\newcommand{\RomanNumeralCaps}[1]
    {\MakeUppercase{\romannumeral #1}}
\numberwithin{equation}{section}
\setlist{nosep}
\setlist{noitemsep}
\newtheorem{theorem}{Theorem}%
\newtheorem{proposition}{Proposition}[section]
\newtheorem{lemma}[proposition]{Lemma}%
\newtheorem{remark}{Remark}[section]%
\newtheorem{assumptions}[remark]{Assumptions}
\numberwithin{equation}{section}%
\newcommand{\dQ}{\mathbb{Q}}%
\newcommand{\dR}{\mathbb{R}}%
\newcommand{\dT}{\mathbb{T}}
\newcommand{\ve}{\varepsilon}
\newcommand{\dP}{\mathbb{P}}%
\newcommand{\dE}{\mathbb{E}}%
\newcommand{\Var}{\mathrm{Var}}%
\newcommand{\Cov}{\mathrm{Cov} }%
\newcommand{\mc}{\mathcal}
\newcommand{\hN}{\frac{N}{2}}
\newcommand{\dive}{\mathrm{div}}
\newcommand{\Id}{\mathrm{Id}}
\newcommand{\diag}{\mathrm{diag}}
\newcommand{\dd}{\mathrm{d}}
\newcommand{\Hc}{\mathcal{H}}%
\newcommand{\Gap}{\mathrm{Gap}}
\newcommand{\dGi}{\dP_{N,\beta}}
\newcommand{\dGip}{\dP_{N',\beta}}
\newcommand{\dGipQ}{\mathbb{Q}_{N',\beta} }
\newcommand{\Ent}{\mathrm{Ent}}
\newcommand{\dGiQ}{\dQ_{N,\beta}}
\newcommand{\dV}{\mathrm{V}}
\newcommand{\dis}{\mathrm{dis}}
\newcommand{\s}{\mathrm{s}}
\newcommand{\FF}{\mathrm{F}}
\newcommand{\lL}{\mathrm{L}}
\newcommand{\g}{\mathrm{g}}
\newcommand{\Riesz}{\mathrm{Riesz}}
\newcommand{\Conf}{\mathrm{Conf}}
\newcommand{\Sine}{\mathrm{Sine}}
\newcommand{\sM}{\mathsf{M}}
\newcommand{\sA}{\mathsf{A}}
\newcommand{\bb}{\mathbb}
\newcommand{\RN}{\RomanNumeralCaps}
\newcommand{\db}{d}
\newcommand{\bn}{\bar{n}}
\newcommand{\bno}{\bar{n}_0}
\newcommand{\tM}{M}
\newcommand{\dH}{\mathrm{H}}
\begin{document}
\title[Decay of correlations for the circular Riesz gas]{Decay of correlations and thermodynamic limit\\ for the circular Riesz gas}

\author{Jeanne Boursier}
\address{
(JB) Universit\'e Paris-Dauphine, PSL University, UMR 7534, CNRS, CEREMADE, 75016 Paris, France}
\email{boursier@ceremade.dauphine.fr}

\begin{abstract}
We investigate the thermodynamic limit of the circular long-range Riesz gas, a system of particles interacting pairwise through an inverse power kernel. We show that after rescaling, so that the typical spacing of particles is of order $1$, the microscopic point process converges as the number of points tends to infinity, to an infinite volume measure $\mathrm{Riesz}_{s,\beta}$. This convergence result is obtained by analyzing gaps correlations, which are shown to decay in power-law with exponent $2-s$. Our method is based on the analysis of the Helffer-Sjöstrand equation in its static form and on various discrete elliptic regularity estimates.
\end{abstract}
\maketitle

\setcounter{tocdepth}{1}
\tableofcontents

\section{Introduction}

\subsection{Setting of the problem}
\
\\
\paragraph{\bf{The circular Riesz gas}}
This paper aims to study an interacting particles system on the circle $\dT:=\dR/\mathbb{Z}$, named circular Riesz gas. Given a parameter $s>0$, the Riesz $s$-kernel on $\dT$ is defined by
\begin{equation}\label{eq:def gs}
    g_{s}:x\in \dT\mapsto\lim_{n\to\infty}\Bigr(\sum_{k=-n}^n \frac{1}{|x+k|^s}-\frac{2}{1-s}n^{1-s}\Bigr).
\end{equation}
Note that for $s\in (0,1)$, $g_s$ is the fundamental solution of the fractional Laplace equation
\begin{equation}\label{eq:lapl}
    (-\Delta)^{\frac{1-s}{2}}g_s=c_s(\delta_0-1),
\end{equation}
where $(-\Delta)^{\frac{1-s}{2}}$ is the fractional Laplacian on $\dT$ . Let us now endow $\dT$ with the natural order $x<y$ if $x=x'+k$, $y=y'+k'$ with $k, k'\in \mathbb{Z}$, $x',y'\in [0,1)$ and $x'<y'$, allowing one to define the set of ordered configurations $$D_N=\{X_N=(x_1,\ldots,x_N)\in \dT^N:x_2-x_1<\ldots <x_N-x_1\}.$$ 
Now consider the pairwise energy
\begin{equation}\label{def:Hn}
    \Hc_N:X_N\in D_N\mapsto N^{-s}\sum_{i\neq j}g_s(x_i-x_j).
\end{equation}
Finally, the circular Riesz gas at inverse temperature $\beta>0$ corresponds to the probability measure
\begin{equation}\label{eq:def gibbs}
   \dd\dGi=\frac{1}{Z_{N,\beta}}e^{-\beta \Hc_N(X_N)}\mathds{1}_{D_N}(X_N)\dd X_N.
\end{equation}

One of the main motivations for studying one-dimensional Riesz gases stems from random matrix theory. The Riesz kernel is indeed given for $s=0$, up to a multiplicative constant, by $-\log |x|$ on $\dR$ and by $-\log |\sin(x/2)|$ on $\dT$. The Gibbs measure associated to such a logarithmic kernel is called 1D log-gas, or $\beta$-ensemble on $\dR$ or circular $\beta$-ensemble on $\dT$. As observed by Dyson \cite{MR148397}, for some special values of $\beta$, namely $\beta\in \{1,2,4\}$, the $\beta$-ensemble matches the joint law of the $N$ eigenvalues of symmetric/hermitian/symplectic random matrices with independent Gaussian entries. Due to this connection, there are numerous probabilistic results on $\beta$-ensembles concerning for instance fluctuations in the bulk and at the edge, correlations of points and gaps, infinite volume limit, relaxation of the Langevin dynamics, high temperature limit, etc. 

The one-dimensional Riesz gas is therefore a natural extension of $\beta$-ensembles and also a fundamental model on which to understand the properties of \emph{long-range} particles systems. The interaction (\ref{eq:def gs}) is indeed long-range when $s\in (0,1)$ while short-range (or hyper-singular, following the terminology of \cite{Borodachov2019DiscreteEO}) when $s\in (1,+\infty)$. The long-range Riesz gas is to this extent a particularly rich model in which interesting phenomena occur, falling outside the classical theory of statistical mechanics. Riesz gases, as a family of power-law interacting particles systems on $\dR^d$, have also received much attention in the physics literature. Apart from the log and Coulomb cases, which are ubiquitous in both mathematical and physics contexts \cite{serfaty2018systems}, Riesz gases have been found out to be natural models in solid state physics,
ferrofluids, elasticity, see for instance \cite{mazars,barre2005large,campa2009statistical,Torquato_2016}. We refer to the nice review \cite{lewin2022coulomb} which presents a comprehensive account of the literature with many open problems.

The first-order asymptotic of long-range Riesz gases is governed by a mean-field energy functional, which prescribes the macroscopic distribution of particles \cite{chafai2014first,serfaty:book1}, corresponding in our circular setting (\ref{eq:def gibbs}) to the uniform measure of the circle. In \cite{boursier2021optimal}, we have investigated the fluctuations of the system and shown that gaps (large spacing between particles) fluctuate much less than for i.i.d variables and much more than in the log-gas case. We have additionally established a central limit theorem for linear statistics with test-functions of very poor regularity, which can for instance be applied to characteristic functions of intervals, thus proving rigorously the predictions of the physics literature \cite{lewin2022coulomb,santra2021gap}. The purpose of this very paper is to investigate another class of problems, related to the question of decay of correlations. More precisely we work at proving the optimal decay of gap correlations as in \cite{erdos2014gap} which considers this question for $\beta$-ensembles and at proving the uniqueness of the limiting Gibbs state. We will show that after rescaling, chosen so that the typical spacing between particles is of order $1$, the microscopic point process converges in the large $N$ limit to a certain point process $\Riesz_{s,\beta}$.
\\

\paragraph{\bf{Infinite volume limit}}
Let $(x_1,\ldots,x_N)$ be distributed according to (\ref{eq:def gibbs}). Fix a centering point on $\dT$, say $x=0$, and consider the rescaled point configuration
\begin{equation*}
    \mathcal{C}_N=\sum_{i=1}^N\delta_{Nx_i}\mathds{1}_{|x_i|<\frac{1}{4}}.
\end{equation*}
With a slight abuse of notation, $\mc{C}_N$ can be seen as a random variable on point configurations on $\dR$. Our goal is to prove that the law of $\mathcal{C}_N$ converges as $N$ tends to infinity, in a suitable topology, to a certain point process $\Riesz_{s,\beta}$. This property is known in statistical physics as the uniqueness of the Gibbs state and is related to the absence of phase transitions. Note that while the existence of limiting point processes is standard \cite{georgiizessin,dereudre2021dlr}, uniqueness is not expected to hold for general interactions even in dimension one. In the cases of the Gaussian and circular $\beta$-ensembles, a unique limit has been exhibited in the seminal works \cite{valko2009continuum,kritchevski2011scaling} and then shown to be universal in the bulk of $\beta$-ensembles for a large class of smooth external potentials in \cite{bourgade2012bulk,bourgade2014universality}, see also \cite{bekerman2015transport}. The limiting measure, called the $\Sine_\beta$ process, can be described using a system of coupled stochastic differential equations \cite{valko2009continuum} or alternatively as the spectrum of an infinite-dimensional random operator \cite{valko2017sine}. Note that a weaker form of uniqueness for the limit of an averaged microscopic uniqueness was also given in \cite{erbar2018one} via a displacement convexity argument. In contrast to the log-gas, the one-dimensional Coulomb gas, i.e with kernel $|x|^{-s}$ for $s=-1$, is not translation invariant in infinite volume as proved in \cite{KUNZ1974303} and Gibbs states are therefore non-unique. As a consequence, the proof of uniqueness for the long-range gas should use both convexity arguments and the decay of the interaction. Let us mention that the existence of an accumulation point for the Coulomb gas in dimension $d\geq 2$ has been proved in \cite{Armstrong2019LocalLA}, but the uniqueness of such a limit is still a completely open problem. 
\\

\paragraph{\bf{Decay of the correlations}}
The question of the uniqueness of the Gibbs state can be tackled by studying the decay of correlations. Fluctuations of linear statistics being small, the appropriate variables to examine in order to establish a decay statement are not points but rather gaps. In the case of the 1D log-gas, it turns out that the correlation between $N(x_{i+1}-x_i)$ and $N(x_{j+1}-x_j)$ decays in $|i-j|^{-2}$, as proved in \cite{erdos2014gap}. In this paper we give a proof of the optimal decay of gap correlations for the circular Riesz gas, which matches the case $s=0$ found in \cite{erdos2014gap} as well as the predictions of the physics literature \cite{Alastuey1985DecayOC,Martin1988SumRI,lewin2022coulomb}. We will also study the hypersingular case $s>1$.
\\

\paragraph{\bf{The Helffer-Sjöstrand equation}}
The fluctuation-dissipation theorem asserts that the fluctuations at equilibrium of any smooth enough function is related to the associated linear response. This yields in particular a representation of the covariance between two smooth functions in terms of the linear response, which solves a partial differential equation named Helffer-Sjöstrand (H.-S.) equation. This equation appeared in \cite{sjostrandpotential,sjostrandpot2,helffer1994correlation} and was more substantially studied in \cite{HELFFER1998571,helffer1998remarks,naddaf1997homogenization}, where it was used to get decay of correlations, uniqueness and Log-Sobolev inequalities. Many approaches to obtaining decay estimates on solutions in convex cases have been developed in the probability and statistical physics literature. For instance, the Feynmac-Kac representation allows one to express the solution with respect to a random walk in a random environment. This formulation, already pointed out it in \cite{helffer1994correlation}, \cite{naddaf1997homogenization}, was used for instance in \cite{Bach2003CorrelationAL,Deuschel2000LargeDA,giacomin} and in the work \cite{erdos2014gap}, which then develops a sophisticated homogenization theory for a system of coupled partial differential equations. There are also some other methods to directly study the solution without the Feynmac-Kac representation, which are based on ideas from stochastic homogenization of elliptic partial differential equations as for instance in \cite{naddaf1997homogenization,armstrong2019c,dario2020massless,thoma2023thermodynamic}.
\\

As mentioned above, the strategy of \cite{erdos2014gap} requires one to control random walks in random environments, which can be quite technical. The gamble of the present paper is to develop a method relying \emph{only on integration by parts} to treat the long-range Riesz gas with $s\in (0,1)$. We will first consider as a landmark the hypersingular case $s>1$ and work with a known distortion argument, used for instance in \cite{helffer1998remarks} or in older techniques to get the decay of eigenfunctions of Schrödinger operators \cite{Combes1973AsymptoticBO}. We will then adapt the method to the long-range case using substantial new inputs including discrete elliptic regularity estimates. Let us emphasize that as it stands, our method cannot be applied to the logarithmic case since it requires to have nearest-neighbor gaps \emph{all} bounded by a large $N$-dependent constant $O(N^\ve)$, with overwhelming probability. Note that this was also one of the crucial difficulties in \cite{erdos2014gap} preventing a simple implementation of the techniques of Caffarelli, Chan and Vasseur \cite{caffarelli2011regularity}.

\subsection{Main results}
Our first result, which concerns the correlations between gaps in the long-range regime $s\in (0,1)$, is the following:

\begin{theorem}[Decay of the correlations for the long-range Riesz gas]\label{theorem:decay gap correlations}
Let $s\in (0,1)$. For all $\ve>0$, there exists a constant $C>0$ such that for all $\xi,\chi:\dR \to \dR$ in $H^1$ and for each $i, j\in \{1,\ldots,N\}$,
\begin{multline}\label{eq:decay corr stat}
    |\Cov_{\dGi}[\xi(N(x_{i+1}-x_i)),\chi(N(x_{j+1}-x_j))]|\\\leq C(\beta)(\dE_{\dGi}[\xi'(x_i)^2]^{\frac{1}{2}}+|\xi'|_{\infty}e^{-c(\beta)d_N(i,j)^{\delta}})(\dE_{\dGi}[\chi'(x_j)^2]^{\frac{1}{2}}+|\chi'|_{\infty}e^{-c(\beta)d_N(i,j)^{\delta}})\frac{1}{d_N(i,j)^{2-s-\ve}}.
\end{multline}
Moreover, for all $\ve>0$ small enough and any $n\in \{1,\ldots,N\}$, there exist $i,j$ such that $\frac{n}{2}\leq |i-j|\leq n$ and
\begin{equation}\label{eq:l corr}
    |\Cov_{\dGi}[N(x_{i+1}-x_i),N(x_{j+1}-x_{j})]|\geq \ve\frac{1}{d_N(i,j)^{2-s}}.
\end{equation}
\end{theorem}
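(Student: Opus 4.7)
The plan is to represent the covariance through the Helffer-Sjöstrand (H.-S.) equation, reduce the upper bound (\ref{eq:decay corr stat}) to a decay estimate for the solution of a random discrete non-local elliptic problem, and extract the lower bound (\ref{eq:l corr}) by matching the decay exponent with the variance asymptotics of mesoscopic gap statistics from \cite{boursier2021optimal}.

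Since $g_s$ is convex, $\mathcal{H}_N$ is strictly convex on $D_N$ and the H.-S.\ identity
\[\Cov_{\dGi}[F,G]=\dE_{\dGi}\bigl[\nabla F\cdot\Psi_G\bigr],\qquad (-\Delta+\beta\nabla\mathcal{H}_N\cdot\nabla+\beta\,\Hess\mathcal{H}_N)\Psi_G=\nabla G,\]
holds. For $F=\xi(N(x_{i+1}-x_i))$ and $G=\chi(N(x_{j+1}-x_j))$, both $\nabla F$ and $\nabla G$ are finite differences supported on two consecutive coordinates, so (\ref{eq:decay corr stat}) reduces to a pointwise decay estimate of $\Psi_G^{(i+1)}-\Psi_G^{(i)}$. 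In gap coordinates $u_k=N(x_{k+1}-x_k)$, the multiplicative operator $\Hess\mathcal{H}_N$ becomes a positive symmetric matrix whose off-diagonal entries are proportional to $g_s''(x_k-x_l)\sim|x_k-x_l|^{-s-2}$. Since typical gaps are of order $1/N$, the rescaled Hessian behaves as a discrete non-local operator of fractional order $1+s$ with long-range kernel $\simeq|k-l|^{-s-2}$; its Green's function, applied to a finite-difference source, produces the decay $|i-j|^{-(2-s)}$, consistent with $g_s$ being the Green's function of $(-\Delta)^{(1-s)/2}$ and with the physics predictions \cite{Alastuey1985DecayOC,lewin2022coulomb}.

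The main obstacle is that $\Hess\mathcal{H}_N$ is random: turning the scaling heuristic into a rigorous statement requires discrete elliptic regularity (of De Giorgi--Nash--Moser flavour) for non-local operators of order $1+s$ with rough but uniformly comparable coefficients, which is precisely the new input announced by the paper. The pointwise two-sided control of the coefficients is provided by the gap rigidity event
\[\mathcal{A}=\bigl\{N(x_{k+1}-x_k)\leq(\log N)^{C}\text{ for all }k\bigr\},\]
which holds with probability at least $1-e^{-c(\log N)^{\delta}}$ by \cite{boursier2021optimal}. On $\mathcal{A}$ the random kernel is comparable, up to polylogarithmic factors, to a deterministic fractional-Laplacian kernel, which feeds the non-local regularity estimates and produces the $|i-j|^{-(2-s-\ve)}$ decay with an arbitrarily small $\ve$-loss coming from the regularity exponent. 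Configurations in $\mathcal{A}^c$ are absorbed into the Gaussian-type remainder $|\xi'|_\infty|\chi'|_\infty e^{-c\,d(i,j)^\delta}$, and the additive $1/N$ in (\ref{eq:decay corr stat}) accounts for boundary/antipodal effects on $\dT$.

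For the lower bound (\ref{eq:l corr}), sharpness of the exponent $2-s$ will be extracted by comparison with the explicit variance asymptotics of appropriately chosen mesoscopic gap statistics, provided by the CLT of \cite{boursier2021optimal}. A uniformly faster-than-$d(i,j)^{-(2-s)}$ decay of gap covariances on the annulus $\{n/2\leq|i-j|\leq n\}$ would force the cross-correlation contribution of this annulus to fall below the share mandated by the variance scaling, yielding a contradiction; hence there must exist at least one pair $(i,j)$ in that annulus saturating the exponent, which is exactly (\ref{eq:l corr}).
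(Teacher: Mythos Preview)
Your proposal contains a fundamental miscomputation of the Hessian in gap coordinates. You write that in the coordinates $u_k=N(x_{k+1}-x_k)$ the Hessian has off-diagonal entries $\sim g_s''(x_k-x_l)\sim |k-l|^{-s-2}$. This is the Hessian in \emph{point} coordinates. Passing to gap coordinates conjugates by the Jacobian of $\Gap_N$, which is a discrete derivative; its inverse is a discrete antiderivative, so the gap-coordinate Hessian is a \emph{double sum} of $g_s''$ terms (see Remark~\ref{remark:exp} and formula~(\ref{eq:expHg2})). Under rigidity this sum concentrates on the Riesz matrix $\mathbb{H}_s$ with entries $\sim d(i,j)^{-s}$, not $d(i,j)^{-(s+2)}$. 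For $s\in(0,1)$ this kernel is \emph{not summable}: the interaction matrix in the H.-S.\ system is genuinely long-range, and the operator $\mathbb{H}_s$ is a smoothing operator with Fourier symbol $\sim|\theta|^{-(1-s)}$ (Lemma~\ref{lemma:inverse}), not a fractional Laplacian of positive order. Your statement that the rescaled operator has ``fractional order $1+s$'' and that a Green's function heuristic directly yields $|i-j|^{-(2-s)}$ is therefore built on the wrong model.

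This error is not cosmetic: once the correct kernel $d(i,j)^{-s}$ is in place, the simple distortion/commutator argument that works in the hypersingular case (Section~\ref{section:decay short}) breaks down, because Lemma~\ref{lemma:perturbation lemma} requires the kernel exponent to exceed $1$. The paper does \emph{not} proceed via De~Giorgi--Nash--Moser regularity for non-local operators; in fact the introduction explicitly notes that the Caffarelli--Chan--Vasseur route was already an obstacle in \cite{erdos2014gap}. Instead the paper (Section~\ref{section:large scale}) couples two distorted norms $\sum d(i,i_0)^{2\alpha}\psi_i^2$ and $\sum d(i,i_0)^{2\gamma}|\nabla\psi_i|^2$ through (i) an elliptic regularity estimate giving an $L^p$ bound on a fractional primitive of $\psi$ via a discrete Gagliardo--Nirenberg inequality (Lemma~\ref{lemma:elliptic}), (ii) a factorization of the system by a matrix approximating $\mathbb{H}_s^{-1}$ that restores short-range behaviour while preserving positivity (Lemma~\ref{lemma:global}), and (iii) a bootstrap localization on shrinking windows (Proposition~\ref{proposition:loc}). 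The good event also needs full rigidity $|x_i+\ldots+x_{i+k-1}-k|\le n^\ve k^{s/2}$, not just a bound on nearest-neighbour gaps. Your sketch of the lower bound via the variance asymptotics of \cite{boursier2021optimal} is, by contrast, the right idea and matches the paper.
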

Note that $d_N$ stands for the symmetric distance on $\{1,\ldots,N\}$.

Theorem \ref{theorem:decay gap correlations} is the natural extension of \cite{erdos2014gap}, which proves that that for $\beta$-ensembles the correlation between $N(x_{i+1}-x_i)$ and $N(x_{j+1}-x_j)$ decays in $|i-j|^{-2}$. The lower bound (\ref{eq:l corr}) is obtained by using a result from \cite{boursier2021optimal} which gives the leading-order asymptotic of the correlation between $N(x_{i}-x_1)$ and $N(x_{j}-x_i)$. Theorem \ref{theorem:decay gap correlations} is in accordance with the expected decay of the truncated correlation function in the mathematical physics and physics literature, see \cite{lewin2022coulomb}. 

Let us comment on the norms appearing in (\ref{eq:decay corr stat}). Our method is mainly based on $L^2$ arguments for a distortion of the Helffer-Sjöstrand equation system which is captured by the $L^2$ norm of $\xi'$ and $\chi'$. Besides by assuming that $\xi'$ and $\chi'$ are uniformly bounded, we can control the solution on a bad event of exponentially small probability by carrying out a maximum principle argument.

Theorem \ref{theorem:decay gap correlations} should be compared to the decay of correlations in the short-range case, that we quantify in the next theorem:

\begin{theorem}[Decay of correlations for the short-range Riesz gas]\label{theorem:hypersing}
Let $s\in (1,+\infty)$. There exists a constant $\kappa>0$ such that for all $\xi,\chi:\dR \to \dR$ in $H^1$ and each $i,j \in \{1,\ldots,N\}$, we have
\begin{multline}\label{eq:decay hyper}
   |\Cov_{\dGi}[\xi(N(x_{i+1}-x_i)),\chi(N(x_{j+1}-x_j))]|\\\leq C(\beta)(\dE_{\dGi}[\xi'(x_i)^2]^{\frac{1}{2}}+|\xi'|_{\infty}e^{-c(\beta)d(i,j)^{\delta}})(\dE_{\dGi}[\chi'(x_j)^2]^{\frac{1}{2}}+|\chi'|_{\infty}e^{-c(\beta)d(i,j)^{\delta}}) \Bigr(\frac{1}{d_N(i,j)^{s-\ve}}+\frac{1}{N}\Bigr).
\end{multline}
\end{theorem}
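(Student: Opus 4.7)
The approach is to represent the covariance via the static Helffer-Sj\"ostrand (H.-S.) formula and extract spatial decay from the associated elliptic PDE. Writing $F(X_N)=\xi(N(x_{i+1}-x_i))$ and $G(X_N)=\chi(N(x_{j+1}-x_j))$, the H.-S. identity reads
\[
  \Cov_{\dGi}(F,G)=\dE_{\dGi}\bigl[\nabla F(X_N)\cdot \phi_G(X_N)\bigr],
\]
where the vector field $\phi_G:D_N\to \dR^N$ solves $(-\Delta+\beta\,\Hess\,\Hc_N)\phi_G=\nabla G$ on the affine subspace $\{\sum_k x_k=\mathrm{const}\}$; the Lagrange multiplier enforcing this constraint is precisely the $1/N$ term of Remark \ref{remark:lagrange}. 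Since $\nabla F$ is a discrete dipole localized at $\{i,i+1\}$ and $\nabla G$ one at $\{j,j+1\}$, bounding $\Cov(F,G)$ reduces to estimating the gap $(\phi_G)_{i+1}-(\phi_G)_i$ when the H.-S. source is localized near~$j$.

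The core of the argument, in the spirit of \cite{helffer1998remarks} and of the Combes-Thomas technique for Schr\"odinger operators \cite{Combes1973AsymptoticBO}, is a \emph{distortion} (exponential conjugation) of the H.-S. operator. I would introduce a weight $w_\lambda=\exp(\lambda\varrho)$, with $\varrho$ a Lipschitz index profile vanishing near $j$ and growing up to the scale $d(i,j)$, and test the H.-S. equation against $w_\lambda^2\phi_G$. This produces a weighted energy identity whose commutator error has size $\lambda^2|\nabla\varrho|^2$. For $s>1$ the nearest-neighbor contribution to $\Hess\,\Hc_N$ has diagonal entries of order $N^2\beta$ --- the second derivative of $g_s$ blows up at the typical scale $1/N$ --- which strongly dominates the Laplacian and absorbs the distortion penalty, yielding exponential decay of the nearest-neighbor part of the H.-S. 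Green function. The polynomial rate $|i-j|^{-(1+s-\ve)}$ then comes from the genuine long-range off-diagonal tails of $\Hess\,\Hc_N$: the pair $\{x_k,x_l\}$ contributes $-N^{-s}g_s''(x_k-x_l)$, which near the typical configuration behaves like $N^2/|k-l|^{s+2}$. Writing the H.-S. operator as a nearest-neighbor piece plus a long-range remainder $V$ and performing one step of the resolvent identity, one integrates the polynomial kernel $V$ against the exponentially decaying nearest-neighbor Green function; the precise exponent $1+s$ is then extracted from the dipole structure of the source and target, the cosmetic $\ve$ absorbing logarithmic factors coming from the discrete convolution.

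The main obstacle is to run the distortion argument on ``bad'' configurations, where some gap is atypically large or small and the quantitative convexity of $\Hc_N$ that drives the absorption of the commutator error degenerates. I would split the expectation according to a good event $\mc{A}$ on which all gaps lie in, say, $[N^{-1-\delta},N^{-1+\delta}]$, and a bad event whose probability decays like $\exp(-c(\beta)d(i,j)^\delta)$ by the large-deviation estimates for gaps obtained in \cite{boursier2021optimal}. On $\mc{A}$ the weighted $L^2$ argument produces the factor $\dE_{\dGi}[\xi'(x_i)^2]^{1/2}\cdot\dE_{\dGi}[\chi'(x_j)^2]^{1/2}$ appearing in \eqref{eq:decay hyper}. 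On the complement, a pathwise maximum-principle argument for the H.-S. system gives only the weaker $|\xi'|_\infty$ and $|\chi'|_\infty$ control of the sources, but this is exactly compensated by the exponentially small probability factor $e^{-c(\beta)d(i,j)^\delta}$. Combining the good and bad estimates, together with the $1/N$ Lagrange multiplier contribution, produces the bound \eqref{eq:decay hyper}.
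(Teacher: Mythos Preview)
Your outline has a genuine gap at its core. You work in point coordinates, where the Hessian entry $\partial_{x_k x_l}\Hc_N$ is of size $N^2/|k-l|^{s+2}$, and you claim that the nearest-neighbor truncation of the H.-S. operator has an exponentially decaying Green function via Combes--Thomas. But the nearest-neighbor piece of $\Hess\Hc_N$ in point coordinates is (up to random coefficients) $N^2$ times a discrete Laplacian in the index: its diagonal $\sim N^2$ and its off-diagonal $\sim -N^2$ have the same magnitude, the row sums vanish by translation invariance, and the Green function on the hyperplane $\{\sum u_i=0\}$ does not decay---it grows linearly in $|i-j|$. The fact that the diagonal ``dominates the Laplacian'' $-\Delta$ (the configuration-space part of $\mc{L}^\mu$) is irrelevant here: what you need for Combes--Thomas in the \emph{index} variable is a spectral gap of the index-space matrix that beats the hopping, and you do not have one. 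Consequently the resolvent-identity step that follows has nothing to iterate against, and the exponential weight $e^{\lambda\varrho}$ is in any case incompatible with the polynomial off-diagonal tail (conjugating $|k-l|^{-(s+2)}$ by $e^{\lambda|k|}$ is unbounded for every $\lambda>0$).

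The paper avoids both issues by a change of variables and a change of weight. It passes to gap coordinates $y_i=N(x_{i+1}-x_i)$ via Proposition~\ref{proposition:HS equation gaps}; there the Hessian $\Hess\Hc_N^\g$ is an order-one matrix with off-diagonal entries $\sim d(i,j)^{-s}$ and, after a mild forcing~\eqref{eq:def nu hypersing}, is uniformly positive definite with constant $n^{-\kappa\ve}$. Because the off-diagonals are only polynomially small, the paper uses a \emph{polynomial} distortion $\lL_\alpha=\diag(1+d(i,i_0)^\alpha)$ and shows (Lemma~\ref{lemma:perturbation lemma}) that the commutator $\lL_\alpha\sM\lL_\alpha^{-1}-\sM$ is small in quadratic form for $\alpha<s-\tfrac12$. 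This yields a first global weighted $L^2$ bound (Lemma~\ref{lemma:illustrative}), which is then upgraded to the pointwise rate $d(i,j)^{-(1+s)}$ by a bootstrap that projects the equation onto shrinking windows around $j$ and controls the resulting exterior field with the previous step (Proposition~\ref{proposition:optimal illus}). The good/bad-event splitting and the maximum-principle control on the bad set that you sketch are indeed used, but only after this structural reduction to gap coordinates.
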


\begin{remark}[Lagrange multiplier and finite volume correlations]\label{remark:lagrange}
The factor $\frac{1}{N}$ reflects correlations due to fact that the total number of points in system is fixed, see \cite{Ernst1981NonequilibriumFI,pulvirenti2017boltzmann,bodineau2020statistical}. In fact, in the framework of Helffer-Sjöstrand equations, it can be interpreted as a Lagrange multiplier associated to the constraint $\sum_{j=1}^N N(x_{j+1}-x_j)=N$, with the convention that $x_{N+1}=x_1$. Interestingly, this correction does not appear in the long-range case (see Theorem \ref{theorem:decay gap correlations}).
\end{remark}

It would be interesting to establish the rate of decay of correlations in the case $s=1$. We believe that for $s=1$, the situation is similar to the long-range case and that the decay is in $d(i,j)^{-1}\log d(i,j)^{-\kappa}$ for some $\kappa>0$. Our next result concerns the limit as $N$ tends to infinity of the law of the configuration
\begin{equation*}
    \sum_{i=1}^N \delta_{N x_i}\mathds{1}_{|x_i|<\frac{1}{4}},
\end{equation*}
Since $\dGi$ is translation invariant, this is equivalent to centering the configuration around any point $x\in \dT$. Let $\Conf(\dR)$ be the set of locally finite, simple point configurations in $\dR$. Given a Borel set $B\subset \dR$, we let $N_{B}:\Conf(\dR)\to \mathbb{N}$ be the number of points lying in $B$. The set $\Conf(\dR)$ is endowed with the $\sigma$-algebra generated by the maps $\{N_B:B \ \text{Borel}\}$. A point process is then a probability measure on $\Conf(\dR)$. Let $(x_1,\ldots,x_N)$ distributed according to $\dGi$. For all $x\in \dT$, denote
\begin{equation}\label{eq:def Qx}
    \dGiQ=\mathrm{Law}\left(\sum_{i=1}^N \delta_{N x_i}\mathds{1}_{|x_i|<\frac{1}{4}}\right)\in \mathcal{P}(\Conf(\dR)).
\end{equation}

\begin{theorem}[Uniqueness of the limiting measure ]\label{theorem:convergence}
Let $s\in (0,1)\cup (1,+\infty)$. There exists a translation invariant point process $\Riesz_{s,\beta}$ such that the sequence of point processes $(\dGiQ)$ converges to $\Riesz_{s,\beta}$ in the topology of local convergence: for any bounded, Borel and local test function $\phi:\Conf(\dR)\to \dR$, we have
\begin{equation*}
    \lim_{N\to \infty}\dE_{\dGiQ}[\phi]=\dE_{\Riesz_{s,\beta}}[\phi].
\end{equation*}
\end{theorem}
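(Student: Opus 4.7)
My plan is to combine the standard existence of subsequential limits, via tightness and the general framework of \cite{georgiizessin,dereudre2021dlr}, with a uniqueness argument driven by the quantitative decay-of-correlations estimates of Theorems \ref{theorem:decay gap correlations} and \ref{theorem:hypersing}. The proof naturally decomposes into three stages: tightness, reduction to convergence of finite-dimensional gap laws, and a Cauchy property for those laws.

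Tightness of $(\dGiQ)_N$ in the local topology is immediate from a uniform intensity bound of the form $\dE_{\dGi}[N_{[-R,R]}(\mc{C}_N)]\leq CR$, provided by the rigidity estimates in \cite{boursier2021optimal}. Since any subsequential limit is a simple point process of unit density, convergence in the local topology is equivalent to convergence of the joint law of any finite collection of consecutive gaps $g_i^N:=N(x_{i+1}-x_i)$ near a fixed index, translation invariance of $\dGi$ on $\dT$ making the choice of index irrelevant.

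The core step is then to show that, for every smooth compactly supported $\phi:\dR^{2k+1}\to \dR$, the sequence $\dE_{\dGi}[\phi((g_i^N)_{|i|\leq k})]$ is Cauchy in $N$. For $N<N'$, I would construct a one-parameter family of Gibbs measures $(\mu_t)_{t\in [0,1]}$ smoothly interpolating between $\dGi$ and $\dGip$, for instance by introducing the extra $N'-N$ particles and continuously switching on their pairwise interactions from $t=0$ to $t=1$. Differentiating $t\mapsto\dE_{\mu_t}[\phi]$ produces a covariance between the local observable $\phi$ and a globally spread-out functional of the configuration; the translation invariance of the circle, the Helffer-Sjöstrand framework already developed in this paper, and the covariance bounds (\ref{eq:decay corr stat})--(\ref{eq:decay hyper}) allow one to bound this derivative by a quantity that vanishes as $\min(N,N')\to\infty$. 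Integrating in $t$ then yields the Cauchy property, and combined with tightness this produces the existence of a unique limit $\Riesz_{s,\beta}$ and the claimed convergence.

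The main obstacle will be the clean implementation of this interpolation so that the integration-by-parts generating the key covariance decomposes into local ingredients directly covered by the decay estimates, and so that the Lagrange multiplier factor $\frac{1}{N}$ of Remark \ref{remark:lagrange} is controlled once summed across macroscopic scales. Given that the paper already develops substantial Helffer-Sjöstrand machinery to prove the decay of correlations, I expect these covariance computations to fit naturally into the same framework, with the remainder of the argument---tightness, local-to-gap reduction, and soft passage to the limit---being essentially standard.
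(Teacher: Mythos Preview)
Your high-level architecture---compactness plus a Cauchy property obtained by differentiating along an interpolation and bounding the resulting covariance via decay estimates---matches the paper. But the specific interpolation you propose has a structural problem, and there is a second gap hiding behind the phrase ``directly covered by the decay estimates''.

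\textbf{The interpolation.} You propose to interpolate between $\dGi$ and $\dGip$ by ``introducing the extra $N'-N$ particles and continuously switching on their pairwise interactions''. But $\dGi$ and $\dGip$ live on configuration spaces of different dimensions (and on circles of different effective sizes after rescaling), so there is no common probability space on which to run a one-parameter family connecting them. The paper resolves this by first projecting onto a window $I=\{1,\ldots,n\}$ with $n\ll N,N'$, then \emph{conditioning} on admissible and compatible exterior configurations $y\in\pi_{I^c}(\mc{M}_N)$ and $z\in\pi_{I^c}(\mc{M}_{N'})$. The conditioned measures $\mu_n=\dGiQ^\g(\cdot\mid y)$ and $\nu_n=\dGipQ^\g(\cdot\mid z)$ now live on the \emph{same} domain $\mc{A}_n\subset\dR^n$, and one can interpolate linearly between their exterior energies: $E(t)=(1-t)\Hc_{n,N}^\g(x,y)+t\Hc_{n,N'}^\g(x,z)$. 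This is what produces the clean formula $\dE_{\nu_n}[G]-\dE_{\mu_n}[G]=\int_0^1\Cov_{\mu(t)}[G,E(t)]\,\dd t$ (Lemma \ref{lemma:integrated repre}).

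\textbf{Decay for the interpolated measure.} The bounds (\ref{eq:decay corr stat}) and (\ref{eq:decay hyper}) are stated for $\dGi$ only; you cannot invoke them for $\mu(t)$. The paper anticipates this by proving the decay results of Sections \ref{section:decay short}--\ref{section:large scale} not for $\dGi$ specifically but for any measure satisfying Assumptions \ref{assumptions:mu} or \ref{assumptions:mu 2}. One must then verify that $\mu(t)$ satisfies these assumptions, which requires a separate rigidity argument under $\mu(t)$ (Lemma \ref{lemma:local laws mu(t)}). Only then can Proposition \ref{proposition:loc} (or its short-range analogue) be applied to bound the covariance $\Cov_{\mu(t)}[G,E(t)]$, as carried out in Lemmas \ref{lemma:estimate t} and \ref{lemma:re eq}. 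Your proposal treats this step as routine, but it is in fact the reason the decay-of-correlations machinery is developed in the generality it is.
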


Theorem \ref{theorem:convergence} extends the known convergence results for $\beta$-ensembles, see \cite{bourgade2012bulk,bourgade2014edge,valko2009continuum,leble2015uniqueness,dereudre2021dlr}. Additionally we are able to give a quantitative bound on the convergence of $\dGiQ(x)$ to $\Riesz_{s,\beta}$ for smooth test-functions.

\begin{theorem}[Quantitative convergence]\label{theorem:quantitative conv}
Let $s\in(0,1)\cup (1,+\infty)$. Let $K\in \{1,\ldots,\hN\}$ and $G:\dR^K\to\dR$ in $H^1$. Let $F:X_N\to D_N\mapsto G(N(x_2-x_1),\ldots,N(x_K-x_{K-1})).$ Fix $x\in\dR$ and let us denote $z_1=\mathrm{argmin}_{z\in\mc{C}}|z_i-x|$. Then for all $\ve>0$, there holds
\begin{equation*}
    \dE_{\dGi}[F]=\dE_{\Riesz_{s,\beta}}[G(z_2-z_1,\ldots,z_{K}-z_{K-1})]+O_\beta\left(N^{-\frac{s}{2}+\ve}\sup|\nabla G|^2 \right).
\end{equation*}
\end{theorem}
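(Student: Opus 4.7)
The plan is to upgrade the qualitative limit of Theorem \ref{theorem:convergence} to a quantitative rate by proving that the sequence $(\dE_{\dGi}[F])_N$ is Cauchy with rate $N^{-s/2+\ve}$ and then identifying the limit via Theorem \ref{theorem:convergence}. Concretely, I would show
\begin{equation*}
\bigl|\dE_{\dGi}[F]-\dE_{\dGip}[F]\bigr| \leq C(\beta)\,N^{-s/2+\ve}\,\sup|\nabla G|^2,
\end{equation*}
uniformly in $N'\geq N$, and then send $N'\to\infty$ using Theorem \ref{theorem:convergence}.

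To compare the two finite--volume measures, I would introduce an interpolating family $(\mu_t)_{t\in[0,1]}$ with $\mu_0=\dGi$ and $\mu_1$ a lift of $\dGip$, obtained for instance by inserting the $N'-N$ extra particles in the region $\{|x|>1/4\}$ far from the support of $F$ and turning on their pairwise interaction continuously in $t$. Differentiating and integrating by parts yields
\begin{equation*}
\tfrac{d}{dt}\dE_{\mu_t}[F] = -\beta\,\Cov_{\mu_t}\bigl[F,\,\partial_t \Hc_t\bigr],
\end{equation*}
so that the problem reduces to bounding the covariance between the local observable $F$ and a perturbation supported at indices at distance $\gtrsim N$ from the first $K$ gaps. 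I would then invoke the static Helffer--Sj\"ostrand representation used throughout the paper, together with the discrete elliptic regularity estimates on the H.-S.~solution $\psi_F$ sourced by $\nabla F$ that have been developed in the body of the work.

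The core input is Theorem \ref{theorem:decay gap correlations}: once the energy perturbation $\partial_t\Hc_t$ is rewritten in terms of gap functions via a further integration by parts (turning the singular derivatives $g_s'(x_i-x_j)$ for $|i-j|\sim N$ into discrete differences of gap observables), the covariance splits into a sum of two--gap covariances directly covered by (\ref{eq:decay corr stat}). Aggregating these contributions with the $N^{-(2-s)+\ve}$ pointwise decay, together with the gap-fluctuation scaling of \cite{boursier2021optimal} which gives variance $\sim |I|^{s}$ for sums of gaps over a block $I$, and applying Cauchy--Schwarz against $\dE[|\nabla F|^2]^{1/2}\lesssim K^{1/2}\sup|\nabla G|$, produces the claimed $N^{-s/2+\ve}$ rate.

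The main obstacle is the first step of the previous paragraph: transferring the covariance bound of Theorem \ref{theorem:decay gap correlations}, which is formulated for functions of single gaps, into a bound suitable for the pair--potential differences $g_s(x_i-x_j)$ appearing in $\partial_t\Hc_t$. This requires a dyadic decomposition of the boundary perturbation, control of each scale via the discrete elliptic regularity estimates, and careful propagation of the $+\ve$ losses so that they do not destroy the aggregation. A secondary issue is the passage from the natural $L^2$ norm of $\nabla G$ (as it appears in (\ref{eq:decay corr stat})) to the $\sup$ norm in the statement, which should follow, as in Theorem \ref{theorem:decay gap correlations}, from a maximum principle argument on a good event of overwhelming probability.
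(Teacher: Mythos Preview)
Your overall strategy---establish a quantitative Cauchy estimate between $\dE_{\dGi}[F]$ and $\dE_{\dGip}[F]$ and then pass to the limit via Theorem~\ref{theorem:convergence}---is exactly the paper's approach. However, two aspects of your implementation diverge from the paper and one of them hides a genuine gap.

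\textbf{On the interpolation.} You propose to interpolate between $\dGi$ and $\dGip$ by ``inserting the extra particles far away and turning on their interaction''. This is awkward because the two measures live on spaces of different dimension. The paper instead fixes a window $I=\{1,\ldots,n\}$ with $K\ll n\ll N,N'$, draws two \emph{exterior} configurations $y\in\pi_{I^c}(\mc{M}_N)$ and $z\in\pi_{I^c}(\mc{M}_{N'})$, and interpolates linearly between the conditioned measures $\mu_n=\dGiQ^\g(\cdot\mid y)$ and $\nu_n=\dGipQ^\g(\cdot\mid z)$ through the one-parameter family of exterior energies $E(t)=(1-t)\Hc_{n,N}^\g(\cdot,y)+t\Hc_{n,N'}^\g(\cdot,z)$; see Lemma~\ref{lemma:integrated repre} and (\ref{eq:def mun(t)}). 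This yields the clean representation $\dE_{\nu_n}[G]-\dE_{\mu_n}[G]=\int_0^1\Cov_{\mu(t)}[G,E(t)]\,\dd t$, and the perturbation $\nabla E(t)$ has the explicit decay $|\partial_j E(t)|\lesssim d(j,\partial I)^{-s/2}$ without any rewriting in terms of two-gap functions.

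\textbf{The genuine gap.} You plan to bound $\Cov_{\mu_t}[G,\partial_t\Hc_t]$ by invoking Theorem~\ref{theorem:decay gap correlations}. But that theorem is stated and proved for $\dGi$ only, whereas the covariance you need to control is under the interpolated measure $\mu(t)$, which is neither $\dGi$ nor $\dGip$. This is not a cosmetic issue: the entire decay machinery (Sections~\ref{section:decay short}--\ref{section:large scale}) is deliberately formulated for a general class of measures satisfying Assumptions~\ref{assumptions:mu} or~\ref{assumptions:mu 2}, precisely so that it can be applied here. The missing step in your argument is to verify that $\mu(t)$ belongs to this class---in particular that rigidity holds under $\mu(t)$---which the paper does in Lemma~\ref{lemma:local laws mu(t)}. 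Once that is in place, the paper does not go through two-gap covariances at all: it applies the H.-S.\ decay directly to $\mu(t)$ (Lemmas~\ref{lemma:estimate t} and~\ref{lemma:re eq}) and obtains the bound
\[
\sum_{j=1}^n d(j,\partial I)^{-s/2}\,\dE_{\mu(t)}[\psi_j^2]^{1/2}\;\leq\; C(\beta)\,n^{\kappa\ve-s/2}\bigl(\dE_{\mu(t)}[\chi_n^2]^{1/2}+\sup|\chi_n|e^{-c(\beta)n^\delta}\bigr),
\]
which is exactly what pairs with $\nabla E(t)$; see Proposition~\ref{proposition:distinct}. Your proposed route through a dyadic decomposition of $\partial_t\Hc_t$ into gap observables and then Theorem~\ref{theorem:decay gap correlations} would, even if the measure issue were resolved, be substantially more indirect.
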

Combining the CLT of \cite{boursier2021optimal} and the convergence result of Theorem \ref{theorem:convergence}, we can additionally prove a CLT for gaps and discrepancies under the $\Riesz_{s,\beta}$ process. Let $\zeta(s,x)$ the Hurwitz zeta function (see for instance \cite{berndt}).

\begin{theorem}[Hyperuniformity of the $\Riesz_{s,\beta}$ process]\label{theorem:hyper riesz}
Let $s\in (0,1)$. Under the process $\Riesz_{s,\beta}$, the sequence of random variables
\begin{equation*}
    K^{-\frac{s}{2}}(z_K-z_1-K)
\end{equation*}
converges in distribution to $Z\sim \mathcal{N}(0,\sigma^2)$ as $K$ tends to infinity with
\begin{equation*}
    \sigma^2=\frac{1}{\beta \frac{\pi}{2}s}\mathrm{cotan}\left(\frac{\pi}{2}s\right).
\end{equation*}
Moreover, the variance of $z_K-z_1$ under $\Riesz_{s,\beta}$ may be expanded as
\begin{equation}\label{eq:var riesz beta}
    \Var_{\Riesz(\beta)}[z_k-z_1]=K^s \sigma^2+o(K^s).
\end{equation}
\end{theorem}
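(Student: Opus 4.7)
The plan is to combine the finite-volume CLT of \cite{boursier2021optimal} for linear statistics with singular test-functions with the thermodynamic limit results established in Theorems \ref{theorem:convergence} and \ref{theorem:quantitative conv}. Under $\dGi$, applying the CLT of \cite{boursier2021optimal} to a smooth approximation of the indicator of an arc of length $K/N$ yields, for $K=K(N)\to\infty$ slowly enough, the convergence in distribution
\begin{equation*}
    K^{-s/2}\bigl(N(x_{K+1}-x_1)-K\bigr)\xrightarrow[N\to\infty]{}\mathcal{N}(0,\sigma^2),
\end{equation*}
with the explicit variance $\sigma^2=\frac{2}{\pi s \beta}\mathrm{cotan}(\tfrac{\pi}{2}s)$. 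The overall strategy is to first push this CLT through the infinite-volume limit $N\to\infty$ to get a statement for the first $K$ points of $\Riesz_{s,\beta}$, and then to let $K\to\infty$ directly under $\Riesz_{s,\beta}$.

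For the distributional convergence, fix $K\in\dN$ and $t\in\dR$ and consider the smooth function $G(u_1,\ldots,u_{K-1})=\exp\bigl(itK^{-s/2}(\sum_j u_j-K)\bigr)$ on $\dR^{K-1}$, for which $\sup|\nabla G|^2\leq t^2K^{1-s}$. Theorem \ref{theorem:quantitative conv} then yields
\begin{equation*}
    \dE_{\Riesz_{s,\beta}}\bigl[e^{itK^{-s/2}(z_K-z_1-K)}\bigr]=\dE_{\dGi}\bigl[e^{itK^{-s/2}(N(x_K-x_1)-K)}\bigr]+O_\beta\bigl(N^{-s/2+\ve}\,t^2 K^{1-s}\bigr),
\end{equation*}
an error which vanishes as $N\to\infty$ for $K$ and $t$ fixed. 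Combining with the finite-$N$ CLT above and a diagonal extraction gives $\dE_{\Riesz_{s,\beta}}[e^{itK^{-s/2}(z_K-z_1-K)}]\to e^{-t^2\sigma^2/2}$ as $K\to\infty$, whence L\'evy's continuity theorem concludes.

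For the variance expansion \eqref{eq:var riesz beta}, the corresponding identity at finite $N$, namely $\Var_{\dGi}[N(x_{K+1}-x_1)]=K^s\sigma^2+o(K^s)$ with an error uniform in $N$, is a consequence of the second-moment computations of \cite{boursier2021optimal}. Transferring it to $\Riesz_{s,\beta}$ cannot rely directly on local convergence since the squared observable is unbounded. The plan is to truncate $(z_K-z_1-K)^2$ at a level $M=M(K)$ much larger than $K^s$, apply Theorem \ref{theorem:convergence} to the bounded truncated observable, and control the tail contribution using concentration bounds on the discrepancy $N(x_{K+1}-x_1)-K$ inherited uniformly in $N$ from the rigidity estimates of \cite{boursier2021optimal}; these tail bounds transfer to $\Riesz_{s,\beta}$ by Fatou's lemma, and letting $M\to\infty$ after $N\to\infty$ closes the argument.

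The main obstacle is precisely this uniform-in-$N$ tail control. The observable fluctuates on scale $K^{s/2}$ and its tails must be controlled with decay strong enough to give uniform integrability of its square, a task made delicate by the slow $|i-j|^{-(2-s)}$ decay of gap correlations from Theorem \ref{theorem:decay gap correlations}, which allows covariances among the $K$ successive gaps composing $x_{K+1}-x_1$ to accumulate and produce the $K^s$ scaling. The distributional part of the theorem is, in contrast, comparatively robust, requiring only convergence of bounded characteristic functions via local convergence, combined with the finite-$N$ CLT of \cite{boursier2021optimal}.
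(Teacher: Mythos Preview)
Your proposal is correct and follows essentially the same route as the paper: invoke the finite-$N$ CLT of \cite{boursier2021optimal} for the discrepancy/gap statistic, pass to $\Riesz_{s,\beta}$ via Theorem~\ref{theorem:quantitative conv}, and then let $K\to\infty$. The only cosmetic differences are that the paper works with Lipschitz test functions $\eta$ rather than characteristic functions, and that instead of your diagonal extraction it uses directly that the CLT error from \cite{boursier2021optimal} is $o_K(1)$ \emph{uniformly in $N$}, which makes the exchange of limits immediate; your diagonal argument is equivalent but the uniformity statement is the cleaner way to phrase the input you need. For the variance, the paper likewise relies on the uniform-in-$N$ variance expansion from \cite{boursier2021optimal} and a passage to the limit; your truncation-plus-rigidity plan is a valid implementation of that step.
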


In particular, Theorem \ref{theorem:hyper riesz} implies that the fluctuations of the number of points in a given interval under $\Riesz_{s,\beta}$ is much smaller than for the Poisson process. In the language of \cite{Torquato_2016}, this says that $\Riesz_{s,\beta}$ is hyperuniform when $s\in( 0,1)$. Our techniques, combined with the method of \cite{boursier2021optimal}, can also give a central limit theorem for linear statistics under the $\Riesz_{s,\beta}$ process, as done in \cite{leble2018clt,lambert2021mesoscopic} for $\Sine_\beta$.

We conclude this set of results by studying the repulsion of the $\Riesz_{s,\beta}$ process at $0$. We show that the probability of having two particles very close to each other decays exponentially.

\begin{proposition}\label{proposition:replusion}
Fix $\alpha\in (0,\frac{s}{2})$. Let $\ve\in (0,1)$. There exist constants $c(\beta)>0$ and $C(\beta)>0$ depending on $\alpha$ and locally uniformly in $\beta$ such that
\begin{equation*}
  \mathbb{P}_{\Riesz_{s,\beta}}(|z_{i+1}-z_i|\geq \ve)\geq 1-C(\beta)e^{-c(\beta)\ve^{-\alpha}}.
\end{equation*}
\end{proposition}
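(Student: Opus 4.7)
I would deduce the repulsion estimate from a finite-volume analogue for $\dGi$ and then transfer it to $\Riesz_{s,\beta}$ via the quantitative convergence of Theorem \ref{theorem:quantitative conv}.

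\emph{Step 1: finite-volume repulsion.} The target is a uniform-in-$N$ bound $\dGi(N(x_{i+1}-x_i)<\ve)\leq C(\beta)e^{-c(\beta)\ve^{-\alpha}}$. On the event $\mathcal{A}:=\{N(x_{i+1}-x_i)<\ve\}$, the near-singular behavior $g_s(u)\sim u^{-s}$ as $u\to 0$ gives $N^{-s}g_s(x_{i+1}-x_i)\geq c\,\ve^{-s}-O(1)$, so the Gibbs density is heavily penalized. I would make this quantitative by a shift-of-variables argument: for $X\in\mathcal{A}$ and $\tau$ a small shift parameter (a constant $\tau\lesssim 1/N$, or $\tau=\tfrac12(x_{i+2}-x_{i+1})$), define $\Phi(X)=(x_1,\dots,x_i,x_{i+1}+\tau,x_{i+2},\dots,x_N)$. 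Changing variables in $\dGi(\mathcal{A})=\int_\mathcal{A} e^{-\beta\Hc_N}/Z_{N,\beta}$ via $\Phi$ yields
\begin{equation*}
    \dGi(\mathcal{A}\cap\{x_{i+2}-x_{i+1}\geq\tau\})\leq e^{-\beta\,[\Hc_N(X)-\Hc_N(\Phi(X))]}\,\dGi(\Phi(\mathcal{A})).
\end{equation*}
The energy gain decomposes into a dominant near-field term $\gtrsim \ve^{-s}$ from the pair $(i,i+1)$, and a far-field correction controlled by the summable series $\sum_{k\geq 1}k^{-s-1}<\infty$ (valid for $s>0$); the latter is in the spirit of the discrete elliptic regularity estimates developed in the paper.

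\emph{Step 2: transfer to the limit.} The event $\{|z_{i+1}-z_i|<\ve\}$ is local around the origin, so I would mollify its indicator by a Lipschitz $G_\delta\in H^1(\dR)$ with $G_\delta(u)=1$ for $u\leq\ve-\delta$, $G_\delta(u)=0$ for $u\geq\ve$, and $|G_\delta'|\leq\delta^{-1}$. Applying Theorem \ref{theorem:quantitative conv} with $K=2$ and $G(u_1)=G_\delta(u_1)$ yields
\begin{equation*}
    \mathbb{P}_{\Riesz_{s,\beta}}(|z_{i+1}-z_i|\leq\ve-\delta)\leq\dE_{\dGi}[G_\delta(N(x_{i+1}-x_i))]+O_\beta(N^{-\frac{s}{2}+\ve'}\delta^{-2}).
\end{equation*}
Letting $N\to\infty$ with $\delta$ and $\ve$ fixed kills the error term, and combining with Step 1 together with the choice $\delta=\ve/2$ delivers the stated bound.

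\emph{Main obstacle.} The hard part is Step 1 on configurations with several consecutive small gaps, where the naive shift is obstructed by the ordering constraint. Handling these requires a multi-scale iteration over clusters of nearly-coalescing particles: at each level the shift parameter $\tau$ must be taken comparable to the cluster's internal scale, producing a Jacobian factor and a weaker near-field gain. I expect the restriction $\alpha<s/2$ in the final estimate to reflect the cumulative losses incurred in this iteration, matching the $N^{-s/2}$ rate already present in Theorem \ref{theorem:quantitative conv}.
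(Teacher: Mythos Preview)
Your two-step structure (finite-volume repulsion, then pass to the limit) is exactly what the paper does, but you are doing unnecessary work in Step~1 and slightly overengineering Step~2.

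\medskip
\noindent\textbf{Step 1 is already available.} The bound $\dGi(N(x_{i+1}-x_i)\leq\ve)\leq C(\beta)e^{-c(\beta)\ve^{-\alpha}}$ is stated in the paper as Lemma~\ref{lemma:no explosion}, imported from \cite[Lemma~4.5]{boursier2021optimal}; the paper's proof of Proposition~\ref{proposition:replusion} simply cites it. Your shift-of-variables sketch is a plausible approach to that lemma, and the cluster obstacle you flag is real, but there is no need to redo this here. Two corrections: the far-field control in such an argument is elementary summability and has nothing to do with the Helffer--Sj\"ostrand elliptic estimates of Section~\ref{section:large scale}; and the restriction $\alpha<s/2$ comes from the finite-volume lemma itself, not from the $N^{-s/2}$ rate of Theorem~\ref{theorem:quantitative conv} --- in your own Step~2 you send $N\to\infty$ before anything else, so that rate never enters the final bound.

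\medskip
\noindent\textbf{Step 2 differs in a minor way.} The paper uses only the qualitative local convergence of Theorem~\ref{theorem:convergence}: since convergence is asserted for all bounded Borel local functions, the indicator of $\{|z_{i+1}-z_i|\leq\ve\}$ is an admissible test function and the inequality passes directly to the limit. Your route via Theorem~\ref{theorem:quantitative conv} with a Lipschitz mollifier $G_\delta$ is correct and more explicit, but the quantitative rate is not needed for the statement.
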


\subsection{Related questions and perspective}
\

\paragraph{\bf{DLR equations and number-rigidity}}
Having proved the existence of an infinite volume limit for the circular Riesz gas, a natural question is then to study the $\Riesz_{s,\beta}$ process from a statistical physics perspective. The first step in that direction could be to establish the Dubroshin-Landford-Ruelle (DLR) equations for the $\Riesz_{s,\beta}$ process as was done for the $\Sine_\beta$ process in \cite{dereudre2021dlr}. We refer to \cite{Georgii+2011} for a presentation of DLR equations in the context of lattice gases and to \cite{dereudre2019introduction} in the context of point processes. A question of interest is then to study the number-rigidity property, recently put forward in \cite{ghosh2017rigidity}, within the family of Riesz gases. Let us recall that a point process is said to be number-rigid whenever given any compact domain of $\dR^d$, the knowledge of the exterior determines in a deterministic fashion the number of points inside the domain. Number-rigidity is a quite surprising phenomenon, which has been proved to occur for the 1D log-gas independently in \cite{chhaibi2018rigidity} and in \cite{dereudre2021dlr} using DLR equations. The recent work \cite{dereudre2021number}, providing a strategy to rule out number-rigidity, should imply together with the local laws of \cite{boursier2021optimal}, that the $\Riesz_{s,\beta}$ process is not number-rigid for $s\in (0,1)$. This would highlight a difference between the log-gas and the long-range Riesz gas for which the effective energy is short-range. 
\\

\paragraph{\bf{Regularity of the free energy}}
A natural question is to investigate the regularity with respect to $\beta$ of the infinite volume process $\Riesz_{s,\beta}$. A way to address this problem is to study the regularity of the free energy of the infinite Riesz gas, which is defined by 
\begin{equation}\label{eq:def free energy}
    f:\beta\in (0,+\infty)\mapsto \lim_{N\to\infty}-\frac{1}{N}\Bigr(\log Z_{N,\beta}-\frac{1}{2}\beta N^{2-s}\iint g_s(x-y)\dd x\dd y\Bigr).
\end{equation}
The existence of such a limit was obtained in \cite{leble2017large} for Riesz gases in arbitrary dimension $d\geq 1$ with $\max(0,d-2)<s<d$. In dimension one, one expects that no phase transition occurs for the circular Riesz gas and that the free energy is smooth and even analytic. To prove that $f$ is twice differentiable, a standard approach is to prove that the rescaled variance of the energy under (\ref{eq:def gibbs}) converges locally uniformly in $\beta$ as $N$ tends to infinity. This should be an easy consequence of Theorems \ref{theorem:decay gap correlations} and \ref{theorem:convergence}.
\\

\paragraph{\bf{Higher dimensions}}
Because the Hamiltonian of the Riesz gas is not convex in dimension $d\geq 2$, it is not clear how to obtain results on correlations. In fact, even showing local laws in the long-range setting is still open, except in the Coulomb case $s=d-2$ tackled in \cite{leble2017large,leble2017local} culminating into the optimal local law result of \cite{Armstrong2019LocalLA}. Nevertheless, a quantitative estimate on the translation invariance of the 2D Coulomb gas has been recently obtained in the work \cite{Leble2021TheTO}, building on a Mermin-Wagner's-type argument in the spirit of \cite{georgii}, see also \cite{thoma2022overcrowding} for related considerations. It could also be interesting to address this question for the hypersingular Riesz gas \cite{hardin2018large}, which is seemingly more tractable since it resembles the hard-core model as $s$ becomes large. For the latter, the translation invariance of the infinite volume Gibbs measures was shown in \cite{richthammer2007translation} by constructing approximate translations avoiding collapses between particles. 
\\

\subsection{Outline of the proofs}\label{subsection:outline}

As mentioned, the heart of the paper is about the analysis of a partial differential equation related to the correlations of the Riesz gas. Given a typical Gibbs measure $\dd\mu=e^{-H(X_N)}\dd X_N$ on $D_N$ (or $\dR^N$), the well-known fluctuation-dissipation relation asserts that the covariance between any smooth functions $F,G:D_N\to\dR$ may be expressed as
\begin{equation}\label{eq:helffer sjos repr c}
    \Cov_{\mu}[F,G]=\dE_{\mu}[ \nabla \phi\cdot\nabla G],
\end{equation}
where $\nabla \phi$ solves
\begin{equation}\label{eq:Lmuphi c}
\left\{
\begin{array}{ll}
    -\Delta \phi+\nabla H\cdot\nabla\phi=F-\dE_{\mu}[F] & \text{on }D_N \\
    \nabla \phi\cdot \vec{n}=0 & \text{on }\partial D_N,
\end{array}
\right.
\end{equation}
One may recognize the operator $\mc{L}^{\mu}=-\Delta+\nabla H\cdot \nabla$ which is the infinitesimal generator of the Markov semigroup associated to the Langevin dynamics with energy $H$. The Helffer-Sjöstrand equation then corresponds to the equation obtained by differentiating (\ref{eq:Lmuphi c}), which reads
\begin{equation}\label{eq:intro helffer equation c}
  \left\{ \begin{array}{ll}
       A_1^{\mu}\nabla \phi=\nabla F  & \text{on }D_N \\
    \nabla \phi\cdot \vec{n}=0  & \text{on }\partial D_N
   \end{array}
   \right.\quad \text{where}\quad   A_1^{\mu}:=\nabla^2 H+\mc{L}^{\mu}\otimes I_N.
\end{equation}
When the Hessian of the energy is uniformly positive-definite, then one can derive by integration by parts a weighted $L^2$ estimate on $|\nabla\phi|$, which yields a Brascamp-Lieb inequality. Note that a maximum principle argument can also give a uniform bound on $|\nabla\phi|$ as seen in \cite{helffer1994correlation}. 

The Hamiltonian we are interested in is rather a (uniformly) convex function of the gaps than of the points. Henceforth it is very convenient to rewrite Equation (\ref{eq:intro helffer equation c}) in a new of system of coordinates. We define the change of variables 
\begin{equation*}
    \Gap_N:X_N\in D_N\mapsto (N|x_2-x_1|,\ldots,N|x_N-x_1|)\in \dR^N
\end{equation*}
and work on the polyhedron
\begin{equation*}
    \mc{M}_N:=\{(y_1,\ldots,y_N)\in (\dR^{+*})^N:y_1+\ldots+y_N=N\}.
\end{equation*}
Assume that the measure of interest $\mu$ can be written $\dd\mu=e^{-H^\g\circ \Gap_N(X_N)}\mathds{1}_{D_N}(X_N)\dd X_N$ and that the test-functions in (\ref{eq:helffer sjos repr c}) are of the form $F=\xi\circ\Gap_N$ and $G=\chi\circ\Gap_N$. Set $\nu=\Gap_N\#\mu$. Then letting
\begin{equation*}
\mc{L}^\nu=\nabla H^\g\cdot \nabla -\Delta\quad \text{and}\quad  A_1^\nu=\nabla^2 H^\g+\mc{L}^\nu\otimes I_N,
\end{equation*}
one may check that the variance of $F$ under $\mu$ can also be represented as
\begin{equation*}
    \Cov_\mu[F,G]=\dE_{\nu}[\nabla\psi\cdot \nabla \chi],
\end{equation*}
where $\nabla\psi$ solves
\begin{equation}\label{eq:intro HS gaps}
    \left\{
    \begin{array}{ll}
        A_1^\nu \nabla\psi=\nabla \xi+\lambda(e_1+\ldots+e_N) & \text{on }\mc{M}_N \\
        \nabla\psi\cdot (e_1+\ldots+e_N)=0 &  \text{on }\mc{M}_N \\
        \nabla\psi\cdot \vec{n}=0 & \text{on }\partial \mc{M}_N.
    \end{array}
    \right.
\end{equation}
Let us mention that the coefficient $\lambda$ in (\ref{eq:intro HS gaps}) is the Lagrange multiplier associated to the linear constraint $y_1+\ldots+y_N=N$. Our focus is to understand how $\partial_j\psi$ decays when $\nabla\xi=e_1$. A first important insight comes from expanding the Hessian of the energy (\ref{def:Hn}) in gap coordinates, that we denote $\Hc_N^\g$. Using some rigidity estimates obtained in \cite{boursier2021optimal}, one can show that for all $\ve>0$, there exists $\delta>0$ such that
\begin{equation*}
   \dGi\Bigr(\Bigr|\partial_{ij}\Hc_n^\g-\frac{1}{1+d_N(i,j)^s}\Bigr|\geq \frac{1}{1+d_N(i,j)^{1+\frac{s}{2}-\ve} }\Bigr)\leq Ce^{-d_N(i,j)^{\delta}},
\end{equation*}
where $d_N$ stands for the symmetric distance on $\{1,\ldots,N\}$. In other words, the interaction matrix in the system (\ref{eq:intro HS gaps}) concentrates around a constant long-range matrix. This already gives a first heuristic to understand the decay of gap correlations stated in Theorem \ref{theorem:decay gap correlations}, which is consistent with the decay of $h:=(-\Delta)^{\frac{1-s}{2}}\delta_0$ (where $(-\Delta)^{\frac{1-s}{2}}$ is the fractional Laplacian on $\dR$).

Due to the long-range nature of the interaction, the analysis of (\ref{eq:intro HS gaps}) is rather delicate. Let us present an idea of the proof in the short-range case $s>1$ as it will be the model computation for the long-range case also. To simplify assume that there exist $s>1, c>0, C>0$ such that uniformly
\begin{equation}\label{eq:decayMNintro}
    \nabla^2 \Hc_N^\g\geq c^{-1}\mathrm{Id}\quad \text{with}\quad |\partial_{ij}\Hc_N^\g|\leq \frac{C}{d_N(i,j)^{s}}\quad \text{for each $1\leq i,j\leq N$}.
\end{equation}
The matrix $\nabla^2\Hc_N^\g$ then looks like a diagonally dominant matrix. The idea to obtain a decay estimate on the solution of (\ref{eq:intro HS gaps}) is to multiply $\partial_i\psi$ by $d_N(i,1)^{\alpha}$ for some well-chosen $\alpha>0$. Let $\lL_\alpha\in \mc{M}_N(\dR)$ be the distortion matrix \begin{equation*}\lL_\alpha=\diag((1+d_N(j,1)^\alpha)_{1\leq j\leq N})
\end{equation*}
and $v^{\dis}:=\lL_{\alpha}\nabla\psi$ which solves
\begin{equation*}
 \beta(\nabla^2 \Hc_N^\g+\delta_{\lL_\alpha})v^{\dis}+\mc{L}^\nu v^{\dis}=e_1+\lambda\lL_{\alpha}(e_1+\ldots+e_N),
\end{equation*}
where $\delta_{\lL_\alpha}$ is the commutator
\begin{equation}\label{eq:com} \delta_{\lL_\alpha}:=\lL_\alpha\nabla^2 \Hc_N^\g\lL_\alpha^{-1}-\nabla^2 \Hc_N^\g.
\end{equation}
A first key is that the more $\nabla^2 \Hc_N^\g$ is diagonal, the more it will commute with diagonal matrices. In fact one can check that for $\alpha<s-\frac{1}{2}$, the commutator (\ref{eq:com}) is small compared to the identity, in the sense of quadratic forms. By integration by parts and using the convexity of $\Hc_N^\g$, this entails an $L^2$ estimate on $\psi^{\dis}$ and therefore a hint on the global decay of $\nabla\psi$. This idea of studying a distorted vector-field is well known in statistical physics, see for instance \cite{helffer1998remarks,Combes1973AsymptoticBO}. By projecting (\ref{eq:intro HS gaps}) in a smaller window we can then improve through a bootstrap argument this first decay estimate. 

In the long-range regime $s\in (0,1)$, the above argument no longer works. A natural way of proceeding is to factorize Equation (\ref{eq:intro HS gaps}) around the ground state by multiplying the system by a matrix $\sA$ close to the inverse of the Riesz matrix $\bb{H}_{N,s}:=(\frac{\mathds{1}_{i\neq j}}{d_N(i,j)^s})_{1\leq i,j\leq N}$. A simple construction can ensure that $\sA\nabla^2 \Hc_n^\g$ remains uniformly positive-definite but the drawback of the operation is that the differential term $D\psi$ can no longer be controlled. The main novelty of the paper is a method based on the comparison of the two distorted norms
\begin{equation}\label{eq:alpha gamma}
    \dE_{\nu}\Bigr[\sum_{i=1}^n d_N(i,1)^{2\alpha}(\partial_i\psi)^2\Bigr]\quad \text{and}\quad \dE_{\nu}\Bigr[\sum_{i=1}^n d_N(i,1)^{2\gamma}|\nabla(\partial_i\psi)|^2\Bigr],
\end{equation}
for well-chosen constants $\alpha>0$ and $\gamma>0$. The first step is to derive an elliptic regularity estimate on the solution of (\ref{eq:intro HS gaps}). We prove that the solution has a discrete fractional primitive of order $\frac{3}{2}-s$ in $L^2$ (up to some $n^{\kappa\ve}$ multiplicative factor) provided $\psi_i$ decays fast enough. In a second step we will control $|\lL_{\gamma}\nabla^2\psi|$ by $|\lL_{\frac{\gamma}{2}+\frac{1}{4}}\nabla\psi|$ (up to a residual term that we do not comment here). The proof uses the distortion argument presented in the short-range case, the elliptic regularity estimate and a discrete Gagliardo-Nirenberg inequality. In a third step we control $|\lL_{\alpha}\nabla\psi|$ by $|\lL_{\alpha-\frac{1-s}{2}}\nabla^2\psi|$ by implementing the factorization trick aforementioned. Combining these two inequalities we deduce that for $\alpha=\frac{3}{2}-s$ and $\gamma=1-\frac{s}{2}$, each of the terms in (\ref{eq:alpha gamma}) are small. This gives the optimal global decay on the solution of (\ref{eq:intro HS gaps}), which we then seek to localize. 

The proof of localization, which allows one to go from (\ref{eq:alpha gamma}) to an estimate on a single $\partial_i\psi$, is also quite delicate. Fix an index $j\in \{1,\ldots,N\}$ and let
\begin{equation*}
    J=\Bigr\{i\in\{1,\ldots,N\}:d_N(i,j)\leq \frac{1}{2}d_N(1,j)\Bigr\}.
\end{equation*}
Projecting Equation (\ref{eq:intro HS gaps}) on the window $J$ makes an exterior field appear, which takes the form
\begin{equation}\label{eq:Vlintro}
    \dV_l:=-\beta \sum_{i\in J^c}\partial_{il}\Hc_N^\g\partial_i\psi,\quad l\in J.
\end{equation}
We then break $\dV$ into the sum of an almost constant field $\dV^{(1)}$ (looking like $\dV_j \sum_{l\in J}e_l)$ and a smaller field $\dV^{(2)}$. A key is that the equation (\ref{eq:intro HS gaps}) associated to a vector-field proportional to $(e_1+\ldots+e_N)$ is much easier to analyze. It indeed admits a mean-field approximation, quite similar to the mean-field approximation of (\ref{eq:intro helffer equation c}) when $F$ is a linear statistics, see \cite{boursier2021optimal}. We then bootstrap the decay of solutions of (\ref{eq:intro HS gaps}). Applying the induction hypothesis to bound (\ref{eq:Vlintro}) \emph{and} to bound the decay of (\ref{eq:intro HS gaps}) in the window $J$, one finally gets after a finite number of iterations, the optimal result of Theorem \ref{theorem:decay gap correlations}.

The uniqueness of the limiting point process stated in Theorem \ref{theorem:convergence} is then an application of our result on the decay of correlations (in fact stated for slightly more general systems than (\ref{eq:intro HS gaps})). As the existence of an accumulation point of (\ref{eq:def Qx}) in the local topology is standard, the problem of convergence of the microscopic process can be rephrased into a uniqueness question.  

Our aim is to prove that the sequence (\ref{eq:def Qx}) defines, in some informal sense, a Cauchy sequence. To do this, we let $I=\{1,\ldots,n\}$ and freeze the points with index in $I^c$: given $N,N'\gg n$, one seeks to compare the conditional measures $\nu_n^y:=\dGi^\g(\cdot\mid y)$ and $\nu_n^z=\dGip^\g(\cdot \mid z)$ where $y\in\dR^{N-n}$, $z\in \dR^{N'-n}$ are two exterior configurations verifying $N-(y_1+\ldots+y_{N-n})=N'-(z_1+\ldots+z_{N'-n})$. The strategy is then to define a continuous path $\nu(t)$ from $\nu_n^y$ to $\nu_n^z$ by linear interpolation of the exterior energies. Let $F$ be a test-function $F:\dR^n\to\dR$ depending only on the coordinates $x_i$ for $\frac{n}{4}\leq i\leq \frac{3n}{4}$. One may write
\begin{equation}\label{eq:diffi}
    \dE_{\nu_n^z}[F]-\dE_{\nu_n^y}[F]=\beta\int_0^1 \Cov_{\nu(t)}[\nabla F,\nabla E(t)]\dd t,
\end{equation}
where $E(t)$ corresponds to some exterior energy term. By applying the decay of correlations result to the measure $\nu(t)$, we find that (\ref{eq:diffi}) is small, which easily concludes the proof of Theorem \ref{theorem:convergence}.

\subsection{Organization of the paper}

\begin{itemize}
\item Section \ref{section:preliminaries c} records various preliminary results, such as rigidity estimates on circular Riesz gases and controls on the discrete fractional Laplacian.
\item Section \ref{section:helffer c} focuses on the well-posedness of the Helffer-Sjöstrand equation and states some of its basic properties.
\item In Section \ref{section:decay short} we introduce our distortion techniques to prove the decay of correlations in the long-range case.
\item Section \ref{section:large scale} is the heart of the paper. It develops a more involved method to treat the decay of correlations in the long-range setting.
\item Section \ref{section:change measure} concludes the proof of uniqueness of the limiting measure of Theorem  \ref{theorem:convergence}.
\end{itemize}

\subsection{Notation}

We let $d_N$ be the symmetric distance on $\{1,\ldots,N\}$ defined for each $i, j\in \{1,\ldots,N\}$ by $$d_N(i,j)=\min(|j-i|,N-|j-i|).$$ 

For $x\in \dR^n$, we let $|x|$ be the Euclidian norm of $x$ and for $M\in \mc{M}_n(\dR)$, $\Vert M\Vert$ be the Hilbert-Schmidt norm of $M$, i.e
\begin{equation*}
    \Vert M\Vert=\sup_{v\in \dR^n\setminus \{0\}}\frac{|Mv|}{|v|}.
\end{equation*}
We let $(e_1,\ldots,e_N)$ be the standard orthonormal basis of $\dR^N$.

We either use the notation $\nabla^2f$ for the Hessian of a real-valued function $f:\dR^n\to\dR$.

For $A, B \geq 0$, we write $A\leq C(\beta)B$ or $A=O_\beta(B)$ whenever there exists a constant $C\in\dR^+$ locally uniform in $\beta$ (which might depend on $s$) such that $A\leq CB$.

\subsection*{Acknowledgments}
The author would like to thank Sylvia Serfaty, Thomas Leblé and Angel Alastuey for many helpful remarks on this work. The author is supported by a grant from the ``Fondation CFM pour la Recherche'' and by a grant from the ERC Project LDRAM, ERC-2019-ADG Project 884584.


\section{Preliminaries}\label{section:preliminaries c}
We begin by recording some useful preliminary results that will be used throughout the paper.

\subsection{Discrepancy estimates}
One shall first state a control on the probability of having two particles very close to each other. According to \cite[Lem.~4.5]{boursier2021optimal}, the following holds:

\begin{lemma}\label{lemma:no explosion c}
Let $s\in (0,1)$ and $\alpha\in(0,\frac{s}{2})$. There exist constants $C(\beta)>0$ and $c(\beta)>0$ locally uniform in $\beta$ such that for each $i\in \{1,\ldots,N\}$ and $\ve>0$,
\begin{equation*}
    \dGi(N(x_{i+1}-x_i)\leq \ve)\leq C(\beta)e^{-c(\beta) \ve^{-\alpha}}.
\end{equation*}
\end{lemma}

In addition, in view of \cite[Th.~ 1]{boursier2021optimal}, the fluctuations of large gaps satisfy the following estimate:

\begin{theorem}[Near-optimal rigidity]\label{theorem:almost optimal rigidity c}
Let $s\in (0,1)$. There exists a constant $C(\beta)$ locally uniform in $\beta$ such that for all $\ve>0$, setting $\delta=\frac{\ve}{4(s+2)}$, for each $i\in \{1,\ldots,N\}$ and $1\leq k\leq \hN$, we have
\begin{equation*}
    \dGi( |N(x_{i+k}-x_i)-k|\geq k^{\frac{s}{2}+\ve})\leq C(\beta)e^{-c(\beta)k^{\delta} }.
\end{equation*}
\end{theorem}
Let us highlight that the variance of $N(x_{i+k}-x_i)$ can in fact be shown to be of order $k^s$, together with a central limit theorem, see \cite[Cor.~1.1]{boursier2021optimal}. 

\subsection{Fractional Laplacian on the circle}
In this subsection we justify the expression of the fundamental solution of the fractional Laplace equation on the circle (\ref{eq:lapl}). Recall the Hurwitz zeta function \cite{apostol1997modular}.

\begin{lemma}[Fundamental solution]\label{lemma:fund}
Let $g_s$ be the solution of (\ref{eq:lapl}). Let $s\in (0,1)$. For all $x\in \dT$, we have
\begin{equation}\label{eq:expression diff}
 g_s(x)=\zeta(s,x)+\zeta(s,1-x)=\lim_{n\to\infty}\Bigr(\sum_{k=-n}^n \frac{1}{|k+x|^s}-\frac{2}{1-s}n^{1-s}\Bigr).
\end{equation}
Moreover for all $p\geq 1$ and all $x\in \dT$, we have
\begin{equation*}
    g_s^{(p)}(x)=(-1)^p s\cdots (s+p-1)\sum_{k\in\mathbb{Z}}\frac{1}{|x+k|^{s+p}}.
\end{equation*}
\end{lemma}

\begin{proof}
We only sketch the main arguments and refer to \cite[Sec.~2]{boursier2021optimal} for a more detailed proof. Using the Fourier characterization of the fractional Laplacian and applying the formula
\begin{equation*}
    \lambda^{-\frac{1-s}{2}}=\frac{1}{\Gamma(\frac{1-s}{2})}\int_0^{\infty}e^{-\lambda t}\frac{\dd t}{t^{1-\frac{1-s}{2}}},
\end{equation*}
valid for all $\lambda>0$, one can express $g_s$ as
\begin{equation*}
 g_s(x)=\frac{c_s}{\Gamma(\frac{1-s}{2})}\int_0^{\infty}(W_t(x)-1)\frac{\dd t}{t^{\frac{1+s}{2}}},
\end{equation*}
where $W_t$ is the heat kernel on $\dT$, namely
\begin{equation*}
    W_t(x)=\frac{1}{2\pi}\sum_{k\in \mathbb{Z}}e^{-t|k|^2 }e^{ikx}=\frac{1}{\sqrt{4\pi t}}\sum_{k\in \mathbb{Z}}e^{-\frac{|x- k|^2}{4t} }.
\end{equation*}
The proof of (\ref{eq:expression diff}) then follows from Fubini's theorem which allows one to invert the order of integration and summation.
\end{proof}

The kernel $g_s$ can be identified with a periodic function on $\dR$ and a crucial consequence of (\ref{eq:expression diff}) is that the restriction of this function to $(0,1)$ is convex, thus allowing the use of various consequences of convexity, such as concentration and functional inequalities.

\subsection{Discrete and semi-discrete Fourier transforms}
The Fourier and inverse Fourier transforms on $\mathbb{Z}/N\mathbb{Z}$ are defined by
\begin{equation}\label{eq:Fd}
    \mathcal{F}_d(f)(\theta)=\sum_{n=0}^{N-1} f(n)e^{in\theta},\quad \text{for $f:\{1,\ldots,N\}\to \dR$}, \quad \theta=\frac{2\pi k}{N}, \quad k\in\{0,\ldots,N-1\},
\end{equation}
\begin{equation}\label{eq:Fdinv}
    \mathcal{F}_d^{-1}(\phi)(n)=\frac{1}{N}\sum_{k=0}^{N-1} \phi\Bigr(\frac{2\pi k}{N}\Bigr)e^{\frac{-2i\pi k}{N} }\dd \theta,\quad \text{for $\phi:\{{2k\pi}/{N}:0\leq k\leq N-1\}\to\dR$},\quad n\in \{1,\ldots,N\}.
\end{equation}
Recall that for all $f$ defined on $\{1,\ldots,N\}$, $f=\mc{F}_d^{-1}\circ \mc{F}_d(f)$. 

Besides if $f:\mathbb{Z}\to\dR$ is in $L^2$, then the semi-discrete Fourier of $\mathbb{Z}$ defined by
\begin{equation*}
    \mathcal{F}_{\mathbb{Z}}(f)(\theta)=\sum_{n=0}^{+\infty} f(n)e^{in\theta},\quad \theta\in [0,2\pi],
\end{equation*}
belongs to $L^2([0,2\pi])$ and one can recover $f$ by the Fourier inverse transform
\begin{equation*}
    f=\mathcal{F}_{\mathbb{Z}}^{-1}(\mc{F}_{\mathbb{Z}}(f)),
\end{equation*}
where
\begin{equation*}
   \mathcal{F}_{\mathbb{Z}}^{-1}(\phi)(n)=\frac{1}{2\pi}\int_0^{2\pi}\phi(\theta)e^{-in \theta}\dd \theta,\quad \text{for $\phi\in L^2([0,2\pi])$},\quad n\in\mathbb{Z}.
\end{equation*}

\subsection{Decay of the inverse Riesz kernel}
In this subsection one studies the decay of the inverse of the periodic Riesz kernel on $\{1,\ldots,N\}$ defined by
\begin{equation}\label{eq:defgNs}
    g_{N,s}:k\in \mathbb{Z}/N\mathbb{Z}\mapsto \lim_{n\to\infty}\Bigr(\sum_{m=-n}^n \frac{1}{|k+m n|^s}-\frac{2}{1-s} n^{1-s}\Bigr)\mathds{1}_{k\neq 0}.
\end{equation}
Denote
\begin{equation}\label{eq:def H matrix}
\mathbb{H}_{N,s}=(g_{N,s}(j-i))_{1\leq i,j\leq N}\in \mathcal{M}_N(\dR).
\end{equation}
Let also $g_{N,s}^{-1}:\{1,\ldots,N\}\to\dR$ be the inverse of the kernel $g_{N,s}$ so that
\begin{equation*}
    \mathbb{H}_{N,s}^{-1}=(g_{N,s}^{-1}(j-i))_{1\leq i,j\leq N}.
\end{equation*}

\begin{lemma}[Decay of the inverse Riesz matrix]\label{lemma:inverse}
There exists a constant $C>0$ such that for each $1\leq i,j\leq N$,
\begin{equation}\label{eq:inverse HN}
   |(\bb{H}_{N,s}^{-1})_{i,j}|\leq \frac{C}{1+d(j,i)^{2-s}}.
\end{equation}
In addition we have
\begin{equation}\label{eq:inv sum}
 \Bigr|\sum_{i=1}^N (\bb{H}_{N,s}^{-1})_{i,1}\Bigr|\leq \frac{C}{N^{1-s}}.
\end{equation}
\end{lemma}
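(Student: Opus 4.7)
The key structural fact I plan to exploit is that $\mathbb{H}_s$ is circulant on $\dZ/N\dZ$: the entry $(1+d(i,j))^{-s}$ depends only on $(i-j)\bmod N$. Hence $\mathbb{H}_s$ is diagonalized by the discrete Fourier transform, with eigenvalues
\[
\mu_k = \sum_{n=0}^{N-1}(1+d(n,0))^{-s}\,e^{2\pi i kn/N}, \qquad k=0,\dots,N-1,
\]
and inverse
\[
(\mathbb{H}_s^{-1})_{ij}=\frac{1}{N}\sum_{k=0}^{N-1}\mu_k^{-1}\,e^{2\pi i k(i-j)/N}.
\]
The heuristic driving the analysis is that $\mathbb{H}_s$ is a discrete proxy for the Riesz convolution on $\dT$, whose Fourier multiplier is $c_s|k|^{s-1}$ by~(\ref{eq:lapl}); its inverse should therefore behave like a discrete fractional Laplacian $(-\Delta)^{(1-s)/2}$, whose continuum kernel decays as $|x|^{-(2-s)}$, giving exactly the expected rate.

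My first step is to obtain matching two-sided bounds on the symbol. Setting $\theta_k=2\pi k/N$ and letting $\hat k\in(-N/2,N/2]$ denote the signed representative of $k$, I would compare $\mu_k$ with the Clausen-type series $\sum_{n\geq 1}\cos(n\theta)/n^s$, whose classical asymptotic is $\Gamma(1-s)\sin(\pi s/2)\theta^{s-1}+O(1)$ as $\theta\to 0^+$. Combining this with an Abel-summation bound for the tail $n\geq N/2$ should yield
\[
c\,N^{1-s}(1+|\hat k|)^{s-1}\leq \mu_k\leq C\,N^{1-s}(1+|\hat k|)^{s-1}.
\]
With this in hand, (\ref{eq:inv sum}) follows at once: summing $(\mathbb{H}_s^{-1})_{i,1}$ over $i\in\{1,\dots,N\}$ kills every Fourier mode except $k=0$, leaving $\sum_{i}(\mathbb{H}_s^{-1})_{i,1}=\mu_0^{-1}\asymp N^{s-1}=N^{-(1-s)}$.

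For the off-diagonal decay (\ref{eq:inverse HN}), I would substitute the symbol bound into the inversion formula with $m=(i-j)\bmod N$, reducing the problem to showing that $N^{s-1}\sum_{k}(1+|\hat k|)^{1-s}e^{2\pi i km/N}$ is of order $d(i,j)^{s-2}$. The natural scheme is to split the sum at $|\hat k|\sim N/d(i,j)$: for low frequencies a direct integral comparison produces a contribution of order $d(i,j)^{s-2}$, while for high frequencies I would perform summation by parts on $\cos(2\pi km/N)$, gaining a factor $1/|\sin(\pi m/N)|\asymp N/d(i,j)$ that beats the growth of the symbol and produces the same order. The two contributions combine to give (\ref{eq:inverse HN}), the diagonal case $i=j$ being trivial.

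The main obstacle will be the uniform positive lower bound on $\mu_k$ for \emph{all} $k$, not merely those with $\theta_k$ small where the Clausen asymptotic applies directly; one must rule out sign changes or near-vanishing at intermediate and high frequencies. This positivity ultimately stems from the positive definiteness of the Riesz kernel --- inherited from the positive multiplier $c_s|k|^{s-1}$ in (\ref{eq:lapl}) --- but making it rigorous in the discrete truncated setting will likely require comparing $\mu_k$ with a Poisson-summed (periodized) version of the continuum Fourier transform of $\widetilde{g}_s$ and checking that the resulting correction terms do not destroy positivity.
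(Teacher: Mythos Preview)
Your approach and the paper's share the same backbone: both recognize that $\mathbb{H}_s$ is circulant and invert it via Fourier analysis, and both must establish the $|\theta|^{-(1-s)}$ asymptotic of the symbol together with its positivity. The execution differs. The paper writes the finite symbol as $S_N(\theta)=S(\theta)+R_N(\theta)$ with $S$ the full Clausen series~\eqref{eq:sumF}, expands $1/S_N$ geometrically in $R_N/S$, and bounds each term by repeated integration by parts in $\theta$, arriving at $\int_{-\pi}^{\pi} e^{-in\theta}S_N(\theta)^{-1}\,d\theta = \int_{-\pi}^{\pi} e^{-in\theta}S(\theta)^{-1}\,d\theta + O(N^{-(2-s)})$; the decay of the latter (an $N$-independent integral) is then read off from the singularity structure of $1/S$. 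Your frequency-splitting route is the more standard multiplier argument and in places cleaner --- your proof of \eqref{eq:inv sum} via the survival of only the $k=0$ mode is exactly right, whereas the paper merely calls it ``straightforward''. The paper's route buys slightly more: it shows the entries of $\mathbb{H}_s^{-1}$ actually converge to a universal limit, not just that they obey the bound.

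One step needs sharpening. When you write that you will ``substitute the symbol bound into the inversion formula'' and then estimate $N^{s-1}\sum_k (1+|\hat k|)^{1-s}e^{2\pi ikm/N}$, be aware that a two-sided pointwise bound $\mu_k\asymp N^{1-s}(1+|\hat k|)^{s-1}$ alone cannot control the oscillatory sum $\sum_k\mu_k^{-1}e^{2\pi ikm/N}$ --- you cannot simply replace $\mu_k^{-1}$ by the model symbol inside a sum with cancellation. The high-frequency summation by parts you invoke gains the factor $N/d(i,j)$ only if you also control the \emph{first differences}, say $|\mu_{k+1}^{-1}-\mu_k^{-1}|\lesssim N^{s-1}(1+|\hat k|)^{-s}$. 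This is obtainable from the same Abel-summation comparison with the Clausen series that gives your size bounds, but it is a genuine additional ingredient and should be made explicit. The paper sidesteps this issue by transferring the oscillatory integral to the smooth continuum symbol $1/S(\theta)$, for which ordinary differentiation in $\theta$ is available.
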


Let us observe that (\ref{eq:inverse HN}) is consistent with the decay of the fundamental solution of the fractional Laplacian. Indeed the coefficient $(\mathbb{H}_{N,s})^{-1}_{i,1}$ is given by the $i$-th coordinate of the solution $v$ of the convolution equation $v*g_{N,s}=\delta(1)$ on $\mathbb{Z}/N\mathbb{Z}$. The continuous counterpart of this equation is $g_s*\psi=\delta_0$ on the real line and it is well-known that the solution $\psi$ decays in $|x|^{-(2-s)}$ near the origin.

\begin{proof}\
\paragraph{\bf{Step 1: the aliasing formula}}
Let $\psi=g_{N,s}^{-1}$, solution of the convolution equation $g_{N,s}*\psi=\delta(1)$ on $\{1,\ldots,N\}$. One can express $\psi$ as the solution of
 \begin{equation*}
    \mc{F}_d(\psi){\mc{F}_d(g_{N,s})}=1,
 \end{equation*}
 where $\mc{F}_d$ stands for the discrete Fourier transform on $\mathbb{Z}/N\mathbb{Z}$, as defined in (\ref{eq:Fd}). For each $k\in\{0,\ldots,N-1\}$, let $\theta_k=\frac{2\pi k}{N}$. We will prove that the Fourier transform of $\mc{F}_d(g_{N,s})$ is non-vanishing. One can thus consider a function $h\in L^2([0,2\pi])$, which interpolates $\mc{F}_{\mathbb{Z}}(\phi)$: for all $\theta\in \{\theta_0,\ldots,\theta_{N-1}\}$,
 \begin{equation*}
    h(\theta)=\frac{1}{\mc{F}_d(g_{N,s})(\theta)}.
 \end{equation*}
 The function $h$ shall be specified later. Let $\phi:\mathbb{Z}\to\dR$ in $L^2$ such that
 \begin{equation}\label{eq:defZphi}
     \mc{F}_{\mathbb{Z}}(\phi)=h.
 \end{equation}
The point is that one can recover $\psi$ from $\phi$: for each $1\leq n\leq N$, there holds
\begin{equation}\label{eq:psiphi}
    \psi(n)=\sum_{k=0}^{\infty}\phi(n+kN).
\end{equation}
The discrete Fourier transform of the function in the right-hand side of the last display is indeed given for all $\theta\in \{\theta_0,\ldots,\theta_{N-1}\}$ by
\begin{equation*}
\begin{split}
    \sum_{n=0}^{N-1}\sum_{k=0}^{\infty}\phi(n+kN)e^{in\theta}&=\sum_{n=0}^{N-1}\sum_{k=0}^{\infty}\phi(n+kN)e^{i(n+kN)\theta}\\
    &=\sum_{n=0}^{\infty}\phi(n)e^{in\theta}=h(\theta)=\mc{F}_d(\psi)(\theta).
\end{split}
\end{equation*}
By Fourier inversion, this concludes the proof of the aliasing formula (\ref{eq:psiphi}).

\paragraph{\bf{Step 2: discrete and semi-discrete Fourier transform of $g_{N,s}$}}
Let us now compute the discrete Fourier transform of $g_{N,s}$ on $\mathbb{Z}/N\mathbb{Z}$. First one can observe that for each $0\leq k\leq N-1$,
\begin{equation}\label{eq:Fdgs}
    \mc{F}_d(g_{N,s})(\theta_k)=\sum_{n=1}^{+\infty}\frac{1}{n^s}e^{in\theta_k}+\sum_{n=1}^{+\infty}\frac{1}{n^s}e^{-in\theta_k}.
\end{equation}
Let us emphasize that the above identity is only true for $\theta\in\{\theta_0,\ldots,\theta_{N-1}\}$. The above sum is related to a well-known function called \emph{periodic zeta function} \cite{apostol1997modular}, defined by
\begin{equation*}
    F(x,s)=\sum_{n=1}^{\infty}\frac{e^{2i\pi n x}}{n^s},
\end{equation*}
where $s\in\mathbb{C}$ and $x\in\dR$ satisfy $\operatorname{Re}(s)>1$ if $x$ is an integer and $\operatorname{Re}(s)>0$ otherwise. One can express (\ref{eq:Fdgs}) as 
\begin{equation*}
    \mc{F}_d(g_{N,s})(\theta_k)=F(\frac{\theta_k}{2\pi},s)+F(-\frac{\theta_k}{2\pi},s),\quad \text{for each $0\leq k\leq N-1$.}
\end{equation*}
Also, when $\operatorname{Re}(s)>0$ and $0<x<1$, it is known, see \cite{apostol1997modular}, that
\begin{equation*}
    F(x,s)=\frac{\Gamma(1-s)}{(2\pi)^{1-s}}\Bigr(e^{i\pi\frac{1-s}{2}}\Gamma(1-s,x)+e^{-i\pi\frac{1-s}{2}}\Gamma(1-s,1-x)\Bigr).
\end{equation*}
Consequently we have the identity $\mc{F}_d(g_{N,s})=S$ on $\{\theta_0,\ldots,\theta_{N-1}\}$, where 
\begin{equation}\label{eq:defS}
 S(\theta)=\frac{2^s\Gamma(1-s)}{\pi^{1-s}}\cos(\frac{\pi(1-s)}{2})\Bigr(\Gamma(1-s,\frac{\theta}{2\pi})+\Gamma(1-s,1-\frac{\theta}{2\pi})\Bigr).
\end{equation}
One can observe that there exists a constant $c>0$ such that for all $\theta\in [0,2\pi]$,
\begin{equation}\label{eq:lowerS}
    S(\theta)\geq \frac{c}{|\theta|^{1-s}}.
\end{equation}
\paragraph{\bf{Step 3: conclusion}}
We have shown that the discrete Fourier transform of $g_{N,s}$ on $\mathbb{Z}/N\mathbb{Z}$ does not vanish, thus allowing to use (\ref{eq:psiphi}). We now specify $h=S$. Let us define
\begin{equation*}
    \phi:n\in \mathbb{Z}\mapsto\int_0^1 S(\theta)e^{-in\theta}\dd \theta.
\end{equation*}
One can check using (\ref{eq:Fdgs}) that
\begin{equation*}
    |\phi(n)|\leq \frac{C}{n^{2-s}}.
\end{equation*}
Since $\phi\in L^2$, by Fourier inversion, one can observe that $\mc{F}_d(\phi)=S.$ Consequently, applying (\ref{eq:psiphi}), we find that there exists a constant $C>0$ such that for each $1\leq n\leq N$,
\begin{equation*}
    |\psi(n)|\leq C\sum_{k=0}^{\infty}\frac{1}{|n+kN|^{2-s}}\leq \frac{C}{n^{2-s}},
\end{equation*}
which proves (\ref{eq:inverse HN}). 
\end{proof}

\begin{remark}[Discrete fractional primitive]\label{remark:discrete}
In view of (\ref{eq:defS}) the convolution of $f:\mathbb{Z}/N\mathbb{Z}\to\dR$ with $g_{N,s}$ formally corresponds to a fractional primitive of $f$ of order $1-s$.
\end{remark}

\subsection{Second derivative of the Riesz kernel}
One may also introduce the second derivative of the kernel $g_{N,s}$ denoted $g_{N,s}'':\mathbb{Z}/N\mathbb{Z}\to\dR$, given for all $U_N\in \dR^N$ by the equality
\begin{equation*}
    \sum_{i,k}g_{N,s}(i-k)u_i u_k=\sum_{i,k}g_{N,s}''(i-k)(u_i+\ldots+u_k)^2.
\end{equation*}
More precisely, we let
\begin{equation}\label{eq:defgss}
   g_{N,s}'':k\in \mathbb{Z}/N\mathbb{Z}\mapsto g_{N,s}(k+1)+g_{N,s}(k-1)-2g_{N,s}(k).
\end{equation}
Note that $g_{N,s}''\geq 0$ with $$g_{N,s}''(k)=O\Bigr(\frac{1}{d_N(k,1)^{2+s}}\Bigr).$$ Withdrawing the pairs $(i,k)$ such that $d_N(i,k)\leq K_0$ for a certain $K_0\geq 1$ in the sum (\ref{eq:defgss}) provides a way to define, starting from $g_{N,s}$, a non-negative kernel truncated at $K_0$. Less intrinsic is the derivative of order $1$ of $g_{N,s}$ that we nevertheless define for the sequel as 
\begin{equation}\label{eq:defg'}
g_{N,s}':i\in\mathbb{Z}/N\mathbb{Z}\mapsto g_{N,s}(i+1)-g_{N,s}(i).
\end{equation}

\section{The Helffer-Sjöstrand equation}\label{section:helffer c}
In this section we introduce some standard results on Helffer-Sjöstrand equations. We first recall basic properties valid for a certain class of convex Gibbs measures. We then study an important change of variables and rewrite the Helffer-Sjöstrand in gap coordinates. For the class of Gibbs measures we are interested in, the energy is a convex function of the gaps. This allows one to derive a maximum principle for solutions, which will be a central tool in the rest of the paper.

\subsection{Well-posedness}
We start by explaining the principle of Helffer-Sjöstrand representation and give some existence and uniqueness results. The subsection is similar to \cite{boursier2021optimal} and follows partly the presentation of \cite{armstrong2019c}. Let $\mu$ be a probability measure on $D_N$ in the form $$\dd\mu=e^{-H(X_N)}\mathds{1}_{D_N}(X_N)\dd X_N,$$ where $H:D_N\to \dR$ is a smooth and convex function. Given a smooth test-function $F:D_N\to \dR$, we wish to rewrite its variance in a convenient and effective way. Let us recall the integration by parts formula for $\mu$. Let $\mc{L}^{\mu}$ be the operator acting on $\mathcal{C}^{\infty}(D_N,\dR)$ given by
\begin{equation*}
    \mc{L}^{\mu}=\nabla H\cdot \nabla-\Delta,
\end{equation*}
where $\nabla$ and $\Delta$ are the standard gradient and Laplace operators on $\dT^N$. The operator $\mc{L}^\mu$ is the generator of the Langevin dynamics associated to the energy $H$ of which $\mu$ is the unique invariant measure. By integration by parts under $\mu$, for any functions $\phi,\psi \in \mathcal{C}^{\infty}(D_N,\dR)$ such that $\nabla\phi\cdot \vec{n}=0$ on $\partial D_N$, we can write
\begin{equation}\label{eq:informal IPP c}
    \dE_{\mu}[\psi \mc{L}^{\mu}\phi]=\dE_{\mu}[\nabla \psi\cdot \nabla \phi].
\end{equation}
This formula may be proved by integration by parts under the Lebesgue measure on $D_N$. 

Assume that the Poisson equation 
\begin{equation}\label{eq:Poisson c}
    \left\{
    \begin{array}{ll}
       \mc{L}^{\mu}\phi=F-\dE_{\mu}[F]  & \text{on }D_N \\
       \nabla \phi\cdot \vec{n}=0  & \text{on }\partial D_N
    \end{array}
    \right.
\end{equation}
admits a weak solution in a certain functional space. Then, by (\ref{eq:informal IPP c}), the variance of $F$ under $\mu$ can be expressed as
\begin{equation*}
    \Var_{\mu}[F]=\dE_{\mu}[\nabla F\cdot \nabla \phi].
\end{equation*}
The above identity is called the Helffer-Sjöstrand representation formula. Let us differentiate (\ref{eq:Poisson c}). Formally, for all $\phi\in \mathcal{C}^{\infty}(D_N,\dR)$, we have
\begin{equation*}
   \nabla \mc{L}^{\mu}\phi= A_1\nabla \phi,
\end{equation*}
where $A_1^{\mu}$ is the so-called Helffer-Sjöstrand operator given by
\begin{equation*}
    A_1^{\mu}=\nabla^2 H+\mc{L}^{\mu}\otimes I_N,
\end{equation*}
with $\mc{L}^{\mu}\otimes I_N$ acting diagonally on $L^2(\{1,\ldots,N\},\mathcal{C}^{\infty}(D_N,\dR))$. Therefore the solution $\nabla\phi$ of (\ref{eq:Poisson c}) formally satisfies
\begin{equation}\label{eq:pose me c}
\left\{\begin{array}{ll}
   A_1^{\mu}\nabla\phi=\nabla F &\text{on }D_N\\
   \nabla \phi\cdot \vec{n}=0 &\text{on }\partial D_N.\\
   \end{array}
   \right.
\end{equation}
This partial differential equation is called the Helffer-Sjöstrand equation. Let us now introduce the appropriate functional spaces to make these derivations rigorous. Let us define the norm
\begin{equation*}
    \Vert F\Vert_{H^{1}(\mu) }=\dE_{\mu}[F^2]^{\frac{1}{2}}+\dE_{\mu}[|\nabla F|^2]^{\frac{1}{2}}.
\end{equation*}
Let $H^1(\mu)$ be the completion of $\mathcal{C}^{\infty}(D_N)$ with respect to the norm $\Vert \cdot \Vert_{H^1(\mu)}$. Let also define the norm
\begin{equation*}
    \Vert F \Vert_{H^{-1}(\mu) }=\sup \{ |\dE_{\mu}[F G]|:G\in H^1(\mu), \Vert G\Vert_{H^1(\mu)}\leq 1\}.
\end{equation*}
We denote $H^{-1}(\mu)$ the dual of $H^1(\mu)$, that is the completion of $\mathcal{C}^{\infty}(D_N)$ with respect to the norm $\Vert \cdot \Vert_{H^{-1}(\mu)}$. We wish to prove that under mild assumptions on $F$, Equation (\ref{eq:pose me c}) is well-posed, in the sense of $L^2(\{1,\ldots,N\},H^{-1}(\mu))$. Let us now make the following assumptions on $\mu$:

\begin{assumptions}\label{assumptions:gibbs measure c}
Assume that $\mu$ is a probability measure on $D_N$ written $$\dd\mu=e^{-H(X_N)}\mathds{1}_{D_N}(X_N)\dd X_N,$$ with $H:D_N\to\dR$ in the form
$$H:X_N\mapsto \sum_{i\neq j}\chi(|x_i-x_j|),$$ with $\chi:\dR^{+*}\to \dR$ satisfying
\begin{equation*}
    \chi\in \mathcal{C}^{2}(\dR^{+*},\dR) \quad \text{and}\quad \chi''\geq c>0.
\end{equation*}
\end{assumptions}

In our applications, $\chi$ is often given by $g_{N,s}$ or a variant of it and the density of the measure $\mu$ is not necessarily bounded from below with respect to the Lebesgue measure on $D_N$. Additionally, the measure $\mu$ does not satisfies a uniform Poincaré inequality. Due to these limitations, to prove the well-posedness of (\ref{eq:pose me c}), we further assume that $F$ is a function of the gaps. We denote 
\begin{equation}\label{def:projection}
    \Pi:X_N\in D_N\mapsto (x_2-x_1,\ldots,x_N-x_1)\in \dT^{N-1}.
\end{equation}
We also let $\mu'$ be the push-forward of $\mu$ by the map $\Pi$:
\begin{equation*}
\mu'=\mu\circ \Pi^{-1}.
\end{equation*}

We can now state the following well-posedness result:

\begin{proposition}[Existence and representation]\label{proposition: existence c}
Let $\mu$ satisfying Assumptions \ref{assumptions:gibbs measure c}. Let $F\in H^1(\mu)$. Assume that $F$ is in the form $F=G\circ \Pi$, $G\in H^1(\mu')$ or that $\mu\geq c>0$. Then there exists a unique $\nabla \phi\in L^2(\{1,\ldots,N\},H^1(\mu))$ such that
\begin{equation}\label{eq:the HS c}
    \left\{ 
    \begin{array}{ll}
        A_1^{\mu}\nabla \phi=\nabla F & \text{ on }D_N \\
        \nabla \phi\cdot \vec{n}=0 & \text{ on }\partial D_N,
    \end{array}
    \right.
\end{equation}
with the first identity being, for each coordinate, an identity on elements of $H^{-1}(\mu)$. Moreover the solution of (\ref{eq:the HS c}) is the unique minimizer of the functional
\begin{equation*}
\nabla \phi \mapsto \dE_{\mu}[\nabla \phi\cdot \nabla^2 H\nabla \phi+|\nabla^2\phi|^2 -2 \nabla F\cdot \nabla \phi],
\end{equation*}
over maps $\nabla\phi\in L^2(\{1,\ldots,N\},H^1(\mu))$. The variance of $F$ may be represented as
\begin{equation}\label{eq:representation variance c}
    \Var_{\mu }[F]=\dE_{\mu}[\nabla \phi\cdot \nabla F ]
\end{equation}
and the covariance between $F$ any function $G\in H^1(\mu)$ as
\begin{equation*}
    \Cov_{\mu}[F,G]=\dE_{\mu}[\nabla \phi\cdot \nabla G].
\end{equation*}
\end{proposition}

The identity (\ref{eq:representation variance c}) is called the Helffer-Sjöstrand formula. The proof of Proposition \ref{proposition: existence c} is postponed to the Appendix, see Section \ref{section:existence c}.

\begin{remark}[On the boundary condition]
The boundary condition $\nabla\phi\cdot\vec{n}=0$ on $\partial D_N$ means that if $x_i=x_j$, then $\partial_i\phi(X_N)=\partial_j\phi(X_N)$. 
\end{remark}

\begin{remark}[Link to the Monge-Ampère equation]\label{remark:monge ampere}
We formally discuss the link between (\ref{eq:the HS c}) and the Monge-Ampère equation. Let $F:D_N\to\dR$ be a smooth test-function. For all $t\geq 0$, consider the measure $\dd\mu_t= \frac{e^{tF}}{\dE_{\mu}[e^{tF}]}\dd \mu$. According to well-known optimal transportation results \cite{Brenier1991PolarFA}, the measure $\mu_t$ can be written $\mu_t=\mu\circ \nabla\Phi_t^{-1}$ with $\Phi_t:D_N\to \dR$ solution of the Monge-Ampère equation
\begin{equation*}
   - \log \det D\nabla\Phi_t+H\circ \nabla\Phi_t-H=tF-\log\dE_{\mu}[e^{tF}].
\end{equation*}
Formally, since $\nu(t)=\mu+t\nu +o(t)$, one expects that $\Phi_t=\Id+t\phi+o(t)$. Linearizing the above equation in $t$ formally gives
\begin{equation*}
    \mc{L}^{\mu}\phi=F-\dE_{\mu}[F],
\end{equation*}
which is the Poisson equation (\ref{eq:Poisson c}). The boundary condition in (\ref{eq:the HS c}) reflects the fact that for all $t\geq 0$, $\nabla\Phi_t$ maps $D_N$ on itself.
\end{remark}

\begin{proposition}\label{proposition:non gradient}
Let $\mu$ satisfying Assumptions \ref{assumptions:gibbs measure c}. Let $v\in L^2(\{1,\ldots,N\},H^{-1}(\mu'))$ such that $$v\cdot (e_1+\ldots+e_N)=0.$$ There exists a unique $\psi\in L^2(\{1,\ldots,N\},H^1(\mu))$ such that
\begin{equation}\label{eq:HS non gradient}
\left\{
\begin{array}{ll}
A_1^{\mu}\psi=v &\text{on }D_N\\
 \psi\cdot \vec{n}=0 & \text{on }\partial D_N.
\end{array}
\right.
\end{equation}
In addition if $v=\nabla F\in  L^2(\{1,\ldots,N\},H^{-1}(\mu'))$, then the solution of (\ref{eq:HS non gradient}) is given by the solution of (\ref{eq:the HS c}).
\end{proposition}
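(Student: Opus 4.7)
The plan is to establish the result via a variational method analogous to the one used for Proposition \ref{proposition:existence HS equation}, with two important modifications: the right-hand side is not required to be a gradient, so I must work in a broader space of vector fields with a general duality pairing; and the compatibility condition $v\cdot(e_1+\ldots+e_N)=0$ reflects the translation invariance of $H$ on $\dT^N$ and needs to be exploited to ensure solvability. The starting observation is that $H(X_N + t(e_1+\ldots+e_N)) = H(X_N)$ yields $\nabla H\cdot(e_1+\ldots+e_N)=0$ and $\Hess H\cdot(e_1+\ldots+e_N)=0$, so the constant vector field $(e_1+\ldots+e_N)$ lies in the kernel of $A_1^\mu$. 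The orthogonality condition on $v$ is precisely the Fredholm-type compatibility required by this kernel, and uniqueness is to be understood on the subspace
\begin{equation*}
E = \{\psi\in L^2(\{1,\ldots,N\},H^1(\mu)) : \psi\cdot(e_1+\ldots+e_N)=0\}.
\end{equation*}

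On $E$ I would introduce the continuous bilinear form $a(\psi,\phi) = \dE_\mu[\psi\cdot \Hess H\,\phi] + \sum_{i=1}^N \dE_\mu[\nabla \psi_i\cdot \nabla \phi_i]$ and the linear form $\ell(\phi) = \langle v,\phi\rangle$, where the pairing uses that each component of $\phi$ can be projected onto its gap-dependent part via conditional expectation under $\mu$ so that the action of $v_i\in H^{-1}(\mu')$ is well-defined; this is consistent precisely because $\sum_i v_i = 0$ in $H^{-1}(\mu')$, so the result of the pairing depends only on the translation-invariant projection of $\phi$. Coercivity of $a$ on $E$ is the key point: writing $\Hess H = \sum_{i\neq j}\chi''(|x_i-x_j|)(e_i-e_j)(e_i-e_j)^\top$ and using $\chi''\geq c>0$ from Assumption \ref{assumptions:gibbs measure}, together with the fact that the vectors $\{e_i-e_j\}$ span the orthogonal complement of $(e_1+\ldots+e_N)$, gives $\psi\cdot \Hess H\,\psi\geq c'|\psi|^2$ pointwise on $E$, and combined with the $|D\psi|^2$ term this makes $a$ coercive. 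Lax-Milgram then produces a unique $\psi\in E$ with $a(\psi,\phi)=\ell(\phi)$ for all $\phi\in E$, and integrating by parts as in \eqref{eq:informal IPP} identifies this variational identity with the weak form of $A_1^\mu\psi=v$ with Neumann-type boundary condition $\psi\cdot\vec{n}=0$ on $\partial D_N$.

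For the last assertion, when $v=\nabla F$ with $F = G\circ \Pi\in H^1(\mu)$ a function of the gaps, translation invariance of $F$ gives $\nabla F\cdot(e_1+\ldots+e_N)=0$, so the orthogonality hypothesis holds; the solution $\nabla\phi$ produced by Proposition \ref{proposition:existence HS equation} lies in $E$ (again by translation invariance, transferred from $\phi$ through the fact that both $F$ and $\mc{L}^\mu$ commute with the translation action on the Poisson equation), and by the Lax-Milgram uniqueness on $E$ it must coincide with $\psi$. The main obstacle I expect is the rigorous verification of the coercivity of $\Hess H$ on the translation-orthogonal subspace together with the careful identification of the duality pairing for $v\in L^2(\{1,\ldots,N\},H^{-1}(\mu'))$ against test fields $\phi$ whose components need not themselves be pure functions of gaps, which requires projecting onto the gap $\sigma$-algebra and checking that the resulting bilinear formulation is consistent with the equation \eqref{eq:HS non gradient}.
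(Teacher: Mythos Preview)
Your approach is correct and essentially the same as the paper's: the paper defers the proof to the Appendix, where it treats the closely related Propositions \ref{proposition:existence HS equation} and \ref{proposition:HS equation gaps} via the same variational/Lax--Milgram scheme you describe, minimizing the functional $\psi\mapsto \dE_\mu[|D\psi|^2+\psi\cdot\Hess H\,\psi-2\psi\cdot v]$ and reading off the Euler--Lagrange equation (see the end of the proof of Proposition \ref{proposition:existence HS equation} in Section \ref{section:existence}). Your identification of the coercivity source---$\Hess H\geq c'\,\Id$ on $\{e_1+\ldots+e_N\}^\perp$ because $\chi''\geq c$---is exactly the mechanism the paper uses elsewhere (cf.\ Subsection 3.7 and Lemma \ref{lemma:div free}), and your observation that uniqueness must be understood on the subspace $E=\{\psi:\psi\cdot(e_1+\ldots+e_N)=0\}$ is a correct and necessary refinement of the paper's literal statement.
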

The proof of Proposition \ref{proposition:non gradient} is also given in the Appendix.

\subsection{Change of coordinates}
In the sequel we will study the decay of correlations in gap coordinates. Define the map
\begin{equation*}
   \Gap_N:X_N\in D_N\mapsto (N(x_2-x_1),\ldots,N(x_{1}-x_{N}))\in\mc{M}_N,
\end{equation*}
where
\begin{equation*}
    \mc{M}_N=\Gap_N(D_N)=\{Y_N\in(\dR^{+*})^N:y_1+\ldots+y_N\}.
\end{equation*}
Since $\mc{M}_N$ is not an open subset of $D_N$, Proposition \ref{proposition: existence c} should be slightly adapted. Let $\mu$ satisfying Assumptions \ref{assumptions:gibbs measure c} and $H^\g:\mc{M}_N\to\dR$ be such that 
\begin{equation*}
    H=H^\g\circ \Gap_N.
\end{equation*}
Define the generator acting on $\mathcal{C}^{\infty}(\mc{M}_N,\dR)$,
\begin{equation*}
    \mc{L}^{\nu}=\nabla H^{\mathrm{g}}\cdot \nabla -\Delta,
\end{equation*}
with $\nabla$ and $\Delta$ the standard gradient and Laplace operator on $\mc{M}_N$. Also let $A_1^\nu$ be the Helffer-Sjöstrand operator acting on $L^2(\{1,\ldots,N\},\mathcal{C}^{\infty}(\mc{M}_N,\dR))$:
\begin{equation*}
    A_1^{\nu}=\nabla^2 H^{\mathrm{g}}+\mc{L}^{\nu}\otimes I_N.
\end{equation*}

Let $F:D_N\to\dR$ in the form $F=G\circ \Gap_N$ with $G:\dR^N\to\dR$ smooth. Let us rewrite Equation (\ref{eq:the HS c}) in gap coordinates. One can expect that the solution $\nabla\phi$ of (\ref{eq:the HS c}) can be factorized into $\phi=\psi\circ\Gap_N$ with $\nabla\psi\in L^2(\{1,\ldots,N\},H^1(\nu))$. Let us derive some formal computation to conjecture the equation satisfied by $\nabla \psi$. For all $t\geq 0$, let $\dd\nu_t= \frac{e^{t G}}{\dE_{\nu}[e^{tG}]}\dd \nu$. In view of Remark \ref{remark:monge ampere}, we wish to find a map $\nabla\psi\in L^1(\{1,\ldots,N\},H^{1}(\nu))$ such that in a certain sense,
\begin{equation}\label{eq:comp}
    \nu\circ(\Id+t\nabla\psi)=\nu_t+o(t).
\end{equation}
Since $\nu$ and $\nu_t$ are both measures on $\mc{M}_N$, one can observe that $\sum_{i=1}^N\partial_i\psi=0$. It is standard the the Gibbs measure $\nu_t$ is the minimizer of the functional
\begin{equation*}
    \nu\in \mathcal{P}(\mc{M}_N)\mapsto \dE_\nu[H^\g+t G]+\Ent(P),
\end{equation*}
where $\Ent$ stands for the entropy on $\mc{M}_N$. Equation (\ref{eq:comp}) is compatible with the variational characterization if $\nabla\psi$ minimizes
\begin{equation*}
    \nabla\psi\mapsto \dE_{\nu}[\nabla\psi\cdot\nabla^2 \Hc^\g\nabla\psi+|\nabla^2\psi|^2-2\nabla G\cdot\nabla\psi],
\end{equation*}
over maps $\nabla\psi\in L^2(\{1,\ldots,N\},H^1(\nu))$ such that $\sum_{i=1}^N\partial_i\psi=0$ and $\nabla\psi\cdot \vec{n}=0$ on $\partial \mc{M}_N$. The Lagrange equation associated for the minimality of $\nabla\psi$ reads
\begin{equation*}
    A_1^{\nu} \nabla\psi=\nabla G+\lambda (e_1+\ldots+e_N),
\end{equation*}
where $\lambda:\mc{M}_N\to\dR$ is a smooth function. We now state this result in the following proposition:

\begin{proposition}\label{proposition: HS gaps c}
Let $\mu$ satisfying Assumptions \ref{assumptions:gibbs measure c}. Let $F\in H^1(\mu)$ in the form $F=G\circ \Gap_N$ with $G\in H^1(\nu)$. There exists a unique $\nabla \psi\in L^2(\{1,\ldots,N\},H^1(\nu))$ solution of
\begin{equation}\label{eq:HS gaps form c}
    \left\{
    \begin{array}{ll}
        A_1^{\nu} \nabla\psi= \nabla G+\lambda (e_1+\ldots+e_N) & \text{on } \mc{M}_N \\
        \nabla \psi\cdot (e_1+\ldots +e_N)=0 & \text{on }\mc{M}_N\\
     \nabla\psi\cdot \vec{n}=0 &\text{on }\partial \mc{M}_N,
    \end{array}
    \right.
\end{equation}
with $\lambda$ satisfying
\begin{equation}\label{eq:exp la}
    \lambda=\frac{1}{N}(e_1+\ldots+e_N)\cdot (\nabla^2 H^\g \nabla\psi-\nabla G).
\end{equation}
The variance of $F$ can be represented as
\begin{equation*}
    \Var_{\mu}[F]=\dE_{\nu}[\nabla G\cdot \nabla \psi].
\end{equation*}
Furthermore, $\nabla\psi$ is the unique minimizer of 
\begin{equation*}
    \nabla\psi\mapsto\dE_{\nu}[\nabla\psi\cdot \nabla^2 H^\g \nabla\psi+|\nabla^2\psi|^2-2\nabla G\cdot \nabla\psi],
\end{equation*}
over maps $\nabla\psi\in L^2(\{1,\ldots,N\},H^1(\nu))$ such that $\nabla\psi\cdot (e_1+\ldots+e_N)=0$.
\end{proposition}

The proof of Proposition \ref{proposition: HS gaps c} is postponed to the Appendix, see Section \ref{section:existence c}.

\begin{remark}\label{remark:exp}
There are several manners to factorize  the energy (\ref{def:Hn}) since we are working on the circle. We choose the more natural one and set
\begin{equation*}
    \Hc_N^\g:Y_N\in\mc{M}_N\mapsto N^{-s}\sum_{i=1}^N\sum_{k=1}^{N/2}g_{N,s}(y_{i}+\ldots y_{i+k})(2\mathds{1}_{k\neq N/2}+\mathds{1}_{k=N/2}).
\end{equation*}
One may check that for each $i\in \{1,\ldots,N\}$ and $Y_N\in\mc{M}_N$,
\begin{equation}\label{eq:expHg}
    \partial_{i}\Hc_N^\g(Y_N)=\sum_{k=1}^{N/2}\sum_{l:i-k<l\leq i}N^{-(1+s)}g_{s}'\Bigr(\frac{y_i+\ldots+y_{i+l}}{N}\Bigr)(2\mathds{1}_{k\neq N/2}+\mathds{1}_{k=N/2})
\end{equation}
and for each $i,j\in\{1,\ldots,N\}$ and $Y_N\in \mc{M}_N$,
\begin{equation}\label{eq:expHg2}
    \partial_{ij}\Hc_N^\g(Y_N)=\sum_{\substack{1\leq k,k'\leq N/2\\|k-k'|\leq N/2}}N^{-(1+s)}g_{N,s}''\Bigr(\frac{y_{i-k}+\ldots+y_{j+k'}}{N}\Bigr)(2\mathds{1}_{|k-k'|\neq N/2}+\mathds{1}_{|k-k'|=N/2}).
\end{equation}
Recall that under the Gibbs measure (\ref{eq:def gibbs}), for large $k$, the spacing $N(x_{i+k}-x_i)$ concentrates around $k$. The expression (\ref{eq:expHg2}) then tells us that the Hessian of the energy in gap coordinates concentrates around a constant matrix with off-diagonal entries decaying in $d(i,j)^{-s}$, similar to (\ref{eq:def H matrix}).
\end{remark}

\subsection{The Brascamp-Lieb inequality}
We now recall the Brascamp-Lieb inequality, a basic concentration inequality for strictly convex log-concave measures \cite{Brascamp2002}. In our context, the measure $\mu$ is not strictly log-concave, but its pushforward $\nu$ is, therefore allowing one to bound the variance of any smooth function of the gaps in the following way:

\begin{lemma}\label{lemma:brascamp Lieb}
Let $\mc{A}\subset D_N$ be a convex domain with a piecewise smooth boundary. Let $F=G\circ \Gap_N$ with $G\in H^1(\nu)$. There holds
\begin{equation*}
    \Var_{\mu}[F\mid \mc{A}]\leq \dE_{\mu}[\nabla F\cdot (\nabla^2 H)^{-1}\nabla F\mid\mc{A}].
\end{equation*}
\end{lemma}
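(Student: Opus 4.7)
The plan is to reduce to the classical Brascamp-Lieb inequality after passing to gap coordinates, where the energy is genuinely strictly convex. The Hessian $\Hess H$ on $D_N$ is degenerate along the translation direction $(1,\ldots,1)$, so $(\Hess H)^{-1}\nabla F$ must be interpreted as the Moore-Penrose pseudo-inverse; because $F=G\circ \Gap_N$ is translation-invariant, $\nabla F$ lies in the orthogonal complement of the kernel of $\Hess H$, and this interpretation is unambiguous.

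First I would push the problem to the gap picture. Let $\nu^{\mc{A}}$ denote the pushforward of $\mu(\cdot\mid\mc{A})$ by $\Gap_N$. Since $\mc{A}$ is convex and $\Gap_N$ is affine, the image $\mc{A}':=\Gap_N(\mc{A})\subset \mc{M}_N$ is convex, and $\nu^{\mc{A}}$ has density proportional to $e^{-H^\g}\mathds{1}_{\mc{A}'}$ on $\mc{M}_N$. By Assumptions \ref{assumptions:gibbs measure} combined with the computation (\ref{eq:expHg2}), $H^\g$ is strictly convex on $\mc{M}_N$, so the restricted density is strictly log-concave on the convex domain $\mc{A}'$. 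By the identity $F=G\circ\Gap_N$, there holds $\Var_\mu[F\mid \mc{A}]=\Var_{\nu^{\mc{A}}}[G]$.

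Next, I would apply the classical Brascamp-Lieb inequality on the affine space $\mc{M}_N$ to the strictly log-concave measure $\nu^{\mc{A}}$, which yields
\begin{equation*}
\Var_{\nu^{\mc{A}}}[G]\leq \dE_{\nu^{\mc{A}}}\bigl[\nabla G\cdot (\Hess H^\g)^{-1}\nabla G\bigr],
\end{equation*}
where gradients and the Hessian are taken intrinsically on $\mc{M}_N$. This is the classical statement extended to a convex subdomain of a strictly convex log-concave measure, and it follows either from the Pr\'ekopa-Leindler approach (with the indicator of $\mc{A}'$ absorbed into the potential as a convex $\{0,+\infty\}$-valued function) or directly from the Helffer-Sj\"ostrand representation together with the lower bound on $\Hess H^\g$.

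Finally, I would translate the right-hand side back to the ambient coordinates. Since $\Gap_N$ is affine with constant differential $D\Gap_N$, we have the factorization $\Hess H=(D\Gap_N)^T\,\Hess H^\g\,(D\Gap_N)$ and $\nabla F=(D\Gap_N)^T\nabla G$. The map $D\Gap_N$ restricts to an isomorphism from the orthogonal complement of $(1,\ldots,1)$ onto the tangent space of $\mc{M}_N$, and this is precisely the subspace containing $\nabla F$; inverting the factorization on this subspace gives
\begin{equation*}
\nabla G\cdot (\Hess H^\g)^{-1}\nabla G=\nabla F\cdot (\Hess H)^{-1}\nabla F.
\end{equation*}
The main point requiring care is this last step: one must verify that the pseudo-inverse of the degenerate form $\Hess H$ coincides with the pullback of the intrinsic inverse on $\mc{M}_N$. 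This is a routine linear algebra computation once one observes that $\ker(\Hess H)$ and the range of $\nabla F$ are orthogonal complements.
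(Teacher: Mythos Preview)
Your proposal is correct and matches the paper's approach: the paper does not give a detailed proof of this lemma but simply records it as the classical Brascamp--Lieb inequality, noting in the surrounding text that ``the measure $\mu$ is not strictly log-concave, but its pushforward $\nu$ is,'' which is precisely the reduction to gap coordinates you carry out. Your extra care about interpreting $(\Hess H)^{-1}$ as the pseudo-inverse on the orthogonal complement of $(1,\ldots,1)$ fills in a detail the paper leaves implicit.
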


\subsection{Localization}
In this subsection we record a crucial convexity Lemma, which is due to Brascamp, see \cite{Brascamp2002}. This lemma is based on the Brascamp-Lieb inequality for log-concave measures on $D_N$, originally derived in \cite{brascamplieb} on $\dR^N$, see also Lemma \ref{lemma:brascamp Lieb}. 

\begin{lemma}\label{lemma:the convexity lemma}
Let $\mu$ be a measure on $D_N$ in the form $\dd\mu=e^{-H}\dd X_N$, with $H$ smooth enough. On $D_N$ let us introduce the coordinates $x=(x_1,\ldots,x_n)$ and $y=(x_{n+1},\ldots,x_N)$. Assume that $H$ may be written in the form $H(x,y)=H_1(x)+H_2(x,y)$ with $\nabla^2 H_2$ non-negative. Let $\widetilde{\mu}$ be the push forward of $\mu$ by the map $X_N\mapsto (x_1,\ldots,x_n)$. Then, the measure $\widetilde{\mu}$ may be written in the form
$\dd\widetilde{\mu}(x)=e^{-\widetilde{H}(x)}\dd x$,
with
\begin{equation*}
    \widetilde{H}(x)=-\log\int e^{-H(x,y)} \dd y
\end{equation*}
and $\widetilde{H}$ satisfies
\begin{equation*}
    \nabla^2 \widetilde{H}\geq \nabla^2 H_1.
\end{equation*}
Moreover, we have 
\begin{equation}\label{eq:exp gradient ext}
   \partial_i \widetilde{H}(x)=\partial_i H(x)-\dE_{\mu(\cdot\mid x)}[\partial_i H_2],\quad \text{for each}\quad 1\leq i\leq n,\quad x\in D_n,
\end{equation}
\begin{equation}\label{eq:exp hess ext}
   \partial_{ij}\widetilde{H}(x)=\partial_{ij}H(x)-\Cov_{\mu(\cdot\mid x) }[\partial_iH_2,\partial_j H_2],\quad \text{for each}\quad 1\leq i,j\leq n, \quad x\in D_n.
\end{equation}
\end{lemma}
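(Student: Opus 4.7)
The plan is to follow Brascamp's classical argument for marginals of log-concave measures. First I would differentiate the expression $\widetilde H(x) = -\log \int e^{-H(x,y)}\, \dd y$ under the integral sign. Because $H_1(x)$ factors out of both the numerator and denominator, the conditional measure $\mu^x$ has density proportional to $e^{-H_2(x,\cdot)} \dd y$, and a direct computation gives
$$\partial_i \widetilde H(x) = \dE_{\mu^x}[\partial_i H] = \partial_i H_1(x) + \dE_{\mu^x}[\partial_i H_2],$$
which is (\ref{eq:exp gradient ext}). A second differentiation, accounting for the $x$-dependence of the density $e^{-H_2}$ appearing inside the expectation, produces a covariance correction
$$\partial_{ij} \widetilde H(x) = \dE_{\mu^x}[\partial_{ij} H] - \Cov_{\mu^x}[\partial_i H, \partial_j H].$$
Since $\partial_i H_1$ is a function of $x$ alone, the covariance collapses to $\Cov_{\mu^x}[\partial_i H_2, \partial_j H_2]$, matching (\ref{eq:exp hess ext}).

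For the Hessian inequality $\Hess \widetilde H \geq \Hess H_1$, I would test against an arbitrary $\xi \in \dR^n$. After subtracting $\xi^\top \Hess H_1 \xi$ from the identity above, the claim reduces to
$$\dE_{\mu^x}\bigl[\xi^\top (\Hess_{xx} H_2)\, \xi\bigr] - \Var_{\mu^x}\bigl[\xi \cdot \nabla_x H_2\bigr] \geq 0.$$
At this point I would apply the Brascamp--Lieb inequality to the log-concave measure $\mu^x$ on the $y$-variables, and to the function $f(y) := \xi \cdot \nabla_x H_2(x,y)$. Since $\nabla_y f = (\Hess_{yx} H_2)\, \xi$, Brascamp--Lieb yields
$$\Var_{\mu^x}[f] \leq \dE_{\mu^x}\bigl[\xi^\top (\Hess_{xy} H_2)\, (\Hess_{yy} H_2)^{-1}\, (\Hess_{yx} H_2)\, \xi\bigr].$$

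To close the argument I would invoke a pointwise Schur complement inequality: since $H_2$ is jointly convex in $(x,y)$ (the convexity hypothesis implicit in this setting), its block Hessian is positive semidefinite, which is equivalent to
$$\Hess_{xx} H_2 - (\Hess_{xy} H_2)(\Hess_{yy} H_2)^{-1} (\Hess_{yx} H_2) \geq 0.$$
Testing against $\xi$, averaging under $\mu^x$, and comparing with the Brascamp--Lieb bound above gives exactly the desired inequality. The main subtleties are the sign bookkeeping in the two differentiations of $\widetilde H$, and the recognition that both the Brascamp--Lieb step and the Schur complement step rest on the same convexity input for $H_2$; apart from that, the manipulations are mechanical.
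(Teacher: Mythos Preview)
Your argument is correct and is the classical Brascamp proof (differentiation under the integral, Brascamp--Lieb on the conditional measure $\mu^x$, then the Schur complement bound coming from joint convexity of $H_2$). The paper does not supply its own proof of this lemma; it simply attributes the result to Brascamp, so there is nothing further to compare.

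One small caution on bookkeeping: the displayed formulas \eqref{eq:exp gradient ext}--\eqref{eq:exp hess ext} as printed in the paper (and the sign in the definition of $\widetilde H$) contain evident typos. The identities you actually derive,
\[
\partial_i \widetilde H(x) = \partial_i H_1(x) + \dE_{\mu^x}[\partial_i H_2],\qquad
\partial_{ij}\widetilde H(x) = \dE_{\mu^x}[\partial_{ij} H] - \Cov_{\mu^x}[\partial_i H_2, \partial_j H_2],
\]
are the correct ones and are precisely what the paper uses downstream (see e.g.\ the computation of $\partial_{ij}\widetilde{\dEE}_{n,N}$ in Section~\ref{section:decay short}). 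So your claim that your formula ``is \eqref{eq:exp gradient ext}'' is right in spirit but not in the literal signs; just be aware of the discrepancy.
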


\subsection{Maximum principle}
In this subsection we derive a useful maximum principle, which allows one to bound the supremum of the $L^2$ norm of the solution in presence of a uniformly convex Hamiltonian. This maximum principle is fairly standard on $\dR^N$, see for instance \cite[Sec.~10]{helffer1994correlation}. We adapt the proof to make it work on $D_N$ and $\mc{M}_N$. A more subtle analysis could perhaps permit to treat general convex domains.

\begin{proposition}\label{proposition:maximum principle}
Let $\mu$ satisfying Assumptions \ref{assumptions:gibbs measure c} and $\nu=\Gap_N\#\mu$. Assume additionally that $\lim_{x\to 0}\chi'(x)=-\infty$. Let $\sM:\mc{M}_N\to\mc{S}_N(\dR)$ be a measurable map. Assume that there exists a constant $c>0$ such that for all $U_N\in \dR^N$,
\begin{equation}\label{eq:lower boundHgap}
    U_N\cdot \sM U_N\geq c|U_N|^2.
\end{equation}
Let $v\in L^2(\{1,\ldots,N\},H^1(\nu))$ and $\psi\in L^2(\{1,\ldots,N\},H^1(\nu))$ be the solution of
\begin{equation}\label{eq:the HS c max}
    \left\{
    \begin{array}{ll}
       \sM \psi+(\mc{L}^\nu \otimes I_N)\psi=v& \text{on } \mc{M}_N \\
     \psi\cdot \vec{n}=0 &\text{on }\partial \mc{M}_N.
    \end{array}
    \right.
\end{equation}
Then  $\psi$ satisfies the following uniform estimate:
\begin{equation}\label{eq:max pr}
    \sup |\psi|\leq c^{-1} \sup |v|.
\end{equation}
\end{proposition}

We give a proof of Proposition \ref{proposition:maximum principle} via stochastic flow following the approach of \cite[Th.~2.1]{CATTIAUX20221}.

\begin{proof}
We wish to give a Feynman-Kac representation for solutions of (\ref{eq:the HS c max}). Let 
\begin{equation*}
    X_t^x=x-\int_0^t \nabla H^\g(X_s^x)\dd s+\sqrt{2}\dd B_t.
\end{equation*}
Note that since $\lim_{x\to 0}\xi'(x)=-\infty$, the dynamics is conservative: for all $x\in D_N$, the process $X_t^x$ does not hit the boundary of $\mc{M}_N$ a.s. Let us denote $A$ the operator from $L^2(\{1,\ldots,N\},H^1(\nu))$ to $L^2(\{1,\ldots,N\},H^{-1}(\nu))$ 
\begin{equation*}
  A=\sM+\mc{L}^\nu\otimes I_N.
\end{equation*}
Applying the spectral theorem for bounded self-adjoint operator to $A^{-1}$, which has a positive spectral gap, allows one to write
\begin{equation*}
\psi=\int_0^{+\infty}e^{-t A}v\dd t.
\end{equation*}
By Itô's formula, one may represent $e^{-t A}v$ as
\begin{equation*}
    (e^{-tA}v)(x)=\dE_{\nu}[ v(X_t^x)e^{-\int_0^t \sM(X_s^x)\dd s }],\quad \text{for all $x\in \mc{M}_N$}.
\end{equation*}
Using the uniform convexity assumption of (\ref{eq:lower boundHgap}), one then gets
\begin{equation*}
    \sup|e^{-tA}v|\leq \sup |v|e^{-tc}
\end{equation*}
Integrating the last display with respect to $t$ finally gives the estimate (\ref{eq:max pr}).
\end{proof}

The proof of Proposition \ref{proposition:maximum principle} is an adaptation in a more involved case of a known maximum principle for the Helffer-Sjöstrand equation, see for instance \cite{helffer1994correlation}.

Let us emphasize that the above proof crucially relies on the fact that $\lim_{x\to 0}\chi'(x)=-\infty$. We now give the standard Gaussian concentration lemma for uniformly log-concave measures on convex bodies.

\begin{lemma}\label{lemma:gaussian estimate}
Let $\mu$ satisfying Assumptions \ref{assumptions:gibbs measure c} and $\nu=\Gap_N\#\mu$. Let $c_N$ be the constant in (\ref{eq:lower boundHgap}). Let $\mc{A}\subset D_N$ be a convex domain with a piecewise smooth boundary. Let $F=G\circ \Gap_N$ with $G\in H^1(\nu)$. For all $t\in \dR$, we have
\begin{equation*}
    \log\dE_{\mu}[e^{t F}\mid\mc{A}]\leq t\dE_{\mu}[F\mid\mc{A}]+\frac{t^2}{2 c_N}\sup_{\mc{A}}|\nabla G|^2.
\end{equation*}
\end{lemma}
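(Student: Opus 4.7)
The plan is to carry out a Bakry-\'Emery--Herbst argument in gap coordinates. Since $F=G\circ \Gap_N$, set $\mc{A}'=\Gap_N(\mc{A})\subset \mc{M}_N$, which is convex because $\Gap_N$ is affine and $\mc{A}$ is convex. The pushforward $\nu=\mu\circ \Gap_N^{-1}$ has density proportional to $e^{-H^\g}$ on $\mc{M}_N$, and
$$
\dE_{\mu}[e^{tF}\mid \mc{A}]=\dE_{\nu}[e^{tG}\mid \mc{A}'].
$$
By Assumptions \ref{assumptions:gibbs measure} and hypothesis \eqref{eq:lower boundHgap}, the restriction of $\nu$ to $\mc{A}'$ is a uniformly log-concave probability measure on the convex body $\mc{A}'$ (inside the affine subspace $\mc{M}_N$) with convexity constant $c_N$. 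The Bakry-\'Emery criterion applied to the reflected Langevin diffusion on $\mc{A}'$ (whose invariant measure is $\nu(\cdot\mid\mc{A}')$) then yields a logarithmic Sobolev inequality with constant $1/c_N$: for every smooth $f\geq 0$ on $\mc{A}'$,
\begin{equation}\label{eq:lsi-plan}
\Ent_{\nu(\cdot\mid\mc{A}')}(f^2)\leq \frac{2}{c_N}\,\dE_{\nu(\cdot\mid\mc{A}')}\big[|\nabla f|^2\big].
\end{equation}

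The second step is Herbst's argument. Define $\Lambda(t)=\log\dE_{\nu}[e^{tG}\mid \mc{A}']$ and apply \eqref{eq:lsi-plan} with $f=e^{tG/2}$. This gives
$$
\Ent_{\nu(\cdot\mid\mc{A}')}\big(e^{tG}\big)\leq \frac{t^2}{2c_N}\,\dE_{\nu(\cdot\mid\mc{A}')}\big[|\nabla G|^2 e^{tG}\big]\leq \frac{t^2}{2c_N}\sup_{\mc{A}'}|\nabla G|^2\,\dE_{\nu(\cdot\mid\mc{A}')}\big[e^{tG}\big].
$$
Expanding the entropy as $\Ent(e^{tG})=\dE[e^{tG}]\big(t\Lambda'(t)-\Lambda(t)\big)$, one recognises the left-hand side as $t^2 \dE[e^{tG}]\,(\Lambda(t)/t)'$, so
$$
\Big(\tfrac{\Lambda(t)}{t}\Big)'\leq \frac{1}{2c_N}\sup_{\mc{A}'}|\nabla G|^2.
$$
Integrating between $0$ and $t$, with $\Lambda(t)/t\to \Lambda'(0)=\dE_{\nu}[G\mid\mc{A}']$ as $t\to 0$, and multiplying by $t$ yields
$$
\Lambda(t)\leq t\,\dE_{\nu}[G\mid\mc{A}']+\frac{t^2}{2c_N}\sup_{\mc{A}'}|\nabla G|^2,
$$
which is the claim after translating back via $\dE_{\nu}[G\mid\mc{A}']=\dE_{\mu}[F\mid\mc{A}]$ and $\sup_{\mc{A}'}|\nabla G|=\sup_{\mc{A}}|\nabla G|$.

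The only non-routine point is justifying the log-Sobolev inequality \eqref{eq:lsi-plan} in the present setting. Two mild issues arise: first, $H^\g$ may diverge on $\partial \mc{M}_N$ (since $\chi'\to -\infty$ at $0$), so one standardly proceeds by approximation, cutting off the singular potential and passing to the limit using monotone convergence of entropies; second, $\mc{A}'$ has a boundary inside $\mc{M}_N$, but it is convex and piecewise smooth so the second fundamental form of $\partial\mc{A}'$ is non-negative in the outward sense, and the $\Gamma_2$ computation for the reflected diffusion produces only non-negative boundary contributions. These remarks make the Bakry-\'Emery argument rigorous and preserve the constant $1/c_N$, so no loss is incurred from the conditioning on $\mc{A}$.
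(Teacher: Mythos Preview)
Your proposal is correct and follows exactly the approach indicated in the paper. The paper does not give a detailed proof of this lemma; immediately after the statement it simply remarks that the result ``can be derived using log-Sobolev inequality and Herbst argument'' and that the log-Sobolev inequality follows from the Bakry--\'Emery criterion (stated as Lemma~\ref{lemma:bakry}). Your write-up supplies precisely these missing details---passing to gap coordinates, invoking Bakry--\'Emery on the convex domain $\mc{A}'$, and running the standard Herbst differential inequality---so there is nothing to add.
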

Lemma \ref{lemma:gaussian estimate} can be derived using Log-Sobolev inequality and Herbst argument. When a measure $\mu$ is uniformly log-concave on a convex domain on $\dR^n$, it follows from the Bakry-Emery criterion \cite{bakryemery} that $\mu$ satisfies a Log-Sobolev inequality.

\begin{lemma}\label{lemma:bakry}
Let $\mu$ be a uniformly log-concave measure on a convex domain of $\dR^N$, with a convexity constant larger than $c>0$. Then $\mu$ satisfies the Log-Sobolev inequality with constant $2c^{-1}$.
\end{lemma}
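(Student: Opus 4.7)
The plan is to derive the log-Sobolev inequality from the Bakry-Émery $\Gamma_2$ criterion applied to the reversible dynamics associated to $\mu$. Write $\dd\mu = e^{-H}\dd x$ on the convex domain $\Omega \subset \dR^N$ with $\Hess H \ge c\,\Id$, and consider the generator $\mc{L}^\mu = \nabla H\cdot \nabla -\Delta$ already used in Section~\ref{section:helffer}, equipped now with reflecting (Neumann) boundary conditions on $\partial \Omega$. Let $(P_t)_{t\ge 0}$ denote the corresponding Markov semigroup; $\mu$ is invariant and reversible, and the integration-by-parts formula $\dE_\mu[g\,\mc{L}^\mu f] = \dE_\mu[\nabla g\cdot \nabla f]$ holds for all smooth $f,g$ with $\nabla f\cdot \vec n = 0$ on $\partial \Omega$.

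First I would compute the iterated carré du champ. With $\Gamma(f) = |\nabla f|^2$, the Bochner-type identity gives
\begin{equation*}
\Gamma_2(f) := \tfrac{1}{2}\mc{L}^\mu \Gamma(f) - \nabla f \cdot \nabla \mc{L}^\mu f = |D^2 f|^2 + \nabla f\cdot \Hess H\,\nabla f,
\end{equation*}
and the hypothesis $\Hess H\ge c\,\Id$ yields the pointwise curvature-dimension bound $\Gamma_2(f)\ge c\,\Gamma(f)$ on $\Omega$, i.e.\ the condition $\mathrm{CD}(c,\infty)$.

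Next I would run the standard Bakry-Émery semigroup interpolation. For $f$ smooth and strictly positive with $\dE_\mu[f] = 1$, set $\Psi(t) = \dE_\mu[(P_tf)\log(P_tf)]$, so that $\Ent_\mu(f) = \Psi(0)$ and, by ergodicity, $\Psi(\infty) = 0$. Differentiation and integration by parts give $-\Psi'(t) = \dE_\mu[|\nabla P_tf|^2 / P_tf]$; differentiating once more and invoking the pointwise bound $\Gamma_2 \ge c\,\Gamma$ produces the differential inequality $\frac{\dd}{\dd t}(-\Psi'(t)) \le -2c(-\Psi'(t))$. Hence $-\Psi'(t)\le -\Psi'(0)\,e^{-2ct}$, and integrating over $t\in[0,\infty)$ together with the substitution $f\leftarrow g^2$ yields $\Ent_\mu(g^2)\le (2/c)\dE_\mu[|\nabla g|^2]$, the log-Sobolev inequality with the claimed constant $2c^{-1}$.

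The main obstacle is that every integration by parts above generates boundary contributions on $\partial \Omega$ which must be shown to have a favorable sign. Unfolding $\dE_\mu[(\mc{L}^\mu f)^2]$ via the Bochner formula produces, in addition to the interior term $\dE_\mu[\Gamma_2(f)]$, a boundary term of the form $\int_{\partial \Omega}\mathrm{II}(\nabla_T f,\nabla_T f)\,e^{-H}\dd\sigma$, where $\mathrm{II}$ is the second fundamental form of $\partial \Omega$ and $\nabla_T$ the tangential gradient (Reilly's identity). Convexity of $\Omega$ is precisely the condition $\mathrm{II}\ge 0$, so this term has the right sign and may be discarded; consequently the pointwise bound $\Gamma_2\ge c\,\Gamma$ integrates through the semigroup estimates without loss, and the Bakry-Émery conclusion goes through verbatim.
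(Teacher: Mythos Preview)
The paper does not actually prove this lemma: it merely states it as a consequence of the Bakry--\'Emery criterion and cites \cite{bakryemery}. Your proposal correctly spells out the standard Bakry--\'Emery semigroup argument that this citation points to, including the handling of the Neumann boundary terms via the nonnegativity of the second fundamental form on a convex domain, so your approach is both correct and precisely what the paper has in mind.
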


\subsection{Concentration inequality for divergence free functions}
If $\mu$ is of the form of Assumptions \ref{assumptions:gibbs measure c}, $\mu$ is not uniformly log-concave and one cannot directly apply Lemma \ref{lemma:gaussian estimate}. However, one can observe that for all $U_N\in \dR^N$ such that $u_1+\ldots+u_N=0$,
\begin{equation*}
    U_N\cdot \nabla^2 H U_N\geq c_N\sum_{i\neq j}(N(u_i-u_j))^2=c_N(N-1)\sum_{i=1}^N u_i^2.
\end{equation*}
Using this observation and the particular structure of $\mu$, one can give a concentration estimate for divergence free functions $F$, i.e for $F$ verifying $\partial_1\phi+\ldots+\partial_N \phi=0$. We now state the crucial concentration result found in \cite[Lem.~3.9]{bourgade2012bulk}.

\begin{lemma}\label{lemma:div free c}
Let $\mu$ satisfying Assumptions \ref{assumptions:gibbs measure c}. Assume that $\chi''\geq c_N$. Let $I\subset \{1,\ldots,N\}$, $\mathrm{card}(I)=K$. Let $F\in H^1(\mu)$ such that $\sum_{i=1}^N\partial_i F=0$ and $\partial_i F=0$ for each $i\in I^c$. We have
\begin{equation}\label{eq:div var}
    \Var_{\mu}[F]\leq \frac{1}{(K-1) c_N}\dE_\mu[|\nabla F|^2].
\end{equation}
Furthermore, for all $t\in \dR$,
\begin{equation*}
    \log\dE_{\mu}[e^{tF}]\leq t\dE_{\mu}[F]+\frac{t^2}{2(K-1) c_N}\sup|\nabla F|^2.
\end{equation*}
\end{lemma}
We refer to \cite[Lem.~3.9]{bourgade2012bulk} for a proof, see also \cite[Lem.~3.13]{boursier2021optimal} for a transcription.

\section{Decay of correlations for the HS Riesz gas}\label{section:decay short}

This section considers the hypersingular Riesz gas, i.e the Riesz gas with the kernel (\ref{eq:expression diff}) for a parameter $s>1$. We show that the covariance between $N(x_{i+1}-x_j)$ and $N(x_{j+1}-x_j)$ decays at least in $d_N(i,j)^{-(s+1)}$. To this end we will be studying the Helffer-Sjöstrand equation in gap coordinates (\ref{eq:HS gaps form c}). Advantaged by that the Hessian of the energy in gap coordinates has typically summable entries, one may implement a simple distortion argument inspired from \cite{helffer1998remarks} to obtain sharp decay estimates.

\subsection{Study of a commutator}
Let us begin by introducing the distortion argument. Given $s>1$, let $\nu$ be the measure (\ref{eq:def gibbs}) in gap coordinates or a slight variant of it. We will be studying the equation
\begin{equation}\label{eq:illusnu}
    \left\{
    \begin{array}{ll}
       A_1^{\nu}\nabla\psi=e_1+\lambda(e_1+\ldots+e_N) & \text{on }\mc{M}_N \\
       \nabla\psi\cdot (e_1+\ldots+e_N)=0 & \text{on }\mc{M}_N\\
       \nabla\psi\cdot \vec{n}=0 &\text{on }\partial \mc{M}_N.
    \end{array}
    \right.
\end{equation}
By Remark \ref{remark:exp}, if $\nu=\dGi^\g$ there exists an event of overwhelming probability on which the Hessian of the energy in gap coordinates decays in $d_N(i,j)^{-s}$ away from the diagonal. Following \cite{helffer1998remarks}, the idea is to study the equation satisfied by $\lL_\alpha\nabla\psi$, where $\lL_\alpha$ stands for the following distortion matrix:
\begin{equation}\label{eq:def L illus}
    \lL_\alpha=\mathrm{diag}(\gamma_1,\ldots,\gamma_N),\quad \text{where $\gamma_i=1+d_N(i,i_0)^{\alpha}$ for each $1\leq i\leq N$}.
\end{equation}
Let us denote 
\begin{equation*}
    \psi^{\dis}=\lL_\alpha\nabla\psi\quad \in L^2(\{1,\ldots,N\},H^1(\nu)).
\end{equation*}
One can check that $\psi^{\dis}$ solves
\begin{equation*}
    A_1^{\nu}\nabla\psi+\beta\delta_{\lL_\alpha}\nabla \psi=e_1+\lambda \lL_\alpha(e_1+\ldots+e_N),\quad \text{where}\quad \delta_{\lL_\alpha}:=\lL_\alpha \nabla^2 {\Hc}_N^\g \lL_\alpha^{-1}-{\Hc}_N^\g.
\end{equation*}
Note that when $\dH\in\mc{M}_N(\dR)$ is a matrix with off-diagonal entries decaying fast enough, then the commutator $\lL_\alpha\dH\lL_\alpha^{-1}-\dH$ is in some sense small compared to the identity, as shown in the next lemma.

\begin{lemma}[Commutation lemma]\label{lemma:perturbation lemma}
Let $s>1$ and $\dH\in\mc{M}_N(\dR)$. Assume that there exists a constant $\ve>0$ such that
\begin{equation}\label{eq:decayM}
    |\dH_{i,j}|\leq  \frac{N^{\ve}}{1+d_N(i,j)^{s}},\quad \text{for each $1\leq i,j\leq N$}.
\end{equation}
Let $\alpha\in (\frac{1}{2},s-\frac{1}{2})$ and $\lL_{\alpha}$ be as in (\ref{eq:def L illus}). There exist constants $C>0$ and $c>0$ such that for all $\ve_0>0$ small enough, letting $\ve'=\frac{\ve+\ve_0}{\min(s-1,s-\frac{1}{2}-\alpha)}$, we have that for all $U_N\in\dR^N$,
\begin{equation}\label{eq:delta1}
   |U_N \cdot (\lL_\alpha\dH\lL_\alpha^{-1}-\dH) U_N|\leq \frac{1}{2}N^{-\ve_0}|U_N|^2 +C C_N^{\kappa}|U_N|\Bigr(\sum_{i:d(i,1 )\leq cN^{\ve'} }u_i^2\Bigr)^{\frac{1}{2}}.
\end{equation}
\end{lemma}

\medskip

\begin{proof}
Let $\dH\in\mc{M}_N(\dR)$ satisfying (\ref{eq:decayM}), $\alpha>0$, $\lL_\alpha$ be as in (\ref{eq:def L illus}) and $U_N\in \dR^N$. We denote
\begin{equation*}
    \delta_{\lL_\alpha}=\lL_\alpha\dH\lL_\alpha^{-1}-\dH\in\mc{M}_N(\dR).
\end{equation*}
For each $1\leq i\leq N$, one may split $(\delta_{\lL_\alpha}U_N)_i$ into
\begin{equation}\label{eq:splitbase}
   (\delta_{\lL_\alpha}U_N)_i=\underbrace{\sum_{l:d_N(i,l)\leq \frac{1}{2}d_N(i,1)}(\delta_{\lL_\alpha})_{i,l}u_l}_{(\RN{1})_i}+\underbrace{\sum_{l:d_N(i,l)> \frac{1}{2}d_N(i,1)}(\delta_{\lL_\alpha})_{i,l}u_l}_{(\RN{2})_i}.
\end{equation}
If $d_N(i,l)\leq \frac{1}{2}d_N(i,1)$, then 
\begin{equation*}
    \Bigr|\frac{\gamma_i-\gamma_l}{\gamma_l}\Bigr|\leq C\frac{d_N(i,l)}{1+d_N(i,1)}
\end{equation*}
and it follows from Cauchy-Schwarz inequality that
\begin{equation}\label{eq:b1}
  |(\RN{1})_i|\leq \frac{Cn^{\ve}}{d_N(i,1)^{s-\frac{1}{2}}}|U_N|.
\end{equation}
Let us choose $\alpha\in (\frac{1}{2},s-\frac{1}{2})$. If $d_N(i,l)\geq \frac{1}{2}d_N(i,1)$, then 
\begin{equation*}
    \Bigr|\frac{\gamma_i-\gamma_l}{\gamma_l}\Bigr|\leq C\frac{\gamma_i}{\gamma_l},
\end{equation*}
which gives, since $\alpha>\frac{1}{2}$,
\begin{equation}\label{eq:b21}
    |(\RN{2})_i|\leq Cn^{\ve}d_N(i,1)^{\alpha-s}\sum_{l:d_N(i,l)>\frac{1}{2}d_N(i,1)}\frac{1}{d_N(l,1)^{\alpha}}|u_l|\leq \frac{Cn^{\ve}}{d_N(i,1)^{s-\alpha}}|U_N|.
\end{equation}
Let $K_0\geq 1$. Combining (\ref{eq:b1}) and (\ref{eq:b21}) one obtains
\begin{multline*}
    |U_N\cdot \delta_{\lL_\alpha}U_N|\leq CN^{\ve}|U_N|^2\Bigr(\sum_{i:d_N(i,1)\geq K_0}\frac{1}{d_N(i,1)^{2\min(s-\frac{1}{2}, s-\alpha)}}\Bigr)^{\frac{1}{2}}+CN^{\ve}|U_N|\Bigr(\sum_{i:d_N(i,1)\leq K_0}u_i^2\Bigr)^{\frac{1}{2}}\\
    \leq CN^{\ve}|U_N|^2\frac{1}{K_0^{\min(s-1,\s-\frac{1}{2}-\alpha)}}+CN^{\ve}|U_N|\Bigr(\sum_{i:d_N(i,1)\leq K_0}u_i^2\Bigr)^{\frac{1}{2}}.
\end{multline*}
Therefore by choosing $K_0=cN^{\ve'}$ with $\ve'=\frac{\ve+\ve_0}{\min(s-1,s-\frac{1}{2}-\alpha)}$, we find that
\begin{equation}\label{eq:ca}
     |U_N\cdot \delta_{\lL_\alpha}U_N|\leq \frac{1}{2}N^{-\ve_0}|U_N|^2+CN^{\ve}|U_N|\Bigr(\sum_{i:d_N(i,1)\leq K_0}u_i^2\Bigr)^{\frac{1}{2}}.
\end{equation}
\end{proof}

\subsection{Localization in a smaller window}
Due to the degeneracy of the interaction at infinity, the system lacks of uniform convexity and one shall sometimes restrict the system to a smaller window. Fix $n$ to be the size of a subsystem, say $n=N$ or $n\leq N/2$, and let $I=\{1,\ldots,n\}$. Using the Log-Sobolev inequality, one may add some convexity within the window $I$ without changing much the measure. Denote $\pi:\mc{M}_N\to\pi(\mc{M}_N)\subset \dR^n$ the projection on the coordinates $(x_i)_{i\in I}$. For $\ve>0$ and $\theta:[0,+\infty)\to (0,+\infty)$ smooth such that $\theta=0$ on $(1,+\infty)$, $\theta''\geq 1$ on $[0,\frac{1}{2})$, $\theta''\geq 0$ on $[0,+\infty]$, let us define 
\begin{equation}\label{eq:def nu hypersing}
    \mathrm{F}^\g:X_n\in \dR^n\mapsto\sum_{i=1}^n \theta(n^{-\ve}x_i)
\end{equation}
and the constrained measure
\begin{align}\label{eq:defQp}
  \dd \dGiQ^\g\propto e^{-\beta \mathrm{F}^\g\circ\pi}\dd\dGi^\g.
\end{align}
Note that the forcing (\ref{eq:def nu hypersing}) is tuned so that the total variation distance between $\dGi$ and $\dGiQ$ decays exponentially in $n$ in view of the Log-Sobolev inequality.  We now define
\begin{equation}\label{eq:def nu}
    \nu:=\dGiQ^\g\circ \pi^{-1}.
\end{equation}
Let also
\begin{align}\label{eq:def tildeE}
    \widetilde{E}:x\in \pi(\mc{M}_N)\mapsto -\frac{1}{\beta}\log \int e^{-\beta(\Hc_{N-n}^\g(y)+\Hc_{n,N}^\g(x,y))}\dd y,
\end{align}
where
\begin{equation}\label{eq:def HnN}
\mathcal{H}_{n,N}^\g:(x,y)\in (\dR^n\times \dR^{N-n})\cap\mc{M}_N\mapsto \mathcal{H}_N^\g(x,y)-\mathcal{H}_n^\g(x)-\Hc_{N-n}^\g(y).
\end{equation}
By Lemma \ref{lemma:the convexity lemma}, $\nu$ may be written in the form
\begin{align}\label{eq:def mun}
    \dd\nu(x) &\propto e^{-\beta \widetilde{\Hc}_n^\g(x)}\mathds{1}_{\pi(\mc{M}_N)}(x)\dd x
\end{align}
where
\begin{equation}\label{eq:tG illus}
\widetilde{\Hc}_n^\g:=\Hc_n^\g+\FF^\g+\widetilde{E}.
\end{equation}

In the sequel one will be studying the decay of the covariance between $x_i$ and $x_j$ under $\nu$ through the analysis of the associated Helffer-Sjöstrand equation. Define the good event
\begin{multline}\label{eq:def Aprime}
    \mc{A}=\left\{ X_n\in \pi (\mc{M}_N):\forall i\in \{1,\ldots,n\}, n^{-\ve}\leq x_i\leq n^{\ve}\right\} \\ \cap\{ \forall i\in \{1,\ldots,n\}, k\in \{1,\ldots,n-i\}, |x_i+\ldots+x_{i+k-1}-k|\leq n^{\ve}k^{\frac{1}{2}}\}.
\end{multline}
Let $A=\nabla^2 F(U_n)\in \mc{M}_n(\dR)$ for some $U_n\in\dR^n$ where $F$ is the quadratic form
\begin{equation*}
    F:X_n\in\dR^n\mapsto \sum_{i,j\in I}g_s''(j-i)(x_i+\ldots+x_j)^2.
\end{equation*}
Let us decompose $\nabla^2\widetilde{\Hc}_n^\g$ into $\nabla^2\widetilde{\Hc}_n^\g=\sM+\tM$ with
\begin{equation}\label{eq:split tH illus}
    \sM=\nabla^2\FF^\g+\nabla^2\Hc_n^{\g}\mathds{1}_{\mc{A}}+A\mathds{1}_{\mc{A}^c}\quad \text{and}\quad  \tM=\nabla^2\Hc_n^\g\mathds{1}_{\mc{A}^c}-A\mathds{1}_{\mc{A}^c}+\nabla^2 \widetilde{E}.
\end{equation}
In the case $n \leq N/2$, we will replace $\nabla^2\widetilde{\Hc}_n^\g$ in (\ref{eq:illusnu}) by $\sM$ and derive some decay estimates on the solution, which will be transferred to the solution of (\ref{eq:illusnu}) by a convexity argument. One can check that uniformly on the event (\ref{eq:def Aprime}) and for each $1\leq i,j\leq n$, we have
\begin{equation}\label{eq:th illus}
|\tM_{i,j}|\leq \frac{Cn^{\kappa\ve}}{1+d_N(i,\partial I)^{s-1/2}d_N(j,\partial I)^{s-1/2}}.
\end{equation}
Until the end of the section we let $d$ be the symmetric distance on $\{1,\ldots,N\}$ if $n=N$ or the usual distance on $\{1,\ldots,n\}$ if $n\leq \frac{N}{2}$:
\begin{equation}\label{eq:def dis hy}
    d:(i,j)\in \{1,\ldots,n\}^2\mapsto \begin{cases}
        \min(|j-i|,N-|j-i|) & \text{if $n=N$}\\
        |j-i| & \text{if $n\leq \frac{N}{2}$}. 
    \end{cases}
\end{equation}

For the purpose of Section \ref{section:change measure} it is convenient to work with a general measure $\nu$ on $\pi(\mc{M}_N)$ satisfying the following:

\begin{assumptions}\label{assumptions:mu}
Let $\nu$ be a probability measure on $\pi(\mc{M}_N)$ in the form $\dd\nu=e^{-\beta H^\g(x)}\dd x$ with $H^\g:\pi(\mc{M}_N)\to\dR$ in $\mathcal{C}^2$ and such that 
\begin{equation*}
    \lim_{d(x,\pi(\mc{M}_N))\to 0}\nabla H^\g(x)\cdot \vec{n}=-\infty.
\end{equation*}
Let $\mc{A}$ be the good event (\ref{eq:def Aprime}). Assume that there exist $C>0, \delta>0$ (depending on $\ve$) such that
\begin{equation*}
    \nu(\mc{A}^c)\leq Ce^{-n^{\delta}}.
\end{equation*}
\end{assumptions}
Note that the above condition ensures first that no boundary term appears in the computations and second that the Langevin dynamics is conservative, implying that the maximum principle of Proposition \ref{proposition:maximum principle} holds true.

Instead of the specific interaction matrix defined in (\ref{eq:split tH illus}) we will be working with a more general measurable function $\sM$ from $\pi(\mc{M}_N)$ to $\mc{S}_{n_0}(\dR)$ with $n_0\leq n$ satisfying the following:

\begin{assumptions}\label{assumptions:sM1}
Let $n_0\leq n$. Let $\sM$ be a measurable map from $\pi(\mc{M}_N)$ to $\mc{S}_{n_0}(\dR)$. 
\begin{enumerate}
    \item There exists $\kappa>0$ such that uniformly on $\pi(\mc{M}_N)$,
    \begin{equation*}
        \sM\geq n^{-\kappa\ve}I_{n_0}.
    \end{equation*}
    \item There exist $\kappa>0$ and $C>0$ such that uniformly on $\pi(\mc{M}_N)$ and for each $1\leq i,j\leq n_0$,
    \begin{equation*}
        |\sM_{i,j}|\leq \frac{Cn^{\kappa\ve}}{1+d(i,j)^s},
    \end{equation*}
  where $d$ is as in (\ref{eq:def dis hy}).  
\end{enumerate}

\end{assumptions}

\subsection{The initial decay estimate}
In this subsection we introduce a simple perturbation argument, which gives a first estimate on the decay of correlations for the constrained hypersingular Riesz gas. The method can be applied to other convex models for which the Hessian of the energy satisfies some decay assumption. This technique follows from an adaptation of a rather classical argument in statistical physics \cite{helffer1994correlation,Combes1973AsymptoticBO}.

\begin{lemma}\label{lemma:illustrative}
Let $s\in (1,+\infty).$ Let $\nu$ and $\sM$ satisfying Assumptions \ref{assumptions:mu} and \ref{assumptions:sM1}. Let $\chi_n\in H^{1}(\nu), i_0\in\{1,\ldots,n\}$ and $\psi\in L^2(I,H^1(\nu))$ be the solution of 
\begin{equation}\label{eq:illus}
    \left\{
    \begin{array}{ll}
      \beta\sM\psi+\mc{L}^{\nu}\psi=\chi_n e_{i_0} & \text{on }\pi(\mc{M}_N) \\
       \psi\cdot\vec{n}=0 & \text{on }\partial \pi(\mc{M}_N).
    \end{array}
    \right.
\end{equation}
Let $d$ be as in (\ref{eq:def dis hy}). Then, for all $\alpha \in (\frac{1}{2},s-\frac{1}{2})$, there exist a constant $C(\beta)$ locally uniform in $\beta$ and $\kappa>0$ such that
\begin{equation}\label{eq:first illus}
    \dE_{\nu}\Bigr[\sum_{i=1}^{n}d(i,i_0)^{2\alpha}\psi_i^2\Bigr]^{\frac{1}{2}}\leq C(\beta)n^{\kappa\ve}\dE_{\nu}[\chi_n^2]^{\frac{1}{2}}.
\end{equation}
\end{lemma}

\begin{proof}Let $\psi\in L^2(I,H^1(\nu))$ be in the solution of (\ref{eq:illus}). Taking the scalar product of (\ref{eq:illus}) with $\psi$ and integrating by parts, one may show that there exist constants $\kappa>0$ and $C>0$ such that
\begin{equation}\label{eq: a priori illus}
  \dE_{\nu}[|\nabla\psi|^2]+ \beta\dE_{\nu}[|\psi|^2]\leq C\beta^{-1}n^{\kappa\ve}\dE_{\nu}[\chi_n^2].
\end{equation}
Fix $\alpha\in (\frac{1}{2},s-\frac{1}{2})$ and consider as in (\ref{eq:def L illus}) the distortion matrix
\begin{equation*}
    \lL_{\alpha}=\mathrm{diag}(\gamma_1,\ldots,\gamma_n),\quad \text{where $\gamma_i=1+d(i,i_0)^{\alpha}$ for each $1\leq i\leq n$}.
\end{equation*}
Let us define $u^{\dis}$ the distorted vector-field
\begin{equation}\label{eq:def psidis hyper}
    u^{\dis}:=\lL_{\alpha}u\in L^2(I,H^1(\nu)).
\end{equation}
Observing that $\lL_\alpha e_{i_0}=e_{i_0}$, we can check that $u^{\dis}$ solves
\begin{equation}\label{eq:equation psidis}
    A_1^{\nu}u^{\dis}+\beta\delta_{\lL_{\alpha}}u^{\dis}=\chi_n e_{i_0},
\end{equation}
where
\begin{equation*}
    \delta_{\lL_{\alpha}}:=\lL_{\alpha} \sM \lL_{\alpha}^{-1}-\sM.
\end{equation*}
Taking the scalar product of (\ref{eq:equation psidis}) with $u^{\dis}$ and integrating by parts under $\nu$ gives
\begin{equation}\label{eq:illus ipp}
    \dE_{\nu}[\beta u^{\dis}\cdot (\sM+\delta_{\lL_{\alpha}})u^{\dis}]+\dE_{\nu}[|\nabla u^{\dis}|^2]=\dE_{\nu}[u_{i_0} \chi_n ],
\end{equation}
where we have used the fact that $u^{\dis}_{i_0}=u_{i_0}$. This gives
\begin{equation*}
 \dE_{\nu}[\beta  u^{\dis}\cdot (\sM+\delta_{\lL_{\alpha}})u^{\dis}]+\dE_{\nu}[|\nabla u^{\dis}|^2]\leq C(\beta)n^{\kappa_0\ve}\dE_{\nu}[\chi_n^2].
\end{equation*}
By assumption, there exist constants $C>0, \kappa>0$ such that uniformly on $D_N$ and for each $i\neq j$,
\begin{equation*}
    |\sM_{i,j}|\leq \frac{Cn^{\kappa\ve}}{1+d(i,j)^{s}}.
\end{equation*}
One may therefore apply Lemma \ref{lemma:perturbation lemma} to the matrix $\dH=\nabla^2 \Hc_n^\g(X_n)$, which gives the existence of $\kappa>0$ and $\kappa'>0$ independent of $X_n$ such that, letting
\begin{equation*}
    K_0=\lfloor n^{\kappa\ve}\rfloor,
\end{equation*}
there holds
\begin{equation}\label{eq:delta illus}
    |\dE_{\nu}[u^{\dis}\cdot \delta_{\lL_\alpha}u^\dis]|\leq \frac{n^{-\ve(s+2)}}{2}\dE_{\nu}[|u^{\dis}|^2]-C(\beta)n^{\kappa'\ve}\dE_{\nu}[|u^{\dis}|^2]^{\frac{1}{2}}\dE_{\nu}\Bigr[\sum_{i:d(i,i_0)\leq K_0}(u^{\dis}_i)^2\Bigr]^{\frac{1}{2}}.
\end{equation}
Furthermore, using the definition of $u^{\dis}$ (\ref{eq:def psidis hyper}) and the a priori bound (\ref{eq: a priori illus}), we find that 
\begin{equation*}
    \dE_{\nu}\Bigr[\sum_{i:d(i,i_0)\leq K_0}(u^{\dis}_i)^2\Bigr]^{\frac{1}{2}}\leq K_0^{\alpha}\dE_{\nu}[|u|^2]^{\frac{1}{2}}\leq C(\beta)n^{\kappa''\ve}\dE_{\nu}[\chi_n^2]^{\frac{1}{2}}.
\end{equation*}
Combining these we deduce that there exists $\kappa>0$ such that
\begin{equation}\label{eq:psidisl}
  \frac{\beta}{2}n^{-\ve(s+2)}\dE_{\nu}\Bigr[\sum_{i=1}^{n}d(i,i_0)^{2\alpha}(\psi_i^{(1)})^2\Bigr]^{\frac{1}{2}}+\dE_{\nu}\Bigr[\sum_{i=1}^n d(i,i_0)^{2\alpha}|\nabla\psi_i^{(l)}|^2\Bigr]^{\frac{1}{2}}\leq C(\beta)n^{\kappa\ve}\dE_{\nu}[\chi_n^2]^{\frac{1}{2}}.
\end{equation}
\end{proof}


\subsection{Bootstrap on the decay exponent}\label{section:boot hyper}
This subsection introduces an iterative argument to improve the decay estimate of Lemma \ref{lemma:illustrative}. The method consists in studying the projection of Equation (\ref{eq:illus}) in a small window. By controlling the field outside the window with the a priori decay estimate, one obtains through the distortion argument of Lemma \ref{lemma:illustrative} a better decay estimate on the solution. After a finite number of iterations one gets the following optimal result:
\begin{proposition}\label{proposition:optimal illus}
Let $s\in (1,+\infty).$ Let $\nu$ and $\sM$ satisfying Assumptions \ref{assumptions:mu} and \ref{assumptions:sM1}. Let $\chi_n\in H^{1}(\nu), i_0\in \{1,\ldots,n\}$ and $\psi\in L^2(I,H^1(\nu))$ be the solution of 
\begin{equation}\label{eq:billus}
    \left\{
    \begin{array}{ll}
       \beta\sM\psi+\mc{L}^{\nu}\psi=\chi_n e_{i_0} & \text{on }\pi(\mc{M}_N) \\
        \psi\cdot\vec{n}=0 & \text{on }\partial \pi(\mc{M}_N).
    \end{array}
    \right.
\end{equation}
Let $d$ be as in (\ref{eq:def dis hy}). There exist $\kappa>0$ and $C(\beta)>0$ locally uniform in $\beta$ such that for each $1\leq j\leq n$,
\begin{equation}\label{eq:opt illus}
   \dE_{\nu}[\psi_j^2]^{\frac{1}{2}}\leq C(\beta)n^{\kappa\ve}\frac{1}{1+d(j,i_0)^{s}}\dE_{\nu}[\chi_n^2]^{\frac{1}{2}}. 
\end{equation}
\end{proposition}

\begin{proof}\

\paragraph{\bf{Step 1: setting the bootstrap}}
Assume that for any $n_0\leq n$ and all $\sM$ taking values and in $\mc{S}_{n_0}(\dR)$ satisfying Assumptions \ref{assumptions:sM1}, each $i_0\in \{1,\ldots,n_0\}$ and $\chi_{n_0}\in H^1(\nu)$, the solution $\psi\in L^2(I,H^1(\nu))$ of 
\begin{equation}\label{eq:eq level2}
\begin{cases}
\sM{\psi}+\mc{L}^{\nu}{\psi}=\chi_{n_0} e_{i_0}&\text{on $\pi(\mc{M}_N$)}\\
{\psi}\cdot\vec{n}=0 &\text{on $\partial \pi(\mc{M}_N)$}
\end{cases}
\end{equation}
satisfies for some $\alpha\geq s-\frac{1}{2}$, $\kappa>0$ and $\delta>0$ the estimate
 \begin{equation}\label{eq:level2}
   \dE_{\nu}[\psi_j^2]^{\frac{1}{2}}\leq C(\beta)n^{\kappa\ve}\Bigr(\frac{1}{1+d(j,i_0)^{\alpha}}+\frac{1}{n}\Bigr)\dE_{\nu}[\chi_{n_0}^2]^{\frac{1}{2}}.
\end{equation}
We wish to prove that (\ref{eq:level2}) holds for $\alpha=s$. Without loss of generality one may assume that $n=n_0$. Fix $i_0\in\{1,\ldots,n\}$, $\chi_n\in H^1(\nu)$ and $\psi$ solution of (\ref{eq:eq level2}).
\paragraph{\bf{Step 2: localization}}
Fix an index $j\in \{1,\ldots,n\}$ and define the window
\begin{equation}\label{eq:illusw}
    J=\{ i\in \{1,\ldots,n\}:d(j,i)\leq d(j,i_0)/2\}.
\end{equation}
Our aim is to study the equation satisfied by $\psi^J:=(\psi_j)_{j\in J}\in L^2(J,H^1(\nu))$. Projecting Equation (\ref{eq:eqqq}) on the $l$-th coordinate for $l\in J$ reads, since $i_0\notin J$,
\begin{equation*}
   \beta \sum_{i\in J}\sM_{i,l}\psi_i+\mc{L}^{\nu}\psi_l=-\beta \sum_{i\in J^c}\sM_{i,l} \psi_i.
\end{equation*}
Let us denote $\sM^{J}=(\sM_{i,j})_{i,j\in J}$ and $\dV\in L^2(J,H^{1}(\nu))$ given for each $l\in J$ by
\begin{equation}\label{eq:V level1}
    \dV_l=-\beta \sum_{i\in J^c}\sM_{i,l}\psi_i,
\end{equation}
so that $\psi^J$ solves
\begin{equation}\label{eq:eq tilde illus}
\begin{cases}
\beta\sM^{J}\psi^J+\mc{L}^{\nu}\psi^J=\dV&\text{on $\pi(\mc{M}_N$)}\\
\psi^J\cdot\vec{n}=0 &\text{on $\partial \pi(\mc{M}_N)$}.
\end{cases}
\end{equation}
\paragraph{\bf{Step 3: bound on the exterior field}}
Fix $l\in J$ and split $\dV_l$ into
\begin{equation}\label{eq:Vl illus}
    \dV_l=\underbrace{\sum_{i\in J^c,d(i,i_0)\leq \frac{1}{2}d(j,i_0)}\sM_{i,l}\psi_i}_{(\RomanNumeralCaps{1})_l}+\underbrace{\sum_{i\in J^c,d(i,i_0)> \frac{1}{2}d(j,i_0)}\sM_{i,l}\psi_i}_{(\RomanNumeralCaps{2})_l}.
\end{equation}
Using Cauchy-Schwarz inequality and Lemma \ref{lemma:illustrative}, we find
\begin{equation*}
    \dE_{\nu}[(\RN{1})_l^2]^{\frac{1}{2}}\leq C(\beta)n^{\kappa\ve}\frac{1}{d(j,i_0)^{s-\frac{1}{2}}}\frac{1}{d(l,\partial J)^{s-\frac{1}{2}}}\dE_{\nu}[\chi_n^2]^{\frac{1}{2}}.
\end{equation*}
On the other hand using Cauchy-Schwarz inequality and Lemma \ref{lemma:illustrative} again, one gets
\begin{equation*}
    \dE_{\nu}[(\RN{2})_l^2]^{\frac{1}{2}}\leq \frac{C(\beta)n^{\kappa\ve}}{d(j,i_0)^{s}}\dE_{\nu}[\chi_n^2]^{\frac{1}{2}}.
\end{equation*}
\paragraph{\bf{Step 4: optimal decay for the auxiliary system}}
Let us split $\psi=\sum_{l\in J}\psi^{(l)}$, where for each $l\in J$ $\psi^{(l)}\in L^2(J,H^1(\nu))$ solves
\begin{equation*}
\begin{cases}
\beta\sM^{J}\psi^{(l)}+\mc{L}^{\nu}\psi^{(l)}=\dV_l e_{l}&\text{on $\pi(\mc{M}_N$)}\\
\psi^{(l)}\cdot\vec{n}=0 &\text{on $\partial \pi(\mc{M}_N)$}
\end{cases}
\end{equation*}
One may apply the bootstrap assumption (\ref{eq:level2}) to $\sM^J$ and $\psi^{(l)}$, which gives the bound
\begin{equation*}
    \dE_{\nu}[(\psi^{(l)}_j)^2]^{\frac{1}{2}}\leq C(\beta)n^{\kappa\ve}\frac{1}{d(j,l)^{\alpha}}\Bigr(\frac{1}{d(j,i_0)^{s-\frac{1}{2}}}\frac{1}{d(l,\partial J)^{s-\frac{1}{2}}}+ \frac{1}{d(j,i_0)^{s}}\Bigr)\dE_{\nu}[\chi_n^2]^{\frac{1}{2}}.
\end{equation*}
Summing this over $l\in J$ yields
\begin{equation*}
    \dE_{\nu}[\psi_j^2]^{\frac{1}{2}}\leq \frac{C(\beta)n^{\kappa\ve}}{d(j,i_0)^{\alpha'}}\dE_{\nu}[\chi_n^2]^{\frac{1}{2}},
\end{equation*}
where
\begin{equation*}
    \alpha'=\min(s,s+\alpha-1,3s-\alpha).
\end{equation*}
Since $\alpha\geq s-\frac{1}{2}$ and $s>1$, $\alpha'>\alpha$. After a finite number of iterations, we find that (\ref{eq:level2}) holds for $\alpha=s$.
\end{proof}

\subsection{Conclusion in the case $n=N$}
In view of Proposition \ref{proposition: HS gaps c}, the H.-S. equation contains when $n=N$ a Lagrange multiplier associated to the linear constraints that $y_1+\ldots+y_N=N$ on $\mc{M}_N$. Controlling this multiplier entails the following:
\begin{lemma}\label{lemma:control lambda}
Let $s\in (1,+\infty).$ Let $\nu$ and $\sM$ satisfying Assumptions \ref{assumptions:mu} and \ref{assumptions:sM1}. Let $\chi_n\in H^{1}(\nu), i_0\in \{1,\ldots,n\}$. Let $\psi\in L^2(I,H^1(\nu))$ solution of 
\begin{equation}\label{eq:billus 2}
    \left\{
    \begin{array}{ll}
       \beta\sM\psi+\mc{L}^{\nu}\psi=\chi_n e_{i_0}+\lambda(e_1+\ldots+e_n) & \text{on }\pi(\mc{M}_N) \\
       \psi\cdot (e_1+\ldots+e_n)=0 & \text{on $\pi(\mc{M}_N)$}\\
        \psi\cdot\vec{n}=0 & \text{on }\partial \pi(\mc{M}_N).
    \end{array}
    \right.
\end{equation}
Let $d$ be as in (\ref{eq:def dis hy}). There exist constants $C(\beta)>0, \delta>0$ such that for each $1\leq j\leq n$,
\begin{equation}\label{eq:sec case}
  \dE_{\nu}[\psi_j^2]^{\frac{1}{2}}\leq  C(\beta)n^{\kappa\ve}\Bigr(\frac{1}{1+d(j,i_0)^{s}}+\frac{1}{n}\Bigr)\dE_{\nu}[\chi_n^2]^{\frac{1}{2}}.
\end{equation}
\end{lemma}

\begin{proof}
Let us first prove that the Lagrange multiplier $\lambda$ in (\ref{eq:billus 2}) satisfies
    \begin{equation}\label{eq:est lambda}
    \dE_{\nu}[\lambda^2]^{\frac{1}{2}}\leq \frac{C(\beta)}{n^{1-\kappa\ve}}\dE_{\nu}[\chi_n^2]^{\frac{1}{2}},
\end{equation}
for some constants $C(\beta)>0, \kappa>0$. By linearity one can split $\psi$ into $\psi=\psi^{(1)}+\psi^{(2)}$ where $\psi^{(1)}\in L^2(I,H^1(\nu))$ solves
\begin{equation*}
    \left\{
    \begin{array}{ll}
      \beta\sM\psi^{(1)}+\mc{L}^{\nu}\psi^{(1)}=\chi_n e_{i_0} & \text{on }\pi(\mc{M}_N) \\
       \psi^{(1)}\cdot\vec{n}=0 & \text{on }\partial \pi(\mc{M}_N).
    \end{array}
    \right.
\end{equation*}
In view of Proposition \ref{proposition:optimal illus}, we have that uniformly in $j\in I$,
\begin{equation}\label{eq:d1}
    \dE_{\nu}[(\psi^{(1)}_j)^2]^{\frac{1}{2}}\leq C(\beta)n^{\kappa\ve}\frac{1}{1+d(j,i_0)^s}\dE_{\nu}[\chi_n^2]^{\frac{1}{2}},
\end{equation}
\begin{equation}\label{eq:d2}
    \dE_{\nu}[(\psi^{(2)}_j)^2]^{\frac{1}{2}}\leq C(\beta)n^{\kappa\ve}\dE_{\nu}[\lambda^2]^{\frac{1}{2}}.
\end{equation}
Let $K_0\geq 1$. Split $\sM$ into $\sM^{(1)}+\sM^{(2)}$ with $\sM^{(1)}$ given for each $i,j \in I$ by
\begin{equation*}
\sM^{(1)}_{i,j}=\sM_{i,j}\mathds{1}_{d(i,j)\leq K_0}.
\end{equation*}
    Let $u=\sum_{j\in I}e_j$. Recall from (\ref{eq:exp la}) that
\begin{equation*}
n\lambda=\beta u\cdot \sM \psi-\chi_n=\beta u\cdot \sM^{(1)}\psi+\beta u\cdot \sM^{(2)}\psi-\chi_n.
\end{equation*}
First note that there exists $C(\beta)>0, \kappa>0$ such that
\begin{equation*}
 \dE_{\nu}[(u\cdot \sM^{(1)}\psi)^2]^{\frac{1}{2}}\leq C(\beta)n^{\kappa\ve}K_0^{\kappa}\dE_{\nu}[|\psi|^2]^{\frac{1}{2}}.
\end{equation*}
Moreover taking the scalar product of (\ref{eq:billus 2}) with $\psi$ and integrating by parts under $\nu$ yields the energetic estimate
\begin{equation*}
\dE_{\nu}[|\psi|^2]^{\frac{1}{2}}\leq C(\beta)n^{\kappa\ve}\dE_{\nu}[\chi_n^2]^{\frac{1}{2}}.
\end{equation*}
Consequently there exists constants $C(\beta)>0,\kappa>0$ such that
\begin{equation}\label{eq:b1 o}
 \dE_{\nu}[(u\cdot \sM^{(1)}\psi)^2]^{\frac{1}{2}}\leq C(\beta)n^{\kappa\ve}K_0^{\kappa}\dE_{\nu}[\chi_n^2]^{\frac{1}{2}}.
\end{equation}
Besides, employing (\ref{eq:d1}), we find
\begin{equation}\label{eq:b2 o}
    \dE_{\nu}[(u\cdot \sM^{(2)}\psi^{(1)})^2]^{\frac{1}{2}}\leq C(\beta)n^{\kappa\ve}\dE_{\nu}[\chi_n^2]^{\frac{1}{2}}.
\end{equation}
Finally, note
\begin{equation*}
|u\cdot\sM^{(2)}\psi^{(2)}|\leq \sum_{i,k:d(i,k)\geq K_0}|\sM_{i,k}||\psi_k^{(2)}|\leq n^{\kappa\ve}K_0^{-s}\sum_{k}|\psi_k^{(2)}|.
\end{equation*}
Using the bound (\ref{eq:d2}) one can see that
\begin{equation}\label{eq:b3 o}
 \dE_{\nu}[(u\cdot\sM^{(2)}\psi^{(2)})^2]^{\frac{1}{2}}\leq C(\beta)n^{\kappa\ve}K_0^{-s}n\dE_{\nu}[\lambda^2]^{\frac{1}{2}}.
\end{equation}
Taking $K_0$ large with enough with respect to $n^\ve$, one can make the left-hand side of (\ref{eq:b3 o}) smaller than $\frac{n}{2}\dE_{\nu}[\lambda^2]^{\frac{1}{2}}$. Combining this with (\ref{eq:b1 o}) and (\ref{eq:b2 o}) one obtains
\begin{equation*}
   n\dE_{\nu}[\lambda^2]^{\frac{1}{2}}\leq \frac{n}{2}\dE_{\nu}[\lambda^2]^{\frac{1}{2}}+C(\beta)n^{\kappa\ve}\dE_{\nu}[\chi_n^2]^{\frac{1}{2}},
\end{equation*}
which proves (\ref{eq:est lambda}). Combined with (\ref{eq:d1}) and (\ref{eq:d2}) this concludes the proof of (\ref{eq:sec case}).
\end{proof}

\subsection{Estimate on the main equation}
There remains to compare the solution of (\ref{eq:illusnu}) to the solution $\psi^{(1)}$ of the simplified equation (\ref{eq:illus}). This supposes to estimate the quantity $\tM\psi^{(1)}$ where $\tM$ is the perturbation in (\ref{eq:split tH illus}). 

\begin{proposition}\label{proposition:ap to ex}
Let $s\in (1,+\infty)$ Let $\nu$ be the measure (\ref{eq:def nu}). Let $i_0\in\{1,\ldots,n\}$ such that $|i_0-n/2|\leq n/4$, $\chi_n\in H^{1}(\nu)$ and $\psi\in L^2(I,H^1(\nu))$ be the solution of 
\begin{equation}\label{eq:main illus}
   \left\{
   \begin{array}{ll}
  A_1^\nu\psi=\chi_n e_{i_0} & \text{on }\pi(\mc{M}_N)\\
 \psi\cdot \vec{n}=0 &\text{on }\partial \pi (\mc{M}_N).
   \end{array}
   \right.
\end{equation}
Let $d$ be as in (\ref{eq:def dis hy}). Then, uniformly in $1\leq j\leq n$, we have
\begin{equation}\label{eq:main illus e}
    \dE_{\nu}[\psi_j^2]^{\frac{1}{2}}\leq C(\beta)n^{\kappa\ve}\Bigr(\frac{1}{1+d(i_0,j)^{s}}+\frac{1}{n^{\min(s-1/2,2s-2)}}\Bigr)(\dE_{\nu}[\chi_n^2]^{\frac{1}{2}}+\sup|\chi_n|e^{-c(\beta)n^{\delta}}).
\end{equation}
Similarly let $w\in L^2(\{1,\ldots,N\},H^1(\nu))$ be the solution of 
\begin{equation}\label{eq:main illus2}
   \left\{
   \begin{array}{ll}
  \beta \nabla^2({\Hc}_N^\g +\FF^\g)w+\mc{L}^{\nu}w=\chi_N e_{i_0}+\lambda(e_1+\ldots+e_N) & \text{on }\mc{M}_N\\
  w\cdot (e_1+\ldots+e_N)=0 & \text{on }\mc{M}_N\\
 w\cdot \vec{n}=0 &\text{on }\partial \mc{M}_N
   \end{array}
   \right.
\end{equation}
Then, uniformly in $1\leq j\leq N$, we have
\begin{equation}\label{eq:main illus e2}
    \dE_{\nu}[w_j^2]^{\frac{1}{2}}\leq C(\beta)n^{\kappa\ve}\Bigr(\frac{1}{1+d(i_0,j)^{s}}+\frac{1}{N}\Bigr)(\dE_{\nu}[\chi_n^2]^{\frac{1}{2}}+\sup|\chi_n|e^{-c(\beta)n^{\delta}}).
\end{equation}
\end{proposition}

\begin{proof}
Let us prove the bound (\ref{eq:main illus e}). Let $\psi\in L^2(I,H^1(\nu))$ be the solution of (\ref{eq:main illus}), $\psi^{(1)}\in L^2(I,H^1(\nu))$ solving
\begin{equation*}
   \left\{
   \begin{array}{ll}
  \beta \sM\psi^{(1)}+\mc{L}^{\nu}\psi^{(1)}=\chi_n e_{i_0} & \text{on }\pi (\mc{M}_N)\\
 \psi^{(1)}\cdot \vec{n}=0 &\text{on }\partial \pi (\mc{M}_N).
   \end{array}
   \right.
\end{equation*}
Define $\psi^{(2)}=\psi-\psi^{(1)}$. One can check that $\psi^{(2)}$ is solution of
\begin{equation*}
   \left\{
   \begin{array}{ll}
  A_1^{\nu}\psi^{(2)}=-\beta\tM^{(1)}\psi^{(1)}& \text{on }\pi (\mc{M}_N)\\
 \psi^{(2)}\cdot \vec{n}=0 &\text{on }\partial \pi( \mc{M}_N).
   \end{array}
   \right.
\end{equation*}
Denote $\tM^{(1)}=\nabla^2 \Hc_n^\g\mathds{1}_{\mc{A}^c}-A\mathds{1}_{\mc{A}^c}$ and $\tM^{(2)}=\nabla^2\widetilde{E}$. One gets by integration by parts
\begin{multline}\label{eq:cc}
    \beta\dE_{\nu}[\psi^{(2)}\cdot\nabla^2\widetilde{\Hc}^\g_n\psi^{(2)}]\\\leq C(\beta)\left(\sup|\psi^{(1)}|(\dE_{\nu}[|\tM^{(1)}|^2]^{\frac{1}{2}}+\dE_{\nu}[\mathds{1}_{\mc{A}^c}|\tM^{(2)}|^2]^{\frac{1}{2}})+\dE_{\nu}[\mathds{1}_{\mc{A}}|\tM^{(2)}\psi^{(1)}|^2]^{\frac{1}{2}}\right)\dE_{\nu}[|\psi^{(2)}|^2]^{\frac{1}{2}}.
\end{multline}
The maximum principle of Proposition \ref{proposition:maximum principle} gives the existence of $C(\beta), \kappa>0$ such that
\begin{equation*}
    \sup|\psi^{(1)}|\leq C(\beta)n^{\kappa\ve}\sup|\chi_n|.
\end{equation*}
Moreover, using Assumption \ref{assumptions:mu}, one finds that there exist constants $C(\beta)>0, c(\beta)>0, \delta>0$ such that
\begin{equation*}
    \dE_{\nu}[|\tM^{(1)}|^2]^{\frac{1}{2}}\leq C(\beta)e^{-c(\beta)n^{\delta}},
\end{equation*}
\begin{equation*}
    \dE_{\nu}[\mathds{1}_{\mc{A}^c}|\tM^{(2)}|^2]^{\frac{1}{2}}\leq C(\beta)e^{-c(\beta)n^{\delta}}.
\end{equation*}
Let us now estimate the vector-field $\tM^{(2)}\psi^{(1)}$. We claim that for uniformly in $1\leq j\leq n$,
\begin{equation}\label{eq:unifj b}
    \dE_{\nu}[\mathds{1}_{\mc{A}}(\tM^{(2)}\psi^{(1)})_j^2]^{\frac{1}{2} } \leq \frac{C(\beta)n^{\kappa\ve}}{1+d(j,\partial I)^{\frac{s}{2} }} \frac{1}{n^{s-1/2}}\dE_{\nu}[\chi_n^2]^{\frac{1}{2}}.
\end{equation}
Fix $1\leq j\leq n$. Recall that for any $x$ in the interior of $\mc{A}$ and for each $1\leq k,l\leq n$,
\begin{equation*}
   \tM_{k,l}^{(2)}(x)=\partial_{kl}\widetilde{E}(x)=\dE_{\dGiQ^\g(\cdot \mid x)}[\partial_{kl}\Hc_{n,N}^\g]-\Cov_{\dGiQ^\g(\cdot\mid x)}[\partial_k \Hc_{n,N}^\g,\partial_l\Hc_{n,N}^\g]. 
\end{equation*}
In view of (\ref{eq:th illus}) we have that for each $1\leq k,l\leq n$,
\begin{equation}\label{eq:bound Mkl}
    \dE_{\nu}[(\tM_{k,l}^{(2)})^2]^{\frac{1}{2}}\leq \frac{C(\beta)n^{\kappa\ve}}{1+d(k,\partial I)^{s-1/2}d(l,\partial I)^{s-1/2} }.
\end{equation}
One can then split the quantity $(\tM^{(2)}\psi^{(1)})_j$ into
\begin{equation*}
  (\tM^{(2)}\psi^{(1)})_j=\underbrace{\sum_{k:d(k,\partial I)\leq n/4}\tM_{j,k}^{(2)}\psi_k^{(1)}}_{ (\RomanNumeralCaps{1})_j}+\underbrace{\sum_{k:d(k,\partial I)> n/4}\tM_{j,k}^{(2)}\psi_k^{(1)}}_{(\RomanNumeralCaps{2})_j}.
\end{equation*}
For the first quantity, using (\ref{eq:bound Mkl}) and (\ref{eq:opt illus}), we can write
\begin{equation}\label{eq:b1 H b}
\begin{split}
    \dE_{\nu}[\mathds{1}_{\mc{A}}(\RomanNumeralCaps{1})_j^2]^{\frac{1}{2}}&\leq \frac{C(\beta)n^{\kappa\ve}}{1+d(j,\partial I)^{s-1/2}}\sum_{k:d(k,\partial I)\leq n/4}\frac{1}{|k-\frac{n}{2}|^{s}}\frac{1}{1+d(k,\partial I)^{s-1/2} }\dE_{\nu}[\chi_n^2]^{\frac{1}{2}}\\
    &\leq \frac{C(\beta)n^{\kappa\ve}}{1+d(j,\partial I)^{s-1/2}}\frac{1}{n^{\min(s,2s-3/2) } }\dE_{\nu}[\chi_n^2]^{\frac{1}{2}}.
\end{split}
\end{equation}
For the second quantity using the bound on the increments of $\tM$ given in (\ref{eq:th illus}), we find
\begin{multline}\label{eq:b2 H b}
\dE_{\nu}\Bigr[\mathds{1}_{\mc{A}}\Bigr(\sum_{k:d(k,\partial I)> n/4}\tM_{j,k}^{(2)}\psi_k^{(1)}\Bigr)^2\Bigr]^{\frac{1}{2}}\leq  \frac{C(\beta)n^{\kappa\ve}}{1+d(j,\partial I)^{s-1/2}}\sum_{k:d(k,\partial I)>n/4 }\frac{1}{|k-\frac{n}{2}|^{s}}\frac{1}{n^{s-1/2}}\dE_{\nu}[\chi_n^2]^{\frac{1}{2}}\\
    \leq\frac{ C(\beta)n^{\kappa\ve}}{1+d(j,\partial I)^{s-1/2}}\frac{1}{n^{s-1/2}}\dE_{\nu}[\chi_n^2]^{\frac{1}{2}}.
\end{multline}
Putting (\ref{eq:b1 H b}) and (\ref{eq:b2 H b}) together we obtain (\ref{eq:unifj b}). Summing this over $j$ yields
\begin{equation}\label{eq:AM1}
  \dE_{\nu}[|\tM^{(2)}\psi^{(1)}|^2]^{1/2} \leq \frac{C(\beta)n^{\kappa\ve}}{n^{\min(s-1/2,2s-2)}}\dE_{\nu}[\chi_n^2]^{\frac{1}{2}}.
\end{equation}
Using the uniform convexity of $\widetilde{\Hc}_n^\g$, we then obtain from (\ref{eq:cc}) the bound
\begin{equation*}
    \dE_{\nu}[|\psi^{(2)}|^2]^{1/2} \leq \frac{C(\beta)n^{\kappa\ve}}{n^{\min(s-1/2,2s-2)}}(\dE_{\nu}[\chi_n^2]^{\frac{1}{2}}+\sup|\chi_n|e^{-c(\beta)n^{\delta}}).
\end{equation*}
In particular, together with (\ref{eq:opt illus}), this yields (\ref{eq:main illus e}). The proof of (\ref{eq:main illus e2}) follows from similar considerations by making use of Lemma \ref{lemma:control lambda}.
\end{proof}

\subsection{Decay of gaps correlations}\label{sub:hyper decay}
We are now ready to conclude the proof of the decay of correlations for the hypersingular Riesz gas. When $x_i$ and $x_j$ are at macroscopic or large mesoscopic distance, one can take  $n=N$ and use the estimate of Proposition \ref{proposition:optimal illus}. Otherwise one may choose $n$ to be a power of $d_N(i,j)$ and apply the estimate of Proposition \ref{proposition:ap to ex} for such a number $n$, thus completing the proof of Theorem \ref{theorem:hypersing}.

\begin{proof}[Proof of of Theorem \ref{theorem:hypersing}]Let $\nu$ be the constrained measure on $\{1,\ldots,N\}$ defined in (\ref{eq:def nu}) with $n=N$. Using the Pinsker inequality, the fact that $\nu$ satisfies a Log-Sobolev inequality (see Lemma \ref{lemma:bakry}) and the local law of Lemma \ref{lemma:local hyper}, one can observe that
\begin{equation*}
    \mathrm{TV}(\dGi^\g,\nu)\leq (2\Ent(\dGi^\g\mid\nu))^{\frac{1}{2}}\leq C(\beta)N^{\kappa\ve}\dE_{\dGi^\g}[|\nabla \FF^\g|^2]^{\frac{1}{2}}\leq C(\beta)e^{-c(\beta)N^{\delta}}.
\end{equation*}
In particular, it follows that
\begin{equation}\label{eq:red Ndelta}
    \Cov_{\dGi}[\xi(N(x_{j+1}-x_j)),\chi(N(x_{i+1}-x_i))]=\Cov_{\nu}[\xi(x_j),\chi(x_i)]+O_\beta(e^{-N^\delta}\sup|\xi|\sup|\chi|).
\end{equation}
Moreover, by Proposition \ref{proposition: HS gaps c}, the covariance term in the last display may be expressed as
\begin{equation*}
   \Cov_{\nu}[\xi(x_j),\chi(x_i)]=\dE_{\nu}[\xi'(x_j)\partial_j \phi], 
\end{equation*}
with $\nabla\phi\in L^2(\{1,\ldots,N\},H^{1}(\nu))$ solution of
\begin{equation*}
    \left\{
    \begin{array}{ll}
        A_1^{\nu}\nabla\phi=\chi'(x_i)e_i +\lambda(e_1+\ldots+e_N)& \text{on }\pi(\mc{M}_N) \\
       \nabla\phi\cdot (e_1+\ldots+e_N)=0  & \text{on }\pi (\mc{M}_N)\\
       \nabla\phi\cdot\vec{n}=0 &\text{on }\partial \pi(\mc{M}_N).
    \end{array}
    \right.
\end{equation*}
Using the estimate of Proposition \ref{proposition:ap to ex}, Hölder's inequality and (\ref{eq:red Ndelta}), one obtains that there exists some small $\ve_0>0$ such that (\ref{eq:decay corr stat}) holds in the case where $d_N(i,j)\geq N^{\ve_0}$. 

We now consider the case where $d_N(i,j)$ is smaller than $N^{\ve_0}$. Let $n\in\{1,\ldots,N\}$ be the smallest number such that $$n^{-\min(s-1/2,2s-2)}\leq {d_N(i,j)^{-(1+s)}}.$$
Without loss of generality, one can assume that $1\leq \frac{n}{3}\leq i,j\leq \frac{2n}{3}$. Since $N(x_{i+1}-x_i)$ and $N(x_{j+1}-x_1)$ are functions of $x_1,\ldots,x_n$ and since $\mc{A}$ has overwhelming probability, one may write 
\begin{equation}\label{eq:redd 1}
  \Cov_{\dGi}[\xi(N(x_{j+1}-x_j)),\chi(N(x_{i+1}-x_i))]=\Cov_{\nu}[\xi(x_j),\chi(x_i)]+O_\beta(e^{-c(\beta)n^\delta}\sup|\xi|\sup|\chi|).
\end{equation}
By Proposition \ref{proposition: HS gaps c} again one can express this covariance term as
\begin{equation}\label{eq:cov nu}
   \Cov_{\nu}[x_j,x_i]=\dE_{\nu}[\xi'(x_j)\partial_j\phi],
\end{equation}
where $\nabla\phi\in L^2(I,H^1(\nu))$ is solution of
\begin{equation}\label{eq:hyph}
    \left\{
    \begin{array}{ll}
        A_1^{\nu}\nabla\phi=\chi'(x_i)e_i & \text{on }\pi(\mc{M}_N) \\
        \nabla\phi\cdot \vec{n}=0 & \text{on } \partial\pi(\mc{M}_N).
    \end{array}
    \right.
\end{equation}
Inserting the result of Proposition \ref{proposition:ap to ex} we find that
\begin{equation*}
    \dE_{\nu}[(\partial_j\phi)^2]^{\frac{1}{2}}\leq C(\beta)n^{\kappa\ve}\Bigr(\frac{1}{d_N(i,j)^{s}}+\frac{1}{n^{\min(s-1/2,2s-2)}} \Bigr)(\dE_{\nu}[\chi'(x_i)^2]^{\frac{1}{2}}+\sup|\chi'|e^{-c(\beta)n^{\delta}}).
\end{equation*}
Inserting this into (\ref{eq:cov nu}) and using (\ref{eq:redd 1}) completes the proof of (\ref{eq:decay corr stat}) by choosing $n$ large enough.
\end{proof}

\section{Decay of correlations for the long-range Riesz gas}\label{section:large scale}
This section is the core of the paper and aims to develop a method to study the decay of correlations in the long-range case $s\in (0,1)$. Because the Hessian of the energy in gap coordinates concentrates around the matrix (\ref{eq:def H matrix}) which has slowly decaying entries, it is not clear how the strategy of Section \ref{section:decay short} can be adapted. Indeed the commutation result of Lemma \ref{lemma:perturbation lemma} cannot be applied to (\ref{eq:def H matrix}). The trick is to exploit the fact that the Hessian is not only positive-definite but actually controls a fractional primitive of the solution. This should be compared with the method of \cite[Sec.~4]{boursier2021optimal} adapted from \cite[Sec.~3]{bourgade2012bulk} which exploits the long-range nature of the interaction to have sharp concentration estimates.

\subsection{Periodization}\label{sub:re}
Let us begin by performing the following series of reductions, leading to the study of a simplified equation:
\begin{enumerate}
    \item Convexification and reduction to a smaller window,
 \item Adding of a Schur complement to the energy of the $n$ points and splitting of the H.-S. operator,
 \item Embedding the system into a periodic system of $\bar{n}\gg n$ points,
 \item Control on the perturbation operator.
\end{enumerate}

As pointed out in Section \ref{section:decay short} the study of the correlations at microscopic distance requires to localize the system at a smaller scale. Let $n\in\{1,\ldots,N\}$ be the active scale with $n=N$ or $n\leq \frac{N}{2}$, $I$ the window $I=\{1,\ldots,n\}$ and $\pi:\mc{M}_N\to\pi(\mc{M}_N)\subset \dR^n$ be the projection on the coordinates $(x_i)_{i\in I}$. Let $\theta:\dR^+\to \dR^+$ smooth such that $\theta=0$ on $(0,\frac{1}{2})$, $\theta''\geq 1$ on $(1,+\infty)$ and $\theta''\geq 0$ on $[0,+\infty]$. Let $\ve>0$ and $\FF^\g$ be the forcing 
\begin{equation*}
\FF^\g:X_n\in\dR^n\mapsto\sum_{i=1}^{n} \theta(n^{-\ve}x_i)
\end{equation*}
and the constrained measure
\begin{align*}
  \dd \dGiQ^\g\propto e^{-\beta \mathrm{F}^\g\circ \pi}\dd\dGi^\g.
\end{align*}
Let $\nu=\dGiQ^\g\circ \pi^{-1}$. We will be studying the solution $\psi\in L^2(I,H^1(\nu))$ of
\begin{equation}\label{eq:eqqq}
    \left\{
    \begin{array}{ll}
     A_1^{\nu}\psi=\chi_ne_{i_0} & \text{on }\pi(\mc{M}_N)\\
        \psi\cdot\vec{n}=0 & \text{on }\partial \pi(\mc{M}_N),
    \end{array}
    \right.
\end{equation}

In the case where $n\leq \frac{N}{2}$, one would prefer working with a periodic system of size $\bar{n}=\lfloor n^{\frac{1}{2}+\frac{1}{s}}\rfloor$ instead of (\ref{eq:eqqq}). The idea is to subtract from $A_1^{\nu}$ the appropriate quantity to identify the equation with the projection on the coordinates $(x_i)_{i\in I}$ of a larger system of size $\bar{n}$. Let $\bar{n}=\lfloor n^{\frac{1}{2}+\frac{1}{s}}\rfloor $ if $n\leq \frac{N}{2}$ and $\bar{n}=N$ if $n=N$. Set $\bar{I}=\{1,\ldots,\bar{n}\}$. Fix a positive parameter $\ve>0$. Let $K_0$ be a large power of $\lfloor n^\ve \rfloor.$ Define $\dH\in\mc{M}_{\bar{n}}(\dR)$ the truncated Riesz matrix at distance $K_0$, i.e $\dH=\nabla^2 F(x)$ for some $x\in \dR^{\bar{n}}$ where
\begin{equation*}
   F:X_{\bar{n}}\in \dR^{\bar{n}}\mapsto \sum_{i,j\in \bar{I}:d_{\bn}(i,j)\geq K_0}g_{\bar{n},s}''(j-i)(x_i+\ldots+x_j)^2.
\end{equation*}
Consider the block decomposition of $\dH$ on $\dR^{n}\times \dR^{\bn-n}$,
\begin{equation}\label{eq:blockHN}
    \dH=\begin{pmatrix}
A & B\\
C & D
\end{pmatrix},\quad A\in \mc{M}_n(\dR).
\end{equation}
Also let
$$G:X_{\bar{n}}\in \dR^{\bar{n}}\mapsto \sum_{i\in I,j\in I^c:d_{\bn}(i,j)\geq K_0}g_{\bar{n},s} ''(j-i)(x_i+\ldots+x_j)^2.$$ Let $\dH^{(2)}=\nabla^2G(x)$ for some $x\in\dR^{\bn}$ and $\dH^{(1)}=\dH-\dH^{(2)}$. Consider the block decomposition of $\dH^{(1)}$ and $\dH^{(2)}$:
\begin{equation}\label{eq:dM1dM2}
    \dH^{(1)}=\begin{pmatrix}
A^{(1)} & 0\\
0 & 0
\end{pmatrix}\quad \text{and}\quad \dH^{(2)}=\begin{pmatrix}
A^{(2)} & B\\
C & D
\end{pmatrix}.
\end{equation}
Since $\dH^{(2)}\geq 0$ we have $A^{(2)}-BD^{-1}C\geq 0$. Furthermore we also have $A^{(1)}\geq 0$. Noting that $D$ is positive-definite, one may consider  $$B(D+\beta^{-1}\mc{L}^\nu \otimes I_{\bar{n}-n})^{-1}C.$$
 
One may now split $A_1^{\nu}$. Recall that the measure $\nu$ can be written
\begin{equation*}
    \dd\nu(x)=\exp(-\beta \widetilde{\Hc}_n^\g(x))\mathds{1}_{\pi(\mc{M}_N)}(x)\dd x,
\end{equation*}
where for any $x\in \pi(\mc{M}_N)$ and $1\leq i,j\leq n$, 
\begin{multline}\label{eq:ijH}
\partial_{ij}\widetilde{\Hc}^\g_n(x)=\partial_{ij}\FF^\g(x)+\partial_{ij}\Hc_{n}^\g(x)+\dE_{\dGiQ^\g(\cdot\mid x)}[\partial_{ij}\Hc_{n,N}^\g(x,\cdot)]-\Cov_{\dGiQ^\g(\cdot\mid x)}[\partial_{i}\Hc_{n,N}^\g(x,\cdot),\partial_j\Hc_{n,N}^\g(x,\cdot)],
\end{multline}
with
\begin{equation}\label{eq:def int}
\Hc_{n,N}^\g:(x,y)\in (\dR^n\times \dR^{N-n})\cap\mc{M}_N\mapsto \Hc_N^\g(x,y)-\Hc_n^\g(x)-\Hc_{N-n}^\g(y).
\end{equation}
Define the good event
\begin{equation}\label{eq:good5 gap}
    \mc{A}=\{X_n\in \pi(\mc{M}_N):\forall i,i+k\in \{1,\ldots,n\},n^{-\ve}\leq x_i\leq n^{\ve}, |x_i+\ldots+x_{i+k}-k|\leq n^\ve k^{\frac{s}{2}}\}. 
\end{equation}

Let us split $A_1^{\nu}$ into
\begin{equation}\label{eq:dec A1mu}
   A_1^{\nu}=\bar{A}_1^{\nu}+\tM,
\end{equation}
where $\bar{A}_1^{\nu}, \tM:L^2(I,H^1(\nu))\to L^2(I,H^{-1}(\nu))$ are given by
\begin{multline}\label{eq:defsM}
    \bar{A}_1^{\nu}:=\beta\nabla^2 \FF^\g+  \beta(\nabla^2 \Hc_n^\g +\dE_{\dGiQ^\g(\cdot\mid x)}[\nabla^2 \Hc_{n,N}^\g(x,\cdot)])\mathds{1}_{\mc{A}}+\beta A\mathds{1}_{\mc{A}^c}-\beta B(D+\beta^{-1}\mc{L}^{\nu}\otimes I_{\bar{n}-n})^{-1}C +\mc{L}^{\nu}\otimes I_n,
\end{multline}
\begin{multline}\label{eq:tH5}
  \tM:=\beta(\nabla^2 \Hc_n^\g +\dE_{\dGiQ^\g(\cdot\mid x)}[\nabla^2 \Hc_{n,N}^\g(x,\cdot)])\mathds{1}_{\mc{A}^c}-\beta A\mathds{1}_{\mc{A}^c} -\beta\Cov_{\dGiQ^\g(\cdot\mid x)}[\nabla\Hc_{n,N}^\g(x,\cdot),\nabla\Hc_{n,N}^\g(x,\cdot)]\\+\beta B( D+\beta^{-1}\mc{L}^{\nu}\otimes I_{\bn-n})^{-1}C.
\end{multline}

 One can prove that the operator $\bar{A}_1^{\nu}$ has a spectral gap, resulting in the uniqueness of the solution $\psi\in L^2(I,H^1(\nu))$ of 
\begin{equation*}
    \begin{cases}
    \bar{A}_1^{\nu}\psi=v&\text{on $\pi(\mc{M}_N)$}\\
    \psi\cdot\vec{n}=0 & \text{on $\partial \pi(\mc{M}_N)$},
    \end{cases}
\end{equation*}
for any $v\in L^2(I,H^1(\nu))$. We work with general measures $\nu$ on $\pi(\mc{M}_N)$ in order to make our argument flexible enough to be applied to the interpolating measure (\ref{eq:def mun(t)}) of Section \ref{section:change measure}.

\begin{assumptions}\label{assumptions:mu2}
Let $\nu$ be a probability measure on $\pi(\mc{M}_N)$ in the form $\dd\nu=e^{-\beta H^\g(x)}\dd x$ with $H^\g:\pi(\mc{M}_N)\to\dR$ $\mathcal{C}^2$ and such that 
\begin{equation*}
    \lim_{d(x,\partial \pi(\mc{M}_N))\to 0}\nabla H^\g(x)\cdot \vec{n}=-\infty.
\end{equation*}
Let $\mc{A}$ be the good event (\ref{eq:good5 gap}). Assume that there exist $C>0, \delta>0$ (depending on $\ve$) such that
\begin{equation*}
    \nu(\mc{A}^c)\leq Ce^{-cn^{\delta}}.
\end{equation*}
\end{assumptions}

For the bootstrap argument to come in Subsection \ref{sub:loc2}, one shall also work with a slightly more general system. Let $A_0:\pi(\mc{M}_N)\to\mc{S}_n(\dR)$ be a measurable map. Let $\sM:\pi(\mc{M}_N)\to\mc{S}_{\bar{n}}(\dR)$ be given by
\begin{equation}\label{eq:defsM'}
    \sM=\begin{pmatrix}
A_0 & B\\
C & D
\end{pmatrix},
\end{equation}
 with $B$, $C$, $D$ constants matrices as in (\ref{eq:blockHN}). One shall impose the following assumptions on $\sM$:

\begin{assumptions}\label{assumptions:sM}
Let $\sM:\pi(\mc{M}_N)\to\mc{S}_{\bn}(\dR)$ be given by (\ref{eq:defsM'}). Assume that
\begin{enumerate}
    \item There exists $\kappa>0$ such that uniformly on $\pi(\mc{M}_N)$, 
    \begin{equation}\label{eq:hypsM}
    \sM\geq n^{-\kappa\ve}I_{\bar{n}}.
    \end{equation}
    \item There exists a family of functions $(\alpha_{i,k})$ such that for all $U_{\bar{n}}\in \dR^{\bar{n}}$,
    \begin{equation*}
        U_{\bar{n}}\cdot\sM U_{\bar{n}}=\sum_{i,k}\alpha_{i,k}(u_i+\ldots+u_k)^2.
    \end{equation*}
  Moreover there exists $\kappa>0$ and $\gamma>0$ such that for each $i,k\in\bar{I}$ such that $d_{\bn}(i,k)\geq n^{\gamma\ve}$,
\begin{equation}\label{eq:alphaik}
   \frac{n^{-\kappa\ve}}{d_{\bn}(i,k)^{s+2}} \leq \alpha_{i,k}\leq \frac{n^{\kappa\ve}}{d_{\bn}(i,k)^{s+2}}.
    \end{equation}
    \item Let 
    $A\in\mc{M}_n(\dR)$ be as in (\ref{eq:blockHN}). There exist $C>0, \kappa>0$ such that uniformly on (\ref{eq:good5 gap}) and for each $1\leq i,j\leq n$,
    \begin{equation}\label{eq:tildeA}
        |(A_0)_{i,j}-A_{i,j}|\leq \frac{Cn^{\kappa\ve}}{d_{\bn}(i,j)^{1+\frac{s}{2}}}.
    \end{equation}
\end{enumerate}
\end{assumptions}

Finally let $\bar{A}_1:L^2(I,H^1(\nu))\to L^2(I,H^{-1}(\nu))$ in the form
\begin{equation}\label{eq:A(x)}
    \bar{A}_1=\beta A_0-B(\beta D+\mc{L}^\nu\otimes I_{\bar n-n})^{-1}C+\mc{L}^{\nu}\otimes I_n.
\end{equation}

\begin{remark}
   Note that the choice $\bar{n}=\lfloor n^{\frac{1}{2}+\frac{1}{s}}\rfloor$ when $n\leq \frac{N}{2}$ ensures that (\ref{eq:tildeA}) holds: indeed there exists $C>0$ such that for each $k\in\{1,\ldots,N\}$,
   \begin{equation*}
       |g_{N,s}(k)-g_{\bar{n},s}(k)|\leq \frac{C}{\bar{n}^{s}}\leq \frac{C}{n^{1+\frac{s}{2}}}.
   \end{equation*}
\end{remark}

\begin{lemma}\label{lemma:ident}
\begin{enumerate}
\item Let $K_0=\lfloor n^{\ve}\rfloor^m$. Let $\mc{A}$ be as in (\ref{eq:good5 gap}). Set 
\begin{equation}\label{eq:A0 e}
    A_0:x\in\pi(\mc{M}_N)\mapsto \beta\nabla^2 F^\g(x)+\beta(\nabla^2 \Hc_n^\g+\dE_{\dGiQ(\cdot\mid x) }[\nabla^2\Hc_{n,N}^\g(x,\cdot) ])\mathds{1}_{\mc{A}}(x)+A\mathds{1}_{\mc{A}^c}(x).
\end{equation}
Then, for $m$ large enough, the $\mc{S}_{\bn}(\dR)$-valued function $\sM$ (\ref{eq:defsM'}) satisfies Assumptions \ref{assumptions:sM}.
\item
Let $\sM$ in the form (\ref{eq:defsM'}) satisfying Assumptions \ref{assumptions:sM}. Let $\bar{A}_1$ be given by (\ref{eq:A(x)}). Let $\psi\in L^2(\bar{I},H^1(\nu))$ be the solution of  
\begin{equation}\label{eq:equ 54}
    \left\{
    \begin{array}{ll}
       \beta\sM\psi+(\mc{L}^{\nu}\otimes I_{\bn})\psi=\chi_n e_{i_0} & \text{on }\pi(\mc{M}_N)\\
        \psi\cdot\vec{n}=0 & \text{on }\partial \pi(\mc{M}_N).
    \end{array}
    \right.
\end{equation}
Let $\psi^{(1)}\in L^2(I,H^1(\nu))$ be the solution of 
\begin{equation}\label{eq:equ psi1}
    \left\{
    \begin{array}{ll}
     \bar{A}_1\psi^{(1)}=\chi_n e_{i_0} & \text{on }\pi(\mc{M}_N)\\
        \psi^{(1)}\cdot\vec{n}=0 & \text{on }\partial \pi(\mc{M}_N).
    \end{array}
    \right.
\end{equation}
We have the identity
\begin{equation}\label{eq:equ psi}
    \psi_j=\psi_j^{(1)} \quad \text{for each $j\in I$}.
\end{equation}
\end{enumerate}
\end{lemma}

\begin{proof}
Let $\sM$ be given by (\ref{eq:defsM'}) with $A_0$ as in (\ref{eq:A0 e}). Note that by definition of the good event (\ref{eq:good5 gap}), $\sM$ verifies (\ref{eq:tildeA}). Let us prove that $\sM$ satisfies (\ref{eq:hypsM}). One can split $\sM$ into $\sM^{(1)}+\sM^{(2)}$ with $\sM^{(2)}$ given for all $U_{\bar{n}}\in \dR^{\bar{n}}$ by
\begin{equation}\label{eq:def Hn1Hn2}
   U_{\bar{n}}\cdot \sM^{(2)}U_{\bar{n}}=\sum_{d_{\bn}(i,k)\geq K_0}\alpha_{i,k}(u_i+\ldots+u_k)^2,
\end{equation}
where the $\alpha_{i,k}$'s are as in (\ref{eq:alphaik}). Let $\dH$ be as in (\ref{eq:blockHN}) and set $\sM^{(2,2)}=\sM^{(2)}-\dH$. There exist $C>0, \kappa>0$ such that for each $1\leq i,j\leq n$,
\begin{equation*}
    |\sM^{(2,2)}_{i,j}|\leq \frac{Cn^{\kappa\ve}}{d_{\bn}(i,j)^{1+\frac{s}{2}}}\mathds{1}_{d_{\bn}(i,j)\geq K_0},
\end{equation*}
with $\sM^{(2,2)}_{i,j}=0$ if $i\in \bar{I}\setminus I$ or $j\in \bar{I}\setminus I$. Let us add to $\sM^{(2,2)}$ the matrix $n^{-\kappa_1\ve}I_{\bar{n}}$ for some well-chosen constant $\kappa_1$. Set $B=n^{-\kappa_1\ve}I_n+(\sM^{(2,2)}_{i,j})_{i,j\in I}\in\mc{M}_n(\dR)$. For $\kappa_1$ large enough, the matrix $B$ is diagonally dominant and its spectrum is thus bounded, according to the Gershgorin circle theorem, by $2n^{-\kappa_1\ve}$. Since $B$ is symmetric, one gets an upper bound on its Euclidian operator norm, allowing one to bound from above the Euclidian operator norm of $\sM^{(2,2)}$ itself. It follows that there exist $\kappa>0$ and $c>0$ such that for all $U_{\bar{n}}=(U_n,V_{\bar{n}-n})\in \dR^{\bar{n}}$,
\begin{equation*}
\Bigr|U_{\bar{n}}\cdot(\sM^{(2,2)}-\dH)U_{\bar{n}}\Bigr|\leq n^{\kappa\ve}K_0^{-c\frac{s}{2}}|U_{n}|^2.
\end{equation*}
Besides note that for all $U_n\in \dR^n$,
\begin{equation*}
    U_{n}\cdot \sM^{(1)}U_n\geq n^{-\kappa\ve}|U_n|^2.
\end{equation*}
Choosing $K_0$ large enough one gets the existence of some $\kappa>0, \kappa'>0$ such that for all $U_{\bar{n}}=(U_n,V_{\bar{n}-n})\in \dR^{\bar{n}}$,
\begin{equation*}
    U_{\bar{n}}\cdot \sM U_{\bar{n}}\geq U_{\bar{n}}\cdot \dH U_{\bar{n}}+n^{-\kappa\ve}|U_n|^2\geq n^{-\kappa'\ve}|U_{\bar{n}}|^2,
\end{equation*}
thus showing that $\sM$ verifies (\ref{eq:hypsM}). Moreover by construction on the event, the matrix $\sM$ is a perturbation of $\dH$ which satisfies (\ref{eq:alphaik}) and so does therefore $\sM$. The definition of (\ref{eq:alphaik}) also ensures that (\ref{eq:tildeA}) holds.

The properties (\ref{eq:alphaik}) and (\ref{eq:tildeA}) holds by definition of the good event (\ref{eq:good5 gap}) and since the matrix $\dH$ verifies (\ref{eq:alphaik}).

Let us verify the second item. Let $\sM$ satisfying Assumptions \ref{assumptions:sM}. Since $\mc{L}^\nu$ is non-negative we obtain that for all $v\in L^2(\bar{I},H^1(\nu))$,
\begin{equation*}
    \dE_{\nu}[v\cdot (\beta\sM+\mc{L}^\nu\otimes I_{\bar{n}}) v] \geq n^{-\kappa\ve}\dE_{\nu}[|v|^2].
\end{equation*}
By the Lax-Milgram theorem, uniqueness and existence of solutions of (\ref{eq:equ 54}) is then a consequence of the coerciveness of
\begin{equation*}
    v\in L^2(\bar{I},H^1(\nu))\mapsto \dE_{\nu}[v\cdot (\beta\sM+\mc{L}^\nu\otimes I_{\bar{n}-n}) v].
\end{equation*}

It remains to prove the coerciveness of 
$$v\in L^2(I,H^1(\nu))\mapsto \dE_{\nu}[v\cdot \bar{A}_1 v].$$ 
Since $\dH^{(2)}\geq 0$, we also have that $A^{(2)}-BD^{-1}C\geq 0$. Then note
\begin{equation}\label{eq:511}
   A_0-BD^{-1}C\geq \sM^{(1)}+A^{(2)}-BD^{-1}C-O(n^{\kappa\ve}K_0^{-\frac{s}{2}}I_n)\geq \sM^{(1)}-O(n^{\kappa\ve}K_0^{-\frac{s}{2}}I_n)\geq n^{-\kappa\ve}I_n.
\end{equation}
Let $w\in L^2(I,H^1(\nu))$. One can observe that
\begin{equation*}
    w\cdot B(D+\beta^{-1}\mc{L}^{\nu}\otimes I_{\bn-n})^{-1}(Cw)=(Cw)\cdot (D+\beta^{-1}\mc{L}^{\nu}\otimes I_{\bn-n})^{-1}(Cw).
\end{equation*}
Integrating this over $\nu$ and using the fact that $D$ is positive shows that for all $w\in L^2(I,H^1(\nu))$,
\begin{equation}\label{eq:posMM}
  0\leq \dE_{\nu}[w\cdot B(D+\beta^{-1}\mc{L}^{\nu}\otimes I_{\bn-n})^{-1}(Cw)]\leq \dE_{\nu}[w\cdot BD^{-1}Cw].
\end{equation}
Consequently, inserting (\ref{eq:511}), we find 
\begin{equation*}
    \dE_{\nu}[w\cdot \bar{A}_1 w]\geq n^{-\kappa\ve}\dE_{\nu}[|w|^2].
\end{equation*}
\end{proof}

We have thus defined a simplified operator $\bar{A}_1^{\nu}$ which can be compared $A_1^\nu$ through the following easy argument. Let $\psi\in L^2(I,H^1(\nu))$ be the solution of (\ref{eq:eqqq}) and $\psi^{(1)}\in L^2(I,H^1(\nu))$ of 
\begin{equation*}
    \left\{
    \begin{array}{ll}
    \bar{A}_1^{\nu}\psi^{(1)}=\chi_n e_{i_0} & \text{on }\pi(\mc{M}_N)\\
       \psi^{(1)}\cdot\vec{n}=0 & \text{on }\partial \pi(\mc{M}_N).
    \end{array}
    \right.
\end{equation*}
Let $w:=\psi^{(1)}-\psi\in L^2(I,H^1(\nu))$, which solves
\begin{equation*}
    \left\{
    \begin{array}{ll}
     A_1^{\nu}w=\tM\psi^{(1)} & \text{on }\pi(\mc{M}_N)\\
       w\cdot\vec{n}=0 & \text{on }\partial \pi(\mc{M}_N),
    \end{array}
    \right.
\end{equation*}
where $\tM$ is an in (\ref{eq:tH5}). Taking the scalar product of the first line of the last display with $w$ and integrating by parts with respect to $\nu$ yields
\begin{equation}\label{eq:here}
   \beta n^{-\kappa\ve} \dE_{\nu}[|w|^2]\leq \beta \dE_{\nu}[w\cdot \nabla^2\widetilde{\Hc}_n^\g w]\leq \dE_{\nu}[ w\cdot \tM\psi^{(1)}].
\end{equation}
We will prove in Lemma \ref{lemma:cond variance} that
\begin{equation*}
  |\dE_{\nu}[w_i(\tM {\psi}^{(1)})_i]|\leq C(\beta)n^{\kappa\ve}\dE_{\nu}[w_i^2]^{\frac{1}{2}}\sum_{j\in I}\frac{1}{1+d(i,\partial I)^{\frac{s}{2}}d(j,\partial I)^{\frac{s}{2}} }\dE_{\nu}[({\psi}^{(1)}_j)^2]^{\frac{1}{2}}.
\end{equation*}
Inserting the last display into (\ref{eq:here}) will then give
\begin{equation}\label{eq:(1)}
    \dE_{\nu}[|w|^2]^{\frac{1}{2}}\leq C(\beta)n^{\kappa\ve} n^{\frac{1-s}{2}}\sum_{j=1}^{n} \frac{\dE_{\nu}[({\psi}^{(1)}_j)^2]^{\frac{1}{2}}}{1+d(j,\partial I)^{\frac{s}{2}}},
\end{equation}
where $d$ stands for the usual distance on $I$. Our main task is to establish that ${\psi}^{(1)}_j$ typically decays in $d_{\bn}(j,i_0)^{-(2-s)}$, making the left-hand side of (\ref{eq:(1)}) bounded by $n^{-1/2}$. This will show that $\psi_j$ is bounded by $d_{\bn}(j,i_0)^{-(2-s)}+O(n^{-1/2})$, thus concluding the proof of Theorem \ref{theorem:decay gap correlations} by choosing $n$ large enough.

Let us finally complete Step 4 and control the operator (\ref{eq:tH5}). Recall that $B^T=C$.

\begin{lemma}\label{lemma:BDC}
Let $\nu$ satisfying Assumptions \ref{assumptions:mu2}. Let $s\in (0,1)$. Let $B, C, D$ be as in (\ref{eq:blockHN}). Recall $I=\{1,\ldots,n\}$ and let $d$ be the usual distance on $I$. There exist $C(\beta), \kappa>0$ such that for all $\eta$, $\phi\in L^2(\nu)$ and for each $1\leq i,j\leq n$, we have
\begin{equation}\label{eq:BDC}
|\dE_{\nu}[(\eta Ce_j)^T (\beta D+\mc{L}^\nu\otimes I_{\bn-n})^{-1}(\phi C e_i)]|\leq \frac{C(\beta)n^{\kappa\ve}}{d(i,\partial I)^{\frac{s}{2}}d(j,\partial I)^{\frac{s}{2}}}\dE_{\nu}[\eta^2]^{\frac{1}{2}}\dE_{\nu}[\phi^2]^{\frac{1}{2}}.
\end{equation}
In addition for all $\eta, \phi\in L^2(\nu)$ and for each $1\leq i,j,l\leq n$, we have
\begin{multline}\label{eq:BDC b}
|\dE_{\nu}[(\eta C(e_j-e_l))^T (\beta D+\mc{L}^\nu\otimes I_{\bn-n})^{-1}(\phi Ce_i)]|\\ \leq \frac{C(\beta)n^{\kappa\ve}d(j,l)}{\min(d(j,\partial I)^{1+\frac{s}{2}},d(l,\partial I)^{1+\frac{s}{2}})}\frac{1}{d(i,\partial I)^{\frac{s}{2}}}\dE_{\nu}[\eta^2]^{\frac{1}{2}}\dE_{\nu}[\phi^2]^{\frac{1}{2}}.
\end{multline}
\end{lemma}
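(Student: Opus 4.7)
The plan is to reformulate the left-hand side of (\ref{eq:BDC}) in terms of an auxiliary Helffer--Sjöstrand-type equation and then reduce the desired bound to a purely deterministic Schur-complement estimate on the Riesz matrix $\bb{H}_{s,m}$. Let $u\in L^2(\{n+1,\ldots,N\},H^1(\mu))$ be the unique solution of
\begin{equation*}
(\beta D+\mc{L}^{\mu}\otimes\Id)u=Ce_i\phi\ \text{on}\ \pi(\mc{M}_N),\qquad \nabla u\cdot\vec{n}=0\ \text{on}\ \partial\pi(\mc{M}_N),
\end{equation*}
so that the left-hand side of (\ref{eq:BDC}) is exactly $\dE_{\mu}[\eta(Bu)_j]$. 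Well-posedness follows from Proposition \ref{proposition:existence HS equation}, since $D$ is a principal submatrix of the positive definite matrix $\bb{H}_{s,m}$ and is therefore itself positive definite.

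The crucial observation is that $D$ has constant coefficients in $x$, so that $\beta D+\mc{L}^{\mu}\otimes\Id$ decouples completely in the eigenbasis of $D$. Writing $D=U\Lambda U^{T}$ with $\Lambda=\diag(\lambda_m)$ and $\tilde{u}=U^{T}u$, one obtains the scalar equations $(\beta\lambda_m+\mc{L}^{\mu})\tilde{u}_m=(U^{T}Ce_i)_m\phi$, and each scalar resolvent has $L^2(\mu)$-operator norm at most $(\beta\lambda_m)^{-1}$ since $\mc{L}^{\mu}$ is non-negative self-adjoint on $L^2(\mu)$. Expanding $(Bu)_j=\sum_m (BU)_{j,m}\tilde{u}_m$ and applying Cauchy--Schwarz both in the eigenindex $m$ and on $\mu$ yields the key inequality
\begin{equation*}
|\dE_{\mu}[\eta(Bu)_j]|\leq \frac{1}{\beta}\dE_{\mu}[\eta^2]^{1/2}\dE_{\mu}[\phi^2]^{1/2}\sqrt{(BD^{-1}B^{T})_{j,j}}\sqrt{(C^{T}D^{-1}C)_{i,i}}.
\end{equation*}
Since $\bb{H}_{s,m}$ is symmetric, $C=B^{T}$ and both radicands are diagonal entries of the single matrix $BD^{-1}B^{T}$.

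The task thus reduces to the deterministic inequality
\begin{equation*}
(BD^{-1}B^{T})_{k,k}\leq \frac{C(\beta)n^{\kappa\ve}}{d(k,\partial I)^{s}},\qquad k\in I,
\end{equation*}
which I would prove via the Schur-complement identity $BD^{-1}B^{T}=A-S$, where $A$ is the $(1,1)$-block of $\bb{H}_{s,m}$ in (\ref{eq:blockHN}) and $S^{-1}$ is the restriction to $I\times I$ of $\bb{H}_{s,m}^{-1}$. The sharp off-diagonal decay $|(\bb{H}_{s,m}^{-1})_{k,l}|\leq C/(1+d(k,l)^{2-s})$ from Lemma \ref{lemma:inverse}, combined with the semi-discrete Fourier analysis from its proof applied in reverse (the Fourier symbol of $S$ being $|\theta|^{s-1}$, so that $S$ is itself a Riesz-type matrix of exponent $s$), allows one to compare the diagonal entries of $A$ and $S$ and isolate the truncation-induced factor $d(k,\partial I)^{-s}$ coming from the finite-volume boundary.

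For the increment bound (\ref{eq:BDC b}), I would run the identical argument with $e_j-e_l$ replacing $e_j$, reducing the task to the quadratic form $(e_j-e_l)^{T}BD^{-1}B^{T}(e_j-e_l)$. The mean-value estimate $|B_{j,k}-B_{l,k}|\leq C|j-l|/d(k,\{j,l\})^{1+s}$, immediate from differentiating the Riesz kernel, combined with the same Schur-complement Fourier analysis delivers the claimed bound. The main obstacle is the deterministic inequality on $(BD^{-1}B^{T})_{k,k}$: because $A_{k,k}$ and $S_{k,k}$ are of comparable magnitude, no naive entrywise estimate suffices, and the boundary factor $d(k,\partial I)^{-s}$ can emerge only from a precise Fourier cancellation of the kind performed in Lemma \ref{lemma:inverse}. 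The spectral decoupling step crucially relies on $D$ having constant coefficients; any $x$-dependence in $D$ would entangle the geometric and stochastic structures in precisely the way the remainder of Section \ref{section:large scale} is designed to control.
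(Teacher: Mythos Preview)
Your probabilistic reduction is correct and genuinely different from the paper's approach. The diagonalization of the constant matrix $D$, followed by the scalar resolvent bound $\|(\beta\lambda_m+\mc{L}^\mu)^{-1}\|_{L^2\to L^2}\leq (\beta\lambda_m)^{-1}$ and Cauchy--Schwarz in the eigenindex, cleanly yields
\[
|\dE_{\mu}[\eta(Bu)_j]|\leq \beta^{-1}\dE_{\mu}[\eta^2]^{1/2}\dE_{\mu}[\phi^2]^{1/2}\sqrt{(BD^{-1}B^{T})_{j,j}}\sqrt{(C^{T}D^{-1}C)_{i,i}},
\]
so the whole lemma is reduced to the purely deterministic diagonal estimate $(BD^{-1}B^{T})_{k,k}\leq Cn^{\kappa\ve}d(k,\partial I)^{-s}$. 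The paper does \emph{not} go this route: it passes to point coordinates, writes $Ce_i$ as a weighted sum of gap vectors $N(e_{i+k}-e_i)$, and then applies the multiscale block-average decomposition of Bourgade to each such gap. The gain there is the explicit lower bound $v\cdot\hat{D}^{(1)}v\geq N^2|I_m|^{-(s+1)}|v|^2$ on divergence-free vectors localized in a window $I_m$, which gives $|I_m|^s$ per block and hence $k^{s+\kappa\ve}$ after telescoping. Your approach separates the stochastic and deterministic parts more cleanly; the paper's approach avoids having to invert $D$ altogether.

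The genuine gap is the deterministic estimate itself. You write that $(BD^{-1}B^T)_{k,k}=A_{k,k}-S_{k,k}$ with $S^{-1}=(\bb{H}_{s,m}^{-1})|_{I\times I}$, and propose to extract $d(k,\partial I)^{-s}$ from a ``precise Fourier cancellation of the kind performed in Lemma~\ref{lemma:inverse}''. But $S$ is not translation invariant (it feels the boundary of $I$), so it has no Fourier symbol; the statement ``the Fourier symbol of $S$ being $|\theta|^{s-1}$'' is only a heuristic for the bulk. Lemma~\ref{lemma:inverse} inverts a \emph{circulant} matrix and gives off-diagonal decay of $\bb{H}_s^{-1}$; it says nothing about inverting a restriction of that inverse to a window, and the trivial operator inequality $D^{-1}\leq(\bb{H}_{s,m}^{-1})|_{I^c\times I^c}$ combined with the entrywise bound of Lemma~\ref{lemma:inverse} is not summable (the double sum $\sum_{p,q\in I^c}|k-p|^{-s}(1+|p-q|)^{-(2-s)}|k-q|^{-s}$ does not converge to $O(r^{-s})$ without cancellation). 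Turning your sketch into a proof would require a boundary-layer analysis for the inverse of a truncated Riesz-type matrix, which is work of the same order as (and not contained in) Lemma~\ref{lemma:inverse}. The paper's block-average argument is precisely a way to obtain the $r^{-s}$ decay without ever facing this inversion.
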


Note that the term in the left-hand side of (\ref{eq:BDC}) resembles the covariance between $\partial_i\Hc_{n,N}^\g$ and $\partial_j \Hc_{n,N}^\g$ under a Gaussian measure. This suggests us to proceed like for controlling the variances of $\partial_i\Hc_{n,N}^\g$ and $\partial_{j}\Hc_{n,N}^\g$, which requires controlling the fluctuations of large gaps. One may thus import the method of \cite[Lem.~3.16-3.17]{bourgade2012bulk} which builds on a block decomposition of large gaps to exploit the convexity of the energy at different scales. 

\begin{proof}[Proof]
First note that since $\beta D+\mc{L}^{\nu}\otimes I_{\bn-n}$ is a positive operator on $L^2(I,H^1(\nu))$, we have
\begin{multline}\label{eq:sub}
    |\dE_{\nu}[(\eta C e_j)^{T}(\beta D+\mc{L}^\nu\otimes I_{\bn-n})^{-1}(\phi Ce_i)]|\\\leq \dE_{\nu}[(\eta Ce_j)\cdot (\beta D+\mc{L}^\nu\otimes I_{\bn-n})^{-1}(\eta Ce_j)]^{\frac{1}{2}}\dE_{\nu}[(\phi C e_i)\cdot (\beta D+\mc{L}^\nu\otimes I_{\bn-n})^{-1}(\phi Ce_i)]^{\frac{1}{2}}.
\end{multline}
Using the positivity of $\mc{L}^\nu\otimes I_{\bn-n}$ and $D$, one can write
\begin{multline*}
  \dE_{\nu}[(\eta C e_j)\cdot (\beta D+\mc{L}^\nu\otimes I_{\bn-n})^{-1}(\eta Ce_j)]\leq \beta^{-1}\dE_{\nu}[(\eta Ce_j)\cdot D^{-1}(\eta Ce_j)]=\beta^{-1}\dE_{\nu}[\eta^2] (Ce_j)\cdot D^{-1}(Ce_j).
\end{multline*}
The right-hand side of the last display can be identified with the variance of 
$(CZ)_j$ where $Z$ is a Gaussian vector $Z\sim \mathcal{N}(0,D)$. Let $g_{\bn,s,K_0}$ be the Riesz kernel truncated at $K_0$, defined by
\begin{equation*}
    g_{\bn,s,K_0}(i)=\sum_{k,l\in I_{i,1}:d_{\bn}(k,l)\geq K_0}g_{\bn,s}''(i-k),
\end{equation*}
where $g''_{\bn,s}$ is as in (\ref{eq:defgss}) and $I_{i,1}$ defined by
\begin{equation*}
    I_{i,1}=\Bigr\{k\in\bar{I}:d_{\bn}(k,\frac{i+1}{2})>\frac{1}{2}d_{\bn}(i,1)\Bigr\}.
\end{equation*}
Recall that for each $j\in I$, $(CZ)_j$ is given by
\begin{equation*}
(CZ)_j=\sum_{k\in\bar{I}\setminus I }g_{\bn,s,K_0}(j-k)Z_k.
\end{equation*}
Define the random vector $(Y_k)_{k\in \bar{I}\setminus I}$ such that $Y_n=0$ and for each $k=n+1,\ldots,\bn$, $Z_k=Y_k-Y_{k-1}$. By discrete integration by parts, one may write
\begin{multline}\label{eq:PnCPn}
    \sum_{k=n+1}^{\lfloor \frac{\bn}{2}\rfloor} g_{\bn,s,K_0}(j-k)Z_k=\sum_{k=n+1}^{\lfloor \frac{\bn}{2}\rfloor}g_{\bn,s,K_0}(j-k)(Y_k-Y_{k-1})\\=-\sum_{k=n+1}^{\lfloor \frac{\bn}{2}\rfloor}(g_{\bn,s,K_0}(j-k)-g_{\bn,s,K_0}(j-k+1))(Y_k-Y_n)+g_{\bn,s,K_0}(j-\lfloor \frac{\bn}{2}\rfloor) (Y_{\lfloor \frac{\bn}{2}\rfloor}-Y_n).
\end{multline}
We claim that there exists $C>0$ and $\kappa>0$ such that for each $n\leq i\leq \bn$ and $1\leq i+k\leq \bn/2$,
\begin{equation}\label{eq:VarS}
    \Var[Y_{i+k}-Y_i]\leq Ck^{s+\kappa\ve}.
\end{equation}
Proceeding as in (\ref{eq:PnCPn}) for the sum between $\lfloor \frac{\bn}{2}\rfloor+1$ and $n$ entails, modulo (\ref{eq:VarS}),
\begin{equation*}
    | e_j\cdot  BD^{-1}C e_j)|\leq \frac{C}{1+d(j,\partial I)^{s/2}}.
\end{equation*}

Let us now prove the claim (\ref{eq:VarS}). Fix $i, k$ such that $n+1\leq i\leq i+k\leq \bn$. Following the lines of \cite[Lem.~3.16-3.17]{bourgade2012bulk}, one shall split $Y_{i+k}-Y_i$ into a sum of block averaged statistics. For each $1\leq l\leq n/2$ and $j\in \{1,\ldots,n\}$, let $I_l(j)$ be an interval of integers in $\{n+1,\ldots,\bn\}$ of cardinal $l+1$ such that $j\in I_l(j)$. Define the block average
\begin{equation*}
    Y_j^{[l]}=\frac{1}{l+1}\sum_{m\in I_l(j)}Y_m.
\end{equation*}
Let $\alpha=\frac{1}{p}$ for a large $p\in\mathbb{N}^*$. One may write
\begin{equation}\label{eq:block5}
    Y_i-Y_{i}^{[k]}=\sum_{m=0}^{p-1}(Y_i^{[\lfloor k^{m\alpha}\rfloor ]}-Y_i^{[\lfloor k^{(m+1)\alpha}\rfloor ]}).
\end{equation}
For each $m\in\{0,\ldots,p-1\}$, denote $G_m=Y_i^{[\lfloor k^{m\alpha}\rfloor ]}-Y_i^{[\lfloor k^{(m+1)\alpha}\rfloor ]}$ and $I_m=I_{\lfloor k^{(m+1)\alpha}\rfloor }(i)$. Denote $\tilde{H}\in\mc{M}_{\bn-n+1}(\dR)$, the Hessian of the energy (i.e the opposite of the log density) associated to the Gaussian vector $Z$. Note that for each $n+1\leq i,j\leq \bar{n}$
\begin{equation*}
    \tilde{H}_{i,j}=\begin{cases}
        g''_{s,\bn}(j-i)\mathds{1}_{d(i,j)\geq K_0}& \text{if $i\neq j$}\\
        -\sum_{k\neq i\in \{n+1,\ldots,\bn\}:d(i,k)\geq K_1}g''_{s,\bn}(k-i) & \text{if $i=j$}.
    \end{cases}
\end{equation*}
On $\dR^{\bn-n+1}$ let us introduce the coordinates $$z=((z_i)_{i\in I_m}, (z_i)_{i\in \{n,\ldots,\bn\}\setminus I_m }).$$ Then let $u=(\partial_i G_m)\in \dR^{|I_m|}$. Let us decompose on $\dR^{|I_m|}\times \dR^{\bn-n+1-|I_m|}$ into
\begin{equation*}
    \tilde{H}=\begin{pmatrix}
\tilde{A} & \tilde{B}\\
\tilde{C} & \tilde{D}
\end{pmatrix},\quad \tilde{A}\in \mc{M}_{|I_m|}(\dR).
\end{equation*}
Using the Schur complement formula, one may express the variance of $G_m$ as
\begin{equation*}
    \Var[G_m]=u\cdot (\tilde{A}-\tilde{B}\tilde{D}^{-1}\tilde{C})^{-1}u.
\end{equation*}
Now define the matrix $A\in\mc{M}_{|I_m|}(\dR)$ given for each $1\leq i,j\leq |I_m|$by 
\begin{equation*}
    \tilde{A}^{(1)}_{i,j}=\begin{cases}
   g_{\bn,s}''(j-i)\mathds{1}_{d(i,j)\geq K_0} & \text{if $i\neq j$}\\
    -\sum_{k\neq i\in I_m:d(i,k)\geq K_0}g''_{s,\bn}(k-i)& \text{if $i=j$}.
    \end{cases}
\end{equation*}
The point is that $\tilde{A}-\tilde{B}\tilde{D}^{-1}\tilde{C}=\tilde{A}^{(1)}+\tilde{A}^{(2)}-\tilde{B}\tilde{D}^{-1}\tilde{C}\geq \tilde{A}^{(1)}$, since $\tilde{A}^{(2)}-\tilde{B}\tilde{D}^{-1}\tilde{C}$ is the Schur complement of a positive-definite matrix. It thus follows that
\begin{equation*}
\Var[G_m]\leq  u\cdot (\tilde{A}^{(1)})^{-1} u.
\end{equation*}
Let $v=(\tilde{A}^{(1)})^{-1} u$. Using the fact that $\sum_{i\in I_m}\partial_i G_m=0$ and $\tilde{A}^{(1)}\sum_{i\in I_m}e_i=0$, one may check that $\sum_{i\in I_m}v_i=0$. It follows that 
\begin{equation*}
 v\cdot \tilde{A}^{(1)}v\geq \sum_{i\neq j\in I_m:d(i,j)\geq K_0}g_{\bn,s}''(i-j)(v_i-v_j)^2\geq\frac{1}{|I_m|^{s+1}}|v|^2.
\end{equation*}
Furthermore observe that
\begin{equation*}
|\nabla G_m|^2\leq \frac{C}{|I_m|}.
\end{equation*}
By integration by parts, the two last displays give 
\begin{equation*}
 \beta\frac{1}{|I_m|^{s+1}}|v|^2\leq v\cdot \tilde{A}^{(1)}v\leq C|v|\frac{1}{|I_m|}.
\end{equation*}
It follows that $|v|\leq C|I_m|^{\frac{1}{2}+s}$ which entails
\begin{equation}\label{eq:Gm5}
  \Var[G_m]\leq C|I_m|^s.
\end{equation}
Summing (\ref{eq:Gm5}) over $m$ and using (\ref{eq:block5}), one finds that 
\begin{equation*}
\Var[Y_{i+k}-Y_i]\leq Ck^{s+\kappa\ve},
\end{equation*}
which yields (\ref{eq:VarS}), thus concluding the proof of (\ref{eq:BDC}). 

The proof of (\ref{eq:BDC b}) follows from the estimate (\ref{eq:VarS}) and the inequality 
\begin{multline}
    |\dE_{\nu}[(\eta C(e_j-e_l))^T (\beta D+\mc{L}^\nu\otimes I_{\bn-n})^{-1}(\phi Ce_i)]|\\ \leq \beta^{-1}\dE_{\nu}[\eta^2]^{\frac{1}{2}}\dE_{\nu}[\phi^2]^{\frac{1}{2}}(C(e_j-e_l)\cdot D^{-1}C(e_j-e_l))^{\frac{1}{2}}(Ce_i\cdot D^{-1}Ce_i)^{\frac{1}{2}}.
\end{multline}
\end{proof}

Let us now control the operator $\tM$ appearing in (\ref{eq:tH5}). 

\begin{lemma}\label{lemma:cond variance}
Let $\mc{A}$ be the good event (\ref{eq:good5 gap}). Let $d$ be the usual distance on $\{1,\ldots,n\}$. There exist $\kappa>0, C(\beta)>0$ such that uniformly in $x\in \mc{A}$, $1\leq i\leq j\leq n$ and $N$, we have
\begin{equation}\label{eq:condQ}
    \Var_{\dGiQ^\g(\cdot\mid x)}[\partial_i \Hc_{n,N}^\g,\partial_j\Hc_{n,N}^\g]\leq \frac{C(\beta)n^{\kappa\ve}}{d(i,\partial I)^{\frac{s}{2}}d(j,\partial I)^{\frac{s}{2}}}.
\end{equation}
Let $\nu$ satisfying Assumptions \ref{assumptions:mu} and $M$ be as in (\ref{eq:tH5}). There exist $\kappa>0, C(\beta)>0$ such that for all $\phi, \eta\in L^2(\nu)$ and each $1\leq i,j\leq n$,
\begin{equation}\label{eq:Mg1}
\dE_{\nu}[\phi e_i\cdot \tM(\eta e_j)]\leq \frac{C(\beta)n^{\kappa\ve}}{d(i,\partial I)^{\frac{s}{2}}d(j,\partial I)^{\frac{s}{2} }}\dE_{\nu}[\eta^2]^{\frac{1}{2}}\dE_{\nu}[\phi^2]^{\frac{1}{2}}
\end{equation}
and for each $1\leq i,l,j\leq n$,
\begin{multline}\label{eq:Mg2}
\dE_{\nu}[(\phi e_i)\cdot\tM(\eta (e_j-e_l))]\leq \frac{C(\beta)n^{\kappa\ve}}{d(j,\partial I)^{\frac{s}{2}}}\frac{d(i,l)}{(d(i,\partial I)\wedge d(l,\partial I))^{1+\frac{s}{2} }}\dE_{\nu}[\phi^2]^{\frac{1}{2}}\dE_{\nu}[\eta^2]^{\frac{1}{2}}\\+C(\beta)e^{-c(\beta)n^{\delta}}\sup|\phi|\sup|\eta|.
\end{multline}
\end{lemma}
\begin{proof}
The control (\ref{eq:condQ}) is a direct consequence a rigidity estimate under $\dGiQ(\cdot\mid x)$ that we defer to Lemma \ref{lemma:local laws mu(t)} in the Appendix. Regarding the definition of (\ref{eq:tH5}), the bound on the Schur complement (\ref{eq:Mg1}) follows from (\ref{eq:condQ}) and Lemma \ref{lemma:BDC}. Since $\mc{A}$ has overwhelming probability one may bound the contribution involving $(\nabla^2 \Hc_n^\g +\dE_{\dGiQ^\g(\cdot\mid x)}[\nabla^2 \Hc_{n,N}^\g(x,\cdot)])\mathds{1}_{\mc{A}^c}-A\mathds{1}_{\mc{A}^c}$ by $C(\beta)\sup|\phi|\sup|\eta|e^{-c(\beta)n^{\delta}}$.
\end{proof}

\subsection{Elliptic regularity estimate}
The aim is now to prove a decay estimate on the solution of (\ref{eq:equ psi1}). We first derive an elliptic regularity estimate and give an $L^2$ bound on the discrete primitive of order $\frac{3}{2}-s$ of $\psi$ in terms of $|\lL_{1/2}\psi|$. We then state a straightforward control on the $L^1$ norm of the discrete primitive of order $1-s$ of $\psi$ with respect to $|\lL_{3/2-s}\psi|$. By interpolation, this yields via a discrete Gagliardo-Nirenberg inequality a control on the $L^{p}$ norm of the fractional primitive of order $1-\frac{s}{2}$ of $\psi$ for $p=\frac{1}{1-s/2}$. Throughout the section, for all $\alpha>0$, $\lL_{\alpha}$ stands for the distortion matrix
\begin{equation}\label{eq:Lalpha5}
    \lL_{\alpha}=\diag(\gamma_1,\ldots,\gamma_{\bn})\quad \text{with}\quad\gamma_i=1+d_{\bn}(i,i_0)^{\alpha}\quad \text{for each $1\leq i\leq \bn$}.
\end{equation}

\begin{lemma}\label{lemma:elliptic}
Let $s\in (0,1)$. Let $\nu$ and $\sM$ satisfying Assumptions \ref{assumptions:mu2} and \ref{assumptions:sM}. Let $\chi_n\in H^{1}(\nu)$, $i_0\in\bar{I}$ and $\psi\in L^2(\bar{I},H^1(\nu))$ be the solution of
\begin{equation}\label{eq:per2n}
    \left\{
    \begin{array}{ll}
       \beta\sM\psi+\mc{L}^{\nu}\psi=\chi_n e_{i_0}+\lambda(e_1+\ldots+e_{\bn}) & \text{on }\pi(\mc{M}_N) \\
        \psi\cdot (e_1+\ldots+e_{\bn})=0 & \text{on }\pi(\mc{M}_N)\\
        \psi\cdot\vec{n}=0 & \text{on }\partial \pi(\mc{M}_N).
    \end{array}
    \right.
\end{equation}
Recalling (\ref{eq:Lalpha5}), there exists $\kappa>0$ such that letting $p=\frac{1}{1-s/2}$, 
\begin{multline}\label{eq:fou}
    \dE_{\nu}\Bigr[\Bigr(\sum_{i=1}^{\bn} |(g_{\bn,s/2}*\psi)_i|^p\Bigr)^{2/p}\Bigr]^{\frac{1}{2}}\leq C(\beta)n^{\kappa\ve}\Bigr(\dE_{\nu}[\chi_n^2]^{\frac{1}{2}}+\sup|\chi_n|e^{-c(\beta)n^{\delta}}+\dE_{\nu}[|\lL_{1/2}\psi|^2]^{\frac{1}{2}}+n\dE_{\nu}[\lambda^2]^{\frac{1}{2}}\Bigr)^{s}\\
    \times\dE_{\nu}[|\lL_{3/2-s}\psi|^2]^{\frac{1-s}{2}}.
\end{multline}
\end{lemma}

\begin{proof}
Let us denote $v=\chi_n e_{i_0}+\lambda(e_1+\ldots+e_{\bn})$. Let $\psi\in L^2(\bar{I},H^1(\nu))$ be the solution of (\ref{eq:per2n}). In view of (\ref{eq:tildeA}), the matrix $\sM$ may be split into $\sM=\sM^{(1)}+\sM^{(2)}$ where $\sM^{(1)}\in\mc{M}_{\bar{n}}(\dR)$ is the constant Toeplitz matrix with the Riesz kernel $g_{\bn,s}$ and $\sM^{(2)}$ satisfying
\begin{equation*}
    |\sM^{(2)}_{i,j}|\leq \frac{n^{\kappa\ve}}{d_{\bn}(i,j)^{1+\frac{s}{2}}},\quad \text{for each $i,j\in \bar{I}$}.
\end{equation*}
Taking the convolution of (\ref{eq:per2n}) with $g_{\bn,s-1}$ and the scalar product with $\psi$ easily gives
\begin{equation}\label{eq:s1}
    \dE_{\nu}\Bigr[\sum_{i=1}^{\bn}(g_{\bn,s-1/2}*\psi)_i^2\Bigr]^{\frac{1}{2}}\leq C(\beta)n^{\kappa\ve}\Bigr(\dE_{\nu}[\chi_n^2]^{\frac{1}{2}}+\sup|\chi_n|e^{-c(\beta)n^{\delta}}+\dE_{\nu}[|\lL_{1/2}\psi|^2]^{\frac{1}{2}}+n\dE_{\nu}[\lambda^2]^{\frac{1}{2}}\Bigr).
\end{equation}
Indeed, the differential terms satisfies
\begin{equation*}
    \sum_{i=1}^{\bn} \dE_{\nu}[\mc{L}^{\nu}((g_{\bn,s-1}*\psi)_i)\psi_i]=\sum_{i=1}^{\bn} \dE_{\nu}[\nabla (g_{\bn,s-1}*\psi)_i\cdot\nabla \psi_i]=\sum_{i,j,k}\dE_{\nu}[g_{\bn,s-1}(k-i)\partial_j \psi_k\cdot \partial_j \psi_i].
\end{equation*}
Since $g_{\bn,s-1}$ is a positive kernel, for each $j\in\{1,\ldots,\bn\}$, setting $u_k=\partial_j \psi_k$, we have
\begin{equation*}
    \sum_{i,k}g_{\bn,s-1}(k-i)u_i u_k\geq 0,
\end{equation*}
which justifies the claim (\ref{eq:s1}). 

Recall that by Remark \ref{remark:discrete}, the convolution of a discrete function $f:\mathbb{Z}/\bn\mathbb{Z}$ with $g_{\bn,\alpha}$ for $\alpha>-1$ corresponds to a fractional primitive of order $1-\alpha$ of $f$. One can now interpolate between the $L^1$ norm of the primitive of $\psi$ of order $1-s$ and the $L^2$ norm of the primitive of order $1-\frac{s}{2}$. Let $\phi:\dT\to\dR$ smooth enough. Applying Lemma \ref{lemma:brezis} to $u:=g_{\bn,s-1/2}*\psi$ with $s_1=0$, $s_2=\frac{1}{2}$, $s_0=\frac{1}{2}-\frac{s}{2}\in(s_1,s_2)$, $\theta=s$, $p_1=2$, $p_2=1$ and $p=\frac{1}{1-\frac{s}{2}}$ gives
\begin{equation}\label{eq:interp}
    \Vert g_{\bn,s/2}*\psi\Vert_{L^{\frac{1}{1-s/2}}(\dT)}\leq C\Vert g_{\bn,s}*\psi\Vert_{L^1(\dT)}^{\theta}\Vert g_{\bn,s-1/2}*\psi\Vert_{L^2(\dT)}^{1-\theta}.
\end{equation}
Let $\phi_0:\dT\to\dR$ smooth such that $\phi_0(\frac{i}{\bar{n}})=\psi_i$ for each $i\in\{1,\ldots,\bar{n}\}$. Using (\ref{eq:interp}) and making $\phi_0$ slightly vary, we deduce that 
\begin{equation}\label{eq:GN}
   \Bigr(\sum_{i=1}^{\bn}|(g_{\bn,s/2}*\psi)_i|^{\frac{1}{1-s/2}}\Bigr)^{1-s/2}\leq C\Bigr(\sum_{i=1}^{\bn}|(g_{\bn,s}*\psi)_i|\Bigr)^{1-s}\Bigr(\sum_{i=1}^{\bn}(g_{\bn,s-1/2}*\psi)_i^2\Bigr)^{\frac{s}{2}}.
\end{equation}
Besides, by Cauchy-Schwarz inequality, it is straightforward to check that
\begin{equation}\label{eq:gs psi}
 \sum_{i=1}^{\bn}|(g_{\bn,s}*\psi)_i|\leq C(\beta)n^{\kappa\ve}|\lL_{3/2-s}\psi|.
\end{equation}
Inserting (\ref{eq:gs psi}) and (\ref{eq:s1}) into (\ref{eq:GN}), one obtains (\ref{eq:fou}).
\end{proof}

\subsection{Control on derivatives}
The aim is now to control the global decay of $\nabla\psi_i$ with respect to the global decay of $\psi_i$. The proof relies on the distortion argument of Lemma \ref{lemma:illustrative}, the central task being to bound a variant of the commutator $\lL_\alpha \sM\lL_\alpha^{-1}-\sM$ from above. 

Let us pause to explain the strategy of this proof. At first let us fix a small parameter $\ve_0>0$. In view of its specific positive-definiteness structure, $\sM$ can be bounded from below by a matrix $\widetilde{\sM}$ where interactions are cut off for $\db_{\bn}(i,k)>\db_{\bn}(i,i_0)^{1-\ve_0}$. We then seek to control $(\lL_\alpha \sM\lL_\alpha^{-1}\psi^{\dis}-\widetilde{\sM}\psi^{\dis})_i$ for each $1\leq i\leq \bn$. By construction, $(\widetilde{\sM}\psi^{\dis})_i$ may be bounded by $|\lL_{3/2-s-\ve_0}\psi|$. Similarly one can bound the left and right tails of $(\lL_\alpha \sM\lL_\alpha^{-1}\psi^{\dis})_i$ by $|\lL_{3/2-s-\ve_0}\psi|$. We are thus left to estimate 
\begin{equation}\label{eq:exA}
\sum_{k\in A(i)}\psi_k {g}_{\bn,s}(k-i) \quad \text{where}\quad A(i):=\{k\neq i:\db_{\bn}(i,i_0)^{1-\ve_0}\leq \db_{\bn}(i,k)\leq \db_{\bn}(i,i_0)^{1+\ve_0}\}.
\end{equation}
The point is to express this sum with respect $w:=\mathbb{H}_{\bn,s/2}\psi$, the discrete primitive $w$ of order $1-s/2$ of $\psi$, which gives
\begin{equation}\label{eq:sumlA}
   \sum_{k\in A(i)}\psi_k {g}_{\bn,s}(\db_{\bn}(i,k))= \sum_{l=1}^{\bn} \sum_{k\in A(i)}g_{\bn,s/2}^{-1}(\db_{\bn}(k,l)){g}_{\bn,s}(\db_{\bn}(i,k))\mathds{1}_{i\neq k}w_l,
\end{equation}
where $g_{\bn,s/2}^{-1}:=\mathbb{H}_{\bn,s/2}^{-1}e_1$. Given an index $l$, one shall therefore estimate a \emph{truncated} convolution product between $g_{\bn,s}$ and $g_{\bn,s/2}^{-1}$. If $l$ lies away from the boundary of $A(i)$, this product almost equals $g_{\bn,s}*g_{\bn,s/2}^{-1}(l)\simeq g_{\bn,1-s/2}(l)$. Fixing a threshold of size $\db_{\bn}(i,i_0)^{1-2\ve_0}$, one can decompose (\ref{eq:sumlA}) according to whether $d_{\bn}(l,\partial A(i))\geq d_{\bn}(i,i_0)^{1-2\ve_0}$. Owing to the previous remark and by Hölder's inequality, one can bound the first contribution by the $L^p$ norm of $w$ with $p=\frac{1}{1-s/2}$ and insert (\ref{eq:fou}). On the other hand, the second contribution can be controlled by $|\lL_{3/2-s-\ve_0}\psi|$.

We finally obtain a control on $|\lL_{1-s/2}D\psi|$ depending on $|\lL_{1-s/2-\ve_0}\psi|$ and on $n^{\ve_0}|\lL_{1/2}\psi|$. A reversed inequality will be proved in the next subsection allowing one to control $|\lL_{3/2-s}\psi|$ by $|\lL_{1-s/2}\psi|$. Since $\ve_0>0$ and $3/2-s>1/2$, this will provide a bound on $|\lL_{3/2-s}\psi|$ and $|\lL_{1-s/2}D\psi|$. 

\begin{lemma}\label{lemma:diff}
Let $s\in (0,1)$. Let $\nu$ and $\sM$ satisfying Assumptions \ref{assumptions:mu2} and \ref{assumptions:sM}. Let $\chi_n\in H^{1}(\nu)$, $i_0\in\bar{I}$ and $\psi\in L^2(\bar{I},H^1(\nu))$ be the solution of 
\begin{equation}\label{eq:eq522}
    \left\{
    \begin{array}{ll}
      \beta\sM\psi+\mc{L}^{\nu}\psi=\chi_n e_{i_0}+\lambda(e_1+\ldots+e_{\bn}) & \text{on }\pi(\mc{M}_N) \\
        \psi\cdot (e_1+\ldots+e_{\bn})=0 & \text{on }\pi(\mc{M}_N)\\
       \psi\cdot\vec{n}=0 & \text{on }\partial \pi(\mc{M}_N).
    \end{array}
    \right.
\end{equation}
Let $\alpha_0\in (\frac{1-2s}{1-s},1)$ as in Lemma \ref{lemma:elliptic}. Let $\gamma\geq \frac{1}{2}$. There exist $C(\beta)$ locally uniform in $\beta$, $\kappa>0$, $\delta>0$ and $\ve_0>0$ such that
\begin{multline}\label{eq:add gamma}
     \dE_{\nu}\Bigr[\sum_{i=1}^{\bn} d(i,i_0)^{2(\frac{\gamma}{2}+\frac{1}{4})}|\nabla\psi_i|^2\Bigr]\leq C(\beta)n^{\kappa\ve}\dE_{\nu}[|\lL_{\gamma}\psi|^2]^{\frac{1}{2}}\Bigr(n^{\kappa\ve_0}\dE_{\nu}[|\lL_{1/2}\psi|^2]^{\frac{1-\alpha_0}{2}}\dE_{\nu}[|\lL_{3/2-s}\psi|^2]^{\frac{\alpha_0}{2}}\\+n^{-\ve_0}\dE_{\nu}[|\lL_{3/2-s}\psi|^2]^{\frac{1}{2}}+n^{\kappa\ve_0+1}(\dE_{\nu}[\lambda^2]^{\frac{1}{2}}\Bigr)+n^{\kappa(\ve_0+\ve)}\dE_{\nu}[\chi_n^2].
\end{multline}
\end{lemma}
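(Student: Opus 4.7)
The plan is to test equation~(\ref{eq:eq522}) against a distorted vector field and then exploit the elliptic regularity estimate of Lemma~\ref{lemma:elliptic}. I would set $\lL := \lL_{\gamma/2+1/4}$ and $\psi^{\dis} := \lL\psi$, and choose a smooth cutoff $\eta$ supported on the good event $\mc{A}$ of~(\ref{eq:good5}) (built as in~(\ref{eq:def cutoff illus})). Testing~(\ref{eq:eq522}) against $\eta\lL^2\psi$ and integrating by parts in the $x$-variables (which is legitimate since the weights $\gamma_i$ are constant in $x$) should yield
\begin{equation*}
\dE_\mu[\eta|\lL\nabla\psi|^2] + \beta\dE_\mu[\eta\psi^{\dis}\cdot(\sM' + \delta_\lL)\psi^{\dis}] = \dE_\mu[\eta\lL^2\psi\cdot v] + R,
\end{equation*}
where $v = \chi_n e_{i_0} + \lambda(e_1+\dots+e_{2n})$, $\delta_\lL := \lL\sM'\lL^{-1}-\sM'$ is the commutator, and $R$ is a boundary error depending on $\nabla\eta$. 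The positivity $\sM'\geq n^{-\kappa\ve}\Id$ from Assumption~\ref{assumptions:sM} lets me drop the $\sM'$-contribution. The cutoff error $R$ is handled via the maximum principle of Proposition~\ref{proposition:maximum principle} together with $\mu(\mc{A}^c)\leq e^{-c(\beta)n^\delta}$, producing an exponentially small contribution absorbed into the last summand of~(\ref{eq:add gamma}).

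Next I would bound the source term by splitting $\dE_\mu[\eta\lL^2\psi\cdot v] = \dE_\mu[\eta\chi_n\psi_{i_0}] + \lambda\dE_\mu\bigl[\eta\sum_j\gamma_j^2\psi_j\bigr]$. For the first piece, I use $\gamma_{i_0}=1$ and Cauchy-Schwarz combined with the a priori $L^2$ estimate $\dE_\mu[\psi_{i_0}^2]^{1/2}\leq C(\beta)n^{\kappa\ve}\dE_\mu[\chi_n^2]^{1/2}$ (analogous to Step~1 of the proof of Lemma~\ref{lemma:illustrative}) to obtain the $n^{\kappa(\ve_0+\ve)}\dE_\mu[\chi_n^2]$ contribution. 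For the Lagrange-multiplier piece, I use the constraint $\sum_j\psi_j=0$ to rewrite $\sum_j\gamma_j^2\psi_j = \sum_j(\gamma_j^2-1)\psi_j$; the weight $\gamma_j^2-1\lesssim d(j,i_0)^{\gamma+1/2}\leq n^{1/2}d(j,i_0)^\gamma$ then gives, by Cauchy-Schwarz, the bound $n^{\kappa\ve_0+1}\dE_\mu[\lambda^2]^{1/2}\dE_\mu[|\lL_\gamma\psi|^2]^{1/2}$.

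The heart of the proof is the commutator. Using the symmetry of $\sM'$, a direct computation gives the identity $\psi^{\dis}\cdot\delta_\lL\psi^{\dis} = \tfrac{1}{2}\sum_{i,j}\sM'_{i,j}\psi_i\psi_j(\gamma_i-\gamma_j)^2$. Unlike in the short-range regime of Lemma~\ref{lemma:perturbation lemma}, the row sums $\sum_j|\sM'_{i,j}|$ now diverge, so one cannot control $\delta_\lL$ as a bounded operator on $L^2$. Instead I would split the double sum according to whether $|i-j|\leq d(i,i_0)/2$ (local regime, where the Taylor bound $|\gamma_i-\gamma_j|\lesssim |i-j|\,d(i,i_0)^{\gamma/2-3/4}$ is available) or $|i-j|>d(i,i_0)/2$ (non-local regime). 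The local part, after Cauchy-Schwarz and a careful regrouping of the weights, should factorize into $\dE_\mu[|\lL_\gamma\psi|^2]^{1/2}\cdot n^{-\ve_0}\dE_\mu[|\lL_{3/2-s}\psi|^2]^{1/2}$, which matches the second term inside the parenthesis of~(\ref{eq:add gamma}). For the non-local part I would recast the sum as a pairing against the discrete fractional primitive $g_{s/2}*\psi$; H\"older's inequality combined with the $L^{1/(1-s/2)}$ estimate furnished by Lemma~\ref{lemma:elliptic} then produces, via the Gagliardo-Nirenberg interpolation built into that lemma, the factor $n^{\kappa\ve_0}\dE_\mu[|\lL_{1/2}\psi|^2]^{(1-\alpha_0)/2}\dE_\mu[|\lL_{3/2-s}\psi|^2]^{\alpha_0/2}$, still multiplied by $\dE_\mu[|\lL_\gamma\psi|^2]^{1/2}$.

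The main obstacle will be the commutator bound. Because $s<1$, $\sM'$ is essentially a fractional Laplacian of order $1-s$, and its commutator with a diagonal polynomial weight does not yield a bounded operator on $L^2$ — precisely the obstruction that prevents a direct adaptation of the short-range argument of Section~\ref{section:decay short}. Circumventing this is the reason we must use the elliptic regularity of Lemma~\ref{lemma:elliptic} to reinterpret the non-local commutator contribution as a pairing against a fractional primitive of $\psi$, and combine it with a discrete Gagliardo-Nirenberg inequality to interpolate between $L^1$ and $L^2$ norms of that primitive; this is what ultimately forces the asymmetric exponent $\alpha_0\in(1-s,1)$ to appear in~(\ref{eq:add gamma}).
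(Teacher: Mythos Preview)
Your strategy coincides with the paper's: distort by $\lL_{\gamma/2+1/4}$, integrate by parts, and split the commutator into a near-diagonal piece (controlled by Taylor expanding the weights) and a far piece recast as a pairing against $g_{s/2}*\psi$ and bounded via Lemma~\ref{lemma:elliptic}. The treatment of the source term, the Lagrange multiplier, and the cutoff error is also the same, and your symmetric identity $\psi^{\dis}\cdot\delta_\lL\psi^{\dis}=\tfrac12\sum_{i,j}\sM'_{i,j}\psi_i\psi_j(\gamma_i-\gamma_j)^2$ is correct.

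There is, however, one missing ingredient: you do not explain where the small parameter $\ve_0$ originates, and your threshold $|i-j|\le d(i,i_0)/2$ does not produce it. With that cut, the local contribution after Cauchy--Schwarz is of order $(\log n)^{1/2}\,\dE_\mu[|\lL_\gamma\psi|^2]^{1/2}\dE_\mu[|\lL_{3/2-s}\psi|^2]^{1/2}$: the inner sum $\sum_{|i-k|\le d(i,i_0)/2}|i-k|^{2(2-s)}d(k,i_0)^{-(3-2s)}\sim d(i,i_0)^{2}$ just barely cancels the weights, leaving the logarithmically divergent $\sum_i d(i,i_0)^{-1}$. In the paper $\ve_0$ enters through an \emph{intermediate truncation scale}: instead of keeping $\sM'$ as the positive part, one builds a short-range Hessian $\widetilde{\sM}$ from interactions at range $\le d(i,i_0)^{1-\ve_0}$ (equations~(\ref{eq:shorc})--(\ref{eq:defsmmm})), writes $\lL_\alpha\sM'\lL_\alpha^{-1}=\widetilde{\sM}+\delta_{\lL_\alpha}^{(1)}+\delta_{\lL_\alpha}^{(2)}$, and retains $\widetilde{\sM}\ge n^{-\kappa\ve}\Id$. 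The near-diagonal pieces (II) and (III) of~(\ref{eq:splitdelta}) then carry an extra factor $d(i,i_0)^{-c\ve_0}$ coming from the truncation (see~(\ref{eq:E2})--(\ref{eq:E3})), and this is what produces the $n^{-\ve_0}$ in~(\ref{eq:add gamma}). That small factor is not cosmetic: in Lemma~\ref{lemma:global} the closure of~(\ref{eq:input1})--(\ref{eq:input2}) requires $Cn^{\kappa\ve}n^{-\ve_0/2}<1$ to absorb $\dE_\mu[|\lL_{3/2-s}\psi|^2]^{1/2}$ into the left-hand side, which fails if the coefficient is merely $O(\log n)$. A secondary structural difference is that the paper splits $\sM'=\bb{H}_s+\sM^{(2)}$ into its deterministic and random parts and treats the two commutators $\delta_{\lL_\alpha}^{(1)},\delta_{\lL_\alpha}^{(2)}$ by distinct arguments; the random part $\sM^{(2)}$ has off-diagonal decay $d(i,j)^{-(1+s/2)}$ and is genuinely short-range, so its commutator can be handled as in Lemma~\ref{lemma:perturbation lemma}.
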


\begin{proof}Let $\psi\in L^2(\bar{I},H^1(\nu))$ be the solution of (\ref{eq:eq522}).
\paragraph{\bf{Step 1: a priori estimates and distortion}}
First note that $\psi$ satisfies the energetic estimate
\begin{equation}\label{eq:ap1}
    \dE_{\nu}[|\psi|^2]^{\frac{1}{2}}+\dE_{\nu}[|D\psi|^2]^{\frac{1}{2}}\leq C(\beta)n^{\kappa\ve}\dE_{\nu}[\chi_n^2]^{\frac{1}{2}}.
\end{equation}
For $\alpha\geq \frac{1}{2}$, let $\lL_{\alpha}\in\mc{M}_{\bn}(\dR)$ be as in (\ref{eq:Lalpha5}). Let $\psi^{\dis}=\lL_{\alpha}\psi$. Multiplying (\ref{eq:eq52}) by $\lL_\alpha$, one can see that $\psi^{\dis}$ solves
\begin{equation*}
    \beta \lL_\alpha\sM\lL_\alpha^{-1}\psi^{\dis}+\mc{L}^{\nu}\psi^{\dis}=\chi_ne_{i_0}+\lambda\lL_{\alpha}(e_1+\ldots+e_{\bn}).
\end{equation*}
In contrast with the short-range case, one cannot expect $|\sM\psi^{\dis}|$ to be of order $n^{\kappa\ve}$ under $\nu$ if $\alpha=\frac{3}{2}-s$ and one should therefore not split $\lL_\alpha\sM\psi^{\dis}$ into $\sM\psi^{\dis}+(\lL_\alpha\sM\lL_\alpha^{-1}-\sM)\psi^{\dis}$.  We will instead isolate short-range interactions. Fix a small parameter $\ve_0>0$. For each $i, j\in \bar{I}$, let $$I_{i,j}:=\Bigr\{k\in\bar{I}:d_{\bn}(k,\frac{i+j}{2})>\frac{1}{2}d_{\bn}(i,j)\Bigr\}.$$ 
By Assumptions \ref{assumptions:mu} item (\ref{eq:alphaik}), there exists a family of functions $(\alpha_{i,j})_{i,j\in \bar{I}}$ satisfying for $\alpha_{i,j}\geq 0$ for each $d_{\bn}(i,j)\geq n^{\ve\gamma}$ such that 
\begin{equation*}
    U_N\cdot \sM U_N=\sum_{k\neq l}\alpha_{k,l}(u_k+\ldots+u_l)^2\geq \sum_{k\neq l :d_{\bn}(k,l)\leq d_{\bn}(k,i_0)^{1-\ve_0} }\alpha_{k,l}(u_k+\ldots+u_l)^2:=U_N\cdot \widetilde{\sM} U_N.
\end{equation*}
It follows that $\sM\geq \widetilde{\sM}$, where
\begin{equation*}
\widetilde{\sM}_{i,j}:=\sum_{k\neq l\in I_{i,j}:\db_{\bn}(k,l)\leq \db_{\bn}(i,i_0)^{1-\ve_0}}\alpha_{k,l}.  
\end{equation*}
Let $l_0:=\lfloor d_{\bn}(i,i_0)^{1-\ve_0}\rfloor$. Let also $\widetilde{\sM}^{(1)}$ defined for each $i,j\in \bar{I}$ by \begin{equation}\label{eq:defsmmm}
    \widetilde{\sM}_{i,j}^{(1)}=\begin{cases}{g}_{\bn,s}(j-i)-{g}_{\bn,s}(l_0)-g_{\bn,s}'(l_0)(\db_{\bn}(j,i)-l_0)& \text{if $d_{\bn}(j,i)\leq l_0$}\\
    0 &\text{if $d_{\bn}(j,i)>l_0$}.
    \end{cases}
\end{equation}
Finally let $\sM^{(2)}=\sM-\bb{H}_{\bn,s}$ be the random part of $\sM$ and set
\begin{equation*}
\delta_{\lL_{\alpha}}^{(1)}=\lL_{\alpha}\bb{H}_{\bn,s}\lL^{-1}_{\alpha}-\widetilde{\sM}^{(1)}\quad
\text{and}\quad \delta_{\lL_{\alpha}}^{(2)}=\lL_{\alpha}\sM^{(2)}\lL^{-1}_{\alpha}-{\sM}^{(2)},
\end{equation*}
so that $\psi^{\dis}$ is solution of
\begin{equation}\label{eq:eqtM}
    \beta \widetilde{\sM}\psi^{\dis}+\beta\delta_{\lL_{\alpha}}^{(1)}\psi^{\dis}+\beta\delta_{\lL_{\alpha}}^{(2)}\psi^{\dis}+\mc{L}^{\nu}\psi^{\dis}=\chi_ne_{i_0}+\lambda\lL_{\alpha}(e_1+\ldots+e_{\bn}).
\end{equation}
\paragraph{\bf{Step 2: integration by parts}}
We proceed as in the proof of Lemma \ref{lemma:illustrative}. Taking the scalar product of (\ref{eq:eqtM}) with $\psi^{\dis}$ reads
\begin{equation}\label{eq:ipp}
    \dE_{\nu}[\beta \psi^{\dis}\cdot (\widetilde{\sM}+\delta_{\lL_\alpha}^{(1)}+\delta_{\lL_\alpha}^{(2)} )\psi^{\dis}]+\dE_{\nu}[|\nabla \psi^{\dis}|^2]=\dE_{\nu}[\psi_{i_0}\chi_n+\lL_{2\alpha}\psi\cdot (e_1+\ldots+e_{\bn})\lambda].
\end{equation}
By construction, there exists a constant $\kappa_0>0$ such that
\begin{equation}\label{eq:kappa0}
    \widetilde{\sM}\geq n^{-\kappa_0\ve}I_{\bar{n}}.
\end{equation}
It therefore remains to control the commutators $\delta_{\lL_\alpha}^{(1)}$ and $\delta_{\lL_\alpha}^{(2)}$. 
\paragraph{\bf{Step 3: control on the long-range commutator}}
This step is the most important of the proof. Recalling that $\lL_{\alpha} \bb{H}_{\bn,s}\lL_{\alpha}^{-1}\psi^{\dis}=\lL_{\alpha} \bb{H}_{\bn,s}\psi$, one may split $\delta_{\lL_\alpha}^{(1)}\psi^{\dis}$ into
\begin{multline}\label{eq:splitdelta}
(\delta_{\lL_\alpha}^{(1)}\psi^{\dis})_i=\underbrace{d_{\bn}(i,i_0)^{\alpha}\hspace{-0.5cm}\sum_{k:d_{\bn}(i,k)\geq d_{\bn}(i,i_0)^{1-\ve_0}}\hspace{-0.5cm} g_{\bn,s}(i-k)\psi_k}_{(\RomanNumeralCaps{1})_i}\\+\underbrace{\sum_{k:d_{\bn}(i,k)\leq d_{\bn}(i,i_0)^{1-\ve_0}}\hspace{-0.5cm}{g}_{\bn,s}(i-k)\Bigr(\frac{d_{\bn}(i,i_0)^{\alpha}}{d_{\bn}(k,i_0)^{\alpha}}-1\Bigr)\psi_k^{\dis}}_{(\RomanNumeralCaps{2})_i}+(\RN{3})_i,
\end{multline}
with
\begin{multline*}
    (\RN{3})_i=g_{\bn,s}'(d_{\bn}(i,i_0)^{1-\ve_0})  \sum_{k:d_{\bn}(i,k)\leq d_{\bn}(i,i_0)^{1-\ve_0}}(d_{\bn}(i,k)-d_{\bn}(i,i_0)^{1-\ve_0})\psi_k^{\dis}\\-{g}_{\bn,s}(d_{\bn}(i,i_0)^{1-\ve_0})\sum_{k:d_{\bn}(i,k)\leq d_{\bn}(i,i_0)^{1-\ve_0}}\psi_k^{\dis}.
\end{multline*}
Let us split $(\RN{1})_i$ further into
\begin{equation*}
    (\RN{1})_i=\underbrace{d_{\bn}(i,i_0)^{\alpha}\sum_{k:d_{\bn}(i,i_0)^{1-\ve_0}\leq d_{\bn}(i,k)\leq d_{\bn}(i,i_0)^{1+\ve_0}}{g}_{\bn,s}(i-k)\psi_k}_{(\RomanNumeralCaps{1})'_i}+\underbrace{d_{\bn}(i,i_0)^{\alpha}\sum_{k:d_{\bn}(i,k)>d_{\bn}(i,i_0)^{1+\ve_0}}{g}_{\bn,s}(i-k)\psi_k}_{(\RomanNumeralCaps{1})''_i}.
\end{equation*}
First note that by Cauchy-Schwarz inequality,
\begin{equation*}
    |(\RN{1})''_i|\leq C\Bigr(\sum_{k:d_{\bn}(i,k)>d_{\bn}(i,i_0)^{1+\ve_0}} \frac{1}{d_{\bn}(i,k)^{2s}}\frac{1}{d_{\bn}(i_0,k)^{3-2s}}\Bigr)^{\frac{1}{2}}|\lL_{3/2-s}\psi|\leq \frac{C}{d_{\bn}(i,i_0)^{1+\ve_0}}|\lL_{3/2-s}\psi|.
\end{equation*}
We turn to the term $(\RN{1})'_i$. The idea is to express it with respect to the primitive of order $1-s/2$ of $\psi$ and to use the $L^{\frac{1}{1-s/2}}$ control of Lemma \ref{lemma:elliptic}. Let $w=\bb{H}_{\bn,s/2}\psi$ and $g_{\bn,s/2}^{-1}=\bb{H}_{\bn,s/2}^{-1}e_1$. One may write
\begin{equation}\label{eq:def1'}
    (\RN{1})_i'=\sum_{l=1}^{\bar{n}} \Bigr(\sum_{k:d_{\bn}(i,i_0)^{1-\ve_0}\leq d_{\bn}(i,k)\leq d_{\bn}(i,i_0)^{1+\ve_0}}\frac{1}{d_{\bn}(i,k)^s}g_{\bn,s/2}^{-1}(k-l)\Bigr)w_l.
\end{equation}
The value of the truncated convolution product in front of $w_l$ depends on whether $l$ lies close to the boundary of $A(i):=\{k:d_{\bn}(i,i_0)^{1-\ve_0}\leq d_{\bn}(i,k)\leq d_{\bn}(i,i_0)^{1+\ve_0}\}$. We claim that there exists a constant $C>0$ such that for each $l\in\{1,\ldots,\bar{n}\}$,
\begin{equation}\label{eq:550}
  \Bigr|\sum_{k\in A(i)}\frac{1}{d_{\bn}(i,k)^s}g_{\bn,s/2}^{-1}(k-l)\Bigr|\leq C\Bigr(\frac{1}{d_{\bn}(i,l)^{s}}\frac{1}{d_{\bn}(l,\partial A(i))^{1-s/2}}+\frac{1}{d_{\bn}(i,l)^{1+\frac{s}{2}-\kappa\ve_0}}\Bigr).
\end{equation}
Let us prove (\ref{eq:550}). First, in view of Lemma \ref{lemma:inverse}, the kernel $g_{\bn,s/2}^{-1}$ satisfies
\begin{equation}\label{eq:gs2}
|g_{\bn,s/2}^{-1}|(k)\leq \frac{C}{d_{\bn}(k,1)^{2-s/2}}\quad \text{for each $1\leq k\leq \bar{n}$},
\end{equation}
\begin{equation}\label{eq:sumgs2}
\Bigr|\sum_{k=1}^{\bar{n}} g_{\bn,s/2}^{-1}(k)\Bigr|\leq \frac{C}{n^{1-\frac{s}{2}}}.
\end{equation}
If $d_{\bn}(l,A(i))\geq d_{\bn}(i,i_0)$, then by (\ref{eq:gs2}), the result if straightforward. Now if $l\in A(i)$ with $d_{\bn}(l,\partial A(i))\geq d_{\bn}(i,i_0)$, one can write
\begin{equation*}
    \sum_{k\in A(i)}\frac{1}{d_{\bn}(i,k)^s}g_{\bn,s/2}^{-1}(k-l)=-\sum_{k\in A(i)}\frac{1}{d_{\bn}(i,k)^s}g_{\bn,s/2}^{-1}(k-l)=O\Bigr(\frac{1}{d_{\bn}(i,i_0)^{1+\frac{s}{2}-\kappa\ve_0}}\Bigr).
\end{equation*}
Finally let $l$ such that $d_{\bn}(l,\partial A(i))\leq d_{\bn}(i,i_0)$. One has
\begin{multline*}
  \sum_{k\in A(i)}\frac{1}{d_{\bn}(i,k)^s}g_{\bn,s/2}^{-1}(k-l)=\sum_{k\in A(i):d_{\bn}(k,l)\leq \frac{3}{4}d_{\bn}(i,i_0) } \frac{1}{d_{\bn}(i,k)^s}g_{\bn,s/2}^{-1}(k-l)\\+\sum_{k\in A(i):d_{\bn}(k,l)> \frac{3}{4}d_{\bn}(i,i_0) } \frac{1}{d_{\bn}(i,k)^s}g_{\bn,s/2}^{-1}(k-l).
\end{multline*}
In view of (\ref{eq:gs2}) there holds
\begin{equation*}
   \Bigr|\sum_{k\in A(i):d_{\bn}(k,l)> \frac{3}{4}d_{\bn}(i,i_0) } \frac{1}{d_{\bn}(i,k)^s}g_{\bn,s/2}^{-1}(k-l) \Bigr|\leq \frac{C}{d_{\bn}(i,i_0)^{1+\frac{s}{2}-\kappa\ve_0}}.
\end{equation*}
Let us split the first term by writing
\begin{equation*}
    \frac{1}{d_{\bn}(i,k)^s}=\frac{1}{d_{\bn}(i,l)^s}+\frac{1}{d_{\bn}(i,k)^s}-\frac{1}{d_{\bn}(i,l)^s}.
\end{equation*}
Since $d_{\bn}(l,A(i))\leq d_{\bn}(i,i_0)$ and $d_{\bn}(k,l)\leq \frac{3}{4}d_{\bn}(i,i_0)$ one has
\begin{equation}\label{eq:d}
   \Bigr|\frac{1}{d_{\bn}(i,k)^s}-\frac{1}{d_{\bn}(i,l)^s}\Bigr|\leq \frac{Cd_{\bn}(k,l)}{d_{\bn}(i,i_0)^{1+s}}. 
\end{equation}
Using in turn (\ref{eq:gs2}) and (\ref{eq:sumgs2}), one can see that 
\begin{multline*}
\sum_{k\in A(i):d_{\bn}(k,i)\leq d_{\bn}(i,i_0) } g_{\bn,s/2}^{-1}(k-l) = \sum_{k\in A(i)}g_{\bn,s/2}^{-1}(k-l)+O\Bigr(\frac{1}{d_{\bn}(i,i_0)^{1-s/2}}\Bigr)\\=O\Bigr(\frac{1}{d_{\bn}(l,\partial A(i))^{1-s/2}}+\frac{1}{d_{\bn}(i,i_0)^{1-s/2}}\Bigr).
\end{multline*}
Finally inserting (\ref{eq:d}) we have
\begin{equation*}
  \Bigr|\sum_{k\in A(i):d_{\bn}(k,l)\leq d_{\bn}(i,i_0) }\Bigr(\frac{1}{d_{\bn}(i,k)^s}-\frac{1}{d_{\bn}(i,l)^s}\Bigr)\frac{1}{d_{\bn}(k,l)^{2-{s}/{2}}}\Bigr|\leq  C\frac{d_{\bn}(i,i_0)^{s/2}}{d_{\bn}(i,l)^{1+s}}\leq \frac{C}{d_{\bn}(i,l)^{1+\frac{s}{2}-\kappa\ve_0} }.
\end{equation*}
Combining the two last displays, one obtains the claimed estimate (\ref{eq:550}).

Let us split the sum over $l$ in (\ref{eq:def1'}) according to whether $d_{\bn}(l,\partial A(i))\geq d_{\bn}(i,i_0)^{1-2\ve_0}$. For the first contribution one can write
\begin{multline*}
  \Bigr|\sum_{l:d_{\bn}(l,\partial A(i))\geq d_{\bn}(i,i_0)^{1-2\ve_0} } \frac{1}{d_{\bn}(i,l)^s}\frac{1}{d_{\bn}(l,\partial A(i))^{1-s/2}}w_l\Bigr| \leq Cd_{\bn}(i,i_0)^{\kappa\ve_0}\sum_{l:d_{\bn}(i,l)\geq d_{\bn}(i,i_0)^{1-2\ve_0}}\frac{1}{d_{\bn}(i,l)^{1+\frac{s}{2}}}|w_l|\\
  \leq Cd_{\bn}(i,i_0)^{\kappa\ve_0}\Bigr(\sum_{l=1}^{\bn}|w_l|^{\frac{1}{1-s/2}}\Bigr)^{1-s/2}\Bigr(\sum_{l:d_{\bn}(i,l)\geq d_{\bn}(i,i_0)^{1-2\ve_0}}\frac{1}{d_{\bn}(i,l)^{\frac{2}{s}(1+\frac{s}{2})}} \Bigr)^{\frac{s}{2}}
  \\\leq \frac{C}{d_{\bn}(i,i_0)^{1-\kappa\ve_0}}\Bigr(\sum_{l=1}^{\bn}|w_l|^{\frac{1}{1-s/2}}\Bigr)^{1-s/2}.
\end{multline*}
Inserting the estimate (\ref{eq:fou}) of Lemma \ref{lemma:elliptic} then yields
\begin{multline*}
    \dE_{\nu}\Bigr[\Bigr|\sum_{l:d_{\bn}(l,\partial A(i))\geq d_{\bn}(i,i_0)^{1-2\ve_0} } \frac{1}{d_{\bn}(i,l)^s}\frac{1}{d_{\bn}(l,\partial A(i))^{1-s/2}}w_l\Bigr|^2\Bigr]^{\frac{1}{2}}\\\leq C(\beta)n^{\kappa\ve}\frac{1}{d_{\bn}(i,i_0)^{1-\kappa\ve_0}}(\dE_{\nu}[|\lL_{1/2}\psi|^2]^{\frac{1}{2}}+\dE_{\nu}[\chi_n^2]^{\frac{1}{2}})^{s}\dE_{\nu}[|\lL_{3/2-s}\psi|^2]^{\frac{1-s}{2}}.
\end{multline*}
For the second contribution, one can check via Cauchy-Schwarz inequality that
\begin{equation*}
    |w_l|\leq \frac{C}{d_{\bn}(l,i_0)^{1-s/2}}|\lL_{3/2-s}\psi|.
\end{equation*}
It follows that
\begin{multline}\label{eq:cru}
 \Bigr|\sum_{l:d_{\bn}(l,\partial A(i))\leq d_{\bn}(i,i_0)^{1-2\ve_0} } \frac{1}{d_{\bn}(i,l)^s}\frac{1}{d_{\bn}(l,\partial A(i))^{1-\frac{s}{2}}}w_l\Bigr|\\ \leq C\frac{1}{d_{\bn}(i,i_0)^{1+\frac{s}{2}} }\sum_{l:d_{\bn}(l,\partial A(i))\leq d_{\bn}(i,i_0)^{1-2\ve_0}}\frac{1}{d_{\bn}(l,\partial A(i))^{1-\frac{s}{2}}}|\lL_{3/2-s}\psi|
 \leq \frac{C}{d_{\bn}(i,i_0)^{1+s\ve_0}}|\lL_{3/2-s}\psi|.
\end{multline}
We have crucially used the fact that in (\ref{eq:cru}), the series $\sum_{k}\frac{1}{k^{1-{s}/{2}}}$ is diverging, in order to have an error in the last display much smaller than $d_{\bn}(i,i_0)^{-1}$, when $\ve_0>0$. This justifies our choice of considering a fractional primitive of order $1-s/2$ (rather than $3/2-s$ for instance). One can gather these estimates into
\begin{multline}\label{eq:E1}
    \dE_{\nu}[(\RN{1})_i^2]^{\frac{1}{2}}\leq \frac{C(\beta)n^{\kappa\ve}}{d_{\bn}(i,i_0)^{1-\alpha }}\Bigr(\Bigr(\dE_{\nu}[\chi_n^2]^{\frac{1}{2}}+\sup|\chi_n|e^{-c(\beta)n^{\delta}}+n^{\kappa\ve_0}\dE_{\nu}[|\lL_{1/2}\psi|^2]^{\frac{1}{2}}\Bigr)^{s}\\+\dE_{\nu}[|\lL_{3/2-s}\psi|^2]^{\frac{1-s}{2}}+\frac{1}{d_{\bn}(i,i_0)^{1+s\ve_0}}\dE_{\nu}[|\lL_{3/2-s}\psi|^2]^{\frac{1}{2}}\Bigr).
\end{multline}
We now control the terms $(\RN{2})_i$ and $(\RN{3})_i$. Let us write $(\RN{2})_i$ as
\begin{multline*}
    (\RN{2})_i=\sum_{k:d_{\bn}(i,k)\leq d_{\bn}(i,i_0)^{1-\ve_0}}\frac{1}{d_{\bn}(i,k)^s}(d_{\bn}(i,i_0)^{\alpha}-d_{\bn}(k,i_0)^{\alpha})\psi_k\\=d_{\bn}(i,i_0)^{\alpha}\sum_{k:d_{\bn}(i,k)\leq d_{\bn}(i,i_0)^{1-\ve_0}}\frac{1}{d_{\bn}(i,k)^s}\Bigr(1-\frac{d_{\bn}(i,k)^{\alpha}}{d_{\bn}(i,i_0)^{\alpha}}\Bigr)\psi_k.
\end{multline*}
One can Taylor expand the weight in the above equation when $d_{\bn}(i,k)\leq d_{\bn}(i,i_0)^{1-\ve_0}$ into
\begin{equation*}
   \Bigr|1-\frac{d_{\bn}(i,k)^{\alpha}}{d_{\bn}(i,i_0)^{\alpha}}\Bigr|\leq C\frac{d_{\bn}(i,k)}{d_{\bn}(i,i_0)}.
\end{equation*}
This allows one to bound $(\RN{2})_i$ by
\begin{equation}\label{eq:E2}
    |(\RN{2})_i|\leq d_{\bn}(i,i_0)^{\alpha-1}\sum_{k:d_{\bn}(i,k)\leq d_{\bn}(i,i_0)^{1-\ve_0}}d_{\bn}(i,k)^{1-s}|\psi_k|\leq \frac{C}{d_{\bn}(i,i_0)^{1-\alpha+(1-s)\ve_0}}|\lL_{3/2-s}\psi|.
\end{equation}
Similarly, by expanding $d_{\bn}(k,i_0)^{\alpha}$ for $k$ close to $i$, one obtains
\begin{equation}\label{eq:E3}
|(\RomanNumeralCaps{3})_i|\leq  \frac{C}{d_{\bn}(i,i_0)^{1-\alpha+\ve_0(1-s)}}|\lL_{3/2-s}\psi|.
\end{equation}
Putting (\ref{eq:E1}), (\ref{eq:E2}) and (\ref{eq:E3}) together, one obtains that for $\ve_0>0$ large enough with respect to $\ve$, there exists $\kappa>0$ such that
\begin{multline}\label{eq:delta11}
|\dE_{\nu}[\psi^{\dis}\cdot\delta_{\lL_\alpha}^{(1)}\psi^{\dis}]|\leq C(\beta)n^{\kappa\ve}\dE_{\nu}[|\lL_{2\alpha-1/2}\psi|^2]^{\frac{1}{2}}\Bigr(n^{\kappa\ve_0}\dE_{\nu}[|\lL_{1/2}\psi|^2]^{\frac{1}{2}}+n^{-\ve_0}\dE_{\nu}[|\lL_{3/2-s}\psi|^2]^{\frac{1}{2}}\\n^{\kappa\ve_0}\dE_{\nu}[|\lL_{1/2}\psi|^2]^{\frac{s}{2}}\dE_{\nu}[|\lL_{3/2-s}\psi|^2]^{\frac{1-s}{2}} +n^{\kappa\ve_0}\dE_{\nu}[\lambda^2]^{\frac{1}{2}}\Bigr)+C(\beta)n^{\kappa\ve_0}\dE_{\nu}[\chi_n^2].
\end{multline}
\paragraph{\bf{Step 4: control on the short-range commutator}}
It remains to bound $\delta_{\lL_\alpha}^{(2)}$. Recall that by (\ref{eq:defsM'}), the off-diagonal entries of $\sM^{(2)}$ typically decays in $d_{\bn}(i,j)^{-(1+\frac{s}{2})}$. One may write
\begin{equation*}
   (\delta_{\lL_{\alpha}}^{(2)}\psi^{\dis})_i=\underbrace{\sum_{k:d_{\bn}(i,k)\leq \frac{1}{2}d_{\bn}(i,i_0)}\sM^{(2)}_{i,k}\Bigr(\frac{d_{\bn}(i,i_0)^{\alpha}}{d_{\bn}(k,1)^{\alpha}}-1\Bigr)\psi_k^{\dis}}_{(\RN{1})_i}+\underbrace{\sum_{k:d_{\bn}(i,k)> \frac{1}{2}d_{\bn}(i,i_0)}\sM^{(2)}_{i,k}\Bigr(\frac{d_{\bn}(i,i_0)^{\alpha}}{d_{\bn}(k,1)^{\alpha}}-1\Bigr)\psi_k^{\dis}}_{(\RN{2})_i}.
\end{equation*}
The first term can be bounded for any value of $\alpha$ by
\begin{equation*}
    \dE_{\nu}[(\RN{1})_i^2]^{\frac{1}{2}}\leq \frac{C(\beta)n^{\kappa\ve}}{d_{\bn}(i,i_0)^{\frac{1}{2}+\frac{s}{2}}}\dE_{\nu}[|\psi^{\dis}|^2]^{\frac{1}{2}},
\end{equation*}
with $C(\beta)$ depending on $\alpha$. For the second term we have
\begin{equation*}
 \dE_{\nu}[(\RN{2})_i^2]^{\frac{1}{2}}\leq \frac{C(\beta)n^{\kappa\ve}}{d_{\bn}(i,i_0)^{1+\frac{s}{2}-\alpha}}\dE_{\nu}[|\lL_{1/2}\psi|^2]^{\frac{1}{2}}.
\end{equation*}
Consequently arguing as in the short-range case (see the proof of Lemma \ref{lemma:illustrative}) we obtain
\begin{equation*}
    \Bigr|\dE_{\nu}\Bigr[\sum_{i=1}^{\bn} \psi_i^{\dis}(\RN{1})_i\Bigr]\Bigr|\leq \frac{\beta}{2}n^{-\ve(s+2)}\dE_{\nu}[|\psi^{\dis}|^2]+C(\beta)n^{\kappa\ve}\dE_{\nu}[|\psi^{\dis}|^2]\dE_{\nu}[\chi_n^2]^{\frac{1}{2}}.
\end{equation*}
By construction, we have
\begin{equation}\label{eq:pos}
    \dE_{\nu}\Bigr[\psi^{\dis}\cdot\widetilde{\sM}\psi^{\dis}+\sum_{i=1}^{\bn} \psi_i^{\dis}(\RN{1})_i]\Bigr]\geq 0.
\end{equation}
For the second term, the point is to give a control in term of $\lL_{2\alpha-1/2}\psi$:
\begin{equation}\label{eq:delta22}
  \Bigr|\dE_{\nu}\Bigr[\sum_{i=1}^{\bn} \psi_i^{\dis}(\RN{2})_i\Bigr]\Bigr|\leq C(\beta)n^{\kappa\ve}\dE_{\nu}[|\lL_{2\alpha-1/2}\psi|^2]^{\frac{1}{2}}\dE_{\nu}[|\lL_{1/2}\psi|^2]^{\frac{1}{2}}.
\end{equation}
\paragraph{\bf{Step 5: conclusion}}
Note that for $\alpha\geq \frac{1}{2}$, $2\alpha-\frac{1}{2}\geq \alpha$. Therefore in view of (\ref{eq:delta11}), (\ref{eq:pos}) and (\ref{eq:delta22}) we obtain from (\ref{eq:ipp}) that for $\alpha\geq \frac{1}{2}$,
\begin{multline}\label{eq:est alpha}
     \dE_{\nu}\Bigr[\sum_{i=1}^{\bn} d_{\bn}(i,i_0)^{2\alpha}|\nabla\psi_i|^2\Bigr]\leq C(\beta)n^{\kappa\ve}\dE_{\nu}[|\lL_{2\alpha-1/2}\psi|^2]^{\frac{1}{2}}\Bigr(n^{\kappa\ve_0}\dE_{\nu}[|\lL_{1/2}\psi|^2]^{\frac{s}{2}}\dE_{\nu}[|\lL_{3/2-s}\psi|^2]^{\frac{1-s}{2}}\\+n^{-\ve_0}\dE_{\nu}[|\lL_{3/2-s}\psi|^2]^{\frac{1}{2}}+n^{\kappa\ve_0}+n\dE_{\nu}[\lambda^2]^{\frac{1}{2}}\Bigr)+n^{\kappa\ve_0}\dE_{\nu}[\chi_n^2]\Bigr).
\end{multline}
This completes the proof of Lemma \ref{lemma:diff}.
\end{proof}

\subsection{Global decay estimate}
Leveraging on the a priori estimate of Lemma \ref{lemma:diff}, we establish a global decay estimate on the solution. The method uses a factorization of the system around its ground state to reduce the problem to the well-understood short-range situation of Section \ref{section:decay short}. Let us emphasize that due to the degeneracy of the inverse of the Riesz matrix (\ref{eq:def H matrix}), it is unavoidable to have an a priori control on $D\psi$ such as (\ref{eq:add gamma}).

\begin{lemma}\label{lemma:global}
Let $s\in(0,1)$. Let $\nu$ and $\sM$ satisfying Assumptions \ref{assumptions:mu2} and \ref{assumptions:sM}. Let $\chi_n\in H^{1}(\nu)$, $i_0\in\bar{I}$ and $\psi\in L^2(\bar{I},H^1(\nu))$ be the solution of 
\begin{equation}\label{eq:eq52}
    \left\{
    \begin{array}{ll}
      \beta\sM\psi+\mc{L}^{\nu}\psi=\chi_n e_{i_0}+\lambda(e_1+\ldots+e_{\bn}) & \text{on }\pi(\mc{M}_N) \\
        \psi\cdot (e_1+\ldots+e_{\bn})=0 & \text{on }\pi(\mc{M}_N)\\
       \psi\cdot\vec{n}=0 & \text{on }\partial \pi(\mc{M}_N).
    \end{array}
    \right.
\end{equation}
There exists a constant $C(\beta)$ locally uniform in $\beta$ and $\kappa>0$ such that
\begin{equation}\label{eq:glob opt}
\dE_{\nu}\Bigr[\sum_{i=1}^{\bn}d_{\bn}(i,i_0)^{2-s}|\nabla\psi_i|^2\Bigr]^{\frac{1}{2}}+  \dE_{\nu}\Bigr[\sum_{i=1}^{\bn}d_{\bn}(i,i_0)^{3-2s}\psi_i^2\Bigr]^{\frac{1}{2}}\leq C(\beta)n^{\kappa\ve}\dE_{\nu}[\chi_n^2]^{\frac{1}{2}}.
\end{equation}
In addition, there exist a constant $C(\beta)$ locally uniform in $\beta$ and $\kappa>0$ such that
\begin{equation}\label{eq:f lambda}
    \dE_{\nu}[\lambda^2]^{\frac{1}{2}}\leq \frac{C(\beta)}{n^{1-\kappa\ve}}\dE_{\nu}[\chi_n^2]^{\frac{1}{2}}.
\end{equation}
\end{lemma}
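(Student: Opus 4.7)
The target exponents $\alpha=\frac{3}{2}-s$ and $\gamma=1-\frac{s}{2}$ are matched through the two algebraic relations $\gamma/2+\frac{1}{4}=1-\frac{s}{2}$ and $\alpha-\frac{1-s}{2}=\gamma$: the first matches the weight on $\nabla\psi$ produced by Lemma \ref{lemma:diff} (applied with its parameter $\gamma$ set to $\frac{3}{2}-s$) to our target weight, and the second expresses that the factorization by an approximation of $\bb H_s^{-1}$ shifts weights from $\psi$ to $\nabla\psi$ by exactly $\frac{1-s}{2}$. The plan is therefore to run a coupled estimate between
$$
A := \dE_\mu[|\lL_{3/2-s}\psi|^2]^{1/2}, \qquad B := \dE_\mu[|\lL_{1-s/2}\nabla\psi|^2]^{1/2},
$$
together with a separate control on the Lagrange multiplier $\lambda$.

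First I would collect the standard a priori estimates: testing (\ref{eq:eq52}) against $\psi$ and integrating by parts under $\mu$, together with Assumptions \ref{assumptions:mu 2}--\ref{assumptions:sM}, gives $\dE_\mu[|\psi|^2]+\dE_\mu[|\nabla\psi|^2]\leq C(\beta)n^{\kappa\ve}\dE_\mu[\chi_n^2]$, and the maximum-principle argument of Proposition \ref{proposition:maximum principle} extended to the embedded system on $\pi'(\mc M_N)$ gives $\sup|\psi|\leq C(\beta)n^{\kappa\ve}\sup|\chi_n|$. Next I would bound the Lagrange multiplier: the formula (\ref{eq:exp la}) expresses $\lambda$ as a $\frac{1}{2n}$-average of $(\sM'\psi)\cdot(e_1+\ldots+e_{2n})$; exploiting the constraint $\psi\cdot(e_1+\ldots+e_{2n})=0$ together with the near translation-invariance of $\sM'$ on the good event (\ref{eq:good5}) and the bound on the perturbation $\widetilde H$ from Lemma \ref{lemma:cond variance}, the averaging gains an extra factor $n^{-1}$ compared with the raw a priori bound, yielding (\ref{eq:f lambda}).

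The core of the proof is the factorization step. I would multiply the equation by a well-chosen approximate inverse $\sA$ of the deterministic short-range Riesz-type matrix $\widetilde{\sM}^{(1)}$ of (\ref{eq:defsmmm}); by Lemma \ref{lemma:inverse} the entries of $\sA$ decay like $d(i,j)^{-(2-s)}$, so $\sA\sM'$ differs from the identity only by off-diagonal errors controlled by Assumption \ref{assumptions:sM} and by Lemma \ref{lemma:cond variance}. Taking the scalar product of the factorized equation with $\lL_{3-2s}\psi$ and integrating by parts under $\mu$ (using that the deterministic $\sA$ commutes with $\mc L^\mu\otimes\Id$), the diagonal part of $\sA\sM'$ produces a coercive contribution bounded below by $c(\beta)n^{-\kappa\ve}A^2$, while the commutator $\lL_{3/2-s}\sA\sM'\lL_{3/2-s}^{-1}-\sA\sM'$ is decomposed into near- and far-diagonal pieces by exactly the scheme used in Step~3 of the proof of Lemma \ref{lemma:diff}. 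This delivers an inequality of the form $A^2\leq C(\beta)n^{\kappa\ve}(A\cdot B+n^{-\delta}A^2+\dE_\mu[\chi_n^2]+n\,A\,\dE_\mu[\lambda^2]^{1/2}+e^{-c(\beta)n^\delta}\sup|\chi_n|^2)$. Combining with Lemma \ref{lemma:diff} applied at $\gamma=\frac{3}{2}-s$ (which, after using $|\lL_{1/2}\psi|\leq|\lL_{3/2-s}\psi|$ valid for $s<1$, yields $B^2\leq C(\beta)n^{\kappa\ve}A\cdot(n^{\kappa\ve_0}A+n^{-\ve_0}A+n^{\kappa\ve_0+1}\dE_\mu[\lambda^2]^{1/2})+\text{source}$), substituting the bound on $\lambda$ just obtained, and using Young's inequality to absorb the $A^2$ and $B^2$ contributions into the left-hand side (made possible by the $n^{-\delta}$ gain built into the factorization and a suitable choice of $\ve,\ve_0$), one obtains $A+B\leq C(\beta)n^{\kappa\ve}(\dE_\mu[\chi_n^2]^{1/2}+e^{-c(\beta)n^\delta}\sup|\chi_n|)$, which is (\ref{eq:glob opt}).

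The main obstacle is the factorization step. The difficulty is twofold. First, $\lL_{3-2s}\sA$ is neither symmetric nor positive, so integration by parts does not immediately produce a coercive quadratic form — one must carefully isolate the positive diagonal piece from an off-diagonal remainder that couples $A$ and $B$ and has to be handled by Cauchy-Schwarz with the right exponents. Second, the decay $d(i,j)^{-(2-s)}$ of $\sA$ from Lemma \ref{lemma:inverse} matches the exponent $3/2-s$ of $\lL_{3/2-s}$ \emph{exactly}, so the commutator estimate has no slack; this is precisely why one is forced into the borderline $L^p$ regularity of the fractional primitive from Lemma \ref{lemma:elliptic} (invoked via Lemma \ref{lemma:diff}) rather than a cruder $L^2$ bound. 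The small gain $n^{-\delta}$ that ultimately closes the bootstrap must come from truncating the Riesz kernel at scale $d(i,i_0)^{1-\ve_0}$ inside the factorization, exactly as was done in the proof of Lemma \ref{lemma:diff}.
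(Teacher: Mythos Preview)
Your high-level plan---factorize by an approximate inverse $\sA$ of the Riesz matrix, combine with Lemma~\ref{lemma:diff} applied at $\gamma=\tfrac{3}{2}-s$, and close the loop between $A=\dE_\mu[|\lL_{3/2-s}\psi|^2]^{1/2}$ and $B=\dE_\mu[|\lL_{1-s/2}\nabla\psi|^2]^{1/2}$---is exactly the paper's strategy. However, you misidentify the source of the $A\leftrightarrow B$ coupling, and this is a genuine gap. The commutator $\lL_{3/2-s}\sA\sM'\lL_{3/2-s}^{-1}-\sA\sM'$ acts on $\psi$, not on $\nabla\psi$; since $\sA\sM'$ has entries decaying like $d(i,j)^{-(2-s)}$ with $2-s>1$, it is handled by the short-range perturbation Lemma~\ref{lemma:perturbation lemma}, producing only $|\psi^{\dis}|\cdot|\psi|$ terms. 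The coupling to $B$ comes instead from the \emph{differential} term. After multiplying by $\lL_\alpha\sA$ and integrating by parts, the $\mc{L}^\mu$ contribution is the quadratic form $\sum_{i,k}(\lL_\alpha\sA\lL_\alpha^{-1})_{i,k}\,\nabla\psi_i^{\dis}\cdot\nabla\psi_k^{\dis}$. Your remark that ``$\sA$ commutes with $\mc{L}^\mu\otimes\Id$'' is correct but irrelevant here: the matrix $\lL_\alpha\sA\lL_\alpha^{-1}$ is not symmetric, and its symmetric part is \emph{not} positive semidefinite in general (take $\sA$ positive but nearly degenerate and $\lL_\alpha$ with large ratio of entries), so the term cannot be dropped. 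The paper splits it as $\sA+(\lL_\alpha\sA\lL_\alpha^{-1}-\sA)$, discards the positive $\sA$ piece, and bounds the commutator by $\dE_\mu[|\lL_{\alpha-(1-s)/2}\nabla\psi|^2]$; this is Step~3 of the paper's proof, and the exponent shift by $\tfrac{1-s}{2}$ is exactly what makes the relation $\alpha=\gamma+\tfrac{1-s}{2}$ close at $(\alpha,\gamma)=(\tfrac{3}{2}-s,\,1-\tfrac{s}{2})$.

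Two secondary points. Your $\sA$ is not the right object: taking an inverse of $\widetilde{\sM}^{(1)}$ from (\ref{eq:defsmmm}) does not ensure $\sA\sM'\geq n^{-\kappa\ve}\Id$. The paper constructs $\sA$ via a \emph{sparsified} kernel $h$ equal to $g_s^{-1}$ only at multiples of a large $K_0$ and zero elsewhere; this makes $\sA$ positive definite (by Fourier) \emph{and} close enough to the identity on short scales that $\sA\sM^{(2)}$ inherits the uniform lower bound of the random short-range piece $\sM^{(2)}$. Finally, your plan to obtain (\ref{eq:f lambda}) before the main estimate does not work: from (\ref{eq:exp la}) and the constraint one only gets $\dE_\mu[\lambda^2]^{1/2}\lesssim n^{\kappa\ve-1}\dE_\mu[|\lL_{1/2}\psi|^2]^{1/2}$, and the raw a~priori bound on $|\psi|$ gives at best $|\lL_{1/2}\psi|\lesssim n^{1/2}|\psi|$, hence only $\dE_\mu[\lambda^2]^{1/2}\lesssim n^{-1/2+\kappa\ve}$---too weak for the $n\,\dE_\mu[\lambda^2]^{1/2}$ terms in Lemma~\ref{lemma:diff}. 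In the paper the bound on $\lambda$ is obtained \emph{simultaneously} with (\ref{eq:glob opt}), using $|\lL_{1/2}\psi|\leq A$.
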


\begin{proof}
The strategy of proof is to multiply the system (\ref{eq:eq52}) by a constant matrix close to the inverse of $\bb{H}_{\bn,s}$, so that the system becomes short-range. There are two main difficulties: first, the interaction matrix should still be positive-definite and second, one should  be able to control the differential terms involving $\nabla \psi$, which do not come as a non-negative contribution anymore.

 \paragraph{\bf{Step 1: factorization}}To solve the first issue, the idea is to define a kernel $f$ which is vanishing outside a certain grid centered at $1$ and of length $K_1=\lfloor n^\ve\rfloor^{\kappa_0}$ for some $\kappa_0\in\mathbb{N}^*$. Assume first that $m:=\frac{\bn}{K_1}\in\mathbb{N}$. Define $f:\bar{I}\to\dR$ given for each $k\in \bar{I}$ by
\begin{equation}\label{eq:defh}
    f(k)=\begin{cases}
g_{m,s}^{-1}(\frac{k}{K_1})&\text{if $k\in \bar{I}\cap K_1\mathbb{Z}$}\\
    0 & \text{otherwise}
    \end{cases},
\end{equation}
where $g_{m,s}^{-1}=\mathbb{H}_{m,s}^{-1}e_1$. Also let $\sA$ be the Toeplitz matrix associated to $f$:
\begin{equation}\label{eq:defA}
    \sA:=(f(j-i))_{i,j}\in \mc{M}_{\bn}(\dR).
\end{equation}

Let us first show that $f$ is a positive-definite kernel on $\{1,\ldots,\bn\}$. Let $\theta\in \{\frac{2k\pi}{\bn}:0\leq k\leq \bn-1\}$. One may notice that
\begin{equation*}
    \sum_{k=0}^{\bn-1}f(k)e^{ik\theta}=\sum_{k=0}^{m-1}g_{m,s}^{-1}(k)e^{ik\theta K_1}.
\end{equation*}
Since $K_1\theta\in \{\frac{2k\pi}{m}:0\leq k\leq m-1\}$, the above sum is positive. It follows that (\ref{eq:defh}) defines a positive-definite kernel and (\ref{eq:defA}) a positive-definite matrix.
Now assume that $\frac{\bn}{K_1}\notin \mathbb{N}$. Let $m=\lfloor\frac{\bn}{K_1}\rfloor$ and $f:\{1,\ldots,m K_1\}\to\dR$ be as in (\ref{eq:defh}). We define
\begin{equation*}
    \sA:=(f(j-i)\mathds{1}_{-i_0\leq j-i\leq mK_1})_{i,j}.
\end{equation*}
Decomposing $\sA$ into blocks and using the uniform positivity of the first block shows that there exists $\kappa>0, C>0$ such that for all $U_{\bn}\in \dR^{\bn}$,
\begin{equation*}
   U_{\bn}\cdot \sA U_{\bn}\geq n^{-\kappa\ve}|U_{\bn}|^2- Cn^{\kappa\ve}\Bigr(\sum_{k:d(k,i_0)\leq K_1}|u_k|^2\Bigr)^{\frac{1}{2}}|U_N|.
\end{equation*}

\paragraph{\bf{Step 2: positiveness of $\sA\sM$}}
Assume that $\frac{\bn}{K_1}\in\mathbb{N}$. One argues that for $K_1$ large enough, the matrix $\sA\sM$ is positive-definite, which is quite delicate since the product of two positive-definite matrices is not in general positive-definite. The idea is to split $\sM$ into the sum of a Toeplitz matrix associated to a positive kernel and a random ``diagonally dominant'' positive matrix. In view of Assumptions \ref{assumptions:sM}, there exists a family of $(\alpha_{i,j})_{i,j\in\bar{I}}$ satisfying $\alpha_{i,j}\geq 0$ for each $i, j\in \bar{I}$ with $d_{\bn}(i,j)\geq n^{\ve\gamma}$ such that
\begin{equation*}
    U_N\cdot \sM U_N=\sum_{k\neq l}\alpha_{k,l}(u_k+\ldots+u_l)^2.
\end{equation*}
Split $\sM$ into $\sM=\sM^{(1)}+\sM^{(2)}$ with for each $1\leq i,j\leq \bar{n}$,
\begin{equation*}
    \sM^{(1)}_{i,j}=\sum_{k\neq l\in I_{i,j}}\alpha_{k,l}\mathds{1}_{d_{\bn}(k,l)\leq K_1},
\end{equation*}
where $I_{i,j}:=\{k\in\bar{I}:d_{\bn}(k,\frac{i+j}{2})>\frac{1}{2}d_{\bn}(i,j)\}$. Since $\sM_{i,j}^{(1)}=0$ if $d_{\bn}(i,j)>K_1$ observe that $\sA\sM^{(1)}=f(1)\sM^{(1)}$. Consequently there exists $\kappa_1>0$ independent of $K_1$ such that
\begin{equation}\label{eq:M21}
   \sA\sM^{(1)}\geq n^{-\kappa_1\ve}I_{\bar{n}}.
\end{equation}
To control the product of $\sA$ with the long-range matrix $\sM^{(2)}$, one may split $\sM^{(2)}$ into a constant Toeplitz matrix and a random part. Let $h$ be the Riesz kernel truncated at $K_1$ defined for each $k\in\{1,\ldots,\bar{n}\}$ by
\begin{equation}\label{eq:defg0}
    h(k):=\sum_{i,j\in I_{1,k}}g_{\bar{n},s}''(j-i)\mathds{1}_{d_{\bn}(i,j)\geq K_1},
\end{equation}
where $g_{\bar{n},s}''$ is as in (\ref{eq:defgss}). By construction, $h$ is a non-negative kernel since for all $U_{\bn}\in \dR^{\bar{n}}$,
\begin{equation*}
    \sum_{i,j}h(i-j)u_iu_j=\sum_{i,j}g_{\bar{n},s}''(j-i)\mathds{1}_{d_{\bn}(i,j)\geq K_1}(u_i-u_j)^2.
\end{equation*}
Denote $\sM^{(2,1)}$ the Toeplitz matrix associated to $h$ and $\sM^{(2,2)}=\sM^{(2)}-\sM^{(2,1)}$. Since Toeplitz matrices do commute, the product of $\sA$ and $\sM^{(2,1)}$ is non-negative. For the random part note that uniformly in $1\leq i,j\leq \bar{n}$,
\begin{equation*}
    |(\sA\sM^{(2,2)})_{i,j}|\leq \frac{Cn^{\kappa\ve}}{d_{\bn}(i,j)^{1+\frac{s}{2}}}\mathds{1}_{d_{\bn}(i,j)\geq K_1},
\end{equation*}

We wish to bound the Euclidian operator norm of $\sA\sM^{(2,2)}$. Although $\sA\sM^{(2,2)}$ is not symmetric, one may proceed as in the proof of Lemma \ref{lemma:ident}. Let $B:=n^{-\kappa\ve}I_{\bar{n}}+\sA\sM^{(2,2)}$. For $K_1$ large enough, $BB^{T}$ is diagonally dominant and its off-diagonal terms satisfy
\begin{equation*}
    \max_{i}\sum_{k\neq i}|(BB^{T})_{i,k}|\leq Cn^{\kappa\ve}K_1^{-s/2}.
\end{equation*}
Therefore from the Gershgorin circle theorem, the spectrum of $BB^{T}$ satisfies
$$\mathrm{Spec}(BB^{T})\subset[n^{-\kappa_1\ve}-Cn^{\kappa\ve}K_1^{-s/2},n^{-\kappa_1\ve}+Cn^{\kappa\ve}K_1^{-s/2}].$$ The Euclidian operator norm of $B$ being its largest singular value, we obtain that there exists $\kappa>0$ such that 
\begin{equation*}
    \sup_{|x|=1} |x\cdot B x|\leq n^{-\kappa\ve},
\end{equation*}
which also gives
\begin{equation}\label{eq:small}
    \sup_{|x|=1} |x\cdot \sA\sM^{(2)} x|\leq Cn^{-\kappa\ve},
\end{equation}
for some constant $C>0$. This can be made much smaller than the lower bound in (\ref{eq:M21}) by choosing $K_1$ large enough, thus proving that $\sA\sM^{(2)}$ is positive-definite. In conclusion, if $\frac{\bn}{K_1}\in\mathbb{N}$ with $K_1$ large enough, there exists $\kappa>0$ such that on (\ref{eq:good5 gap}),
\begin{equation*}
 \sA\sM\geq n^{-\kappa\ve}I_{\bar{n}}. 
\end{equation*}

To summarize, on the first hand the positivity of $\sA\sM^{(1)}$ follows from the grid construction of (\ref{eq:defh}), the positivity of $\sM^{(1)}$ and (\ref{eq:small}). On the second hand the positivity of $\sA\sM^{(2,1)}$ follows from the fact $\sA$ and $\sM^{(2,1)}$ are positive and commute.

Now, if $\frac{\bn}{K_1}\notin\mathbb{N}$, then for all $U_{\bn}\in \dR^{\bn}$,
\begin{equation}\label{eq:positivity}
   U_{\bn}\cdot\sA\sM U_{\bn}\geq n^{-\kappa\ve}I_{\bn}-Cn^{\kappa\ve}\Bigr(\sum_{i:d(i,i_0)\leq K_1}|u_i|^2\Bigr)^{\frac{1}{2}}|\lL_{3/2-s}U_{\bn}|. 
\end{equation}
We will apply (\ref{eq:positivity}) to $\psi^\dis:=\lL_\alpha\psi$ and control $\sum_{d(i,i_0)\leq K_1}(\psi_i^\dis)^2$ by $K_1^{2\alpha}|\psi|^2$.

\paragraph{\bf{Step 3: decay of $\sA\sM$}}
Finally, one may show that the kernel (\ref{eq:defh}) defines a good approximation of $g_{\bn,s}^{-1}$: choosing $K_1$ to be a large power of $\lfloor n^{\ve}\rfloor$ as before, one can check that there exists a constant $\kappa>0$ such that for each $k\in \{1,\ldots,\bn\}$,
\begin{equation}\label{eq:gsh}
   |g_{\bn,s}*f|(k)\leq \frac{Cn^{\kappa\ve}}{1+d_{\bn}(k,1)^{2-s}}.
\end{equation}
Let $i \in \{1,\ldots,\bn\}\cap K_1\mathbb{Z}$. Using that for each $k\in\{1,\ldots,m\}$, $g_{m,s}(k)=g_{\bn,s}(k K_0)K_0^s$, we find
\begin{equation*}
    \sum_{k=1}^{\bn}{g}_{\bn,s}(k-i)f(k)=K_1^{-s}\mathds{1}_{l=1}.
\end{equation*}

Now if $i\in\{1,\ldots,\bn\}$, one can write it as $i=i_0+(i-i_0)$ with $i_0\in \{1,\ldots,\bn\}\cap K_1\mathbb{Z}$ and $d_{\bn}(i,i_0)\leq K_1$. Therefore, by Taylor expansion,
\begin{multline*}
  \Bigr|\sum_{k=1}^{\bn}{g}_{\bar{n},s}(k-i)f(k)- \sum_{k=1}^{\bn}{g}_{\bar{n},s}(k-i_0)f(k)-O(K_1)\sum_{k=1}^{\bn}{g}_{\bar{n},s}'(k-i)f(k)\Bigr|\\\leq CK_1^2 \sum_{k=1}^{\bn}\frac{1}{d_{\bn}(i,k)^{2+s}}\frac{1}{d_{\bn}(k,1)^{2-s}}\leq \frac{CK_1^2}{d_{\bn}(i,1)^{2-s}},
\end{multline*}
where $g_{\bn,s}'$ is as in (\ref{eq:defg'}). Also note that
\begin{equation*}
 \Bigr|\sum_{k=1}^{\bn}{g}_{\bar{n},s}'(k-i)f(k)\Bigr|\leq \frac{C'}{d_{\bn}(i,1)^{2-s}}.
\end{equation*}
It thus follows that (\ref{eq:gsh}) holds true. By similar considerations, one can prove that when $\frac{\bn}{K_1}\notin \mathbb{N}$, the off-diagonal entries of $\sA\sM$ decay as (\ref{eq:gsh}): there exist $C>0, \kappa>0$ such that for each $i,j \in\{1,\ldots,\bn\}$,
\begin{equation}\label{eq:gshb}
    |(\sA\sM)_{i,j}|\leq \frac{Cn^{\kappa\ve}}{1+d_{\bn}(i,j)^{2-s}}.
\end{equation}

\paragraph{\bf{Step 4: distortion}}
For $\alpha\geq \frac{1}{2}$, let $\lL_{\alpha}\in\mc{M}_{\bn}(\dR)$ be as in (\ref{eq:Lalpha5}). The argument proceeds by multiplying Equation (\ref{eq:eq52}) by $\lL_{\alpha}\sA$. Set $\psi^{\dis}=\lL_\alpha\psi$, which solves
\begin{equation}\label{eq:AM}
    \beta \lL_\alpha \sA\sM\lL_\alpha^{-1}\psi^{\dis}+ (\lL_\alpha \sA\lL_\alpha^{-1}-\sA)\mc{L}^{\nu}\psi^{\dis}+\sA\mc{L}^{\nu}\psi^{\dis}=\lL_\alpha \sA(\chi_n e_{i_0}+\lambda(e_1+\ldots+e_{\bn})).
\end{equation}
Let $\delta_{\lL_\alpha}=\lL_\alpha\sA\sM\lL_\alpha^{-1}-\sA\sM$. Taking the scalar product of (\ref{eq:AM}) with $\psi^{\dis}$ and integrating over $\nu$ yields
\begin{multline}\label{eq:ipp3}
    \beta\dE_{\nu}\Bigr[\psi^{\dis}\cdot (\sA\sM+\delta_{\lL_\alpha})\psi^{\dis}+\sum_{i,k} \sA_{i,k}\nabla\psi_i^{\dis}\cdot\nabla\psi_k^{\dis}+\sum_{i,k}(\lL_\alpha \sA\lL_\alpha^{-1}-\sA)_{i,k}\nabla\psi_i^{\dis}\cdot\nabla\psi_k^{\dis}\Bigr]\\=\dE_{\nu}[\chi_n \psi_{i_0}+\lambda\lL_{\alpha}\psi\cdot \lL_{\alpha}\sA(e_1+\ldots+e_{\bn})].
\end{multline}
\paragraph{\bf{Step 5: control on differential commutator}}
Since $\sA$ fails to be uniformly positive-definite in $n$, one cannot directly bound the differential term in (\ref{eq:ipp3}) by $|D\psi^{\dis}|$. However as we have seen in Lemma \ref{lemma:diff}, the gradient of $\psi$ satisfies a global decay estimate whenever $\psi$ does. Let us first split the quantity of interest into
\begin{multline}\label{eq:2}
 \sum_{k}(\lL_\alpha \sA\lL_\alpha^{-1}-\sA)_{i,k}\nabla\psi_i^{\dis}\cdot\nabla\psi_k^{\dis}=\sum_k f(i-k)\Bigr(\frac{d_{\bn}(i,i_0)^{\alpha}}{d_{\bn}(k,i_0)^{\alpha}}-1\Bigr)\nabla\psi_i^{\dis}\cdot\nabla\psi_k^{\dis}\\
  =\underbrace{\sum_{k:d_{\bn}(k,i)\leq \frac{1}{2}d_{\bn}(i,i_0)} f(i-k)\Bigr(\frac{d_{\bn}(i,i_0)^{\alpha}}{d_{\bn}(k,i_0)^{\alpha}}-1\Bigr)\nabla\psi_i^{\dis}\cdot\nabla\psi_k^{\dis}}_{(\RN{1})_i}\\+ \underbrace{\sum_{{k:d_{\bn}(k,i)>\frac{1}{2}d_{\bn}(i,i_0)}} f(i-k)\Bigr(\frac{d_{\bn}(i,i_0)^{\alpha}}{d_{\bn}(k,i_0)^{\alpha}}-1\Bigr)\nabla\psi_i^{\dis}\cdot\nabla\psi_k^{\dis}}_{(\RN{2})_i}.
\end{multline}
One seeks to control the expectation of $(\RN{1})_i$ and $(\RN{2})_i$ with respect to $\dE_{\nu}[|\lL_{\gamma}D\psi|^2]$ for some well-chosen constant $\gamma>0$. Fix $\gamma\geq \frac{1}{2}$. Using (\ref{eq:gshb}) and the fact that $\sum_{k=1}^{\bn}\psi_k=0$, the second term of (\ref{eq:2}) may be bounded by
\begin{multline}\label{eq:sumL2}
    \dE_{\nu}[|(\RN{2})_i|]\leq C(\beta)n^{\kappa\ve}\frac{\dE_{\nu}[|\nabla\psi_i^{\dis}|^2]^{\frac{1}{2}}}{d_{\bn}(i,i_0)^{2-s-\alpha}}\dE_{\nu}\Bigr[\sum_{k:d_{\bn}(k,i)>\frac{1}{2}d_{\bn}(i,i_0)}d_{\bn}(k,i_0)^{2\gamma}|\nabla\psi_k|^2\Bigr]^{\frac{1}{2}}\\ \times \Bigr(\sum_{k:d_{\bn}(i,k)\geq \frac{1}{2}d_{\bn}(i,i_0)}\frac{1}{d_{\bn}(k,i_0)^{2\gamma}}\Bigr)^{\frac{1}{2}}\leq C(\beta)n^{\kappa\ve}\frac{\dE_{\nu}[|\nabla\psi_i^{\dis}|^2]^{\frac{1}{2}}}{d_{\bn}(i,i_0)^{\frac{3}{2}-s-\alpha+\gamma}}\dE_{\nu}[|\lL_{\gamma}D\psi|^2]^{\frac{1}{2}}.
\end{multline}
For the first term, using Cauchy-Schwarz inequality one can first write
\begin{equation*}
|(\RN{1})_i|\leq \frac{C}{d_{\bn}(i,i_0)}\Bigr(\sum_{k:d_{\bn}(i,k)\leq \frac{1}{2}d_{\bn}(i,i_0)}\frac{d_{\bn}(i,k)^{2\gamma}}{d_{\bn}(i,k)}|\nabla \psi_k|^2\Bigr)^{\frac{1}{2}}\Bigr(\sum_{k:d_{\bn}(i,k)\leq \frac{1}{2}d_{\bn}(i,i_0) }\frac{d_{\bn}(i,k)^{2(\alpha-\gamma)}}{d_{\bn}(i,k)^{1-2s}}|\nabla \psi_k^{\dis}|^2\Bigr)^{\frac{1}{2}}.
\end{equation*}
Summing this over $i$ yields
\begin{multline}\label{eq:sumL1}
    \sum_{i=1}^{\bn} |(\RN{1})_i|\leq C\Bigr(\sum_{i=1}^{\bn} \sum_{k:d_{\bn}(i,k)\leq \frac{1}{2}d_{\bn}(i,i_0)}\frac{1}{d_{\bn}(i,k)}d_{\bn}(k,i_0)^{2\gamma}|\nabla\psi_k|^2\Bigr)^{\frac{1}{2}}\\\times \Bigr(\sum_{i=1}^{\bn} \frac{1}{d_{\bn}(i,i_0)^{2-4(\alpha-\gamma)}}\sum_{k:d_{\bn}(i,k)\leq \frac{1}{2}d_{\bn}(i,i_0)}\frac{1}{d_{\bn}(i,k)^{1-2s}}d_{\bn}(k,i_0)^{2\gamma}|\nabla\psi_k|^2\Bigr)^{\frac{1}{2}}\\
    \leq Cn^{\kappa\ve}\Bigr(\sum_{i=1}^{\bn} d_{\bn}(i,i_0)^{2\gamma}|\nabla\psi_i|^2\Bigr)^{\frac{1}{2}}\Bigr(\sum_{k=1}^{\bn} d_{\bn}(k,i_0)^{2\gamma}|\nabla\psi_k|^2 \frac{1}{d_{\bn}(k,i_0)^{2-2s-4(\alpha-\gamma)}}\Bigr)^{\frac{1}{2}}.
\end{multline}
Combining (\ref{eq:sumL1}) and (\ref{eq:sumL2}), one can see that if $\alpha\leq\gamma+\frac{1-s}{2}$, then
\begin{equation}\label{eq:subtle}
    \Bigr|\dE_{\nu}\Bigr[ \sum_{i,k}(\lL_\alpha \sA\lL_\alpha^{-1}-\sA)_{i,k}\nabla\psi_i^{\dis}\cdot\nabla\psi_k^{\dis}\Bigr]\Bigr|\leq C(\beta)n^{\kappa\ve}\dE_{\nu}\Bigr[\sum_{i=1}^n d_{\bn}(i,i_0)^{2\gamma}|\nabla\psi_i|^2\Bigr].
\end{equation}
\paragraph{\bf{Step 6: control on the commutator $\delta_{\lL_\alpha}$}}
One should now control the commutator $\delta_{\lL_\alpha}$ appearing in (\ref{eq:ipp3}). Let us recall the decay estimate on $f*g_{\bn,s}$ stated in (\ref{eq:gsh}). By analyzing $\sA\sM^{(2)}$, one can see that the off-diagonal entries of $\sA\sM$ typically decay in
\begin{equation*}
    \dE_{\nu}[(\sA\sM)_{i,j}^2]^{\frac{1}{2}}\leq \frac{Cn^{\kappa\ve}}{d_{\bn}(i,j)^{2-s}}.
\end{equation*}
As a consequence Lemma \ref{lemma:perturbation lemma} tells us that for $\alpha\in (0,\frac{3}{2}-s]$,
\begin{equation*}
    \dE_{\nu}[\psi^{\dis}\cdot \delta_{\lL_\alpha}^{(1)}\psi^{\dis}]\leq \frac{n^{-\kappa_0\ve}}{2}\dE_{\nu}[|\psi^{\dis}|^2]+C(\beta)n^{\kappa\ve}\dE_{\nu}[|\psi^{\dis}|^2]^{\frac{1}{2}}\dE_{\nu}[|\psi|^2]^{\frac{1}{2}}.
\end{equation*}
From the positivity of $\sA\sM$ stated in (\ref{eq:positivity}) this gives
\begin{equation}\label{eq:delta3}
    \dE_{\nu}[\psi^{\dis}(\sA\sM+\delta_{\lL_\alpha})\psi^{\dis}]\geq \frac{n^{-\kappa_0\ve}}{2}\dE_{\nu}[|\psi^{\dis}|^2]-n^{\kappa\ve}\dE_{\nu}[|\psi^{\dis}|^2]^{\frac{1}{2}}\dE_{\nu}[|\psi|^2]^{\frac{1}{2}}.
\end{equation}
\paragraph{\bf{Step 7: conclusion}}
Combining (\ref{eq:ipp3}), (\ref{eq:subtle}) and (\ref{eq:delta3}) one gets that for $\alpha\in (0,\frac{3}{2}-s]$,
\begin{equation}\label{eq:higher by smaller}
    \dE_{\nu}[|\lL_\alpha\psi|^2]^{\frac{1}{2}}\leq C(\beta)n^{\kappa\ve}\Bigr(\dE_{\nu}[\chi_n^2]^{\frac{1}{2}}+n^{\alpha-\frac{1}{2}+s}\dE_{\nu}[\lambda^2]^{\frac{1}{2}}+\dE_{\nu}\Bigr[\sum_{i=1}^{\bn} d_{\bn}(i,i_0)^{2(\alpha-\frac{1-s}{2}) }|\nabla\psi_i|^2\Bigr]^{\frac{1}{2}}\Bigr).
\end{equation}
In particular taking $\alpha=\frac{3}{2}-s$, one obtains
\begin{equation}\label{eq:input1}
    \dE_{\nu}[|\lL_{3/2-s}\psi|^2]^{\frac{1}{2}}\leq C(\beta)n^{\kappa\ve}\Bigr(\dE_{\nu}[\chi_n^2]^{\frac{1}{2}}+n\dE_{\nu}[\lambda^2]^{\frac{1}{2}}+\dE_{\nu}\Bigr[\sum_{i=1}^{\bn} d_{\bn}(i,i_0)^{2(1-\frac{s}{2})}|\nabla\psi_i|^2\Bigr]^{\frac{1}{2}}\Bigr).
\end{equation}
Furthermore applying the estimate (\ref{eq:add gamma}) with $\gamma=1-\frac{s}{2}$, we recognize
\begin{multline}\label{eq:input2}
   \dE_{\nu}\Bigr[\sum_{i=1}^{\bn} d_{\bn}(i,i_0)^{2(1-\frac{s}{2})}|\nabla\psi_i|^2\Bigr]\leq Cn^{\kappa\ve}\Bigr(n^{-\ve_0}\dE_{\nu}[|\lL_{3/2-s}\psi|^2]+n^{\kappa\ve_0}\dE_{\nu}[|\lL_{3/2-s}\psi|^2]^{\frac{1-s}{2}} \dE_{\nu}[|\lL_{1/2}\psi|^2]^{\frac{s}{2}}\\+n^{\kappa\ve_0}\dE_{\nu}[|\lL_{3/2-s}|^2]^{\frac{1}{2}}n\dE_{\nu}[\lambda^2]^{\frac{1}{2}}+\dE_{\nu}[\chi_n^2]\Bigr).
\end{multline}
Since $s\in (0,1)$, combining (\ref{eq:input1}) and (\ref{eq:input2}) one gets
\begin{equation*}
  \dE_{\nu}[|\lL_{3/2-s}\psi|^2]^{\frac{1}{2}}+\dE_{\nu}[|\lL_{1-s/2}D\psi|^2]^{\frac{1}{2}}\leq C(\beta)n^{\kappa\ve}(n^{\kappa\ve_0}\dE_{\nu}[\chi_n^2]^{\frac{1}{2}}+n^{-\ve_0}\dE_{\nu}[|\lL_{3/2-s}\psi|^2]^{\frac{1}{2}}+n\dE_{\nu}[\lambda^2]^{\frac{1}{2}}).
\end{equation*}
Taking $\ve_0>0$ large enough with respect to $\ve$, one obtains the existence of a constant $\kappa>0$ such that
\begin{equation}\label{eq:ee1}
  \dE_{\nu}[|\lL_{3/2-s}\psi|^2]^{\frac{1}{2}}+\dE_{\nu}[|\lL_{1-s/2}D\psi|^2]^{\frac{1}{2}}\leq C(\beta)n^{\kappa\ve}(\dE_{\nu}[\chi_n^2]^{\frac{1}{2}}+n\dE_{\nu}[\lambda^2]^{\frac{1}{2}}).
\end{equation}
Using the expression (\ref{eq:exp la}), one can also see that
\begin{equation}\label{eq:ee2}
  \dE_{\nu}[\lambda^2]^{\frac{1}{2}}\leq C(\beta)n^{\kappa\ve-1}\dE_{\nu}[|\lL_{1/2}\psi|^2]^{\frac{1}{2}}.
\end{equation}
Since $\frac{3}{2}-s>\frac{1}{2}$, one gets from (\ref{eq:ee1}) and (\ref{eq:ee2}) the estimates (\ref{eq:glob opt}) and (\ref{eq:f lambda}).
\end{proof}

One shall extend the global decay estimate of Lemma \ref{lemma:global} to the H.-S. equation without linear constraint. 

\begin{lemma}\label{lemma:global bis}
Let $s\in (0,1)$. Let $\nu$  and $\sM$ satisfying Assumptions \ref{assumptions:mu2} and \ref{assumptions:sM}. Let $\chi_n\in H^{1}(\nu)$, $i_0\in\bar{I}$ and $\psi\in L^2(\bar{I},H^1(\nu))$ be the solution of 
\begin{equation}\label{eq:568}
    \left\{
    \begin{array}{ll}
      \beta\sM\psi+\mc{L}^{\nu} \psi=\chi_n e_{i_0}& \text{on }\pi(\mc{M}_N) \\
       \psi\cdot\vec{n}=0 & \text{on }\partial \pi(\mc{M}_N).
    \end{array}
    \right.
\end{equation}
There exist a constant $C(\beta)$ locally uniform in $\beta$ and $\kappa>0$ such that
\begin{equation*}
\dE_{\nu}\Bigr[\sum_{i=1}^{\bn}d_{\bn}(i,i_0)^{2-s}|\nabla\psi_i|^2\Bigr]^{\frac{1}{2}}+  \dE_{\nu}\Bigr[\sum_{i=1}^{\bn}d_{\bn}(i,i_0)^{3-2s}\psi_i^2\Bigr]^{\frac{1}{2}}\leq C(\beta)n^{\kappa\ve}\dE_{\nu}[\chi_n^2]^{\frac{1}{2}}.
\end{equation*}
\end{lemma}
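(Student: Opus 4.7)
The plan is to reduce Lemma~\ref{lemma:global bis} to the constrained case of Lemma~\ref{lemma:global} via a mean-zero decomposition. Set $\mathbf{1}:=e_1+\cdots+e_{2n}$ and write $\psi=\tilde\psi+c\mathbf{1}$ with $c:=(2n)^{-1}\mathbf{1}\cdot\psi\in L^2(\mu)$ and $\tilde\psi:=\psi-c\mathbf{1}$ orthogonal to $\mathbf{1}$. Since $\sum_i d(i,i_0)^{3-2s}\leq Cn^{4-2s}$ and $\sum_i d(i,i_0)^{2-s}\leq Cn^{3-s}$, the contribution of $c\mathbf{1}$ to the weighted $L^2$ norm in~(\ref{eq:glob opt}) is controlled as soon as one shows $\dE_{\mu}[c^2]^{1/2}\leq C(\beta)n^{-(2-s)+\kappa\ve}\dE_{\mu}[\chi_n^2]^{1/2}$ together with a matching bound on $\dE_{\mu}[|\nabla c|^2]$.

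Taking the scalar product of the equation in~(\ref{eq:568}) with $\mathbf{1}/(2n)$, and using $\mc{L}^{\mu}(\psi\cdot\mathbf{1})=2n\,\mc{L}^{\mu}c$ together with the symmetry of $\sM'$, yields the scalar Poisson-type equation
\begin{equation*}
\mc{L}^{\mu}c + \beta\,\Lambda(x)\,c = \frac{\chi_n}{2n} - \frac{\beta}{2n}\sum_{i=1}^{2n}\bigl((\sM'\mathbf{1})_i-\Lambda(x)\bigr)\tilde\psi_i,\qquad \Lambda(x):=\frac{1}{2n}\mathbf{1}\cdot\sM'\mathbf{1}.
\end{equation*}
From Assumptions~\ref{assumptions:sM} and the concentration estimate~(\ref{eq:concM}), on the good event~(\ref{eq:good5}) one has $\Lambda(x)\geq c(\beta)n^{1-s-\kappa\ve}$, so the scalar equation is uniformly coercive of order $n^{1-s}$. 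Multiplying by $c$ and integrating by parts, then invoking the a priori estimate $\dE_{\mu}[|\psi|^2]\leq C(\beta)n^{\kappa\ve}\dE_{\mu}[\chi_n^2]$ (obtained by pairing~(\ref{eq:568}) with $\psi$ and using the positive definiteness of $\sM'$) to absorb the right-hand side, one obtains the required bounds on $c$ and $\nabla c$ with the correct scalings.

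For the orthogonal part $\tilde\psi$, substituting $\psi=\tilde\psi+c\mathbf{1}$ into~(\ref{eq:568}) produces the constrained equation
\begin{equation*}
\beta\sM'\tilde\psi + \mc{L}^{\mu}\tilde\psi = \chi_n e_{i_0} + \tilde\lambda\,\mathbf{1} + v,\qquad \tilde\psi\cdot\mathbf{1}=0,
\end{equation*}
where $\tilde\lambda:=c\beta\Lambda(x)+\mc{L}^{\mu}c$ absorbs the $\mathbf{1}$-proportional part of $c\beta\sM'\mathbf{1}+(\mc{L}^{\mu}c)\mathbf{1}$ and plays the role of a Lagrange multiplier, while $v:=-c\beta(\sM'\mathbf{1}-\Lambda(x)\mathbf{1})$ is a residual encoding only the boundary deviation of $\sM'\mathbf{1}$ from its bulk mean. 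Using the bound on $c$ above together with the fact that $\sM'\mathbf{1}-\Lambda(x)\mathbf{1}$ has $L^2$ norm $O(n^{(1-s)/2+\kappa\ve})$, the vector $v$ is a sub-leading perturbation on the scale of $\dE_{\mu}[\chi_n^2]^{1/2}$. One then applies Lemma~\ref{lemma:global} to $\tilde\psi$: its proof extends without modification to a right-hand side of the form $\chi_n e_{i_0}+v$ with $v$ small in weighted $L^2$, since $v$ enters the integration by parts of Step~4 only via scalar products of the form $\dE_{\mu}[\eta\,\psi^{\dis}\cdot\lL_\alpha\sA v]$ controlled by Cauchy--Schwarz. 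Combining the estimate on $\tilde\psi$ with the bound on $c\mathbf{1}$ yields~(\ref{eq:glob opt}) for $\psi$.

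The main obstacle is the quantitative control of the boundary profile $(\sM'\mathbf{1})_i-\Lambda(x)$ needed to treat $v$ as a small perturbation, together with the bookkeeping required to verify that the extension of Lemma~\ref{lemma:global} to a perturbed right-hand side does not degrade the weighted $L^2$ bounds.
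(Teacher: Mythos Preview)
Your decomposition $\psi=\tilde\psi+c\mathbf{1}$ is natural but different from the paper's, and the argument as written does not close. The crucial claim that $|\sM'\mathbf{1}-\Lambda(x)\mathbf{1}|=O(n^{(1-s)/2+\kappa\ve})$ is not correct. Write $\sM'=\bb{H}_{s,m}+\sM^{(2)}$ with $\sM^{(2)}$ supported on the upper-left $n\times n$ block and satisfying $|\sM^{(2)}_{i,j}|\le n^{\kappa\ve}d(i,j)^{-(1+s/2)}$ on the good event. Since $\bb{H}_{s,m}$ is circulant, $(\sM'\mathbf{1})_i-\Lambda_0=(\sM^{(2)}\mathbf{1})_i$ for $i\le n$, and $|(\sM^{(2)}\mathbf{1})_i|\le Cn^{\kappa\ve}$ because $\sum_j d(i,j)^{-(1+s/2)}$ converges. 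There is no mechanism forcing further cancellation, so $|\sM'\mathbf{1}-\Lambda\mathbf{1}|=O(n^{1/2+\kappa\ve})$, not $O(n^{(1-s)/2})$. With this and the a priori bound $\dE_\mu[|\tilde\psi|^2]^{1/2}\le Cn^{\kappa\ve}\dE_\mu[\chi_n^2]^{1/2}$, the coupling term in your scalar equation is only $O(n^{-1/2+\kappa\ve})$, and the coercivity $\Lambda\sim n^{1-s}$ yields merely $\dE_\mu[c^2]^{1/2}=O(n^{s-3/2+\kappa\ve})$. Plugging into $\sum_i d(i,i_0)^{3-2s}c^2\sim n^{4-2s}c^2$ leaves an extra factor $n^{1/2}$. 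The same loss propagates into the weighted norm of $v=-c\beta(\sM'\mathbf{1}-\Lambda\mathbf{1})$, so the ``small perturbation'' you feed back into Lemma~\ref{lemma:global} is not small enough at the relevant weighted scale, and the bookkeeping you flag as an obstacle is in fact a genuine circularity.

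The paper sidesteps this by reversing the order of the splitting: it writes $\psi=v+w$ where $v$ solves the \emph{constrained} equation (\ref{eq:eq52}) with Lagrange multiplier $\lambda$, and $w$ solves $\beta\sM'w+\mc{L}^\mu w=\lambda\mathbf{1}$. The point is that Lemma~\ref{lemma:global} applied to $v$ delivers both the weighted decay of $v$ \emph{and} the sharp bound $\dE_\mu[\lambda^2]^{1/2}\le Cn^{-1+\kappa\ve}$, with no reference to $w$. Then $w$ is handled by a mean-field ansatz $w^{(1)}=f\mathbf{1}$ where $f$ solves a scalar equation with right-hand side $\lambda$ and effective mass $\sim n^{1-s}$, giving $f=O(n^{-(2-s)})$ directly; the residual $w^{(2)}=w-w^{(1)}$ is then controlled by a plain energy estimate since its source $\sM^{(2)}w^{(1)}$ is already small. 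This decomposition is decoupled: the equation for $w$ has right-hand side $\lambda\mathbf{1}$, not something involving $v$, which is exactly what your orthogonal projection fails to achieve.
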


\begin{proof}
Let $\psi\in L^2(\bar{I},H^1(\nu))$ be the solution of (\ref{eq:568}). One can decompose $\psi$ into $\psi=v+w$ where $v, w\in  L^2(\bar{I},H^1(\nu))$ solve
\begin{equation}\label{eq:secv}
    \left\{
    \begin{array}{ll}
      \beta\sM v+\mc{L}^{\nu} v=\chi_n e_{i_0}+\lambda(e_1+\ldots+e_{\bn}) & \text{on }\pi(\mc{M}_N) \\
        v\cdot (e_1+\ldots+e_{\bn})=0 & \text{on }\pi(\mc{M}_N)\\
       v\cdot\vec{n}=0 & \text{on }\partial \pi(\mc{M}_N),
    \end{array}
    \right.
\end{equation}
\begin{equation}\label{eq:secw}
    \left\{
    \begin{array}{ll}
      \beta\sM w+\mc{L}^{\nu} w=\lambda(e_1+\ldots+e_{\bn})& \text{on }\pi(\mc{M}_N) \\
       w\cdot\vec{n}=0 & \text{on }\partial \pi(\mc{M}_N).
    \end{array}
    \right.
\end{equation}
For the vector-field $v$, one may apply Lemma \ref{lemma:global} which gives
\begin{equation}\label{eq:viagain}
\dE_{\nu}\Bigr[\sum_{i=1}^{\bn}d_{\bn}(i,i_0)^{2-s}|\nabla v_i|^2\Bigr]^{\frac{1}{2}}+  \dE_{\nu}\Bigr[\sum_{i=1}^{\bn}d_{\bn}(i,i_0)^{3-2s}v_i^2\Bigr]^{\frac{1}{2}}\leq C(\beta)n^{\kappa\ve}\dE_{\nu}[\chi_n^2]^{\frac{1}{2}}
\end{equation}
as well as
\begin{equation}\label{eq:55la}
    \dE_{\nu}[\lambda^2]^{\frac{1}{2}}\leq \frac{C(\beta)}{n^{1-\kappa\ve}}\dE_{\nu}[\chi_n^2]^{\frac{1}{2}}.
\end{equation}
It remains to address Equation (\ref{eq:secw}). One can write a mean-field approximation for (\ref{eq:secw}) in the form $f(e_1+\ldots+e_{\bn})$ where $f\in H^1(\nu)$ is the solution of
\begin{equation}\label{eq:def fff}
    \beta f+\frac{1}{\bn^{1-s}}\mc{L}^{\nu}f=\lambda.
\end{equation}
By integration by parts this implies together with the control (\ref{eq:55la}) that
\begin{equation}\label{eq:f1}
    \dE_{\nu}[f^2]^{\frac{1}{2}}\leq \frac{C(\beta)}{n^{2-s-\kappa\ve}}\dE_{\nu}[\chi_n^2]^{\frac{1}{2}},
\end{equation}
\begin{equation}\label{eq:nab f1}
    \dE_{\nu}[|\nabla f|^2]^{\frac{1}{2}}\leq \frac{C(\beta)}{n^{\frac{3}{2}-s-\kappa\ve}}\dE_{\nu}[\chi_n^2]^{\frac{1}{2}}.
\end{equation}
Define $w^{(1)}=f\times(e_1+\ldots+e_{\bn})$ and $w^{(2)}=w-w^{(1)}$ which is solution of
\begin{equation*}
    \left\{
    \begin{array}{ll}
      \beta\sM w^{(2)}+\mc{L}^{\nu}w^{(2)}=-\beta \sM^{(2)}w^{(1)}& \text{on }\pi(\mc{M}_N) \\
       w\cdot\vec{n}=0 & \text{on }\partial \pi(\mc{M}_N).
    \end{array}
    \right.
\end{equation*}
By (\ref{eq:f1}), there holds
\begin{equation*}
    \dE_{\nu}[|\sM^{(2)}w^{(1)}|^2]^{\frac{1}{2}}\leq \frac{C(\beta)}{n^{\frac{3}{2}-s-\kappa\ve}}\dE_{\nu}[\chi_n^2]^{\frac{1}{2}}.
\end{equation*}
In particular, we have
\begin{equation}\label{eq:w11}
    \dE_{\nu}[|w|^2]^{\frac{1}{2}}\leq \frac{C(\beta)}{n^{\frac{3}{2}-s-\kappa\ve}}\dE_{\nu}[\chi_n^2]^{\frac{1}{2}}
\end{equation}
and similarly
\begin{equation}\label{eq:w22}
    \dE_{\nu}[|\nabla w|]^{\frac{1}{2}}\leq \frac{C(\beta)}{n^{1-\frac{s}{2}-\kappa\ve}}\dE_{\nu}[\chi_n^2]^{\frac{1}{2}}.
\end{equation}
It follows from (\ref{eq:w11}) and (\ref{eq:w22}) that $w$ satisfies the estimate (\ref{eq:viagain}) and so does $\psi$.
\end{proof}

\subsection{Localization and optimal decay}\label{sub:loc2}
Let us now adapt the localization argument of Subsection \ref{section:boot hyper} to derive the near-optimal decay of the solution of (\ref{eq:eq52}). Having proved Lemma \ref{lemma:global}, it remains to control the decay of $\psi_j$ for a single $j\in\bar{I}$. To this end, we project the periodized equation (\ref{eq:eq52}) into a small window centered around $j$. After isolating an exterior field, one can see that the projected equation has a similar structure as the equation one is starting from. By splitting the external field in a suitable manner, one can then decompose the solution into two parts, that we control separately. 

\begin{proposition}\label{proposition:loc}
Let $s\in (0,1)$. Let $\nu$ and $\sM$ satisfying Assumptions \ref{assumptions:mu2} and \ref{assumptions:sM}. Let $\chi_n\in H^{1}(\nu)$, $i_0\in\bar{I}$ and
$\psi\in L^2(\bar{I},H^1(\nu))$ be the solution of 
\begin{equation}\label{eq:eq53}
    \left\{
    \begin{array}{ll}
      \beta\sM\psi+\mc{L}^{\nu} \psi=\chi_n e_{i_0}+\lambda(e_1+\ldots+e_{\bn}) & \text{on }\pi(\mc{M}_N) \\
        \psi\cdot (e_1+\ldots+e_{\bn})=0 & \text{on }\pi(\mc{M}_N)\\
       \psi\cdot\vec{n}=0 & \text{on }\partial \pi(\mc{M}_N).
    \end{array}
    \right.
\end{equation}
There exist $C(\beta)$ locally uniform in $\beta$ and $\kappa>0$ such that for each $1\leq i\leq n$,
\begin{equation}\label{eq:final control}
    \dE_{\nu}[\psi_i^2]^{\frac{1}{2}}\leq \frac{C(\beta)n^{\kappa\ve}}{1+d_{\bn}(i,i_0)^{2-s}}\dE_{\nu}[\chi_n^2]^{\frac{1}{2}},
\end{equation}
\begin{equation}\label{eq:final control bis}
    \dE_{\nu}[|\nabla\psi_i|^2]^{\frac{1}{2}}\leq \frac{C(\beta)n^{\kappa\ve}}{1+d_{\bn}(i,i_0)^{\frac{3}{2}-\frac{s}{2}}}\dE_{\nu}[\chi_n^2]^{\frac{1}{2}}.
\end{equation}
\end{proposition}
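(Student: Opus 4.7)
The plan is to run a bootstrap on the decay exponent of $\psi_i$, starting from the global bound of Lemma \ref{lemma:global} and iterating a localization procedure in the spirit of the short-range argument of Proposition \ref{proposition:optimal illus}, but with a splitting of the exterior field that is specifically tailored to the slow decay of $\sM'$. First, Lemma \ref{lemma:global} yields the averaged bound $\dE_\mu[\sum_i d(i,i_0)^{3-2s}\psi_i^2]^{1/2}\leq C(\beta)n^{\kappa\ve}\mathcal{E}$, where $\mathcal{E}:=\dE_\mu[\chi_n^2]^{1/2}+\sup|\chi_n|e^{-c(\beta)n^\delta}$. Upgrading this to a windowed statement gives, for any $j\in\{1,\ldots,n\}$ and any $\ve'\in(0,1)$, the initial control $\dE_\mu[\sum_{i:d(i,j)\leq d(j,i_0)^{\ve'}}\psi_i^2]^{1/2}\leq C(\beta)n^{\kappa\ve}d(j,i_0)^{-(3/2-s-\ve'/2)}\mathcal{E}$, which plays the role of the induction hypothesis with some exponent $\gamma_0$.

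Fix $j$, a scale $\ve'\in(0,1)$ to be tuned, and set $J=\{i:d(i,j)\leq d(j,i_0)^{\ve'}\}$. Projecting (\ref{eq:eq53}) on $J$ and writing $\widetilde\psi=(\psi_l)_{l\in J}$, $\sM^J=(\sM'_{i,l})_{i,l\in J}$, one obtains
\begin{equation*}
\beta\sM^J\widetilde\psi+\mc{L}^\mu\widetilde\psi=\dV+\lambda\sum_{l\in J}e_l,\qquad \dV_l:=-\beta\sum_{i\in J^c}\sM'_{i,l}\psi_i.
\end{equation*}
Following the strategy announced in Section \ref{subsection:outline}, decompose $\dV=\dV^{(1)}+\dV^{(2)}$ with
\begin{equation*}
\dV^{(1)}_l=-\beta\sum_{i\in J^c}\sM'_{i,j}\psi_i,\qquad \dV^{(2)}_l=-\beta\sum_{i\in J^c}(\sM'_{i,l}-\sM'_{i,j})\psi_i.
\end{equation*}
The contribution $\dV^{(1)}$ is constant along $l\in J$, hence proportional to $\sum_{l\in J}e_l$: the corresponding piece of $\widetilde\psi$ falls under the mean-field analysis of Lemma \ref{lemma:global bis} applied to $\sM^J$ inside $J$, producing a factor $|J|^{-(1-s/2)+O(\ve)}$ and thus a sharp gain in the decay exponent. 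For $\dV^{(2)}$ one exploits the Toeplitz-type regularity encoded in (\ref{eq:concM}) and in the analogues of Lemmas \ref{lemma:BDC} and \ref{lemma:cond variance} for increments, which yield on the good event $|\sM'_{i,l}-\sM'_{i,j}|\leq C n^{\kappa\ve}|l-j|/d(i,j)^{1+s}$. Splitting $J^c$ further according to whether $d(i,i_0)\leq d(j,i_0)/2$ or not, one uses on the far part the induction hypothesis on $\psi_i$, and on the near part the cancellation $\psi\cdot(e_1+\ldots+e_{2n})=0$, exactly as in the closing step of Proposition \ref{proposition:optimal illus}.

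Once $\dV^{(1)}$ and $\dV^{(2)}$ are controlled in $L^2$, the distortion matrix $\lL_\alpha=\mathrm{diag}((1+d(i,\partial J)^\alpha)_{i\in J})$ with $\alpha\in(\frac12,s-\frac12)$ is applied inside $J$ and the commutation Lemma \ref{lemma:perturbation lemma} — valid because $\sM^J$ is short-range-like on scales $\leq|J|$ — turns the $L^2$ estimate on $\dV$ into a pointwise estimate on $\psi_l$ for $l$ near the center of $J$, paying only a factor $n^{\kappa\ve}$ and a Lagrange term bounded by (\ref{eq:f lambda}). Carrying this out with an induction hypothesis $\dE_\mu[\sum_{i\in J}\psi_i^2]^{1/2}\leq C(\beta)n^{\kappa\ve}(d(j,i_0)^{-\gamma}+n^{-1})\mathcal{E}$ produces an improved exponent $\gamma'=\min(2-s-\kappa\ve',\,\gamma+c\ve')$ for some $c>0$; after finitely many iterations the target exponent $2-s$ is reached up to $O(\ve')$, and (\ref{eq:final control}) follows by sending $\ve$, $\ve'$ to $0$. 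The derivative estimate (\ref{eq:final control bis}) is then obtained by one application of Lemma \ref{lemma:diff} with $\gamma=\tfrac12$ together with the newly obtained pointwise decay on $\psi$.

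The main obstacle, and the place where the long-range case departs fundamentally from the short-range one, is the treatment of $\dV^{(1)}$. The slow decay of $\sM'$ prevents a direct $L^2$ control of $\dV$ with a summable weight; the argument only closes because $\dV^{(1)}$ turns out to point in the mean-field direction $\sum_{l\in J}e_l$, whose response is governed by the improved estimate of Lemma \ref{lemma:global bis} and picks up the crucial $|J|^{-(1-s/2)}$ factor. Verifying that the residual $\dV^{(2)}$ genuinely produces a strict improvement $\gamma'>\gamma$ at each step — and that the mean-field gain on $\dV^{(1)}$ is compatible with the distortion estimate inside $J$ — is the technical crux of the proof.
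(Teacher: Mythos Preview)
Your proposal has a genuine gap, and it stems from importing the short-range machinery into a place where it cannot work.

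First, the distortion step inside the window $J$ is broken as written: you invoke Lemma \ref{lemma:perturbation lemma} with $\alpha\in(\tfrac12,s-\tfrac12)$, but for $s\in(0,1)$ this interval is empty, and Lemma \ref{lemma:perturbation lemma} is explicitly stated for $s>1$. The claim that ``$\sM^J$ is short-range-like on scales $\leq|J|$'' does not rescue this: restricting to a window does not change the off-diagonal decay rate $d(i,l)^{-s}$, and the commutator bound of Lemma \ref{lemma:perturbation lemma} depends on that rate, not on the size of the matrix. So the mechanism you propose for converting the $L^2$ control of $\dV$ into a pointwise bound on $\psi_l$ inside $J$ is unavailable.

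Second, and relatedly, the paper's proof is organized quite differently. The window is taken of size $\tfrac12 d(j,i_0)$, not $d(j,i_0)^{\ve'}$; the projected operator $\sM^J$ is defined via the Schur complement so that it again satisfies Assumptions \ref{assumptions:sM}, and the bootstrap hypothesis (\ref{eq:as alpha})--(\ref{eq:as gamma}) is applied \emph{recursively inside $J$} rather than through a short-range distortion. The exterior field is split not into a single constant plus increment but into a \emph{piecewise constant} part $\dV^{(2)}$ on $K=d(j,i_0)^{\ve'}$ subintervals and a fluctuating remainder $\dV^{(1)}$. The piecewise constant part is then handled by a mean-field analysis of the constant-coefficient equation $\beta g_s*\phi^{(k)}+\mc{L}^\mu\phi^{(k)}=\dV^{(2)}_{i_k}\sum_{l\in I_k}e_l$, studying its discrete increments and using the convolution structure to extract the exponent $2-s$; this is the core of Steps 4--6 and is where the long-range case genuinely departs from Proposition \ref{proposition:optimal illus}.

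Finally, the gradient bound (\ref{eq:final control bis}) is not a one-shot consequence of Lemma \ref{lemma:diff}: that lemma gives only a weighted $L^2$ control. In the paper the exponents $\alpha$ for $\psi$ and $\gamma$ for $\nabla\psi$ are bootstrapped \emph{simultaneously}, and both are needed at each iteration (see (\ref{eq:as alpha})--(\ref{eq:as gamma}) and Steps 3--7).
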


\begin{proof}
We proceed by bootstrapping the decay exponent on solutions of (\ref{eq:eq53}) and (\ref{eq:568}) for \emph{all} $\sM$ satisfying Assumptions \ref{assumptions:sM}. Assume that there exist $\alpha\geq\frac{3}{2}-s$ and $\gamma\geq 1-\frac{s}{2}$ with $\gamma\leq \alpha$ such that for $\sM$ satisfying Assumptions \ref{assumptions:sM} and all $\chi_n\in H^{-1}(\nu)$, $i_0\in\{1,\ldots,\bn\}$, if $\psi\in L^2(\bar{I},H^1(\nu))$ solves (\ref{eq:eq53}) or (\ref{eq:568}), then there exists $C(\beta)$ and $\kappa>0$ such that for each $1\leq j\leq \bn$,
\begin{equation}\label{eq:as alpha}
    \dE_{\nu}[\psi_j^2]^{\frac{1}{2}}\leq \frac{C(\beta)n^{\kappa\ve}}{d_{\bn}(j,i_0)^{\alpha}}\dE_{\nu}[\chi_n^2]^{\frac{1}{2}},
\end{equation}
\begin{equation}\label{eq:as gamma}
    \dE_{\nu}[|\nabla\psi_j|^2]^{\frac{1}{2}}\leq  \frac{C(\beta)n^{\kappa\ve}}{d_{\bn}(j,i_0)^{\gamma}}\dE_{\nu}[\chi_n^2]^{\frac{1}{2}}.
\end{equation}
In addition to (\ref{eq:as alpha}) and (\ref{eq:as gamma}), we will also make a systematic use of the global estimates of Lemma \ref{lemma:global} and Lemma \ref{lemma:global bis}.

\paragraph{\bf{Step 1: projection and embedding}}
Let $\chi_n\in H^{1}(\nu)$, $i_0\in\{1,\ldots,n\}$ and $\psi\in L^2(\bar{I},H^1(\nu))$ be the solution of (\ref{eq:eq53}). Fix an index $j\in \{1,\ldots,\bar{n}\}$ and define the window
\begin{equation}\label{eq:defJ}
    J:=\{i\in\{1,\ldots,\bar{n}\}:d_{\bn}(i,j)\leq d_{\bn}(i_0,j)/2\}.
\end{equation}
Let $n_0=|J|$. Let $\psi^J:=(\psi_i)_{i\in J}\in L^2(J,H^1(\nu))$. Projecting (\ref{eq:eq53}) onto (\ref{eq:defJ}) reads
\begin{equation}\label{eq:startboot}
    \begin{cases}
    \beta \sM^J {\psi^J}+\mc{L}^{\nu}\psi^0=-\beta \Bigr(\sum_{l\in J^c}\sM_{i,l}\psi_l\Bigr)_{i\in J}& \text{on $\pi(\mc{M}_N)$}\\
    {\psi^0}\cdot\vec{n}=0 & \text{on $\partial \pi(\mc{M}_N)$}.
    \end{cases}
\end{equation}
Let us operate the series of reductions of Subsection \ref{sub:re} to reduce the study to a periodic system of size $\bar{n}_0=\lfloor n_0^{\frac{1}{2}+\frac{1}{s}}\rfloor$. One may assume that $d_{\bn}(j,i_0)\geq n^{\kappa\ve}$ for some large $\kappa>0$, otherwise the statements (\ref{eq:final control}) and (\ref{eq:final control bis}) are straightforward. Let us denote $\bar{J}=\{1,\ldots,\bar{n}_0\}$. We now let $d$ stand for the symmetric distance on $\bar{J}$. Consider the Riesz matrix on $\bar{J}$ truncated at $K_0=\lfloor n^{\kappa\ve}\rfloor$ chosen as in (\ref{eq:blockHN}), namely $\sM_0=\nabla^2 F(x)\in\mc{M}_{\bar{n}_0}$ for some $x\in\dR^{\bar{n}_0}$ where
\begin{equation*}
   F:X_{\bar{n}_0}\in \dR^{\bar{n}_0}\mapsto \sum_{i,j\in\bar{J}:d_{\bn_0}(i,j)\geq K_0}g_{\bar{n}_0,s}''(j-i)(x_i+\ldots+x_j)^2.
\end{equation*}
Consider the block decomposition of $\dH$ on $\dR^{n_0}\times \dR^{\bno-n_0}$,
\begin{equation}\label{eq:dMboot}
    \dH=\begin{pmatrix}
A_0 & B_0\\
C_0 & D_0
\end{pmatrix},\quad A_0\in \mc{M}_{n_0}(\dR).
\end{equation}
Let us add and subtract to the first line of (\ref{eq:startboot}) the quantity $B_0(D_0+\beta^{-1}\mc{L}^{\nu}\otimes I_{n_0})C_0$. Defining 
\begin{equation*}
   \sM_0=\begin{pmatrix}
\sM^J & B_0\\
C_0 & D_0
\end{pmatrix},
\end{equation*}
with $B_0$, $C_0$ and $D_0$ as in (\ref{eq:dMboot}), this allows one to identify ${\psi}^J_j$ with $\psi^0_j$ for each $j\in\{1,\ldots,n_0\}$, where $\psi^0\in L^2(\bar{J},H^1(\nu))$ solves
\begin{equation*}
    \begin{cases}
   \beta {\sM}_0\psi^0+\mc{L}^{\nu}\psi^0={\dV}& \text{on $\pi(\mc{M}_N)$}\\
   \psi^0\cdot\vec{n}=0 & \text{on $\partial \pi(\mc{M}_N)$}.
    \end{cases}
\end{equation*}
Moreover, the external field ${\dV}\in L^2(\bar{J},H^1(\nu))$ satisfies ${\dV}_l=0$ if $l\in\{n_0+1,\ldots,\bar{n}_0\}$ and for each $l\in\{1,\ldots,n_0\}$,
\begin{equation*}
    {\dV}_l=-\beta \sum_{i\in J^c}\sM_{i,l}\psi_i-\sum_{i\in J} e_l\cdot B_0(\beta D_0+\mc{L}^\nu\otimes I_{\bno-n_0})^{-1} (C_0e_i\psi_i)+\lambda.
\end{equation*}
Note that $\sM_0$ satisfies Assumptions \ref{assumptions:sM}.

\paragraph{\bf{Step 2: splitting of the exterior potential}}
Fix $\ve'>0$ and let us partition $\bar{J}$ into $K:=\lfloor d_{\bn}(j,i_0)^{\ve'}\rfloor$ intervals $I_1,\ldots,I_K$ of equal size up to a $O(d_{\bn}(j,i_0)^{1-\ve'})$. For each $k\in\{1,\ldots,K\}$, let $i_k$ be an index in the center of $I_k$. One can split the external potential into $\dV=\dV^{(1)}+\dV^{(2)}$, where
\begin{equation*}
    \dV^{(2)}_l=\dV_{i_k}\quad \text{if $l\in I_k$}.
\end{equation*}
Note that $\dV^{(2)}$ is piecewise constant on the partition $\bar{J}=\cup_{k=1}^K I_k$. By linearity, $\psi^0$ can be decomposed into $\psi^0=v+w$ with $v, w\in L^2(\bar{J},H^1(\nu))$ solving
\begin{equation}\label{eq:defvb}
    \left\{
    \begin{array}{ll}
      \beta \sM_0 v+\mc{L}^{\nu}v=\sum_{l\in J}\dV_l^{(1)}e_l & \text{on }\pi(\mc{M}_N)\\
      v\cdot\vec{n}=0 & \text{on }\partial \pi(\mc{M}_N),
    \end{array}
    \right.
\end{equation}
\begin{equation}\label{eq:defwb}
    \left\{
    \begin{array}{ll}
      \beta\sM_0 w+\mc{L}^{\nu}w=\sum_{l\in J}\dV^{(2)}_l e_l & \text{on }\pi(\mc{M}_N) \\
      w\cdot\vec{n}=0 & \text{on }\partial \pi(\mc{M}_N).
    \end{array}
    \right.
\end{equation}
\paragraph{\bf{Step 3: study of $v$}}
By using Cauchy-Schwarz inequality, Equation (\ref{eq:BDC}), the fact that $\sum_{k=1}^{\bn} \psi_k=0$, the estimates (\ref{eq:glob opt}) and (\ref{eq:f lambda}) and Lemma \ref{lemma:BDC}, one may check that for each $l\in J$,
\begin{equation*}
    \dE_{\nu}[(\dV^{(1)})_l^2]^{\frac{1}{2}}\leq C(\beta)n^{\kappa\ve}\frac{d_{\bn_0}(j,l)^{1-\ve'}}{d_{\bn}(j,i_0)^{\frac{3}{2}-s}}\frac{1}{d_{\bn}(l,\partial J)^{\frac{1}{2}+s}}\dE_{\nu}[\chi_n^2]^{\frac{1}{2}}.
\end{equation*}
Note that we have not made use of the bootstrap assumption for this last estimate but rather of the global estimate (\ref{eq:glob opt}). Let us decompose $v$ into $v=\sum_{l\in J}v^{(l)}$ where for each $l\in J$, $v^{(l)}\in L^2(\bar{J},H^1(\nu))$ solves
\begin{equation}\label{eq:defvl}
    \left\{
    \begin{array}{ll}
\beta\sM_0v^{(l)}+\mc{L}^{\nu}v^{(l)}=\dV_l^{(1)}e_l & \text{on }\pi(\mc{M}_N) \\
      v^{(l)}\cdot\vec{n}=0 & \text{on }\partial \pi(\mc{M}_N),
    \end{array}
    \right.
\end{equation}
By applying the bootstrap assumption (\ref{eq:as alpha}) in the window $\bar{J}$, one can see that for each $l\in J$ and $j\in \bar{J}$,
\begin{equation*}
    \dE_{\nu}[(v_j^{(l)})^2]^{\frac{1}{2}}\leq C(\beta)n^{\kappa\ve}\frac{d_{\bn_0}(j,l)^{1-\ve'-\alpha}}{d_{\bn}(j,i_0)^{\frac{3}{2}-s}}\frac{1}{d_{\bn}(l,\partial J)^{\frac{1}{2}+s}}\dE_{\nu}[\chi_n^2]^{\frac{1}{2}}.
\end{equation*}
Summing this over $l\in J$ yields
\begin{equation}\label{eq:vj}
     \dE_{\nu}[v_j^2]^{\frac{1}{2}}\leq \frac{C(\beta)n^{\kappa\ve}}{d_{\bn}(j,i_0)^{\alpha+\ve'}}\dE_{\nu}[\chi_n^2]^{\frac{1}{2}}.
\end{equation}
In a similar manner, using the induction hypothesis (\ref{eq:as gamma}), one also obtains
\begin{equation}\label{eq:nabvj}
     \dE_{\nu}[|\nabla v_j|^2]^{\frac{1}{2}}\leq \frac{C(\beta)n^{\kappa\ve}}{d_{\bn}(j,i_0)^{\gamma+\ve'}}\dE_{\nu}[\chi_n^2]^{\frac{1}{2}}.
\end{equation}

\paragraph{\bf{Step 4: study of $w$}}
It remains to study the solution $w$ associated to the piecewise constant vector-field $\dV^{(2)}$. We will construct an approximation of $w$ by replacing $\sM_0$ by the constant Riesz matrix on the window $\bar{J}$. For each $k\in\{1,\ldots,K\}$, let $w^{(k)}\in L^2(\bar{J},H^1(\nu))$ be the solution of
\begin{equation*}
    \left\{
    \begin{array}{ll}
      \beta\sM_0 w^{(k)}+\mc{L}^{\nu}w^{(k)}=\dV^{(2)}_{i_k}\sum_{l\in I_k}e_l & \text{on }\pi(\mc{M}_N) \\
      w^{(k)}\cdot\vec{n}=0 & \text{on }\partial \pi(\mc{M}_N)
    \end{array}
    \right.
\end{equation*}
and $\phi^{(k)}\in L^2(\bar{J},H^1(\nu))$ be the solution of
\begin{equation}\label{eq:phik}
   \beta g_{\bar{n}_0,s}*\phi^{(k)}+\mc{L}^{\nu}\phi^{(k)}=\dV^{(2)}_{i_k}\sum_{l\in I_k}e_l.
\end{equation}
Let also $\sM_0^{(2)}$ be the difference between $\sM_0$ and the Toeplitz matrix associated to $g_{\bar{n}_0,s}$. Finally set $\eta^{(k)}\in L^2(\bar{J},H^1(\nu))$ defined for each $i\in \bar{J}$ by $\eta^{(k)}_i=\phi^{(k)}_{i+1}-\phi^{(k)}_i$. One can observe that
\begin{equation*}
   \beta g_{\bar{n}_0,s}*\eta^{(k)}+\mc{L}^{\nu}\eta^{(k)}=\dV^{(2)}_{i_k}(e_{i_{k+1}}-e_{i_k}).
\end{equation*}
In view of Lemma \ref{lemma:global}, there holds
\begin{equation*}
   \dE_{\nu}[(\dV_{i_k}^{(2)})]^{\frac{1}{2}}\leq \frac{C(\beta)n^{\kappa\ve}}{d_{\bn}(j,i_0)}\dE_{\nu}[\chi_n^2]^{\frac{1}{2}}.
\end{equation*}
Besides, from the global estimate of Lemma \ref{lemma:global}, letting $S=g_{\bar{n}_0,s}*\phi^{(k)}$, we have
\begin{equation}\label{eq:boundsum}
    \dE_{\nu}[S_i^2]^{\frac{1}{2}}\leq \frac{C(\beta)n^{\kappa\ve}}{d_{\bn}(j,i_0)}\dE_{\nu}[\chi_n^2]^{\frac{1}{2}}.
\end{equation}
One may then write $\phi^{(k)}_j$ as
\begin{equation*}
    \phi_j^{(k)}=\sum_{l\in \bar{J}} g_{\bar{n}_0,s}^{-1}(j-l)S_l=\underbrace{\sum_{l\in \bar{J}:d_{\bn_0}(j,l)>\frac{1}{2}d_{\bn_0}(j,\partial I_k) }g_{\bar{n}_0,s}^{-1}(j-l)S_l}_{(\RN{1})_j}+\underbrace{\sum_{l\in \bar{J}:d_{\bn_0}(j,l)\leq \frac{1}{2}d_{\bn_0}(j,\partial I_k) }g_{\bar{n}_0,s}^{-1}(j-l)S_l}_{(\RN{2})_j}.
\end{equation*}
For the first term, employing (\ref{eq:boundsum}), we find
\begin{equation*}
    \dE_{\nu}[(\RN{1})_j^2]^{\frac{1}{2}}\leq \frac{C(\beta)n^{\kappa\ve}}{d_{\bn_0}(j,i_0)d_{\bn_0}(j,\partial I_k)^{1-s} }\dE_{\nu}[\chi_n^2]^{\frac{1}{2}}.
\end{equation*}
One may then split the second term into
\begin{equation}\label{eq:586}
    (\RN{2})_j=\underbrace{\sum_{l\in \bar{J}:d_{\bn_0}(j,l)\leq \frac{1}{2}d_{\bn_0}(j,\partial I_k) }g_{\bar{n}_0,s}^{-1}(j-l)(S_j-S_l)}_{(\RN{2})_j'}+\underbrace{\sum_{l\in \bar{J}:d_{\bn_0}(j,l)\leq \frac{1}{2}d_{\bn_0}(j,\partial I_k) }g_{\bar{n}_0,s}^{-1}(j-l)S_j}_{(\RN{2})_j''}.
\end{equation}
In view of (\ref{eq:boundsum}), $(\RN{2})_j''$ is bounded by
\begin{equation*}
    \dE_{\nu}[((\RN{2})''_j)^2]^{\frac{1}{2}}\leq \frac{C(\beta)n^{\kappa\ve}}{d_{\bn}(j,i_0)d_{\bn_0}(j,\partial I_k)^{1-s} }\dE_{\nu}[\chi_n^2]^{\frac{1}{2}}.
\end{equation*}
For $(\RN{2})_j'$ we can note that
\begin{equation}\label{eq:sumi}
  S_l-S_j=\sum_{i\in \bar{J}} \phi_i^{(k)}\Bigr(g_{\bn_0,s}(l-i)-g_{\bn_0,s}(j-i)\Bigr)=\sum_{i\in \bar{J}} (\phi_i^{(k)}-\phi_j^{(k)})\Bigr(g_{\bn_0,s}(l-i)-g_{\bn_0,s}(j-i)\Bigr).
\end{equation}
Applying Cauchy-Schwarz inequality and the global decay estimate of Lemma \ref{lemma:global} shows there exist $C(\beta)>0, \kappa>0$ such that for each $i,j\in\bar{I}$,
\begin{equation}\label{eq:inc}
    \dE_{\nu}[(\phi_i^{(k)}-\phi_j^{(k)})^2]^{\frac{1}{2}}\leq C(\beta)n^{\kappa\ve}\Bigr(\sum_{l\in (i,j)}\frac{1}{d_{\bn_0}(l,\partial I_k)^{3-2s}} \Bigr)^{\frac{1}{2}}\dE_{\nu}[\chi_n^2]^{\frac{1}{2}}.
\end{equation}
Again, one can split the sum (\ref{eq:sumi}) according to whether $d(i,j)\leq 3d(l,j)$. For the first contribution we find using (\ref{eq:inc}),
\begin{multline*}
  \Bigr|\sum_{i\in \bar{J}:d_{\bn_0}(i,j)\leq 3d_{\bn_0}(l,j) } (\phi_i^{(k)}-\phi_j^{(k)})\Bigr(g_{\bn_0,s}(l-i)-g_{\bn_0,s}(j-i)\Bigr)\Bigr|\leq C(\beta)n^{\kappa\ve}\frac{d_{\bn_0}(j,l)^{1-s}}{d_{\bn}(j,i_0)}\frac{1}{d_{\bn_0}(j,\partial I_k)^{1-s}}.
\end{multline*}
Regarding the second contribution we find
\begin{multline*}
  \Bigr|\sum_{i\in \bar{J}:d_{\bn_0}(i,j)\geq 3d_{\bn_0}(l,j) } (\phi_i^{(k)}-\phi_j^{(k)})\Bigr(g_{\bn_0,s}(l-i)-g_{\bn_0,s}(j-i)\Bigr)\Bigr|\\\leq \frac{C(\beta)n^{\kappa\ve}}{d_{\bn}(j,i_0)}\sum_{i\in \bar{J}:d_{\bn_0}(i,j)\geq 3d_{\bn_0}(l,j) }\frac{1}{d_{\bn_0}(i,\partial I_k)^{1-s}}\frac{d_{\bn_0}(j,l)}{d_{\bn_0}(j,i)^{1+s}}
  \leq \frac{C(\beta)n^{\kappa\ve}d_{\bn_0}(j,l) }{d_{\bn}(j,i_0)d_{\bn_0}(j,\partial I_k)}.
\end{multline*}
Assembling the above leads to 
\begin{equation}\label{eq:finalphi}
  \dE_{\nu}[(\phi_j^{(k)})^2]^{\frac{1}{2}}\leq \frac{C(\beta)n^{\kappa\ve}}{d_{\bn}(j,i_0)d_{\bn_0}(j,\partial I_k)^{1-s} } \dE_{\nu}[\chi_n^2]^{\frac{1}{2}}.
\end{equation}
A similar computation shows that
\begin{equation}\label{eq:finalphi b}
  \dE_{\nu}[|\nabla\phi_j^{(k)}|^2]^{\frac{1}{2}}\leq \frac{C(\beta)n^{\kappa\ve}}{d_{\bn}(j,i_0)d_{\bn_0}(j,\partial I_k)^{\frac{1}{2}-\frac{s}{2}} } \dE_{\nu}[\chi_n^2]^{\frac{1}{2}}.
\end{equation}
\paragraph{\bf{Step 5: conclusion for $\sM_0^{(2)}=0$}}
Assume that $\sM_0^{(2)}=0$. Then $\phi^{(k)}=w^{(k)}$ for each $k\in\{1,\ldots,K\}$. Summing (\ref{eq:vj}) and (\ref{eq:finalphi}) over $k$ and choosing $\ve'$ small enough shows that there exists a small $\eta>0$ such that
\begin{equation*}
    \dE_{\nu}[\psi_j^2]^{\frac{1}{2}}\leq C(\beta)n^{\kappa\ve}\Bigr(\frac{1}{d_{\bn}(j,i_0)^{\alpha+\eta}}+\frac{1}{d_{\bn}(j,i_0)^{2-s}}\Bigr)\dE_{\nu}[\chi_n^2]^{\frac{1}{2}},
\end{equation*}
\begin{equation*}
    \dE_{\nu}[|\nabla\psi_j|^2]^{\frac{1}{2}}\leq C(\beta)n^{\kappa\ve}\Bigr(\frac{1}{d_{\bn}(j,i_0)^{\gamma+\eta}}+\frac{1}{d_{\bn}(j,i_0)^{\frac{3}{2}-\frac{s}{2}}}\Bigr)\dE_{\nu}[\chi_n^2]^{\frac{1}{2}}.
\end{equation*}
One concludes after a finite number of steps that
\begin{equation}\label{eq:ff}
    \dE_{\nu}[\psi_j^2]^{\frac{1}{2}}\leq \frac{C(\beta)n^{\kappa\ve}}{d_{\bn}(j,i_0)^{2-s}}\dE_{\nu}[\chi_n^2]^{\frac{1}{2}},
\end{equation}
\begin{equation}\label{eq:ff b}
    \dE_{\nu}[|\nabla\psi_j|^2]^{\frac{1}{2}}\leq \frac{C(\beta)n^{\kappa\ve}}{d_{\bn}(j,i_0)^{\frac{3}{2}-\frac{s}{2}}}\dE_{\nu}[\chi_n^2]^{\frac{1}{2}}.
\end{equation}
\paragraph{\bf{Step 6: control of $w$ in the general case}}
Let us go back to the general case and define $e^{(k)}=w^{(k)}-\phi^{(k)}$ where $\phi^{(k)}$ is as in (\ref{eq:phik}). Note that $e^{(k)}$ solves
\begin{equation*}
   \beta \sM_0 e^{(k)}+\mc{L}^{\nu}e^{(k)}=-\beta \sM_0^{(2)}\phi^{(k)}.
\end{equation*}
According to the estimates (\ref{eq:finalphi}) and (\ref{eq:finalphi b}) of Step 4, the vector-field $\sM_0^{(2)}\phi^{(k)}$ satisfies for each $1\leq i\leq n$,
\begin{equation*}
    \dE_{\nu}[((\sM_0^{(2)}\phi)_i^{(k)})^2]^{\frac{1}{2}}\leq \frac{C(\beta)n^{\kappa\ve}}{d_{\bn_0}(j,i_0)d_{\bn_0}(i,\partial I_k)^{1-s} }\dE_{\nu}[\chi_n^2]^{\frac{1}{2}},
\end{equation*}
\begin{equation*}
    \dE_{\nu}[|\nabla(\sM_0^{(2)}\phi)_i^{(k)}|^2]^{\frac{1}{2}}\leq \frac{C(\beta)n^{\kappa\ve}}{d_{\bn_0}(j,i_0)d_{\bn_0}(i,\partial I_k)^{\frac{1}{2}-\frac{s}{2}}}\dE_{\nu}[\chi_n^2]^{\frac{1}{2}}.
\end{equation*}
It follows from the bootstrap assumptions (\ref{eq:as alpha}) and (\ref{eq:as gamma}) that for each $1\leq i\leq n$,
\begin{equation*}
    \dE_{\nu}[(e_i^{(k)})^2]^{\frac{1}{2}}\leq C(\beta)n^{\kappa\ve}\Bigr(\frac{1}{d_{\bn_0}(i,\partial I_k)^{2-s} }+\frac{1}{d_{\bn_0}(i,\partial I_k)^{\alpha+1-s}}\Bigr)\dE_{\nu}[\chi_n^2]^{\frac{1}{2}},
\end{equation*}
\begin{equation*}
    \dE_{\nu}[|\nabla e_i^{(k)}|^2]^{\frac{1}{2}}\leq C(\beta)n^{\kappa\ve}\Bigr(\frac{1}{d_{\bn_0}(j,\partial I_k)^{\frac{3}{2}-\frac{s}{2}}} +\frac{1}{d_{\bn_0}(j,\partial I_k)^{\gamma+1-s}}\Bigr)\dE_{\nu}[\chi_n^2]^{\frac{1}{2}}.
\end{equation*}
Consequently the same estimate holds for $w^{(k)}$. Summing this over $k$ yields this existence of a constant $\kappa>0$ such that
\begin{equation*}
    \dE_{\nu}[w_j^2]^{\frac{1}{2}}\leq C(\beta)n^{\kappa(\ve+\ve')}\Bigr(\frac{1}{d_{\bn}(j,i_0)^{2-s}}+\frac{1}{d_{\bn}(j,i_0)^{\alpha+1-s}}\Bigr)\dE_{\nu}[\chi_n^2]^{\frac{1}{2}},
\end{equation*}
\begin{equation*}
    \dE_{\nu}[|\nabla w_j|^2]^{\frac{1}{2}}\leq C(\beta)n^{\kappa(\ve+\ve')}\Bigr(\frac{1}{d_{\bn}(j,i_0)^{\frac{3}{2}-\frac{s}{2}}}+\frac{1}{d_{\bn}(j,i_0)^{\gamma+1-s}}\Bigr)\dE_{\nu}[\chi_n^2]^{\frac{1}{2}}.
\end{equation*}
Combined with (\ref{eq:vj}) and (\ref{eq:nabvj}), this improves the induction hypotheses (\ref{eq:as alpha}) and (\ref{eq:as gamma}) provided $\ve'>0$ is chosen small enough. After a finite number of iterations, one finally gets (\ref{eq:final control}) and (\ref{eq:final control bis}).
\paragraph{\bf{Step 7: conclusion for equation (\ref{eq:568})}}
In view of the bootstrap assumption, it remains to consider the solution $\psi$ of (\ref{eq:568}). Let us split $\psi$ as in the proof of Lemma \ref{lemma:global bis} into $\psi=v+w$ where $v, w\in L^2(\bar{I},H^1(\nu))$ are solutions of (\ref{eq:secv}) and (\ref{eq:secw}). By applying the result of Step 6 to $v$, one can see that there exists a positive $\eta>0$ such that for each $i\in\{1,\ldots,\bn\}$,
\begin{equation}\label{eq:s7v}
    \dE_{\nu}[v_i^2]^{\frac{1}{2}}\leq C(\beta)n^{\kappa\ve}\Bigr(\frac{1}{d_{\bn}(j,i_0)^{\alpha+\eta}}+\frac{1}{d_{\bn}(j,i_0)^{2-s}}\Bigr)\dE_{\nu}[\chi_n^2]^{\frac{1}{2}},
\end{equation}
\begin{equation}\label{eq:s7v2}
    \dE_{\nu}[|\nabla v_i|^2]^{\frac{1}{2}}\leq C(\beta)n^{\kappa\ve}\Bigr(\frac{1}{d_{\bn}(j,i_0)^{\gamma+\eta}}+\frac{1}{d_{\bn}(j,i_0)^{\frac{3}{2}-\frac{s}{2}}}\Bigr)\dE_{\nu}[\chi_n^2]^{\frac{1}{2}}.
\end{equation}
As in the proof of Lemma \ref{lemma:global bis} one shall split $w$ into $w=w^{(1)}+w^{(2)}$ with $$w^{(1)}=f(e_1+\ldots+e_{\bn}),$$ where $f$ is given by (\ref{eq:def fff}). Let $\sM^{(2)}$ be the difference between $\sM$ and the Toeplitz matrix associated to $g_{\bar{n},s}$. Observe that $w^{(2)}$ solves
\begin{equation*}
    \left\{
    \begin{array}{ll}
      \beta\sM w^{(2)}+\mc{L}^{\nu}w^{(2)}=-\beta \sM^{(2)}w^{(1)}& \text{on }\pi(\mc{M}_N) \\
       w\cdot\vec{n}=0 & \text{on }\partial \pi(\mc{M}_N).
    \end{array}
    \right.
\end{equation*}
Using (\ref{eq:f1}) we find that for each $i\in\{1,\ldots,\bn\}$,
\begin{equation*}
    \dE_{\nu}[(\sM^{(2)}w^{(1)})_i^2]^{\frac{1}{2}}\leq \frac{C(\beta)n^{\kappa\ve}}{d_{\bn}(j,i_0)^{2-s}}\dE_{\nu}[\chi_n^2]^{\frac{1}{2}}.
\end{equation*}
Employing the bootstrap assumption to bound $w^{(2)}$, we find that for each $i\in\{1,\ldots,\bn\}$,
\begin{equation*}
    \dE_{\nu}[(w^{(2)}_i)^2]^{\frac{1}{2}}\leq C(\beta)n^{\kappa\ve}\Bigr(\frac{1}{d_{\bn}(j,i_0)^{2-s}}+\frac{1}{d_{\bn}(j,i_0)^{\alpha+1-s}}\Bigr)\dE_{\nu}[\chi_n^2]^{\frac{1}{2}}.
\end{equation*}
Similarly, applying (\ref{eq:nab f1}), one gets
\begin{equation*}
   \dE_{\nu}[|\nabla w^{(1)}_i|^2]^{\frac{1}{2}}\leq C(\beta)n^{\kappa\ve}\Bigr(\frac{1}{d_{\bn}(j,i_0)^{\frac{3}{2}-\frac{s}{2}}}+\frac{1}{d_{\bn}(j,i_0)^{\gamma+1-s}}\Bigr)\dE_{\nu}[\chi_n^2]^{\frac{1}{2}}.  
\end{equation*}
Combining the two last displays with (\ref{eq:s7v}) and (\ref{eq:s7v2}) improves the recursion hypothesis when $\psi$ is solution of (\ref{eq:568}).
\end{proof}

\begin{remark}
Even though the Lagrange multiplier in (\ref{eq:eq53}) is of order $1/n$, there is no correction of order $1/n$ in (\ref{eq:final control}), contrarily to the case $s>1$. This is related to the fact that $u:=\bb{H}_{n,s}^{-1}(e_1+\ldots+e_n)$ satisfies $u_i\sim c/n^{1-s}$ for each $1\leq i\leq n$. Note that in the above proof, the Lagrange multiplier is contained in $\dV^{(2)}$ and the smallness of the associated solution shown in (\ref{eq:finalphi}).
\end{remark}

\subsection{Decay estimate for solutions of (\ref{eq:eqqq})}
In the case $n\leq N/2$, one shall now deduce from Proposition \ref{proposition:loc} a control on the solution of (\ref{eq:eqqq}).
\begin{proposition}\label{prop:main2}
Let $s\in (0,1)$. Let $i_0\in\{\frac{n}{4},\ldots,\frac{3n}{4}\}$, $\chi_n\in H^1(\nu)$ and $\psi\in L^2(I,H^1(\nu))$ solution of
    \begin{equation}\label{eq:main1}
    \left\{
    \begin{array}{ll}
     A_1^{\nu}\psi=\chi_ne_{i_0} & \text{on }\pi(\mc{M}_N)\\
        \psi\cdot\vec{n}=0 & \text{on }\partial \pi(\mc{M}_N)),
    \end{array}
    \right.
\end{equation}
There exist constants $C(\beta)>0, c(\beta)>0,\delta>0$ and $\kappa>0$ such that for each $j\in \{1,\ldots,n\}$,
\begin{equation}\label{eq:main1r}
   \dE_{\nu}[\psi_j^2]^{\frac{1}{2}}\leq C(\beta)n^{\kappa\ve}\Bigr(\frac{1}{|j-i_0|^{2-s}}+\frac{1}{\sqrt{n}}\Bigr)(\dE_{\nu}[\chi_n^2]^{\frac{1}{2}}+\sup|\chi_n|e^{-c(\beta)n^{\delta}}).
\end{equation}
\end{proposition}

\begin{proof}
The proof is similar to that of Proposition \ref{proposition:ap to ex}. Denote $d$ the usual distance on $\{1,\ldots,n\}$. Let $\psi\in L^2(I,H^1(\nu))$ be the solution of (\ref{eq:main1}) and $\psi^{(1)}$ solution of
\begin{equation}\label{eq:second1}
    \left\{
    \begin{array}{ll}
     \bar{A}_1^{\nu}\psi^{(1)}=\chi_ne_{i_0} & \text{on }\pi(\mc{M}_N)\\
        \psi^{(1)}\cdot\vec{n}=0 & \text{on }\partial \pi(\mc{M}_N),
    \end{array}
    \right.
\end{equation}
Let $\psi^{(2)}:=\psi-\psi^{(1)}$, which solves 
\begin{equation*}
    \left\{
    \begin{array}{ll}
     A_1^{\nu}\psi^{(2)}=-\beta\tM \psi^{(1)}& \text{on }\pi(\mc{M}_N)\\
        \psi^{(2)}\cdot\vec{n}=0 & \text{on }\partial \pi(\mc{M}_N).
    \end{array}
    \right.
\end{equation*}
Taking the scalar product of the above equation with $\psi^{(2)}$ and integrating by parts under $\nu$ yields
\begin{equation}\label{eq:ifp}
    \dE_{\nu}[|\psi^{(2)}|^2]\leq C(\beta)n^{\kappa\ve}\dE_{\nu}[\psi^{(2)}\cdot \tM\psi^{(1)}].
\end{equation}
We claim that uniformly in $1\leq j\leq n$,
\begin{equation}\label{eq:unifj}
    \dE_{\nu}[\psi^{(2)}\cdot \tM\psi^{(1)}] \leq \frac{C(\beta)}{n^{1-\kappa\ve}}\dE_{\nu}[|\psi^{(2)}|^2]^{\frac{1}{2}}\dE_{\nu}[\chi_n^2]^{\frac{1}{2}}.
\end{equation}
Let $\mc{A}$ be the good event (\ref{eq:good5 gap}). Fix $1\leq j\leq n$. One can split the quantity $(\tM\psi^{(1)})\cdot e_j$ into
\begin{equation*}
  (\tM\psi^{(1)})\cdot e_j=\underbrace{\sum_{k:d(k,\partial I)\leq n/4}e_j\cdot \tM(e_k\psi_k^{(1)})}_{ (\RomanNumeralCaps{1})_j}+\underbrace{\sum_{k:d(k,\partial I)> n/4}e_j\cdot \tM(e_k\psi_k^{(1)})}_{(\RomanNumeralCaps{2})_j}.
\end{equation*}
By (\ref{eq:Mg1}) and (\ref{eq:Mg2}), 
one may bound the first quantity by
\begin{multline*}
    \dE_{\nu}[\mathds{1}_{\mc{A}}\psi^{(2)}_j(\RomanNumeralCaps{1})_j]\leq \frac{C(\beta)n^{\kappa\ve}}{d(j,\partial I)^{\frac{s}{2}}}\sum_{k:d(k,\partial I)\leq n/4}\frac{1}{d(k,i_0)^{2-s}}\frac{1}{d(k,\partial I)^{\frac{s}{2}} }\dE_{\nu}[(\psi_j^{(2)})^2]^{\frac{1}{2}}\dE_{\nu}[\chi_n^2]^{\frac{1}{2}}\\
    \leq \frac{C(\beta)\dE_{\nu}[(\psi_j^{(2)})^2]^{\frac{1}{2}}}{n^{1-\frac{s}{2}-\kappa\ve}d(j,\partial I)^{\frac{s}{2}}}\dE_{\nu}[\chi_n^2]^{\frac{1}{2}}.
\end{multline*}
For the second quantity, we can write
\begin{equation}\label{eq:b1 H}
  (\RomanNumeralCaps{1})_j=\sum_{k:d(k,\partial I)> n/4}e_j\cdot \tM((e_k-e_{i_0})\psi^{(1)}_k)+\sum_{k:d(k,\partial I)\leq n/4}e_j\cdot \tM(e_{i_0}\psi_{k}^{(1)}).
\end{equation}
For the first term of the last display, using the bound on the increments of $\tM$ given in (\ref{eq:tildeHest inc}), we find that
\begin{multline}\label{eq:b2 H}
    \dE_{\nu}\Bigr[\mathds{1}_{\mc{A}}\psi_j^{(2)}e_j\cdot\Bigr(\sum_{k:d(k,\partial I)> n/4}\tM((e_k-e_{i_0})\psi^{(1)}_k)\Bigr)\Bigr]\\\leq \frac{C(\beta)n^{\kappa\ve}}{d(j,\partial I)^{\frac{s}{2}}}\sum_{k:d(k,\partial I)>n/4 }\frac{1}{d(i_0,k)^{1-s}}\frac{1}{n^{1+\frac{s}{2}}}\dE_{\nu}[(\psi_j^{(2)})^2]^{\frac{1}{2}}\dE_{\nu}[\chi_n^2]^{\frac{1}{2}}
    \leq \frac{C(\beta)\dE_{\nu}[(\psi_j^{(2)})^2]^{\frac{1}{2}}}{n^{1-\frac{s}{2}+\kappa\ve}d(j,\partial I)^{\frac{s}{2}}}\dE_{\nu}[\chi_n^2]^{\frac{1}{2}}.
\end{multline}
Because $\psi^{(1)}\cdot (e_1+\ldots+e_n)=0$, the second term of (\ref{eq:b1 H}) satisfies
\begin{equation}\label{eq:b3 H}
\dE_{\nu}\Bigr[\mathds{1}_{\mc{A}}\psi^{(2)}_je_j\cdot\Bigr(\sum_{k:d(k,\partial I)> n/4}\tM(e_{i_0}\psi_k^{(1)})\Bigr) \Bigr]\leq \frac{C(\beta)\dE_{\nu}[(\psi_j^{(2)})^2]^{\frac{1}{2}}}{n^{1-\kappa\ve}}\dE_{\nu}[\chi_n^2]^{\frac{1}{2}}.
\end{equation}
Putting (\ref{eq:b1 H}), (\ref{eq:b2 H}) and (\ref{eq:b3 H}) together we obtain (\ref{eq:unifj}). Summing this over $j$ yields
\begin{equation*}
  \dE_{\nu}[\mathds{1}_{\mc{A}}\psi^{(2)}\cdot \tM\psi^{(1)}] \leq \frac{C(\beta)\dE_{\nu}[|\psi^{(2)}|^2]^{\frac{1}{2}}}{n^{1-\kappa\ve}}\dE_{\nu}[\chi_n^2].
\end{equation*}
Finally, inserting the maximum principle of Proposition \ref{proposition:maximum principle} we obtain
\begin{equation*}
    \dE_{\nu}[\mathds{1}_{\mc{A}^c}\psi^{(2)}\cdot \tM\psi^{(1)}]\leq C(\beta)n^{\kappa\ve}\sup|\chi_n|\dE_{\nu}[|\psi^{(2)}|^2]^{\frac{1}{2}}\dE_{\nu}[\mathds{1}_{\mc{A}^c}|\tM|^2]^{\frac{1}{2}}\leq C(\beta)e^{-c(\beta)n^{\delta}}\sup|\chi_n|\dE_{\nu}[|\psi^{(2)}|^2]^{\frac{1}{2}}.
\end{equation*}
Inserting the last displays into (\ref{eq:ifp}) we find
\begin{equation}\label{eq:fpsi}
    \dE_{\nu}[|\psi^{(2)}|^2]\leq \frac{C(\beta)}{n^{1-\kappa\ve}}(\dE_{\nu}[\chi_n^2]+e^{-c(\beta)n^{\delta}}\sup|\chi_n|^2).
\end{equation}
In particular, for each $1\leq j\leq n$, there holds
\begin{equation*}
    \dE_{\nu}[(\psi^{(2)}_j)^2]\leq \frac{C(\beta)}{n^{1-\kappa\ve}}(\dE_{\nu}[\chi_n^2]+e^{-c(\beta)n^{\delta}}\sup|\chi_n|^2)
\end{equation*}
and the estimate (\ref{eq:main1r}) follows.
\end{proof}

\subsection{Proof of Theorem \ref{theorem:decay gap correlations}}

\begin{proof}[Proof of Theorem \ref{theorem:decay gap correlations}]
Arguing as in the proof of Theorem \ref{theorem:hypersing}, one may deduce Theorem \ref{theorem:decay gap correlations} from the decay estimate of Proposition \ref{prop:main2}. 
\end{proof}

\section{Uniqueness of the limiting measure}\label{section:change measure}

In this section we show that the sequence of the laws of microscopic processes converges, in a suitable topology, to a certain point process $\Riesz_{s,\beta}$, as claimed in Theorem \ref{theorem:convergence}. The existence of an accumulation point being a routine argument, Theorem \ref{theorem:convergence} is in fact a uniqueness result. To establish the uniqueness of the accumulation point, we demonstrate that microscopic point processes form, in an appropriate sense, a Cauchy sequence. In the following subsection, we further explain the strategy of proof and show how the problem can be reduced to a correlation question.

\subsection{Reduction to a correlation estimate}
In the present section one seeks to compare the two following quantities:
\begin{equation*}
    \dE_{\dGi^\g}[F(x_1,\ldots,x_n)]\quad \text{and}\quad \dE_{\dGip^\g}[F(x_1,\ldots,x_n)],\quad \text{with}\quad F:\dR^n\to\dR \quad \text{smooth},
\end{equation*}
where $1\leq n\leq N\leq N'$. Let us denote $I=\{1,\ldots,n\}$ and $\pi:\mc{M}_N\to\pi(\mc{M}_N)$ the projection on the coordinates $(x_1,\ldots,x_n)$. We claim that if $F$ depends on variables in the bulk of $\{1,\ldots,n\}$, then the expectations of $F$ under $\dGi^\g$ and $\dGip^\g$ approximately coincide whenever $N$ and $N'$ are chosen large enough. We will draw an exterior configuration $y=(y_{n+1},\ldots,y_N)\in \pi_{I^c}(\mc{M}_N)$ from $\dGi^\g$ and an exterior configuration $z=(z_{n+1},\ldots,z_{N'})\in \pi_{I^c}(\mc{M}_{N'})$ from $\dGip$ and compare the conditioned measures $\dGi(\cdot \mid y)$ and $\dGip(\cdot\mid z)$. Let us slightly modify the measures $\dGi$ and $\dGip$ by adding the following quantity to the Hamiltonian:
\begin{equation}\label{eq:F forcing}
    \mathrm{F}^\g:X_n\in\dR^n\mapsto\sum_{i=1}^{n} \theta(n^{-\ve}x_i)+\sum_{i=1}^n\theta(n^{-\ve}x_i^{-1}).
\end{equation}
Define the constrained measures
\begin{equation}
   \label{eq:GQ6}
   \dd \dGiQ^\g\propto e^{-\beta \mathrm{F}^\g\circ\pi}\dd\dGi^\g,  \quad \dd \dGipQ^\g\propto e^{-\beta \mathrm{F}^\g\circ\pi}\dd\dGip^\g.
\end{equation}
We say that a configuration $y=(y_{n+1},\ldots,y_N)\in \pi_{I^c}(\mc{M}_N)$ is admissible if
\begin{equation}\label{eq:def ad}
   |y_i+\ldots+y_{i+k}-k|\leq Cn^{\ve}k^{\frac{s}{2}}\quad \text{for each $n+1\leq i,i+k\leq N$}
\end{equation}
and that $y\in\pi_{I^c}(\mc{M}_N)$ and $z\in\pi_{I^c}(\mc{M}_{N'})$ are compatible if
\begin{equation}\label{eq:co}
    N-(y_{n+1}+\ldots+y_N)=N'-(z_{n+1}+\ldots+z_{N'}).
\end{equation}
Given $y\in \pi_{I^c}(\mc{M}_N)$ and $z\in \pi_{I^c}(\mc{M}_{N'})$ two admissible and compatible configurations, denote 
\begin{equation}\label{eq:def cond m}
    \nu_n^{y}=\dGiQ^\g(\cdot \mid y)\quad \text{and}\quad \nu_n^{z}=\dGiQ^\g(\cdot \mid z).
\end{equation}
Letting
\begin{equation}\label{eq:domain}
    \mc{A}_n=\{(x_1,\ldots,x_n)\in \pi(\mc{M}_N):x_1+\ldots+x_n\leq N-y_{n+1}-\ldots-y_N\},
\end{equation}
we can write
\begin{align*}
   \dd \nu_n^{y}(x) &\propto e^{-\beta (\Hc_n^\g(x)+ \Hc_{n,N}^\g(x,y)+\FF^\g(x))}\mathds{1}_{\mc{A}_n}(x)\dd x\\
    \dd \nu_n^{z}(x) &\propto e^{-\beta (\Hc_n^\g(x)+ \Hc_{n,N'}^\g(x,z)+\FF^\g(x) )}\mathds{1}_{\mc{A}_n}(x)\dd x,
\end{align*}
where $\Hc_{n,N}^\g(x,y)$, defined in (\ref{eq:def int}), stands for the interaction between $x$ and $y$. A first possibility to compare $\nu_n^y$ and $\nu_n^z$ is to transport one measure onto the other and to study the decay of the solution of the Monge-Ampere equation. Instead, we interpolate between $\nu_n^{y}$ and $\nu_n^{z}$ and consider a continuous path $\nu(t)$ in the space of probability measures on $\pi(\mc{M}_N)$. There are several ways of interpolating, one of them consisting in running the Langevin dynamics as in \cite{armstrong2019c}. Alternatively, one can consider a convex combination of $\Hc_{n,N}^\g$ and $\Hc_{n,N'}^\g$. For $t\in [0,1]$, define
\begin{equation*}
   E(t)(x)=(1-t)\Hc_{n,N}^\g(x,y)+t\Hc_{n,N'}^\g(x,z)\quad \text{and}\quad {\Hc}_n^\g(t)=\Hc_n^\g+\FF^\g+{E}(t)
\end{equation*}
and the probability measure
\begin{equation}\label{eq:def mun(t)}
    \dd\nu(t)(x)\propto e^{-\beta {\Hc}^\g_n(t)(x) }\mathds{1}_{\mc{A}_n}(x)\dd x.
\end{equation}
Observe that $\nu(0)=\nu_n^{y}$ and $\nu(1)=\nu_n^{z}$. 

Let $G:\dR^n\to\dR$ be a measurable bounded function. Define 
\begin{equation*}
    h:t\in [0,1]\mapsto \dE_{\nu(t)}[G].
\end{equation*}
It is straightforward to check that $h$ is smooth and that for all $t\in (0,1)$,
\begin{equation*}
    h'(t)=\beta\Cov_{\nu(t)}[G,\Hc_{n,N}^\g(\cdot,y)-\Hc_{n,N}^\g(\cdot,z)].
\end{equation*}
Integrating this between $0$ and $1$, we obtain the following integral representation of the difference of the expectations of $G$ under $\nu_n^y$ and $\nu_n^z$:

\begin{lemma}\label{lemma:integrated repre}
Let $G:\dR^n\to\dR$ be a measurable bounded function. Let also $\nu(t)$ be the measure defined in (\ref{eq:def mun(t)}). We have
\begin{equation}\label{eq:transport NN'}
    \dE_{\nu_n^{z}}[G]=\dE_{\nu_n^{y}}[G]\\+\beta\int_0^1 \Cov_{\nu(t)}[G,\Hc_{n,N}^\g(\cdot,y)-\Hc_{n,N}^\g(\cdot,z)]\dd t.
\end{equation}
\end{lemma}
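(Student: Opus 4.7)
The plan is to prove the formula by a direct computation: differentiate along the linear interpolation $(\mu(t))_{t\in[0,1]}$ and apply the fundamental theorem of calculus. Set
\[
h:t\in[0,1]\mapsto\dE_{\mu(t)}[G].
\]
Since $\mu(0)=\mu_n$ and $\mu(1)=\nu_n$, proving the lemma reduces to establishing that $h\in\mathcal{C}^1([0,1])$ with a derivative given by the covariance on the right-hand side of (\ref{eq:transport NN'}), and then integrating.

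First I would verify the regularity of $h$ and compute $h'(t)$. Writing $h$ as the ratio
\[
h(t)=\frac{\int_{\mc{A}_n}G(x)e^{-\beta\Hc_n^\g(t)(x)}\,\dd x}{\int_{\mc{A}_n}e^{-\beta\Hc_n^\g(t)(x)}\,\dd x},
\]
the integrand depends on $t$ only through $E(t)$, which is affine in $t$ by construction (\ref{eq:def tildeEt}); its derivative $\partial_t E(t)=\Hc_{n,N'}^\g(\cdot,z)-\Hc_{n,N}^\g(\cdot,y)$ is independent of $t$. Since $G$ is bounded, $\mc{A}_n$ is bounded, and the admissibility condition (\ref{eq:def ad}) together with the expression (\ref{eq:expHg}) ensures that $\partial_t E(t)$ is locally bounded on $\mc{A}_n$, differentiation under the integral sign is justified by dominated convergence in both numerator and denominator. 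The quotient rule then yields the standard Gibbs identity
\[
h'(t)=-\beta\bigl(\dE_{\mu(t)}[G\,\partial_t E(t)]-\dE_{\mu(t)}[G]\,\dE_{\mu(t)}[\partial_t E(t)]\bigr)=-\beta\Cov_{\mu(t)}[G,\partial_t E(t)].
\]
This is the content of the identity $h'(t)=\Cov_{\mu(t)}[G,E(t)]$ stated just above the lemma, once one notes that $\partial_t E(t)$ is independent of $t$ so covariances involving it may be reorganized freely.

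Integrating from $0$ to $1$ and using the fundamental theorem of calculus gives
\[
\dE_{\nu_n}[G]-\dE_{\mu_n}[G]=\int_0^1 h'(t)\,\dd t,
\]
which is exactly (\ref{eq:transport NN'}). There is no genuine obstacle in this proof: the argument is an elementary differentiation under the integral on a bounded convex domain against a bounded test function, and all the depth of the subsequent application lies in controlling the resulting covariance by the decay-of-correlations estimates of Section \ref{section:large scale}, which is carried out in the remainder of Section \ref{section:change measure}.
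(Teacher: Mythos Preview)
Your proposal is correct and follows exactly the approach the paper sketches in the paragraph preceding the lemma: differentiate $h(t)=\dE_{\mu(t)}[G]$ under the integral sign and integrate from $0$ to $1$. Your computation $h'(t)=-\beta\,\Cov_{\mu(t)}[G,\partial_t E(t)]$ is the precise version of the paper's informal identity; note that the lemma as stated in the paper suppresses the factor $-\beta$ and writes $E(t)$ in place of $\partial_t E(t)=\Hc_{n,N'}^\g-\Hc_{n,N}^\g$, but the latter is indeed what is used downstream (see the proof of Proposition~\ref{proposition:distinct}), so your formulation is in fact the one that is applied.
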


We will consider functions $G$ depending on a small number of coordinates in the bulk of $\{1,\ldots,n\}$. Let us emphasize that $\partial_i(\Hc_{n,N}^\g(\cdot,y)-\Hc_{n,N}^\g(\cdot,z))$ typically decays in ${d_n(i,\partial I)^{-\frac{s}{2}}}$ under $\nu(t)$. One should therefore prove that the decay of correlations under $\nu(t)$ is fast enough in order to compensate the long-range nature of the interaction and conclude that the covariance term in (\ref{eq:transport NN'}) is small. In order to apply the result of Proposition \ref{proposition:ap to ex} to the measure $\nu(t)$, one shall first prove some rigidity estimates under $\nu(t)$.

\begin{lemma}\label{lemma:local laws mu(t)}
Let $s\in (0,1)$. Let $1\leq n\leq N\leq N'$ with $N\gg n^{\frac{2}{s}}$. Let $y\in \pi_{I^c}(\mc{M}_N)$ and $z\in \pi_{I^c}(\mc{M}_{N'})$ be two admissible and compatible configurations in the sense of (\ref{eq:def ad}) and (\ref{eq:co}). Let $\nu(t)$ be the probability measure (\ref{eq:def mun(t)}). There exist constants $\kappa>0$, $C(\beta)>0$ and $c(\beta)>0$ locally uniform in $\beta$ such that
\begin{align}\label{eq:est1}
    \nu(t)(n^{-\kappa\ve}\leq x_i\leq n^{\kappa\ve})&\geq 1-C(\beta)e^{-c(\beta)n^{\delta}},\quad \text{for each $1\leq i\leq n$},\\\label{eq:est2}
    \nu(t)(|x_i+\ldots+x_{i+k}-k|\geq n^{\kappa\ve}k^{\frac{s}{2}})&\leq C(\beta)e^{-c(\beta)n^{\delta}},\quad \text{for each $1\leq i\leq i+k\leq n$}.
\end{align}
\end{lemma}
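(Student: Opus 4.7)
The plan is to adapt the near-optimal rigidity argument of Theorem~\ref{theorem:almost optimal rigidity} (proved for the unconditioned circular Riesz gas in~\cite{boursier2021optimal}) to the interpolated, conditioned measure $\mu(t)$. The measure $\mu(t)$ inherits the two structural features that drive the original proof: it is log-concave on the convex polytope $\mc{A}_n$, with Hessian close to the Riesz matrix $\bb{H}_s$, and the admissibility~\eqref{eq:def ad} of $y, z$ ensures that the induced external force $\nabla E(t)$ stays small on typical configurations.

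First I would record the convexity structure. Since $g_s$ restricted to $(0,1)$ is convex (a consequence of~\eqref{eq:expression diff}), the inner energy $\Hc_n^\g$ is a convex function of the gap coordinates on $\pi(\mc{M}_N)$. For each fixed admissible $y$ (resp.\ $z$), the cross-interaction $\Hc_{n,N}^\g(\cdot,y)$ (resp.\ $\Hc_{n,N'}^\g(\cdot,z)$) is convex in $x$, being a sum of compositions of $g_s$ with affine maps in~$x$; therefore $E(t)$ is convex on $\mc{A}_n$ for every $t\in[0,1]$. On the sub-region where at least one gap is smaller than $\tfrac12 n^{\ve}$ the forcing $\FF^\g$ contributes a uniform convexity of order $n^{-\ve(s+2)}$, while outside that region the short-range extraction of Subsection~\ref{sub:re} produces an analogous lower bound on $\Hess\Hc_n^\g$.

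Second I would bound the external field $\nabla E(t)$ at the equispaced point $\bar x=(1,\ldots,1)\in\mc{A}_n$. Expanding $\partial_i\Hc_{n,N}^\g$ from~\eqref{eq:expHg} by Taylor's formula for $g_s'$ (using the derivative decay~\eqref{eq:der p}) and injecting the admissibility bound~\eqref{eq:def ad}, one obtains for every block statistic $u_{j,k}=e_j+\ldots+e_{j+k-1}$ that
\begin{equation*}
|u_{j,k}\cdot \nabla E(t)(\bar x)|\leq C n^{\kappa\ve} k^{s/2}/d(j,\partial I)^{s/2},
\end{equation*}
together with a parallel bound for the increments of $\Hess E(t)$. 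A variational argument on the convex polytope $\mc{A}_n$ then shows that the minimizer $x^\star$ of $\Hc_n^\g(t)$ satisfies $|u_{j,k}\cdot(x^\star-\bar x)|\leq C n^{\kappa\ve}k^{s/2}$.

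Third, for the fluctuations around $x^\star$ I would combine the Brascamp-Lieb inequality (Lemma~\ref{lemma:brascamp Lieb}) with the decay of $\bb{H}_s^{-1}$ from Lemma~\ref{lemma:inverse}, yielding the bound $\Var_{\mu(t)}[u_{j,k}\cdot x]\leq C(\beta) n^{\kappa\ve}k^s$. The divergence-free Gaussian concentration of Lemma~\ref{lemma:div free}, applied to the centred statistic $u_{j,k}\cdot x-\dE_{\mu(t)}[u_{j,k}\cdot x]$ after a linear extension making it divergence-free on the full configuration space, upgrades this variance bound to the subgaussian concentration~\eqref{eq:est2}. The $L^\infty$ estimate~\eqref{eq:est1} then follows from the $k=1$ case by a union bound, or alternatively from the maximum-principle argument of Proposition~\ref{proposition:maximum principle}.

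The main obstacle is that the uniform convexity constant $n^{-\kappa\ve}$ is too weak to directly recover the sharp scaling $k^s$ of the variance at large $k$ through a one-shot Brascamp-Lieb estimate: one must genuinely exploit the long-range structure of the inverse Hessian, not merely its smallest eigenvalue. Following~\cite[Section~3]{boursier2021optimal}, I would therefore run a multi-scale bootstrap, establishing a preliminary local law at a small reference scale and progressively upgrading it. At each step, one must re-verify that the external field $\nabla E(t)$ remains compatible with the target fluctuation scale on every mesoscopic window, which is exactly what the admissibility assumption~\eqref{eq:def ad} delivers uniformly in $k$.
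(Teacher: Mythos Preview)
Your proposal is broadly viable, and for the fluctuation part you eventually land on the same device as the paper: the block-average decomposition of \cite{bourgade2012bulk} together with Lemma~\ref{lemma:div free}. Note however that your first pass in item~3 is not quite right as written: the statistic $u_{j,k}\cdot x$ does not have divergence-free gradient (the components of $u_{j,k}$ sum to $k$, not $0$), and the ``linear extension'' you allude to is vague. The paper does not attempt a one-shot concentration; it writes $N(x_i-x_i^{[k]})$ as a telescoping sum $\sum_m G_m$ of differences of block averages, each $G_m$ being automatically divergence-free and supported on a window of size $|I_m|$, and applies Lemma~\ref{lemma:div free} scale by scale. This is precisely the ``multi-scale bootstrap'' you invoke at the end, so the fluctuation argument ultimately coincides.

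The genuine difference is in the accuracy step. You propose to locate the minimizer $x^\star$ of $\Hc_n^\g(t)$ directly, by bounding $\nabla E(t)(\bar x)$ and inverting the Hessian. The paper instead exploits the interpolation structure of $\mu(t)$ a second time: once the fluctuation bound is in hand (Step~1), it feeds back into the identity
\[
\dE_{\nu(t)}[N(x_{i+k}-x_i)]-\dE_{\nu(0)}[N(x_{i+k}-x_i)]=\int_0^t \Cov_{\nu(s)}[N(x_{i+k}-x_i),E(s)]\,\dd s,
\]
and Cauchy--Schwarz plus the fluctuation bound on gaps shows $\Var_{\nu(s)}[E(s)]\le C(\beta)n^{\kappa\ve}$ (the leading term of $\partial_i E(s)$ is constant in $x$, hence invisible to the variance). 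This pins $\dE_{\nu(t)}$ to $\dE_{\nu(0)}$; specializing to $N=N'$ and averaging over admissible $y$ then connects $\dE_{\nu(0)}$ to the unconditioned expectation, which equals $k+O_\beta(1)$ by Theorem~\ref{theorem:almost optimal rigidity}. What this buys over your route is uniformity: your variational bound carries a factor $d(j,\partial I)^{-s/2}$ from the external field, so the control on $x^\star-\bar x$ degrades near $\partial I$, whereas the paper's comparison to $\nu(0)$ is uniform in $i,k$ and offloads all boundary issues onto the already-established rigidity of the full circular gas. Your approach can likely be made to work but requires extra care at the boundary; the paper's is shorter because it recycles Step~1 and Theorem~\ref{theorem:almost optimal rigidity} rather than redoing a minimizer analysis.
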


Lemma \ref{lemma:local laws mu(t)}, whose proof is given in Appendix \ref{section:loc laws}, implies that $\nu(t)$ verifies Assumptions \ref{assumptions:mu2}. Let us now split the operator $A_1^{\nu(t)}$ in a convenient way in order to simplify the analysis of the correlations equation. Following Subsection \ref{sub:re}, one can decompose $A_1^{\nu(t)}$ into $A_1^{\nu(t)}=\bar{A}_1^{\nu(t)}+\tM(t)$ with
\begin{equation*}
    \bar{A}_1^{\nu(t)}:=\beta\nabla^2 \FF^\g+\beta A\mathds{1}_{\mc{A}^c}+\beta\nabla^2(\Hc_n^{\g}(x)+E(t))\mathds{1}_{\mc{A}}\\-\beta B( D+\beta^{-1}\mc{L}^{\nu(t)}\otimes I_n)^{-1}C+\mc{L}^{\nu(t)}\otimes I_n,
\end{equation*}
\begin{equation*}
\tM(t):\beta\nabla^2(\Hc_n^{\g}+E(t))\mathds{1}_{\mc{A}^c}-\beta A\mathds{1}_{\mc{A}^c}
  +\beta B( D+\beta^{-1}\mc{L}^{\nu(t)}\otimes I_n)^{-1}C,
\end{equation*}
where $A, B, C, D$ are as in (\ref{eq:blockHN}) and $\mc{A}$ as in (\ref{eq:good5 gap}). Denote $d$ the usual distance on $\{1,\ldots,n\}$. In view of Lemmas \ref{lemma:local laws mu(t)} and \ref{lemma:BDC}, for $s\in (0,1)$, there exist $C(\beta)>0, c(\beta)>0, \delta>0$ and $\kappa>0$ such that for each $1\leq i,j,l\leq n$, $\eta$ and all $\phi\in L^2(\nu(t))$,
\begin{multline}\label{eq:tildeHest}
    \dE_{\nu(t)}[(\phi e_j)\cdot \tM(t)(\eta e_i)]^{\frac{1}{2}}\leq \frac{C(\beta)n^{\kappa\ve}}{d(i,\partial I)^{\frac{s}{2}} d(j,\partial I)^{\frac{s}{2}}}\dE_{\nu(t)}[\phi^2]^{\frac{1}{2}}\dE_{\nu(t)}[\eta^2]^{\frac{1}{2}}+C(\beta)e^{-c(\beta)n^{\delta}}\sup|\phi|\sup|\eta|,
\end{multline}
    \begin{multline}\label{eq:tildeHest inc}
    \dE_{\nu(t)}\Bigr[(\phi e_j)\tM(t)(\eta (e_i-e_l))\Bigr]^{\frac{1}{2}}\leq \frac{C(\beta)n^{\kappa\ve}d(i,l)}{\min(d(i,\partial I)^{1+\frac{s}{2}},d(j,\partial I)^{1+\frac{s}{2}})d(j,\partial I)^{\frac{s}{2}}}\dE_{\nu(t)}[\phi^2]^{\frac{1}{2}}\dE_{\nu(t)}[\eta^2]^{\frac{1}{2}}\\
     +C(\beta)e^{-c(\beta)n^{\delta}}\sup|\phi|\sup|\eta|.
\end{multline}
Similarly in the case $s\in (1,+\infty)$, there exist $C(\beta)>0, c(\beta)>0, \delta>0$ and $\kappa>0$ such that for each $1\leq i,j,l\leq n$ and all $\eta$, $\phi\in L^2(\nu(t))$,
\begin{multline}\label{eq:tildeHest b}
    \dE_{\nu(t)}[(\phi e_j)\cdot \tM(t)(\eta e_i)]^{\frac{1}{2}}\leq \frac{C(\beta)n^{\kappa\ve}}{d(i,\partial I)^{s-\frac{1}{2}} d(j,\partial I)^{s-\frac{1}{2}}}\dE_{\nu(t)}[\phi^2]^{\frac{1}{2}}\dE_{\nu(t)}[\eta^2]^{\frac{1}{2}} 
   +C(\beta)e^{-c(\beta)n^{\delta}}\sup|\phi|\sup|\eta|, \end{multline}
    \begin{multline}\label{eq:tildeHest inc b}
    \dE_{\nu(t)}\Bigr[(\phi e_j)\tM(t)(\eta (e_i-e_l))\Bigr]^{\frac{1}{2}}\leq \frac{C(\beta)n^{\kappa\ve}d(i,l)}{\min(d(i,\partial I)^{\frac{3}{2}+s},d(j,\partial I)^{\frac{3}{2}+s})d(j,\partial I)^{\frac{1}{2}+s}}\dE_{\nu(t)}[\phi^2]^{\frac{1}{2}}\dE_{\nu(t)}[\eta^2]^{\frac{1}{2}}\\+C(\beta)e^{-c(\beta)n^{\delta}}\sup|\phi|\sup|\eta|.
\end{multline}

\subsection{Estimate on the main equation}
 Our goal is to study the decay of the solution $\psi$ of the Helffer-Sjöstrand equation associated to $\nu(t)$ when the source vector-field is localized on a small number of coordinates in the center of $I$. Since $\nu(t)$ verifies Assumptions \ref{assumptions:mu2}, one may apply the result of Proposition \ref{proposition:loc} to $\bar{A}_1^{\nu(t)}$. By convexity, this yields a satisfactory bound on $\psi$, thus proving that that the correlation between a given gap in the center of $I$ and the interaction energy ${E}(t)$, tends to $0$ as $n$ tends to infinity.
\begin{lemma}\label{lemma:re eq}
Let $s\in (0,1)\cup(1,+\infty)$. Let $y\in \pi_{I^c}(\mc{M}_N)$ be an admissible configuration in the sense of (\ref{eq:def ad}) and $\nu(t)$ be the measure defined in (\ref{eq:def mun(t)}). Let $\chi_n\in H^1$, $i_0\in\{1,\ldots,n\}$ such that $|i_0-\frac{n}{2}|\leq \frac{n}{4}$. Let $\psi\in L^2(I,H^1(\nu(t)))$ solving
\begin{equation}\label{eq:real eq}
    \left\{
    \begin{array}{ll}
       \beta \nabla^2 ({\Hc}_n^\g(t)+\FF^\g)\psi+ \mc{L}^{\nu(t)}\psi=\chi_n e_{i_0} & \text{on }\mc{A}_n \\
       \psi\cdot \vec{n}=0 &\text{on }\partial \mc{A}_n.
    \end{array}
    \right.
\end{equation}
Denote $d$ the usual distance on $\{1,\ldots,n\}$. There exist constants $C(\beta)>0, \kappa>0$ such that
\begin{equation*}
   \sum_{j=1}^{n} \frac{\dE_{\nu(t)}[\psi_j^2]^{\frac{1}{2}}}{d(j,\partial I)^{\frac{s}{2}} }\leq C(\beta)n^{\kappa\ve}(\dE_{\nu(t)}[\chi_n^2]^{\frac{1}{2}}+\sup|\chi_n|e^{-c(\beta)n^{\delta}})(n^{-\frac{s}{2}}\mathds{1}_{s\in (0,1)}+n^{-\frac{1}{2}}\mathds{1}_{s\in (1,+\infty)}).
\end{equation*}
\end{lemma}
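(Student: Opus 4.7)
The plan is to adapt the two-piece splitting strategy used in Proposition \ref{proposition:ap to ex}, with the estimates on the commutator $H(t)$ replacing those on the short-range perturbation $\widetilde{H}$. Write $\psi = \psi^{(1)} + \psi^{(2)}$ where $\psi^{(1)} \in L^2(\{1,\ldots,n\}, H^1(\mu(t)))$ solves the approximate equation
\begin{equation*}
\beta \sM \psi^{(1)} + \mc{L}^{\mu(t)} \psi^{(1)} = \chi_n e_{i_0} + \lambda^{(1)} (e_1+\ldots+e_n),
\end{equation*}
with the constraint $\psi^{(1)} \cdot (e_1+\ldots+e_n) = 0$ and Neumann boundary condition on $\partial \mc{A}_n$, for which Lemma \ref{lemma:estimate t} immediately supplies the pointwise decay (\ref{eq:decmunt}). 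The residual $\psi^{(2)} = \psi - \psi^{(1)}$ then satisfies
\begin{equation*}
\beta \Hess(\Hc_n^\g(t)+\FF^\g) \psi^{(2)} + \mc{L}^{\mu(t)} \psi^{(2)} = -\beta H(t) \psi^{(1)} + \lambda^{(2)} (e_1+\ldots+e_n),
\end{equation*}
so the full Hessian is in play but the source is now controlled by the commutator $H(t)$.

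First I would control the vector-field $H(t) \psi^{(1)}$ in $L^2(\mu(t))$. Fix $1 \leq j \leq n$. The key is that $\sum_k \psi^{(1)}_k = 0$, so I can rewrite
\begin{equation*}
(H(t)\psi^{(1)})_j = \sum_k \bigl(H(t)_{j,k} - H(t)_{j,i_0}\bigr) \psi^{(1)}_k,
\end{equation*}
split the sum according to whether $d(k, \partial I) \leq n/4$ or not, and apply the estimate (\ref{eq:tildeHest}) on the far piece together with the increment estimate (\ref{eq:tildeHest inc}) on the near piece (around $i_0$, where $d(i_0, \partial I) \gtrsim n$ by hypothesis). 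Combining with the $\psi^{(1)}$ decay coming from Lemma \ref{lemma:estimate t}, I expect to obtain a pointwise bound of the shape
\begin{equation*}
\dE_{\mu(t)}\bigl[(H(t)\psi^{(1)})_j^2\bigr]^{1/2} \leq \frac{C(\beta) n^{\kappa \ve}}{d(j,\partial I)^{s/2}} \frac{1}{n} \bigl(\dE_{\mu(t)}[\chi_n^2]^{1/2} + \sup|\chi_n| e^{-c(\beta) n^\delta}\bigr),
\end{equation*}
and analogously with exponent $s - \frac{1}{2}$ in the hypersingular regime, by using (\ref{eq:tildeHest b})--(\ref{eq:tildeHest inc b}) in place of (\ref{eq:tildeHest})--(\ref{eq:tildeHest inc}). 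The main obstacle here is accounting carefully for the combination of the decay of increments of $H(t)$ and the decay of $\psi^{(1)}$; in particular, the term localized near $i_0$ has to exploit that $\psi^{(1)}_k$ is $O(1/n)$ in that region, which is provided by the $1/n$ correction in (\ref{eq:decmunt}), and the far term has to exploit the cancellation from $\psi^{(1)}\cdot(e_1+\ldots+e_n)=0$ together with the decay of $H(t)_{j,k}$.

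Second, I would bound $\psi^{(2)}$ using that $\Hess(\Hc_n^\g(t)+\FF^\g)$ is uniformly convex with constant $n^{-\kappa \ve}$ on the support of a cutoff of the good event (\ref{eq:good5}), a fact which follows from the construction of $\FF$ and the rigidity estimates of Lemma \ref{lemma:local laws mu(t)}. Taking the scalar product of the equation for $\psi^{(2)}$ with $\psi^{(2)}$ itself, integrating by parts under $\mu(t)$, and controlling the Lagrange multiplier $\lambda^{(2)}$ via an expression analogous to (\ref{eq:exp la}), one obtains
\begin{equation*}
\dE_{\mu(t)}[|\psi^{(2)}|^2] \leq C(\beta) n^{\kappa\ve} \, \dE_{\mu(t)}[|H(t)\psi^{(1)}|^2],
\end{equation*}
modulo exponentially small error from the complement of the good event, handled by the maximum principle of Proposition \ref{proposition:maximum principle}.

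Finally, combining the pointwise estimate on $\psi^{(1)}$ from Lemma \ref{lemma:estimate t} with the $L^2$ bound on $\psi^{(2)}$ obtained above, and then summing the weighted quantity $\sum_j d(j,\partial I)^{-s/2} \dE_{\mu(t)}[\psi_j^2]^{1/2}$, the dominant contribution is $\sum_j d(j,\partial I)^{-s/2} \cdot n^{-1}$, which gives $n^{-s/2}$ when $s \in (0,1)$ and $n^{-1/2}$ when $s > 1$ (the latter via the integrability of $d(j,\partial I)^{-s/2}$ being replaced by the cruder scaling since $s/2$ may exceed $1$). The hardest step is the $H(t)\psi^{(1)}$ estimate, because getting the clean $d(j,\partial I)^{-s/2} n^{-1}$ decay requires simultaneously using the zero-sum constraint and the two different estimates (\ref{eq:tildeHest}) and (\ref{eq:tildeHest inc}) in a balanced way; everything after that is a routine integration by parts plus summation in $j$.
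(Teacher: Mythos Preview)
Your proposal is correct and follows essentially the same route as the paper: the splitting $\psi=\psi^{(1)}+\psi^{(2)}$, the appeal to Lemma~\ref{lemma:estimate t} for $\psi^{(1)}$, the control of $H(t)\psi^{(1)}$ via the near/far decomposition in $d(k,\partial I)$ combined with the zero-sum cancellation and the increment estimates (\ref{eq:tildeHest})--(\ref{eq:tildeHest inc}), and the final weighted Cauchy--Schwarz summation are exactly what the paper does. The only presentational difference is that the paper bounds the bilinear form $\dE_{\mu(t)}[\psi^{(2)}\cdot H(t)\psi^{(1)}]$ directly (since the available estimates on $H(t)$, which involves the operator $B(\beta D+\mc{L}^{\mu}\otimes\Id)^{-1}C$, are stated as bilinear bounds), whereas you phrase the intermediate step as a pointwise $L^2$ bound on $(H(t)\psi^{(1)})_j$; the two are equivalent by duality, and the paper's phrasing is slightly more direct.
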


\begin{proof}
Let $s\in (0,1)$. Let $\psi\in L^2(I,H^1(\nu(t)))$ be the solution of (\ref{eq:real eq}). Let $\psi^{(1)}\in L^2(I,H^1(\nu(t)))$ be the solution of 
\begin{equation*}
    \left\{
    \begin{array}{ll}
       \beta \sM\psi^{(1)}+ \mc{L}^{\nu(t)}\psi^{(1)}=\chi_n e_{i_0} & \text{on }\mc{A}_n \\
       \psi^{(1)}\cdot \vec{n}=0 &\text{on }\partial \mc{A}_n.
    \end{array}
    \right.
\end{equation*}
Set $\psi^{(2)}=\psi-\psi^{(1)}\in L^2(I,H^1(\nu(t)))$. One can observe that $\psi^{(2)}$ is solution of
\begin{equation*}
  \left\{
    \begin{array}{ll}
       \beta \nabla^2({\Hc}_n^\g(t)+\FF^\g)\psi^{(2)}+ \mc{L}^{\nu(t)}\psi^{(2)}=-\beta \tM(t)\psi^{(1)} & \text{on }\mc{A}_n \\
       \psi^{(2)}\cdot \vec{n}=0 &\text{on }\partial \mc{A}_n.
    \end{array}
    \right.
\end{equation*}
Since $\nu(t)$ satisfies Assumptions \ref{assumptions:mu2} if $s\in (0,1)$ (resp. Assumptions \ref{assumptions:mu} if $s\in(1,+\infty)$), one may apply the estimate (\ref{eq:final control}) of Proposition \ref{proposition:loc} (resp. the estimate (\ref{eq:opt illus}) of Proposition \ref{proposition:optimal illus}) to $\psi^{(1)}$ which yields
\begin{equation*}
    \dE_{\nu(t)}[(\psi_i^{(1)})^2]^{\frac{1}{2}}\leq C(\beta)n^{\kappa\ve}\Bigr(\frac{\mathds{1}_{s\in(0,1)} }{d(i,i_0)^{2-s}}+\frac{\mathds{1}_{s\in(1,+\infty)} }{d(i,i_0)^s}\Bigr).
\end{equation*}
Together with the bounds (\ref{eq:tildeHest}) and (\ref{eq:tildeHest inc}), this implies 
\begin{equation*}
    \dE_{\nu(t)}[|\psi^{(2)}|^2]\leq \frac{C(\beta)}{n^{1-\kappa\ve}}(\dE_{\nu(t)}[\chi_n^2]+\sup|\chi_n|^2e^{-c(\beta)n^{\delta}}).
\end{equation*}
By Cauchy-Schwarz inequality, this yields
\begin{equation*}
    \sum_{j=1}^{n} \frac{1}{d(j,\partial I)^{\frac{s}{2}}}\dE_{\nu(t)}[(\psi_j^{(2)})^2]^{\frac{1}{2}}\leq C(\beta)n^{\kappa\ve-\frac{s}{2}}(\dE_{\nu(t)}[\chi_n^2]^{\frac{1}{2}}+\sup|\chi_n|e^{-c(\beta)n^{\delta}})
\end{equation*}
and the same estimate holds for $\psi$. We conclude likewise in the case $s\in (1,+\infty)$.
\end{proof}

\subsection{Proof of Theorem \ref{theorem:convergence} and Theorem \ref{theorem:quantitative conv}}

By inserting the decay estimate of Lemma \ref{lemma:re eq} into the identity (\ref{eq:transport NN'}), one may easily compare the measures $\nu_n^y$ and $\nu_n^z$ when the exterior configurations $y$ and $z$ are compatible. After integrating $y$ and $z$ in the set of admissible and compatible configurations, there remains to show that typical fluctuations of $x_1+\ldots+x_n$ do not affect much the measures, thus demonstrating the following comparison inequality:

\begin{proposition}\label{proposition:distinct}
Let $s\in (0,1)\cup(1,+\infty)$. Let $G:\dR^n\to\dR$ in $H^1$ such that $\sup|\nabla G|<\infty$. Assume that $G$ depends only on the variables $x_i$ for $i\in J:=\{\lfloor \frac{n}{2}\rfloor-K,\ldots,\lfloor \frac{n}{2}\rfloor+K\}$ with $K\leq n/4$. Let $\mc{A}$ be the good event (\ref{eq:good5 gap}). We have
\begin{multline}\label{eq:comp scales}
     \Bigr|\dE_{\dGiQ^\g}[G(x_1,\ldots,x_n)]-\dE_{\dGipQ^\g}[G(x_1,\ldots,x_n)]\Bigr|\\\leq  C(\beta)n^{\kappa\ve}(n^{-\frac{\min(s,1-s)}{2}}\mathds{1}_{s\in (0,1)}+n^{-\frac{1}{2}}\mathds{1}_{s\in(1,+\infty)})\Bigr(\sum_{i\in J}\sup_{\mc{A}}|\partial_i G|+\sum_{i\in J}\sup|\partial_i G|e^{-c(\beta)n^{\delta}}\Bigr).
\end{multline}
\end{proposition}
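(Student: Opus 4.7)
The plan is to rewrite the difference of conditional expectations as an integral along the interpolating path $\mu(t)$ and then to control the integrand using the machinery developed in the preceding sections. Applying Lemma~\ref{lemma:integrated repre} with $y=(x_{n+1},\ldots,x_N)$ and $z=(x_{n+1}',\ldots,x_N')$ produces
\begin{equation*}
 \dE_{\nu_n}[G]-\dE_{\mu_n}[G]=\int_0^1 \Cov_{\mu(t)}[G,E(t)]\,\dd t,
\end{equation*}
so it will be enough to establish, uniformly in $t\in[0,1]$, a bound of the shape $|\Cov_{\mu(t)}[G,E(t)]|$ smaller than the right-hand side of (\ref{eq:comp scales}) (without the integral in $t$) and then integrate in $t$ using Fubini.

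To control the integrand I would invoke the Helffer--Sj\"ostrand representation of Proposition~\ref{proposition:HS equation gaps} to write
\begin{equation*}
 \Cov_{\mu(t)}[G,E(t)]=\dE_{\mu(t)}[\psi\cdot\nabla E(t)],
\end{equation*}
where $\psi\in L^2(\{1,\ldots,n\},H^1(\mu(t)))$ solves equation~(\ref{eq:real eq}) with source $\nabla G$. Because $G$ depends only on the variables indexed by $I$, the source is supported on $I$, and linearity of the equation gives the decomposition $\psi=\sum_{i\in I}\psi^{(i)}$, where each $\psi^{(i)}$ is the solution with source $\partial_iG\,e_i$. To each $\psi^{(i)}$ I would apply Lemma~\ref{lemma:re eq} with $\chi_n:=\partial_iG$ and $i_0:=i$; the bulk placement hypothesis on $I$ guarantees $|i-n/2|\leq n/4$, so that
\begin{equation*}
 \sum_{j=1}^{n}\frac{1}{d(j,\partial I)^{s/2}}\dE_{\mu(t)}[(\psi^{(i)}_j)^2]^{1/2}\leq C(\beta)n^{\kappa\ve}\bigl(n^{-s/2}\mathds{1}_{s\in(0,1)}+n^{-1/2}\mathds{1}_{s\in(1,+\infty)}\bigr)\bigl(\dE_{\mu(t)}[(\partial_iG)^2]^{1/2}+\sup|\partial_iG|e^{-c(\beta)n^\delta}\bigr).
\end{equation*}

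The remaining ingredient is a pointwise estimate on the external force, namely $\dE_{\mu(t)}[(\partial_jE(t))^2]^{1/2}\leq C(\beta)n^{\kappa\ve}/d(j,\partial I)^{s/2}$ in the long-range case (and the corresponding exponent in the short-range case), which is precisely what is needed to match the weight produced by Lemma~\ref{lemma:re eq}. This estimate should be derived by expanding $\partial_jE(t)$ in gap coordinates via~(\ref{eq:expHg}), decomposing the sum over exterior pairs according to their distance to $\partial I$, and applying the near-optimal rigidity estimates of Lemma~\ref{lemma:local laws mu(t)} together with the admissibility of the exterior configurations; the template is the same as the one used in (\ref{eq:tildeHest})--(\ref{eq:tildeHest b}) for the Hessian $H(t)$. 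Once this decay is in hand, applying Cauchy--Schwarz term-by-term yields
\begin{equation*}
 |\dE_{\mu(t)}[\psi\cdot\nabla E(t)]|\leq \sum_{i\in I}\sum_{j=1}^n \dE_{\mu(t)}[(\psi^{(i)}_j)^2]^{1/2}\dE_{\mu(t)}[(\partial_jE(t))^2]^{1/2},
\end{equation*}
and the desired bound on $\Cov_{\mu(t)}[G,E(t)]$ follows by combining the two estimates; integrating in $t$ concludes the proof.

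The main obstacle will be the pointwise control of $\dE_{\mu(t)}[(\partial_jE(t))^2]^{1/2}$ with the correct exponent. Unlike the Hessian estimates already recorded in (\ref{eq:tildeHest}) and (\ref{eq:tildeHest b}), $\nabla E(t)$ is a first-order quantity for which a naive deterministic bound would give an exponent that is off; the sharp estimate must be produced by pairing the rigidity of the exterior configuration with the rigidity of interior gaps under $\mu(t)$. This is, however, a technical variant of the covariance estimates underlying Lemmas~\ref{lemma:BDC} and~\ref{lemma:cond variance}, to which the matter reduces.
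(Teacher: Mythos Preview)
Your proposal follows the same route as the paper: interpolation via Lemma~\ref{lemma:integrated repre}, Helffer--Sj\"ostrand representation, linear decomposition over $i\in I$, and Lemma~\ref{lemma:re eq} applied to each $\psi^{(i)}$, then pairing with $\nabla E(t)$. One omission worth flagging: the measures $\mu_n,\nu_n$ entering Lemma~\ref{lemma:integrated repre} are the \emph{constrained} conditional measures $\dGiQ^\g(\cdot\mid y)$ of (\ref{eq:def cond m}), not the unconstrained $\dGi^\g(\cdot\mid y)$ that appear in the proposition. The paper inserts a preliminary log-Sobolev/Pinsker step to pass from one to the other (the forcing $\FF^\g$ makes the constrained conditional measure uniformly log-concave with constant $n^{-\kappa\ve}$, while rigidity renders $\nabla\FF^\g$ exponentially small, so the total-variation discrepancy is $O_\beta(e^{-c(\beta)n^\delta})$). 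You should add this step.

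Regarding the ``main obstacle'' you raise: the paper treats it as routine rather than as the crux. The covariance in Lemma~\ref{lemma:integrated repre} is actually with $\partial_t E(t)=\Hc_{n,N'}^\g(x,z)-\Hc_{n,N}^\g(x,y)$, a \emph{difference} of two interaction forces. A first-order Taylor expansion in the exterior gaps, together with admissibility of $y,z$ (which controls $|y_k-z_k|$) and the interior rigidity provided by Lemma~\ref{lemma:local laws mu(t)}, already yields the $d(j,\partial I)^{-s/2}$ decay you need; no new covariance machinery beyond what you cite is required. In the paper this is dispatched in one line by citing Lemmas~\ref{lemma:integrated repre}, \ref{lemma:local laws mu(t)} and \ref{lemma:re eq}.
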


\begin{proof}
Let $G:\dR^n\to\dR$ satisfying the above. Let $\FF^\g$ be the forcing (\ref{eq:F forcing}) and $\dGiQ^\g$, $\dGipQ^\g$ be the measures (\ref{eq:GQ6}). The measure $\dGiQ^\g\circ\pi^{-1}$ being uniformly log-concave with constant $c=\beta n^{-\ve(s+2)}$ one may apply the Bakry-Emery to see that 
\begin{equation*}
    \mathrm{Ent}[\dGi^\g\circ \pi^{-1}\mid\dGiQ^\g\circ \pi^{-1}]\leq 2c^{-1}\dE_{\dGi^\g}[|\nabla \mathrm{F}^\g|^2].
\end{equation*}
Using in turn the rigidity estimates of Lemma \ref{lemma:no explosion c} and Theorem \ref{theorem:almost optimal rigidity c} and the Pinsker inequality, one can write
\begin{equation}\label{eq:tv6}
    \mathrm{TV}(\dGi^\g\circ \pi^{-1},\dGiQ^\g\circ \pi^{-1})\leq C(\beta)e^{-c(\beta)n^\delta}.
\end{equation}
By similar arguments one also has
\begin{equation*}
    \mathrm{TV}(\dGip^\g\circ \pi^{-1},\dGipQ^\g\circ \pi^{-1})\leq C(\beta)e^{-c(\beta)n^\delta}.
\end{equation*}
Let $y\in\pi_{I^c}(\mc{M}_N)$ and $z\in \pi_{I^c}(\mc{M}_{N'})$ be two admissible configurations in the sense of (\ref{eq:def ad}). Assume that $y$ and $z$ are compatible in the sense of (\ref{eq:co}), i.e $N-(y_{n+1}+\ldots+y_N)=N'-(z_{n+1}+\ldots+z_{N'})$. Denote $\mc{A}_n$ the domain (\ref{eq:domain}), $\nu_n^y$ and $\nu_n^z$ the measures (\ref{eq:def cond m}) and $\nu(t)=(\nu(t,y,z))$ the interpolating measure (\ref{eq:def mun(t)}). For each $i\in J$, let $\psi^{(t,i)}\in L^2(I,H^1(\nu(t)))$ be the solution of
\begin{equation*}
   \left\{ \begin{array}{ll}
        A_1^{\nu(t)}\psi^{(t,i)}=(\partial_i G)e_i & \text{on $\mc{A}_n$} \\
        \psi^{(t,i)}\cdot\vec{n}=0 & \text{on $\partial \mc{A}_n$}.
    \end{array}\right.
\end{equation*}
By applying the estimates of Lemmas \ref{lemma:integrated repre}, \ref{lemma:local laws mu(t)} and \ref{lemma:re eq}, we find
\begin{equation}\label{eq:fixed}
\begin{split}
     |\dE_{\dGiQ^\g}[G\circ\pi\mid {y}]-  \dE_{\dGipQ^\g}[G\circ\pi\mid {z}|&\leq \int_0^1 |\Cov_{\nu(t)}[G,\Hc_{n,N'}^\g-\Hc_{n,N}^\g]|\dd t\\
    &=\sum_{i\in J}\int_0^1 |\dE_{\nu(t)}[\nabla (\Hc_{n,N'}^\g-\Hc_{n,N}^\g)\cdot\psi^{(t,i)}]|\dd t\\ & \leq C(\beta)n^{\kappa\ve-\frac{\min(s,1)}{2}}\sum_{i\in J}(\dE_{\nu(t)}[(\partial_i G)^2]^{\frac{1}{2}}+\sup|\partial_i G|e^{-c(\beta)n^{\delta}})\\
     & \leq C(\beta)n^{\kappa\ve-\frac{\min(s,1)}{2}}\Bigr(\sum_{i\in J}\sup_{\mc{A}}|\partial_i G|+\sum_{i\in J}\sup|\partial_i G|e^{-c(\beta)n^{\delta}}\Bigr),
\end{split}
\end{equation}
where we have used the fact that the event (\ref{eq:good5 gap}) has overwhelming probability under $\nu(t)$. Since $y$ (resp. $z$) is admissible under $\dGiQ^\g$ (resp. $\dGipQ^\g$) with overwhelming probability, we have by integrating (\ref{eq:fixed}) that for $\alpha_n\in (n-n^{\ve+\frac{\min(s,1)}{2}},n+n^{\ve+\frac{\min(s,1)}{2}})$,
\begin{multline}\label{eq:step1}
    \dE_{\dGiQ^\g}[G\circ\pi\mid x_1+\ldots+x_n=\alpha_n]=\dE_{\dGipQ^\g}[G\circ\pi\mid x_1+\ldots+x_n=\alpha_n]\\+O_\beta\Bigr(n^{\kappa\ve-\frac{s}{2}}\sum_{i\in J}\sup_{\mc{A}}|\partial_i G|+\sum_{i\in J}\sup|\partial_i G|e^{-c(\beta)n^{\delta}}\Bigr).
\end{multline}
We have thus compared the measures $\dGiQ^\g$ and $\dGipQ^\g$ when $x_1,\ldots,x_n$ are constrained to occupy the same volume. Our aim is now to show that making $\alpha_n$ slightly vary does not affect much the expectation of $G$. Let us upper bound the quantity
\begin{equation*}
    \dE_{\dGiQ^\g}[G\circ\pi\mid x_1+\ldots+x_n=\alpha_n]-\dE_{\dGiQ^\g}[G\circ\pi\mid x_1+\ldots+x_n=n].
\end{equation*}
Set $\alpha=\frac{\alpha_n}{n}$. Performing the change of variables $\Phi(\alpha,\cdot):X_N\mapsto X_N'$ with
\begin{equation*}
    \begin{cases}
        x_i'=\frac{1}{\alpha} x_i & \text{if $i\in I$}\\
        x_i'=\frac{N-n}{N-\alpha n}x_i &\text{if $i\in I^c$}
    \end{cases}
\end{equation*}
reads
\begin{equation}\label{eq:Phial}
    \dE_{\dGiQ^\g}[G\circ \pi \mid x_1+\ldots+x_n=n\alpha]=\alpha^{-n}\Bigr(\frac{N-n}{N-\alpha n}\Bigr)^{N-n}\dE_{\mu_\alpha}[G\circ \pi\circ \Phi(\alpha,\cdot)\mid x_1+\ldots+x_n=n],
\end{equation}
where $\mu_\alpha:=\dGiQ^\g\circ \Phi^{-1}$, which may be expressed in the form
\begin{equation*}
    \dd \mu_\alpha(x)\propto e^{-\beta H(\alpha,x)}\dd x.
\end{equation*}
Differentiating (\ref{eq:Phial}) with respect to $\alpha$ gives
\begin{equation*}
    \frac{\partial }{\partial \alpha} \dE_{\mu_\alpha}[G\circ \pi\circ \Phi(\alpha,\cdot)\mid x_1+\ldots+x_n=n]=\beta\Cov_{\mu_\alpha }[G\circ \pi\circ \Phi(\alpha,\cdot),\partial_1 H]+\dE_{\mu_\alpha}\Bigr[\frac{\partial}{\partial\alpha}\nabla\Phi(\alpha,\cdot)\cdot\nabla (G\circ \pi)\circ\Phi(\alpha,\cdot)\Bigr].
\end{equation*}
Using a rough upper bound on the variance of the energy $\partial_1H $, one gets the existence of constants $C(\beta)>0, \kappa>0$ such that uniformly in $\alpha$, 
\begin{equation}\label{eq:bla}
 \Bigr|\frac{\partial }{\partial\alpha} \dE_{\dGiQ^\g}[G\circ \pi \mid x_1+\ldots+x_n=n\alpha]\Bigr|\leq C(\beta)n^{\kappa\ve+\frac{1}{2}}\Bigr(\sum_{i\in J}\sup_{\mc{A}}|\partial_i G|+\sum_{i\in J}\sup|\partial_i G|e^{-c(\beta)n^{\delta}}\Bigr).
\end{equation}
Since $\alpha=O(n^{\frac{s-1}{2}+\ve})$ for $s\in (0,1)$ one may deduce by Taylor expansion that 
\begin{multline*}
  \Bigr|\dE_{\dGiQ^\g}[G\circ\pi\mid x_1+\ldots+x_n=\alpha_n]-\dE_{\dGiQ^\g}[G\circ\pi\mid x_1+\ldots+x_n=n]\Bigr|\\ \leq C(\beta)n^{\kappa\ve+\frac{s-1}{2}}\Bigr(\sum_{i\in J}\sup_{\mc{A}}|\partial_i G|+\sum_{i\in J}\sup|\partial_i G|e^{-c(\beta)n^{\delta}}\Bigr),  
\end{multline*}
which tends to $0$ as $N$ tends to infinity. When $s\in (1,+\infty)$ it is easy to refine (\ref{eq:bla}). Applying Proposition \ref{proposition:ap to ex} to the measure $\mu_\alpha$, one obtains since $\alpha=O(n^{-1/2+\ve})$,  
\begin{equation*}
    \Bigr|\dE_{\dGiQ^\g}[G\circ\pi\mid x_1+\ldots+x_n=\alpha_n]-\dE_{\dGiQ^\g}[G\circ\pi\mid x_1+\ldots+x_n=n]\Bigr|\leq C(\beta)n^{\kappa\ve-\frac{1}{2}}.  
\end{equation*}
In combination with (\ref{eq:step1}) this finishes the proof of (\ref{eq:comp scales}).
\end{proof}

We are now ready to conclude the proof of the uniqueness of the limiting measure. We will consider random variables in the space of configurations on $\dR$ and one should first define a $\sigma$-algebra on it. We let $\Conf(\dR)$ be the set of locally finite and simple point configurations in $\dR$. Given a Borel set $B\subset \dR$, we denote $N_{B}:\Conf(\dR)\to \mathbb{N}$ the number of points lying in $B$. We then endow $\Conf(\dR)$ with the $\sigma$-algebra generated by the maps $\{N_B :B \ \text{Borel}\}$. We call point process a probability measure on $\Conf(\dR)$. We then say that a sequence $P_N$ of point processes converges to $P$ for the local topology on $\Conf(\dR)$ whenever for any bounded, Borel and local function $f:\Conf(\dR)\to\dR$, the following convergence holds:
\begin{equation*}
    \lim_{n\to\infty}\dE_{P_N}[f]=\dE_{P}[f].
\end{equation*}

\begin{proof}[Proof of Theorems \ref{theorem:convergence} and \ref{theorem:quantitative conv}]\
\paragraph{\textbf{Step 1: compactness. }}
Let $(x_1,\ldots,x_N)$ distributed according to $\dGi$. Denote
\begin{equation*}
    Q^N=\mathrm{Law}\left(\sum_{i=1}^N \delta_{N x_i}\mathds{1}_{|x_i|<\frac{1}{4}}\right)\in \mathcal{P}(\Conf(\dR)).
\end{equation*}
Let us show that the sequence $(Q^N)$ has an accumulation point in the local topology on $\mc{P}(\Conf(\dR))$. We follow the strategy of \cite[Prop. 2.9]{dereudre2021dlr}. For all $R>0$ denote $\Lambda_R=[-R,R]$ and for all $Q\in \mc{P}(\Conf(\dR))$, $Q_R$ the law of $\mc{C}|_{\Lambda_R}$ when $\mc{C}$ is distributed according to $Q.$ For two point processes $P$ and $Q$, define the relative specific entropy of $P$ with respect to $Q$ by
\begin{equation*}
    \Ent[P\mid Q]=\limsup_{R\to\infty}\Ent[P_R\mid Q_R].
\end{equation*}
Let $\Pi$ be a Poisson point process on $\dR$. According to \cite[Prop.~ 2.6]{georgiizessin}, the level sets of $\Ent[\cdot\mid \Pi]$ are sequentially compact for the local topology. As a consequence it is enough to check that
\begin{equation}\label{eq:sup sur I}
    \sup_{N\in \mathbb{N}^*}\sup_{K\in \mathbb{N}^*} \frac{1}{K}\Ent[Q^{N}_K,\Pi_{\Lambda_K}]<\infty.
\end{equation}
Let $B_{K,\Lambda_K}$ be a Bernoulli process on $\Lambda_K$. Following \cite{dereudre2021dlr}, one can split the relative entropy into
\begin{equation}\label{eq:expent}
\begin{split}
    \Ent[Q^N_K\mid \Pi_{\Lambda_K}]&=\int \log \frac{\dd Q_K^N}{\dd B_{K,\lambda_K}}\dd Q_K^N+\int \log \frac{\dd B_{K,\Lambda_K}}{\dd \Pi_{\Lambda_K}}\dd Q_K^N\\
    &=-\log K_{N,\beta}(\Lambda_K)-\beta \dE_{Q_K^N}\Bigr[\sum_{x_i\neq x_j\in \mc{C}}g_s(x_i-x_j)\Bigr]-\log\left(e^{-N}\frac{N^N}{N!}\right),
\end{split}
\end{equation}
where
\begin{equation*}
    K_{N,\beta}(\Lambda_K)=\int\exp\Bigr(-\beta \sum_{x_i\neq x_j\in \mc{C}\cap \Lambda_K}g_s(x_i-x_j)\Bigr)\mathds{1}_{\frac{N}{4}D_N(X_N)}\dd X_N.
\end{equation*}
From the rigidity estimates of Theorem \ref{theorem:almost optimal rigidity c}, we have
\begin{equation*}
    \log K_{N,\beta}(\Lambda_K)=-\beta\dE_{Q_K^N}\Bigr[\sum_{x_i\neq x_j\in \mc{C}\cap\Lambda_K}g_s(x_i-x_j)\Bigr]+O_\beta(K).
\end{equation*}
Inserting this into (\ref{eq:expent}), we deduce that (\ref{eq:sup sur I}) holds. It follows that $(Q^N)$ has an accumulation point in the local topology.
 
\paragraph{\textbf{Step 2: uniqueness. }}Let us now prove that this accumulation point is unique. Let $P,Q \in \mathcal{P}(\Conf(\dR))$ be two accumulation points of $(Q^N)$ in the local topology. Note that $P$ and $Q$ are necessarily translation invariant. Let $k_0\geq 1$. Set
\begin{equation*}
    F:\mathcal{C}\in \Conf(\dR)\mapsto G(z_2-z_1,\ldots,z_{k_0}-z_1),
\end{equation*}
with $G:\dR^{k_0}\to\dR$ smooth. In view of Proposition \ref{proposition:distinct}, we can see that
\begin{equation*}
    \dE_{P}[F ]=\dE_{Q}[F ].
\end{equation*}
This implies that for each $k_0\in \mathbb{N}$, the law of $(z_2-z_1,\ldots,z_{k_0}-z_1)$ under $P$ equals the law of $(z_2-z_1,\ldots,z_{k_0}-z_1)$ under $Q$. Since $P$ and $Q$ are translation invariant, we conclude that $P=Q$.
\end{proof}

The proof of Theorem \ref{theorem:quantitative conv} is now straightforward.

\begin{proof}[Proof of Theorem \ref{theorem:quantitative conv}]
By Theorem \ref{theorem:convergence},
\begin{equation*}
    \lim_{N\to\infty}\dE_{\dGi}[F\circ \pi]=\dE_{\Riesz_{s,\beta}}[G(z_2-z_1,\ldots,z_{k_0}-z_1) ].
\end{equation*}
Since the error term in (\ref{eq:comp scales}) is uniform in $N$, this concludes the proof of Theorem \ref{theorem:convergence}.
\end{proof}

\subsection{Proof of the hyperuniformity result}
Having already established in \cite{boursier2021optimal} that the $N$-Riesz gas is hyperuniform and that $N(x_K-x_1)$ is of order $O(K^s)$ under $\dGi$ with a Gaussian asymptotic behavior, it is now immediate using the convergence result of Theorem \ref{theorem:convergence} to prove that $\Riesz_{s,\beta}$ is also hyperuniform.

\begin{proof}[Proof of Theorem \ref{theorem:hyper riesz}]
Let $1\leq K\leq \hN$. Set $\ell_N=\frac{N}{K}$. Let
\begin{equation*}
    F_N=(N\ell_N)^{-\frac{s}{2}}\left(\sum_{i=1}^N \mathds{1}_{(0,\ell_N)(x_i)}-\ell_N\right).
\end{equation*}
Let $Z\sim \mathcal{N}(0,\sigma^2)$ with
\begin{equation*}
    \sigma^2=\frac{1}{\beta \frac{\pi}{2}s}\mathrm{cotan}\left(\frac{\pi}{2}s\right).
\end{equation*}
Let $\eta:\dR\to\dR$ such that $|\eta'|_{\infty}\leq 1$. In \cite{boursier2021optimal}, we have proved that
\begin{equation}\label{eq:CLTN}
    \dE_{\dGi}[ \eta(F_N)]=\dE[\eta(Z)]+o_{K}(1),  
\end{equation}
with a $o_K(1)$ uniform in $N$. Set 
\begin{equation*}
    \widetilde{F}_N=K^{-\frac{s}{2}}N(x_K-x_1).
\end{equation*}
Using Theorem \ref{theorem:almost optimal rigidity c}, we can prove that
\begin{equation}\label{eq:eqqq 2}
    \dE_{\dGi}[\eta(\widetilde{F}_N)]=\dE_{\dGi}[\eta(F_N)]+o_K(1),
\end{equation}
with a $o_K(1)$ uniform in $N$. Now by Theorem \ref{theorem:quantitative conv}, we have
\begin{equation}\label{eq:conv rr}
    \lim_{N\to \infty}\dE_{\dGi}[\eta(\widetilde{F}_N) ]=\dE_{\Riesz_{s,\beta}}[\eta(K^{-\frac{s}{2}}(z_K-z_1-K) )].
\end{equation}
Combining (\ref{eq:CLTN}), (\ref{eq:eqqq 2}) and (\ref{eq:conv rr}), one deduces that
\begin{equation*}
    \dE_{\Riesz_{s,\beta}}[\eta(K^{-\frac{s}{2}}(z_K-z_1-K))]=\dE[\eta(Z)]+o_K(1).
\end{equation*}
We deduce that under the process $\Riesz_{s,\beta}$, the sequence $K^{-\frac{s}{2}}(z_K-z_1-K)$ converges in distribution to $Z\sim \mathcal{N}(0,\sigma^2)$. Moreover by \cite{boursier2021optimal}, 
\begin{equation*}
    \Var_{\dGi}[F_N]=\Var[Z]+o_N(K^{s }),
\end{equation*}
with a $o_N(K^s)$ uniform in $N$. Proceeding as above, one easily prove the variance estimate (\ref{eq:var riesz beta}).
\end{proof}

\subsection{Proof of the repulsion estimate}
\begin{proof}[Proof of Proposition \ref{proposition:replusion}]
Let $\alpha\in (0,\frac{s}{2})$. It was proved in \cite[Lem.~4.5]{boursier2021optimal} that there exist constants $C(\beta)>0, c(\beta)>0$ locally uniform in $\beta$ such that for each $i\in\{1,\ldots,N\}$ and $\ve>0$ small enough,
\begin{equation*}
    \dGi(N(x_{i+1}-x_i)\leq \ve)\leq C(\beta)e^{-c(\beta)\ve^{-\alpha}}.
\end{equation*}
Since $(\dGi^\g)$ converges to $\mathrm{Riesz}_{s,\beta}$ in the local topology, one can pass the above inequality to the limit which gives
\begin{equation*}
    \mathbb{P}_{\Riesz_{s,\beta}}(z_{i+1}-z_i\leq \ve)\leq C(\beta)e^{-c(\beta)\ve^{-\alpha}}.
\end{equation*}
\end{proof}

\appendix

\section{Discrete Gagliardo-Nirenberg inequality}
The Gagliardo-Nirenberg inequality, originally proved independently in \cite{Gagliardo_1958aa,nirenberg}, is an interpolation inequality between different weak derivatives in $L^p$ spaces. The result was at first stated for derivatives of integer order and then extended to derivatives of fractional order in the rather recent paper \cite{brezis:hal-01626613}. The main result of \cite{brezis:hal-01626613} gives sufficient and necessary conditions on the orders and exponents for an interpolation inequality to hold on $\dR^n$. For shortcut, we only present one of the cases where the interpolation inequality is valid.

\begin{lemma}[Brezis-Mironescu]\label{lemma:brezis}Let $1\leq p,p_1,p_2\leq \infty$. Let $s_1,s_2\geq 0$ and $\theta\in(0,1)$ such that
\begin{equation*}
    s_1\leq s_2,\quad s_0=\theta s_1+(1-\theta)s_2,\quad \frac{1}{p}=\frac{\theta}{p_1}+\frac{1-\theta}{p_2}.
\end{equation*}
Assume that $s_2<1$. Then, there exists a constant $C>0$ depending on $p_1,p_2,s_1,s_2,\theta$ such that for all $u\in W^{s_1,p_1}(\dR)\cap W^{s_2,p_2}(\dR)$,
\begin{equation*}
    \Vert u \Vert_{W^{s_0,p}(\dR)}\leq C\Vert u \Vert_{W^{s_1,p_1}(\dR)}^{\theta}\Vert u \Vert_{W^{s_2,p_2}(\dR)}^{1-\theta}.
\end{equation*}
\end{lemma}

By taking a periodic function of period $1$ on $(-n,n)$, one can show by letting $n$ tend to infinity that Lemma \ref{lemma:brezis} also holds for functions defined on the circle.

\section{Well-posedness results}\label{section:existence c}
The proofs of Propositions \ref{proposition: existence c} and \ref{proposition: HS gaps c} can be found in \cite[App.~A]{boursier2021optimal}. For completeness we sketch the main arguments below.

Let $\mu$ satisfying Assumptions \ref{assumptions:gibbs measure c}. The formal adjoint with respect to $\mu$ of the derivation $\partial_i$, $i\in \{1,\ldots,N\}$ is given by
\begin{equation*}
    \partial_i^*w =\partial_i w-(\partial_i H)w,
\end{equation*}
meaning that for all $v, w\in \mathcal{C}^{\infty}(D_N,\dR)$ such that $\nabla w\cdot\vec{n}=0$, the following identity holds
\begin{equation}\label{eq:preli ipp 1 c}
    \dE_{\mu}[(\partial_i v) w ]=\dE_{\mu}[ v \partial_i^{*}w ].
\end{equation}
The above identity can be shown by integration by parts under the Lebesgue measure on $D_N$. Recall the map
\begin{equation*}
    \Pi:X_N\in D_N\mapsto (x_2-x_1,\ldots,x_N-x_1)\in \dT^{N-1}
\end{equation*}
and
\begin{equation*}
    \mu'=\mu\circ \Pi^{-1}.
\end{equation*}

\begin{proof}[Proof of Proposition \ref{proposition: existence c}]Let $F=G\circ \Pi$ with $G\in H^1(\mu)$. Recall that if $F\in H^1(\mu)$, then $\nabla F\in L^2(\{1,\ldots,N\},H^{-1}(\mu))$. Let 
\begin{equation*}
    E=\{\phi\circ \Pi : \phi\in H^1(\mu'),\dE_{\mu}[\phi\circ \Pi]=0 \}.
\end{equation*}
Consider the functional
\begin{equation*}
    J:\phi\in E\mapsto \dE_{\mu}[|\nabla \phi|^2-2\phi F].
\end{equation*}
One may easily check that $J$ admits a unique minimizer. Indeed for all $\phi=\psi\circ \Pi\in E$, one can write
\begin{equation*}
    |\dE_{\mu}[\phi F]|\leq \Vert F\Vert_{H^{-1}(\mu) }| \Vert\phi\Vert_{H^1(\mu) }.
\end{equation*}
Moreover since $\phi\in E$, one can observe that
\begin{equation*}
    \dE_{\mu}[|\phi|^2]=\dE_{\mu'}[|\psi|^2]\leq c^{-1}\dE_{\mu' }[|\nabla\psi|^2]=\frac{1}{2c}\dE_{\mu}[|\nabla \phi|^2].
\end{equation*}
It follows that $J$ is bounded from below. Since $J$ is convex and l.s.c, by standard arguments, it is l.s.c for the weak topology of $H^1(\mu)$ and therefore $J$ admits a minimizer $\phi$.

One can then easily check by integration by parts that the Euler-Lagrange equations for $\phi$ state that a.e on $D_N$,
\begin{equation}\label{eq:euler1}
    \mc{L}^{\mu}\phi=F-\dE_{\mu}[F],
\end{equation}
with the boundary condition 
\begin{equation}\label{eq:euler2}
    \nabla \phi\cdot \vec{n}=0, 
\end{equation}
a.e on $\partial D_N$. Equations (\ref{eq:euler1}) and (\ref{eq:euler2}) easily imply that $J$ admits a unique minimizer.

Let us now differentiate rigorously Equation (\ref{eq:euler1}). Let $w\in \mathcal{C}_c^{\infty}(D_N)$ and $i\in\{1,\ldots,N\}$. By integration by parts, we have
 \begin{multline*}
     \dE_{\mu}[ w \partial_i F ]=\dE_{\mu}[\partial_i^{*}w(F-\dE_{\mu}[F]) ]
     =\dE_{\mu}[ \partial_i^*w \mc{L} \phi ]
     =\dE_{\mu}[\nabla \partial_i^*w \cdot \nabla \phi]\\
     =\sum_{j=1}^N\dE_{\mu}[ (\partial_i^* \partial_j w) \partial_j \phi]+\sum_{j=1}^N \dE_{\mu}[ (\left[\partial_j,\partial_i^*\right]w)\partial_j \phi ].
 \end{multline*}
 The first term of the right-hand side of the last display may be expressed as
 \begin{equation*}
    \sum_{j=1}^N\dE_{\mu}[ (\partial_i^* \partial_j w) \partial_j \phi]=\sum_{j=1}^N \dE_{\mu}[(\partial_j w)\partial_i \partial_j \phi ]
    =\dE_{\mu}[\nabla w\cdot \nabla(\partial_i \phi)]
    =\dE_{\mu}[w \mc{L}^{\mu}( \partial_i\phi)].
\end{equation*}
For the second term, recalling the identity $\left[\partial_j,\partial_i^*\right]=(\nabla^2 H)_{i,j},$ one may write
\begin{equation*}
    \sum_{j=1}^N \dE_{\mu}[ (\left[\partial_j,\partial_i^*\right]w)\partial_j \phi ]=\dE_{\mu}[(w \cdot \nabla^2 H\nabla \phi)_i]. 
\end{equation*}
One deduces that, in the sense $H^{-1}(\mu)$, for each $i\in\{1,\ldots,N\}$,
\begin{equation*}
   (\nabla^2 H\nabla \phi)_i+ \mc{L}^{\mu}(\partial_i\phi)=\partial_i F.
\end{equation*}
Together with the boundary condition (\ref{eq:euler2}), this concludes the proof of existence and uniqueness of a solution to (\ref{eq:the HS c}). We turn to the proof the variational characterization of the solution of (\ref{eq:the HS c}). Let
\begin{equation}\label{eq:J}
    J:L^2(\{1,\ldots,N\},H^1(\mu))\mapsto \dE_{\mu}[|D\psi|^2+\psi\cdot\nabla^2 H\psi-2\psi\cdot \nabla F].
\end{equation}
By standard arguments, one can prove that $J$ admits a minimizer $\psi$, which satisfies the Euler-Lagrange equation
\begin{equation*}
    A_1^{\mu}\psi=\nabla F.
\end{equation*}
Moreover, one may assume that $\psi\cdot \vec{n}=0$ on $\partial D_N$. By integration by parts, we conclude that $\psi=\nabla\phi$.
\end{proof}

Let us now prove Proposition \ref{proposition: HS gaps c}. Recall the notation
\begin{equation*}
    \Gap_N:X_N\in D_N\mapsto (N|x_2-x_1|,N|x_3-x_2|,\ldots,N|x_{N}-x_{1}|)\in \dR^N,
\end{equation*}
\begin{equation*}
    \mc{M}_N=\Gap_N(D_N)\quad \text{and}\quad  \nu=\Gap_N\#\mu.
\end{equation*}

\begin{proof}[Proof of Proposition \ref{proposition: HS gaps c}]
Let $G\in H^{-1}(\nu)$. Denote $E=\{\phi\in H^{1}(\nu):\dE_{\nu}[\phi]\}=0$ and $J$ the functional
\begin{equation*}
    J:\phi\in E\mapsto \dE_{\nu}[|\nabla\phi|^2-2\phi G].
\end{equation*}
By standard arguments (see the proof of Proposition \ref{proposition: existence c}), we can show that $J$ admits a unique minimizer $\phi$. Since $\phi$ is a minimizer of $J$, for all $h\in E$, 
\begin{equation*}
    \dE_{\nu}[\nabla \phi\cdot \nabla h]=\dE_{\nu}[G h].
\end{equation*}
By integration by parts, one can observe that for all $h\in E$,
\begin{equation*}
  \dE_{\nu}[\nabla \phi\cdot \nabla h]=\dE_{\nu}[\mc{L}^{\nu}\phi h]+\int_{\partial\mc{M}_N} (\nabla\phi\cdot \vec{n})h e^{-H}.
\end{equation*}
By density, it then follows that
\begin{equation*}
    \left\{
    \begin{array}{ll}
        \mc{L}^{\nu}\phi=G-\dE_{\nu}[G] & \text{on }\mc{M}_N \\
        \nabla\phi\cdot \vec{n}=0 & \text{on }\partial \mc{M}_N.
    \end{array}
    \right.
\end{equation*}
To prove that $\nabla\phi$ satisfies the Helffer-Sjöstrand equation (\ref{eq:HS gaps form c}), we need to adapt the integration by parts formula (\ref{eq:preli ipp 1 c}). One may easily show that for all $v\in \mathcal{C}^{\infty}(\mc{M}_N)$ such that $\nabla v\cdot \vec{n}=0$ on $\partial D_N$ and $\psi\in L^2(\{1,\ldots,N\},\mathcal{C}^{\infty}(\mc{M}_N)$ such that $\psi\cdot (e_1+\ldots+e_N)=0$, there holds
\begin{equation}\label{eq:ipp mug c}
    \dE_{\nu}\left[\psi\cdot \nabla v\right]=\dE_{\nu}\left[v(-\nabla H^\g\cdot \psi+\dive \psi)\right].
\end{equation}
Let $w\in L^2(\{1,\ldots,N\},\mathcal{C}_c^{\infty}(\mc{M}_N))$ such that $\sum_{i=1}^N w_i=0$. In view of (\ref{eq:ipp mug c}),
\begin{equation*}
    \dE_{\nu}[ w\cdot \nabla G]=\dE_{\nu}[(G-\dE_{\nu}[G])(-\nabla H^\g\cdot w+\dive w)]=\dE_{\nu}[ \mc{L}^{\nu} \phi (-\nabla H^\g\cdot w+\dive w)].
\end{equation*}
Integrating part the last equation gives
\begin{equation*}
    \dE_{\nu}[ w\cdot \nabla G]=\dE_{\nu}[ \nabla\phi\cdot \nabla (-\nabla H^\g\cdot w+\dive w)]=\dE_{\nu}[w \cdot (\mc{L}^{\nu}\nabla\phi+\nabla^2 H^\g\nabla\phi)].
\end{equation*}
By density, we deduce that there exists a Lagrange multiplier $\lambda\in H^{-1}(\nu)$ such that 
\begin{equation*}
  \nabla^2 H^\g \nabla \phi+\mc{L}^{\nu}\nabla\phi=\nabla G+\lambda (e_1+\ldots+e_N).
\end{equation*}
Recalling that $\nabla\phi\cdot\vec{n}=0$ on $\partial \mc{M}_N$, this yields the existence of a solution to (\ref{eq:HS gaps form c}). Since $\sum_{i=1}^N \partial_i\phi=0$, taking the scalar product of the above equation with $e_1+\ldots+e_N$ yields 
\begin{equation*}
   \lambda =\frac{1}{N}(e_1+\ldots+e_N)\cdot \nabla^2 H^{\mathrm{g}}\nabla\phi.
\end{equation*}
The uniqueness of the solution to (\ref{eq:HS gaps form c}) is straightforward. The proof of the variational characterization comes from arguments similar to the proof of Proposition \ref{proposition: existence c}.
\end{proof}


\section{Local laws for the HS Riesz gas}

\begin{lemma}\label{lemma:local hyper}
Let $s>1$. For all $\ve>0$ small enough, there exists $\delta>0$ such that
\begin{equation}\label{eq:nearhyper}
    \dGi(N(x_{i+1}-x_i)\geq k^{\ve})\leq C(\beta)e^{-c(\beta)k^\delta},\quad \text{for each $1\leq i\leq N$}.
\end{equation}
For all $\ve>0$ small enough, there exists $\delta>0$ such that
\begin{equation}\label{eq:gaphyper}
    \dGi(|N(x_{i+k}-x_i)-k|\geq k^{\frac{1}{2}+\ve})\leq C(\beta)e^{-c(\beta)k^{\delta}},\quad \text{for each $1\leq i\leq N$ and $1\leq k\leq \hN$}.
\end{equation}
\end{lemma}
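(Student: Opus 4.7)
The plan is to exploit the fact that for $s>1$ the Hamiltonian $\Hc_N^\g$ is uniformly convex in gap coordinates, and then apply the Bakry-Emery log-Sobolev criterion together with Herbst's Gaussian concentration. This is much simpler than the long-range situation because the interaction is effectively short-range and the convexity constant does not degenerate with $N$.

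\textbf{Uniform convexity.} The first step is to establish that for $s>1$ the Hessian $\Hess \Hc_N^\g$ is bounded below on the tangent space of $\mc{M}_N$ by a constant $c_0=c_0(s)>0$ independent of $N$. From the expression (\ref{eq:expHg2}) and the positivity of $g_s''$, on configurations where gaps stay in a compact subset of $(0,\infty)$, $\Hess \Hc_N^\g$ is close to the Riesz matrix $\bb{H}_s$ of (\ref{eq:def H matrix}). Since $\bb{H}_s$ is a circular Toeplitz matrix, its eigenvalues are given by the symbol $S(\theta)=\sum_{n\geq 1}n^{-s}\cos(n\theta)$ of (\ref{eq:sumF}); for $s>1$ this sum is absolutely convergent and continuous on $[-\pi,\pi]$, with minimum value strictly positive (at $\theta=0$ the value is $\zeta(s)>0$ and the function stays away from $0$ elsewhere by standard estimates on alternating sums). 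Hence $\bb{H}_s\geq c_0 \Id$ uniformly in $N$.

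\textbf{Concentration via Bakry-Emery and Herbst.} I would then introduce the forced measure
\begin{equation*}
\dd\widetilde\nu\propto e^{-\beta \Hc_N^\g-\beta\widetilde\FF}\dd Y_N,\qquad \widetilde\FF=\sum_{i=1}^N\theta(y_i/n^\ve)+\widetilde\theta(y_i n^\ve),
\end{equation*}
with convex penalties enforcing $n^{-\ve}\leq y_i\leq n^\ve$, so that $\Hess(\Hc_N^\g+\widetilde\FF)\geq c_1 \Id$ on all of $\mc{M}_N$ with $c_1$ of order one. By the Bakry-Emery criterion (Lemma \ref{lemma:bakry}), $\widetilde\nu$ then satisfies a log-Sobolev inequality with constant $O(1/\beta)$. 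Since $y_i$ is $1$-Lipschitz on $\mc{M}_N$ and $S_k:=y_i+\cdots+y_{i+k-1}-k$ is $\sqrt{k}$-Lipschitz, Herbst's lemma yields
\begin{equation*}
\widetilde\nu(|y_i-\dE_{\widetilde\nu}[y_i]|\geq t)\leq 2e^{-c\beta t^2},\qquad \widetilde\nu(|S_k-\dE_{\widetilde\nu}[S_k]|\geq t\sqrt{k})\leq 2e^{-c\beta t^2}.
\end{equation*}
By translation symmetry, $\dE_{\widetilde\nu}[y_i]=1+O(e^{-ck^\delta})$ and $\dE_{\widetilde\nu}[S_k]=O(e^{-ck^\delta})$, so specializing to $t=k^\ve$ yields (\ref{eq:nearhyper}) and (\ref{eq:gaphyper}) under $\widetilde\nu$.

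\textbf{Transfer to $\dGi^\g$ and main obstacle.} To conclude it remains to transfer these bounds from $\widetilde\nu$ to $\nu=\dGi^\g$. Pinsker's inequality combined with the log-Sobolev inequality for $\widetilde\nu$ gives $\mathrm{TV}(\nu,\widetilde\nu)\leq \sqrt{2\Ent(\nu|\widetilde\nu)}\leq C\dE_\nu[|\nabla\widetilde\FF|^2]^{1/2}$, whose right-hand side is supported on the event that some gap leaves the good regime $[n^{-\ve},n^\ve]$. The main obstacle is this transfer step: a priori tail bounds on gaps are precisely what one aims to prove, so one must bootstrap. The scheme is initialized by Lemma \ref{lemma:no explosion} (for the lower tail) and Markov's inequality applied to the identity $\dE_\nu[y_i]=1$ (for the upper tail), and then iteratively refined by applying the Herbst concentration of the previous step with progressively tighter truncation scales $n^{\ve/2^j}$. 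This converts a polynomial tail estimate into a sub-Gaussian one after finitely many iterations, in a manner analogous to \cite[Section 3]{boursier2021optimal} but significantly simpler here thanks to the non-degenerate convexity constant.
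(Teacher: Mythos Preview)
Your approach has a genuine gap at the convexity step. The claim that $\Hess(\Hc_N^\g+\widetilde\FF)\geq c_1\Id$ with $c_1$ of order one is false. First, the symbol you invoke, $S(\theta)=\sum_{n\geq 1}n^{-s}\cos(n\theta)$, satisfies $S(\pi)=-\eta(s)<0$ for $s>1$, so that computation does not establish positivity. More importantly, even granting that the ground-state Hessian is uniformly positive definite, the actual Hessian $\Hess\Hc_N^\g(Y_N)$ is configuration-dependent and degenerates on the good set. Take $y_1=y_2=n^{\ve}$ and $y_i\approx 1$ for $i\geq 3$ (which lies in $\mc{M}_N$ and in your truncation region), and test against $u=(1,-1,0,\dots,0)/\sqrt2$. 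The only pairs with $S_u(i,j)\neq 0$ are $(1,2)$ and $(2,j)$, $j\geq 3$; since $S_y(2,j)\geq y_2=n^{\ve}$, the quadratic form is $O(n^{-\ve(s+1)})$. Your forcing $\theta(y_i/n^\ve)$ adds at most $O(n^{-2\ve})$ on the diagonal. Hence $c_1\leq Cn^{-2\ve}$, in agreement with the paper's own statement in Section~\ref{section:decay short} that the constrained measure is log-concave only with constant $n^{-\ve(s+2)}$.

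With this degraded constant, your bootstrap does not close. For a single gap $y_i$, Herbst with convexity $M^{-(s+2)}$ (truncation at scale $M$) yields a tail $\exp(-cM^{-(s+2)}t^2)$; at $t=M/2$ this is $\exp(-cM^{-s})\to 1$, so you cannot improve the truncation scale. The paper circumvents this by a genuinely different mechanism: a scale-by-scale bootstrap going from $K$ down to $K^{1-\alpha_0}$, applying the divergence-free concentration of Lemma~\ref{lemma:div free} to $N(x_{i+k}-x_i)$ inside a \emph{local} window $I$ of size $\sim k$. The crucial gain is the factor $(|I|-1)^{-1}$ in Lemma~\ref{lemma:div free}, absent from plain Herbst, which is exactly what allows the iteration to close for $1<s<2$. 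For $s\geq 2$ the paper uses yet another route (direct control of the energy via shrinking, bypassing convexity). Once \eqref{eq:nearhyper} is in hand, \eqref{eq:gaphyper} does follow by a simple log-concave argument as you outline, but the heart of the lemma is precisely the nearest-neighbour estimate, and there your argument is incomplete.
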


Let us sketch the proof of the above lemma.

\begin{proof}[Sketch of the proof]
We show recursively, starting from $K=N$, that the log-Laplace transform of $N(x_{i+K}-x_i)$ is of order $O(K)$ uniformly in $i\in\{1,\ldots,N\}$. The point is that one may control the interaction between two sub-intervals by simply shrinking the configuration as in \cite[Proof of Prop.~4.4]{hardin2018large}. Let us first prove that there exists a constant $C(\beta)>0$ such that for any $i\in\{1,\ldots,N\}$ and $k\geq 1$,
\begin{equation}\label{eq:lhyp}
    \log\dE_{\dGi}[\exp(N(x_{i+k}-x_i)]\leq C(\beta)k.
\end{equation}

For clarity we work in zoomed coordinates. For all $R>0$ let us denote $\Lambda_R=[-\frac{R}{2},\frac{R}{2}]$ and $\dT_N$ the torus of size $N$. Let $P_{N}$ be the periodic Riesz gas with $N$ points in $\dT_N$:
\begin{equation*}
    \dd P_{N}(X_N)\propto e^{-\beta H_N(X_N)}\mathds{1}_{X_N\in \dT_N^N}\dd X_N,
\end{equation*}
where 
\begin{equation*}
    H_N:X_N\in (\dT_N)^N \mapsto \sum_{i\neq j}g_{N,s}(x_i-x_j).
\end{equation*}
Let also $Q_{n,R}$ be the Riesz gas with $n$ points in $\Lambda_R$ and $K_{n,R}$ be the associated partition function:
\begin{equation*}
    \dd Q_{n,R}(X_n)=\frac{1}{K_{n,R}}e^{-\beta\tilde{H}_n(X_n)}\mathds{1}_{\Lambda_R^n}(X_n)\dd X_n,
\end{equation*}
where
\begin{equation*}
    \tilde{H}_n:X_n\in\Lambda_R^n \mapsto \sum_{i\neq j}g_{N,s}(x_i-x_j).
\end{equation*}
For any measurable subset $B$ in $\dT_N$ or $\Lambda_R$ and $n\geq 1$ denote
\begin{equation*}
    \mc{N}_n(B)=\mathrm{card}\{1\leq i\leq n:x_i\in B\}.
\end{equation*}
We proceed to the proof of (\ref{eq:lhyp}) by bootstrap on scales.

Fix $C_0>0$. Let $\gamma\in (0,1]$. Assume that for any $N\geq 1, R\geq N\gamma$ and $n\geq 1, R', R>0$ such that $|n-R'|\leq \frac{R'}{2}, R\geq R'\gamma$ we have
\begin{equation}\label{eq:bo per}
    \dE_{P_{N}}[e^{\mc{N}_N(\Lambda_R)}]\leq C_0R,
\end{equation}
\begin{equation}\label{eq:bo flat}
      \dE_{Q_{n,R'}}[e^{\mc{N}_{n}(\Lambda_R)}]\leq C_0R.
\end{equation}
Let us prove that (\ref{eq:bo per}) and (\ref{eq:bo flat}) hold for some $\gamma'$ much smaller than $\gamma$ for some well-controlled $C_0'>0$.

Let $R'=N\gamma$. For a small parameter $\alpha>0$ to be fixed later, let us consider $R\geq \max(R^{\frac{1}{1+\alpha}},\sqrt{\gamma}R)$. Let us prove that (\ref{eq:bo per}) holds for the interval $\Lambda_R$. Since the bootstrap assumption (\ref{eq:bo per}) holds for $R'$, one may restrict the integral to an event where the number of points inside $\Lambda_{R'}$ is well-controlled. Let $n\geq 1$ such that $|n-R'|\leq \frac{R'}{2}$. One studies
\begin{equation*}
    \dE_{P_{N}}[e^{\mc{N}_N(\Lambda_R)}\mid \mc{N}_N(\Lambda_{R'})=n]=\frac{ \int_{\dT_N^N} e^{\mc{N}_N(\Lambda_R)-\beta H_N}\mathds{1}_{\mc{N}(\Lambda_{R'})=n}}{ \int_{\dT_N^N} e^{-\beta H_N}\mathds{1}_{\mc{N}_N(\Lambda_{R'})=n}}.
\end{equation*}
To bound the numerator from above, one can simply use the fact that the interaction between $\Lambda_{R'}$ and $\Lambda_{R'}^c$ is positive:
 \begin{equation}\label{eq:numC}
     \int_{\dT_N^N} e^{\mc{N}_N(\Lambda_R)-\beta H_N}\mathds{1}_{N(\Lambda_{R'})=n}\leq K_{N-n,N-R'} \times\int_{\Lambda_{R'}^n} e^{\mc{N}_n(\Lambda_R)-\beta \tilde{H}_n}  
 \end{equation}
To bound the denominator from below, one can condition on the fact that all points $x_1,\ldots,x_n$ lie in a smaller window $\Lambda'=[-\frac{R'}{2}+R^{1-\ve},\frac{R'}{2}-R^{1-\ve}]\subset \Lambda_{R'}$. Denote
\begin{equation*}
    \mc{A}=(\Lambda')^n.
\end{equation*}
Let us then partition $\dT_N\setminus \Lambda_{R'}$ into $m:=\lfloor \frac{N-n}{R'}\rfloor+1$ intervals $Q_k, k=1,\ldots,m$ of approximate sizes $R'$. On the event $\mc{A}$, one can bound from below the interaction between $\Lambda_{R'}$ and $\Lambda_{R'}^c$, denoted $H(\Lambda_{R'}, \Lambda_{R'}^c)$, by
\begin{equation}\label{eq:HRR}
    H(\Lambda_{R'}, \Lambda_{R'}^c)\geq -\frac{R^{2(1+\alpha)}}{R^{s}}-\sum_{k=1}^m \frac{\mc{N}_{N-n}(Q_k)}{(kR')^{s} }
\end{equation}
Fix $\ve>0$ and $\alpha=\frac{s-1-\ve}{2}>0$, so that by Jensen's inequality, one may write from (\ref{eq:HRR}),
\begin{equation*}
    \log \int_{\dT_N^N} e^{-\beta H_N}\dd X_N\geq \log \int_{\mc{A}} e^{-\beta \tilde{H}_n} +\log K_{N-n,N-R'}-\beta\log \dE_{Q_{N-n,N-R'}}\Bigr[\sum_{k=1}^m \frac{\mc{N}_{N-n}(Q_k)}{(kR')^s}\Bigr]-O(R^{1-\ve}).
\end{equation*}
Applying the bootstrap assumption (\ref{eq:bo flat}) to $Q_{N-n,N-R'}$, we find 
\begin{equation*}
    \log \dE_{Q_{N-n,N-R'}}\Bigr[\sum_{k=1}^m \frac{\mc{N}_{N-n}(Q_k)}{(kR')^s}\Bigr]=O(R^{1-\ve}).
\end{equation*}
One then gets by scaling that the partition function of the $n$ variables conditioned to $\mc{A}$ satisfies
\begin{equation*}
    \log \int_{\mc{A}} e^{-\beta \tilde{H}_n}=\log K_{n,R'}+O(R^{1-\ve}).
\end{equation*}
It thus follows that
\begin{equation}\label{eq:denC}
    \log \int_{\dT_N^N} e^{-\beta H_N}\dd X_N\geq \log K_{n,R'}+\log K_{N-n,N-R'}+O(R^{1-\ve}).
\end{equation}
Assembling (\ref{eq:numC}) and (\ref{eq:denC}) shows that
\begin{equation*}
    \log \dE_{P_{N}}[ e^{\mc{N}_N(\Lambda_R)}\mid \mc{N}_N(\Lambda_{R'})=n]=\log \dE_{Q_{n,R' } }[e^{\mc{N}_n(\Lambda_R) }]+O(R^{1-\ve}).
\end{equation*}
Note
\begin{equation*}
    \frac{R}{R'}=\frac{\gamma'}{\gamma}\geq \gamma,
\end{equation*}
since $\gamma'\geq \sqrt{\gamma}$. One can therefore apply the bootstrap assumption (\ref{eq:bo flat}) to $Q_{n,R'}$ and $\Lambda_R$ to deduce that
\begin{equation*}
    \log \dE_{Q_{n,R'}}[ e^{\mc{N}_n(\Lambda_R)}]\leq C_0 R.
\end{equation*}
Finally applying the induction hypothesis (\ref{eq:bo per}) to control the probability that $|\mc{N}_N(\Lambda_{R'})-R'|\geq \frac{R'}{2}$ one obtains
\begin{equation*}
    \log \dE_{P_{N}}[ e^{\mc{N}_N(\Lambda_{R})}]\leq C_0 R+O(R^{1-\ve}).
\end{equation*}
We obtain by a similar proof that for any $n\geq 1, R'>1$ such that $|n-R'|\leq \frac{n}{2}$ and $R$ such that $R\geq \max(N\sqrt{\gamma},(R')^{\frac{1}{1+\alpha}})$,
\begin{equation*}
    \log \dE_{Q_{n,R'}}[ e^{\mc{N}_n(\Lambda_{R})}]\leq C_0 R+O(R^{1-\ve}),  
\end{equation*}
thus proving the bootstrap assumptions (\ref{eq:bo per}) and (\ref{eq:bo flat}) for $C_0'=C_0+O(R^{-\ve})$. Since the initialization is evident we conclude after $O(\log N)$ iterations that (\ref{eq:lhyp}) holds for some constant $C>0$ independent of $R$.

Let us then improve (\ref{eq:lhyp}) into (\ref{eq:gaphyper}). Let $i\in\{1,\ldots,N\}$, $k\in\{1,\ldots,\frac{N}{2}\}$, $I$ be the window $I=\{i-k,\ldots,i+k\}$, $\theta:\dR^+\to\dR^+$ be a smooth cutoff function such that $\theta(x)=x^2$ if $x>1$, $\theta=0$ on $[0,\frac{1}{2}]$ and $\theta''\geq 0$ on $\dR^+$. Given $\ve>0$, define
\begin{equation*}
    \FF=\sum_{i\in I} \theta\Bigr( \frac{N(x_{i+1}-x_i)}{k^{\ve}}\Bigr)  
\end{equation*}
and $\dGiQ$ the constrained measure 
\begin{equation*}
   \dd \dGiQ\propto e^{-\beta \FF}\dd\dGi.
\end{equation*}
From (\ref{eq:lhyp}), one gets using the Pinsker and Log-Sobolev inequalities that the total variation distance between $\dGi$ and $\dGiQ$ decays exponentially in $k$, meaning that there exist $\delta>0$ depending on $\ve$ and $C(\beta), c(\beta)>0$ such that
\begin{equation}\label{eq:TV}
    \mathrm{TV}(\dGi,\dGiQ)\leq C(\beta)e^{-c(\beta)k^{\delta}}.
\end{equation}
Moreover, since $\dGiQ$ is uniformly log-concave in gap coordinates, one gets the existence of some $\kappa>0$ such that for all $t\in\dR$,
\begin{equation*}
    \log\dE_{\dGiQ}[tN(x_{i+k}-x_i)]-\dE_{\dGiQ}[N(x_{i+k}-x_i)]=O_\beta(k^{1+\kappa\ve}).
\end{equation*}
Employing (\ref{eq:TV}) again to now compare the expectations of $N(x_{i+k}-x_i)$ under $\dGi$ and $\dGiQ$, one concludes the proof of (\ref{eq:gaphyper}). 
\end{proof}

\section{Local laws for the interpolating measure}\label{section:loc laws}
We prove the rigidity estimate of Lemma \ref{lemma:local laws mu(t)} for the conditioned measures (\ref{eq:def cond m}) and the interpolating path (\ref{eq:def mun(t)}). The proof is based on an argument of \cite[Sec.~3]{bourgade2012bulk}. 

\begin{proof}[Proof of Lemma \ref{lemma:local laws mu(t)}]Let $y\in \pi_{I^c}(\mc{M}_N)$ and $z\in \pi_{I^c}(\mc{M}_{N'})$ be as in the statement of Lemma \ref{lemma:local laws mu(t)} and $\nu(t)$ as in (\ref{eq:def mun(t)}). Let $\mu(t)$ be the push-forward of $\nu(t)$ by the map $X_n\in\dR^n\mapsto (0,x_1,x_1+x_2\ldots,x_1+\ldots+x_{n})$. The bound (\ref{eq:est1}) is immediate in view of the forcing (\ref{eq:F forcing}). Let us prove (\ref{eq:est2}).

\paragraph{\bf{Step 1: control of the fluctuations}}
Let $i\in\{1,\ldots,n\}$ and $k\in\{1,\ldots,N/2\}$ such that $1\leq i+k\leq n$. We wish to prove that for $\ve'>0$ large enough with respect to $\ve$, there exists $\delta>0$ depending on $\ve'>0$ such that 
\begin{equation}\label{eq:claim devnu}
    \mu(t)\Bigr(|N(x_{i+k}-x_i)-\dE_{\mu(t)}[N(x_{i+k}-x_i)]|\geq k^{\frac{s}{2}+\ve}n^{\ve}\Bigr)\leq C(\beta)e^{-c(\beta)k^{\delta}}.
\end{equation}
Following \cite[Lem.~3.16-3.17]{bourgade2012bulk}, one may first decompose $N(x_{i+k}-x_i)$ into a sum of block average statistics. For each $i\in I^c$, let $I_k(i)$ be an interval of integers of cardinal $k+1$ such that $i\in I_k(i)$ and let
\begin{equation*}
    x_i^{ [k] }=\frac{1}{k+1}\sum_{j\in I_k(i)}x_j.
\end{equation*}
Let $\alpha=\frac{1}{p}$ for a large $p\in\mathbb{N}^*$. Since $x_i^{[0]}=x_i$, one can break $N(x_i-x_i^{[k]})$ into
\begin{equation}\label{eq:block dec}
    N(x_i-x_{i}^{[k]})=\sum_{m=0}^{p-1}N(x_i^{[\lfloor k^{m\alpha}\rfloor ]}-x_i^{[\lfloor k^{(m+1)\alpha}\rfloor ]}).
\end{equation}
For each $m\in\{0,\ldots,p-1\}$, denote $G_m=N(x_i^{[\lfloor k^{m\alpha}\rfloor ]}-x_i^{[\lfloor k^{(m+1)\alpha}\rfloor ]})$ and $I_m=I_{\lfloor k^{(m+1)\alpha}\rfloor }(i)$. Fix $m\in \{0,\ldots,p-1\}$. Let us bound the fluctuations of $G_m$. Since $G_m$ only depends on the variables $x_i$ with $i$ in $I_m$ and since $\sum_{i\in I_m}\partial_i G_m$=0, one can use the Gaussian concentration result for divergence free test-functions stated in Lemma \ref{lemma:div free c}. Let us introduce the coordinates $x=(x_i)_{i\in I_m}$ and $y=(x_i)_{i\in I\setminus I_m}$ on $\pi(D_N)$. The measure $\mu(t)$ satisfies the assumptions of Lemma \ref{lemma:div free c} in the window $I_m$. It can indeed be written \begin{equation*}
\dd\mu(t)= e^{-\beta H(x,y)}\mathds{1}_{\pi(D_N)(x,y)}\dd x\dd y\quad \text{with}\quad H(x,y)=H_1(x)+H_2(x,y),
\end{equation*}
with $\nabla^2 H_2\geq 0$, $H_1$ satisfying $\sum_{i\in I_m}\partial_i H_1=0$ with
\begin{equation*}
    \nabla^2 H_1\geq N^2 k^{-(m+1)\alpha(s+2-\ve)}.
\end{equation*}
Lemma \ref{lemma:div free c} therefore gives
\begin{equation*}
\begin{split}
    \log \dE_{\mu(t)}[e^{t G_m }]&\leq t\dE_{\mu(t)}[G_m]+\frac{t^2}{2\beta}N^{-2} k^{(m+1)\alpha(s+2+\ve)}|I_m|^{-1}\sup|\nabla G_m|^2 \\
    &\leq t\dE_{\nu(t)}[G_m]+\frac{t^2}{2\beta}k^{\alpha(s+1)+ms\alpha+\ve(s+2)}.
\end{split}
\end{equation*}
We conclude that for $\ve'$ large enough with respect to $\ve$, there exists $\delta>0$ depending on $\ve'$ such that
\begin{equation*}
    \mu(t)(|G_m-\dE_{\mu}[G_m]|\geq k^{\frac{s}{2}+\ve'})\leq C(\beta)e^{-c(\beta) k^{\delta}}.
\end{equation*}
Inserting this into (\ref{eq:block dec}), one deduces that for $\ve'$ large enough with respect to $\ve$, there exists $\delta>0$ depending on $\ve'$ such that
\begin{equation}\label{eq:ii+k}
    \mu(t)(|N(x_{i}-x_{i}^{[k]})-\dE_{\mu(t)}[N(x_{i}-x_{i}^{[k]})]|\geq k^{\frac{s}{2}+\ve})\leq C(\beta)e^{-c(\beta) k^{\delta}}.
\end{equation}
One can finally check that the variable $N(x_{i+k}^{[k]}-x_{i+k})$ verifies the same estimate: proceeding as for $G_m$ with $m=p-1$, we obtain that for $\ve'>0$ large enough with respect to $\ve$, there exists $\delta>0$ depending on $\ve'$ such that
\begin{equation}\label{eq:blockdiff}
    \mu(t)(|N(x_i^{[k]}-x_{i+k}^{[k]})-\dE_{\mu(t)}[N(x_i^{[k]}-x_{i+k}^{[k]})]|\geq k^{\frac{s}{2}+\ve})\leq C(\beta)e^{-c(\beta) k^{\delta}}.
\end{equation}
Combining (\ref{eq:ii+k}) applied to $i$ and $i+k$ and (\ref{eq:blockdiff}), one finally gets the claim (\ref{eq:claim devnu}).

\paragraph{\bf{Step 2: accuracy estimate}}
Taking a step back in gap coordinates, one shall now control the expectation of $x_i+\ldots+x_{i+k}$ under $\nu(t)$. By construction we can write
\begin{equation*}
    \dE_{\nu(t)}[x_i+\ldots+x_{i+k}]-\dE_{\nu(0)}[x_i+\ldots+x_{i+k}]=\beta\int_0^t \Cov_{\nu(s)}[x_i+\ldots+x_{i+k},\Hc_{n,N'}^\g(\cdot,z)-\Hc_{n,N}^\g(\cdot,y)]\dd s.
\end{equation*}
By Cauchy-Schwarz inequality and using (\ref{eq:claim devnu}) one can write
\begin{equation}\label{eq:nu1nu0}
  |\dE_{\nu(t)}[x_i+\ldots+x_{i+k}]-\dE_{\nu(0)}[x_i+\ldots+x_{i+k}]|\leq C(\beta)n^{\kappa\ve}k^{\frac{s}{2}}\int_0^t \Var_{\nu(s)}[\Hc_{n,N'}^\g(\cdot,z)-\Hc_{n,N}^\g(\cdot,y)]^{\frac{1}{2}}\dd s. 
\end{equation}
First, recall that there exists a constant $C>0$ such that for all $x\in N\dT$, 
\begin{equation}\label{eq:gstgs}
    |N^{-s}g_s(\frac{x}{N})-\tilde{g}_s(x)|\leq \frac{C}{N^s},
\end{equation}
where $\tilde{g}_s:Nx\in \dT\mapsto \frac{1}{|x|^s}$. Let us denote
\begin{equation*}
    \widetilde{\Hc}_{n,N}^\g:(x,y)\in\dR^n\times\dR^{N-n}\mapsto\sum_{i\in I,j\in \{1,\ldots,N\}\setminus I }\frac{1}{|x_i+\ldots+y_j|^s},
\end{equation*}
\begin{equation*}
    \widetilde{\Hc}_{n,N'}^\g:(x,z)\in\dR^n\times\dR^{N'-n}\mapsto\sum_{i\in I,j\in \{1,\ldots,N'\}\setminus I}\frac{1}{|x_j+\ldots+z_j|^s}.
\end{equation*}
To begin the comparison let us restrict the sum as follows:
\begin{multline}\label{eq:splitd}
    \widetilde{\Hc}_{n,N'}^\g(x,z)-\widetilde{\Hc}_{n,N}^\g(x,y)=\sum_{i\in I}\sum_{j\in\{1,\ldots,N\}\setminus I }\Bigr(\frac{1}{|x_i+\ldots+z_j|^s}-\frac{1}{|x_i+\ldots+y_j|^s}\Bigr)\\+\sum_{i\in I}\sum_{j\in\{1,\ldots,N'\}\setminus \{1,\ldots,N\}}\frac{1}{|x_i+\ldots+z_j|^s}
\end{multline}
Fix $k\in I$. By Taylor expansion, one may write
\begin{equation}\label{eq:first t}
    \sum_{j=n+1}^{N/2} \Bigr(\frac{1}{|x_{k}+\ldots+z_j|^s}-\frac{1}{|x_k+\ldots+y_j|^s}\Bigr)=\sum_{j=n+1}^{N/2} \tilde{g}_s'(x_k+\ldots+y_j)N(z_j-y_j)+(\RN{1})_k
\end{equation}
with the error term $(\RN{1})_k$ satisfying
\begin{equation}\label{eq:Ei1}
    \Var_{\nu(s)}[(\RN{1})_k]^{\frac{1}{2}}\leq C(\beta)n^{\kappa\ve}\sum_{j=n+1}^{N'/2}\frac{|j-n|^{s+\ve'}}{|j-k|^{2+s}}\leq \frac{C(\beta)n^{\kappa\ve}}{d(k,\partial I)^{1-\ve'}},
\end{equation}
for some $\ve'>0$. By Taylor expansion again and using Lemma \ref{lemma:local laws mu(t)}, one can write
\begin{equation*}
\Var_{\nu(s)}[\tilde{g}_s'(x_k+\ldots+y_j)]\leq C(\beta)n^{\kappa\ve}\tilde{g}_s''(j-k)(n-k)^{s/2+\kappa\ve}.
\end{equation*}
The leading-order of the right-hand side of (\ref{eq:first t}) therefore satisfies
\begin{multline}\label{eq:second t}
  \sum_{j=n+1}^{N'/2} \tilde{g}_s'(x_k+\ldots+y_j)(z_{n+1}+\ldots+z_j-(y_{n+1}+\ldots+y_j))\\= \sum_{j=n+1}^{N'/2} \tilde{g}_s'(|j-k|)(z_{n+1}+\ldots+z_j-(y_{n+1}+\ldots+y_j))+(\RN{2})_k
\end{multline}
with
\begin{equation}\label{eq:Ei2}
    \Var_{\nu(s)}[(\RN{2})_k]^{\frac{1}{2}}\leq C(\beta)n^{\kappa\ve}\sum_{j=n+1}^{N'/2} \frac{|j-n|^{s/2}|n-k|^{s/2+\kappa\ve} }{|j-k|^{s+2}}
    \leq \frac{C(\beta)n^{\kappa\ve}}{d(k,\partial I)^{1-\ve'}}.
\end{equation}
The point is that leading order term in (\ref{eq:second t}) is constant with respect to $x$ and its variance under $\nu(s)$ is therefore $0$. It follows that uniformly in $s$,
\begin{equation}\label{eq:r}
    \Var_{\nu(s)}\Bigr[\sum_{k=1}^n  \sum_{j=n+1}^{N/2} \Bigr(\frac{1}{|x_{k}+\ldots+z_j|^s}-\frac{1}{|x_k+\ldots+y_j|^s}\Bigr)\Bigr]\leq C(\beta)n^{\kappa\ve}.
\end{equation}
One may proceed similarly for the terms at the left-hand side of $I$ and one concludes that (\ref{eq:r}) also holds for the first quantity in (\ref{eq:splitd}). It remains to bound the second term in (\ref{eq:splitd}). By assumptions on $z$, one has
\begin{equation}\label{eq:addNN'}
    \Var_{\nu(s)}\Bigr[\sum_{i\in I}\sum_{j\in\{1,\ldots,N'\}\setminus \{1,\ldots,N\}}\frac{1}{|x_i+\ldots+z_j|^s}\Bigr]^{\frac{1}{2}}\leq C(\beta)n^{\kappa\ve+1}N^{-\frac{s}{2}+\ve}.
\end{equation}
Combining (\ref{eq:gstgs}), (\ref{eq:Ei1}), (\ref{eq:r}), we find that uniformly in $s$,
\begin{equation*}
    \Var_{\nu(s)}[\Hc_{n,N}^\g(\cdot,y)-\Hc_{n,N}^\g(\cdot,z)]\leq C(\beta)n^{\kappa\ve}.
\end{equation*}
Inserting this into (\ref{eq:nu1nu0}) one obtains
\begin{equation}\label{eq:nutnu0}
    |\dE_{\nu(t)}[x_i+\ldots+x_{i+k}]-\dE_{\nu(0)}[x_i+\ldots+x_{i+k}]|\leq C(\beta)n^{\kappa\ve}k^{\frac{s}{2}+\kappa\ve}.
\end{equation}
Let us denote $\mc{B}\subset \pi_{I^c}(\mc{M}_N)$ the set of admissible configurations as defined in (\ref{eq:def ad}). By taking $t=1$ and $N=N'$, we find that for all $y,z\in \mc{B}$,
\begin{equation}\label{eq:compxx'}
    |\dE_{\dGiQ^\g(\cdot\mid y)}[x_i+\ldots+x_{i+k}]-\dE_{\dGiQ^\g(\cdot\mid z)}[x_i+\ldots+x_{i+k}]|\leq C(\beta)n^{\kappa\ve}k^{\frac{s}{2}+\kappa\ve}.
\end{equation}
Since by Theorem \ref{theorem:almost optimal rigidity c}
\begin{equation*}
|\dE_{\dGiQ^\g}[(x_i+\ldots+x_{i+k})\mathds{1}_{\mc{B}}]-k|\leq C(\beta)n^{\kappa\ve},
\end{equation*}
we deduce from (\ref{eq:compxx'}) that for all admissible configurations $y\in \mc{B}$,
\begin{equation*}
|\dE_{\dGiQ^\g(\cdot \mid y)}[x_i+\ldots+x_{i+k}]-k|\leq C(\beta)n^{\kappa\ve}k^{\frac{s}{2}+\kappa\ve}.
\end{equation*}
Inserting this into (\ref{eq:nutnu0}) concludes the proof of Lemma \ref{lemma:local laws mu(t)}.
\end{proof}

\bibliographystyle{alpha}
\bibliography{main.bib}

\end{document}